\newcommand{\w}{\widetilde}
\newtheorem{theorem}{Theorem}
\newtheorem{lemma}{Lemma}
\newtheorem{proposition}{Proposition}
\newtheorem{corollary}{Corollary}
\newtheorem{definition}{Definition}
\theoremstyle{remark}
\newtheorem{remark}{Remark}[section]
\numberwithin{equation}{section}
\newcommand{\C}{{\mathbb C}}       
\newcommand{\R}{{\mathbb R}}       
\newcommand{\Z}{{\mathbb Z}}       
\newcommand{\D}{{\mathbb D}} 
\newcommand{\N}{{\mathbb{N}}}
\newcommand{\internalcomment}[1]{}
\begin{document}

\title[A bound for Lagrange polynomials of Leja points and applications]
{A uniform bound for the Lagrange polynomials of Leja points for the unit disk and applications in multivariate Lagrange interpolation}





\begin{abstract}

We study uniform estimates for the family of fundamental Lagrange
polynomials associated with any Leja sequence for the complex unit disk.
The main result claims that all these polynomials are
uniformly bounded on the disk, i.e.
independently on the range $N$ of the associated
$N$-Leja section. We also give an improved estimate for special values of $N$.
As a first application, we get 
an analogous estimate 
for any compact subset whose boundary is an Alper-smooth Jordan curve.

We then deal with some applications in multivariate Lagrange interpolation.
We first give some estimates of the Lebesgue constant for
the intertwining sequence of Leja sequences, a result that also 
requires an explicit formula for the associated fundamental Lagrange polynomials
with uniform estimates. We finish by dealing with a method to get
explicit bidimensional Leja sequences and then give a positive example
for the bidisk.

\end{abstract}

\maketitle

\tableofcontents

\section{Introduction}

\subsection{Definition of Leja points for the unit disk}

In this paper we deal with the estimates of the Lagrange polynomials
of Leja points for the unit disk. We denote the complex unit disk by
\begin{eqnarray*}
\D & = &
\left\{z\in\C,\,|z|<1\right\}
\,,
\end{eqnarray*}
and $\overline{\D}$ is the closed one.

Next, for $N\geq1$, for $\eta_1,\ldots,\eta_N$ different complex numbers and $z\in\C$,
we consider the fundamental Lagrange interpolation polynomial ({\em FLIP}) associated with
$\eta_k$,
\begin{eqnarray}\label{lagsec}
l_k^{(N)}(z) & = &
\prod_{j=1,j\neq k}^N
\frac{z-\eta_j}{\eta_k-\eta_j}
\,,\;\;\;
k=1,\ldots,N
\,.
\end{eqnarray}

\bigskip

The problem of finding {\em good} sets $\left\{\eta_k\right\}_{k\geq1}$ for
Lagrange interpolation
(i.e. for which we can have some control of the associated FLIPs)
is a domain of big interest. 
One of them is called a Leja sequence and will be considered
in the whole paper.

\begin{definition}\label{defleja}

A {\em Leja sequence $\mathcal{L}$} for a compact set $K\subset\C$ is a sequence\\
$\left(\eta_1,\eta_2,\ldots,\eta_k,\ldots\right)$ 
that satisfies the following property: 
\begin{eqnarray}
\eta_1
& \in &
\partial K
\,,
\end{eqnarray}
and for all
$k\geq2$,
\begin{eqnarray}\label{defleja1}
\sup_{z\in K}
\left[
\prod_{j=1}^{k-1}
\left|
z-\eta_j
\right|
\right]
& = &
\prod_{j=1}^{k-1}
\left|
\eta_k-\eta_j
\right|
\,.
\end{eqnarray}

For all $N\geq1$, the
{\em $N$-Leja section $\mathcal{L}_N$} of a Leja sequence
$\mathcal{L}$ is the finite sequence given by the first
$N$ points of $\mathcal{L}$.

\end{definition}

These sequences took their name from F. Leja (see~\cite{leja1}) but they were first
considered by A. Edrei (see \cite{edrei1}, p. 78).
Of course, these sequences are not necessarily unique.
Even if we fix the first $k$ points $\eta_j$, the choice for 
$\eta_{k+1}$ can be multiple in general.
On the other hand, it follows by the maximum principle that
all the $\eta_j$'s lie on the boundary $\partial K$.

In the following, we will essentially deal with the special case
$K=\overline{\D}$ for which all the Leja sequences are explicit.
One can find in~\cite{bialascalvi} their complete description with proof: if we fix
$\eta_1=1$, we have for all $k\geq2$,
\begin{eqnarray}\label{explicitleja}
\eta_k
\;=\;
\exp
\left(
i\pi\sum_{l=0}^sj_l2^{-l}
\right)
& \mbox{ where } &
k-1
\;=\;
\sum_{l=0}^sj_l2^l
\,,\;
j_l\in\{0,1\}
\,.
\end{eqnarray}
In particular, the first $2^s$ points form a complete set of roots of unity of
degree $2^s$.

Finally, these Leja sequences can be seen
as an approximation to
one-dimensional Fekete sets (see~\cite{fekete1}):
a $N$-{\em Fekete set} for the compact subset
$K$ is a set of $N$ elements
$\zeta_1,\ldots,\zeta_N\in K$ which
maximize (in modulus) the Vandermonde determinant, i.e.
\begin{eqnarray}\label{defvdmfekete}
\left|
VDM\left(\zeta_1,\ldots,\zeta_N\right)
\right|
& = &
\sup_{z_1,\ldots,z_N\in K}
\;
\left|
VDM\left(z_1,\ldots,z_N\right)
\right|
\\\nonumber
& = &
\sup_{z_1,\ldots,z_N\in K}
\;
\prod_{1\leq i<j\leq N}\left|z_j-z_i\right|
\,.
\end{eqnarray}
One of the essential
differences is that determining Fekete sets is an
$N$-dimensional (with respect to $\C$) optimization problem while determining
Leja sequences is just a $1$-dimensional one. In addition, it follows from 
Definition~\ref{defleja} that the construction of 
Leja sequences is inductive (unlike any $N$-Fekete set
which requires a new research of an $N$-tuple
$\left(\zeta_1,\ldots,\zeta_N\right)$ for
every $N\geq1$).

\subsection{Main result and a first application}

The main result that we will prove is the following.

\begin{theorem}\label{unifestim}

Let $\mathcal{L}=\left(\eta_1,\eta_2,\ldots\right)$
be a Leja sequence for the unit disk. Then
the FLIPs $l_k^{(N)}$ are uniformly
bounded with respect to $N\geq1$ and
$k=1,\ldots,N$, i.e.
\begin{eqnarray*}\label{estimate}
\sup_{N\geq k\geq1}
\left(
\sup_{z\in\overline{\D}}
\left|
l_k^{(N)}(z)\right|
\right)
& \leq &
\pi\exp(3\pi)
\,.
\end{eqnarray*}

\end{theorem}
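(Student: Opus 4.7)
The plan is to exploit the highly structured nature of Leja sequences for the unit disk. By the maximum modulus principle it suffices to estimate $|l_k^{(N)}(z)|$ on $|z|=1$. I would first treat the special case $N=2^s$, where formula~(\ref{explicitleja}) shows that $\eta_1,\ldots,\eta_{2^s}$ is exactly the full set of $2^s$-th roots of unity; hence $w_N(z):=\prod_{j=1}^N(z-\eta_j)=z^N-1$ and a direct computation gives
\begin{eqnarray*}
l_k^{(N)}(z) & = & \frac{\eta_k\,(z^N-1)}{N\,(z-\eta_k)}.
\end{eqnarray*}
Parametrising $z=e^{i\theta}$ and $\eta_k=e^{i\theta_k}$, the modulus equals $|\sin(N\phi/2)|/(N|\sin(\phi/2)|)$ with $\phi=\theta-\theta_k$, and $|\sin(Nx)|\leq N|\sin x|$ yields $|l_k^{(N)}(z)|\leq 1$; so on powers of $2$ the estimate holds with a much better constant.

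For general $N$, write $N=2^s+m$ with $0\leq m<2^s$. The key structural observation from~(\ref{explicitleja}) is the self-similarity $\eta_{2^s+i}=e^{i\pi/2^s}\eta_i$ for $1\leq i\leq 2^s$, which yields the factorisation
\begin{eqnarray*}
w_N(z) & = & (z^{2^s}-1)\cdot e^{i\pi m/2^s}\,w_m(e^{-i\pi/2^s}z).
\end{eqnarray*}
This opens the way to an induction on $N$. When $k>2^s$, writing $k=2^s+l$, one derives the clean identity
\begin{eqnarray*}
l_k^{(N)}(z) & = & \frac{z^{2^s}-1}{\eta_k^{2^s}-1}\cdot l_l^{(m)}\bigl(e^{-i\pi/2^s}z\bigr).
\end{eqnarray*}
Since $\eta_k$ is a $2^{s+1}$-th root of unity but not a $2^s$-th one, $\eta_k^{2^s}=-1$, so $|\eta_k^{2^s}-1|=2\geq|z^{2^s}-1|$ on $\overline{\D}$; the prefactor has modulus at most $1$ and the estimate reduces to the shorter $m$-Leja section, which is covered by the induction.

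When $k\leq 2^s$, the same factorisation instead gives
\begin{eqnarray*}
l_k^{(N)}(z) & = & l_k^{(2^s)}(z)\cdot \frac{w_m(e^{-i\pi/2^s}z)}{w_m(e^{-i\pi/2^s}\eta_k)}.
\end{eqnarray*}
The first factor is bounded by $1$ via the base case, and the numerator in the second is controlled by $\sup_{\overline{\D}}|w_m|=|w_m(\eta_{m+1})|$ via the Leja extremality~(\ref{defleja1}). The \emph{main obstacle} is to bound $|w_m(\alpha)|$ from below at the half-step point $\alpha:=e^{-i\pi/2^s}\eta_k$, which is a $2^{s+1}$-th but not a $2^s$-th root of unity. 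I would attack it through the identity $z^{2^s}-1=w_m(z)\prod_{i=m+1}^{2^s}(z-\eta_i)$ evaluated at $\alpha$, which yields
\begin{eqnarray*}
|w_m(\alpha)| & = & \frac{2}{\prod_{i=m+1}^{2^s}|\alpha-\eta_i|}.
\end{eqnarray*}
It then remains to bound the combined quantity $|w_m(\eta_{m+1})|\cdot\prod_{i=m+1}^{2^s}|\alpha-\eta_i|$ uniformly in $m$, $s$ and $k$. Writing each chord as $2|\sin(\cdot/2)|$, summing logarithms and comparing them with the integral of $\log|\sin|$ over appropriate partitions of $[0,2\pi]$ (where the leading $-\log 2$ contributions cancel), one should obtain a bound of order $e^{c\pi}$ with an explicit $c$, closing the induction and delivering the stated constant $\pi\exp(3\pi)$.
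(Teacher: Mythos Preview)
Your overall architecture matches the paper: reduce to $|z|=1$, handle $N=2^s$ directly, and for general $N=2^s+m$ split according to whether $k>2^s$ (where the recursive structure gives a clean reduction to a shorter Leja section, exactly as in the paper's Lemma~\ref{prelimpfthm}) or $k\le 2^s$. The treatment of the first two cases is essentially correct, with the caveat that for a \emph{general} Leja sequence the second block is $\rho_1\widetilde{\mathcal{L}}_{m}$ for \emph{some} $2^s$-th root of $-1$ and \emph{some} other Leja sequence $\widetilde{\mathcal{L}}$, not necessarily $e^{i\pi/2^s}$ times the original first $m$ points; this is harmless for the induction but your displayed identity $\eta_{2^s+i}=e^{i\pi/2^s}\eta_i$ holds only for the specific sequence~(\ref{explicitleja}).

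The genuine gap is in the case $k\le 2^s$. By bounding $|l_k^{(2^s)}(z)|\le 1$ and the numerator $|w_m(e^{-i\pi/2^s}z)|$ by $\sup_{\overline{\D}}|w_m|$, you decouple the two factors and destroy a cancellation that is essential for uniformity. Take $m=1$ and $k=1$: then $w_1(u)=u-1$, so $\sup|w_1|=|w_1(-1)|=2$, while $\alpha=e^{-i\pi/2^s}$ gives $|w_1(\alpha)|=2\sin(\pi/2^{s+1})\sim\pi/2^s$. Your bound for the second factor is therefore $2/|w_1(\alpha)|\sim 2^{s+1}/\pi$, and since you have already replaced $|l_1^{(2^s)}(z)|$ by $1$, the resulting estimate blows up like $2^s$. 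Equivalently, in your ``main obstacle'' formulation, the quantity $|w_m(\eta_{m+1})|\cdot\prod_{i=m+1}^{2^s}|\alpha-\eta_i|/2$ equals $2/|\alpha-1|\sim 2^{s+1}/\pi$ here; no integral comparison of $\log|\sin|$ can repair this, because the divergence comes from a single factor $|\alpha-\eta_1|$ of size $\sim 2^{-s}$, not from an error in a Riemann sum. The true FLIP is of course bounded in this example: when $z$ is far from $\eta_k$ the factor $|l_k^{(2^s)}(z)|$ is of order $2^{-s}$ and compensates, but your decoupling throws this away.

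The paper avoids this by never separating the two factors. Using Lemma~\ref{lkgeneral} it writes $|l_k^{(N)}(z)|$ as $\dfrac{|z^{2^{p_1}}-1|}{2^{p_1}|z-z_k|}\prod_{q=2}^n\dfrac{|z^{2^{p_q}}+\omega_0^{2^{p_q}}|}{|z_k^{2^{p_q}}+\omega_0^{2^{p_q}}|}$, reduces to $k=1$ by rotation (Lemma~\ref{changetoz1}), and then carries out a delicate trigonometric analysis of this product in Section~\ref{specialessential}: the \emph{alternating} binary decomposition of $l$ (where $\omega_0=e^{(2l+1)i\pi/2^{p_1}}$) partitions the indices $p_q$ into two sets $T$ and $S$ (Lemma~\ref{partitionTS}), on which the tangent factors are controlled differently (Lemmas~\ref{estimquotcospi/4} and~\ref{estimquotcospi/2}), and three ranges of $|\theta|$ are treated separately so that the small first factor can absorb the large denominators when needed. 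This coupling between the ``roots-of-unity'' piece and the ``extra $m$ points'' piece is precisely what your proposal is missing.
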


\bigskip

Some remarks are in order.
First, the explicit bound may not be optimal: indeed, this can  be seen
along the whole proof; on the other hand, an improvement for the bound will be given 
in a special case for $N=2^p-1$ in Section~\ref{secspecialN} (Theorem~\ref{specialN}).
We prove that, except for an asymptotically negligible number of values for $k=1,\ldots,N$,
the FLIPs are asymptotically bounded by $1$. 

Next, an important interpretation of this result is that any $N$-Leja section for the disk
has essentially the same property as any $N$-Fekete set. Indeed, it is known
that the FLIPs associated with
any $N$-Fekete set are always
bounded by $1$: this can be shown by noticing that
every FLIP can be written as follows for all $z\in\C$,
\begin{eqnarray}\label{defflip1}
l_{k}^{(N)}(z)
& = &
\frac{
VDM\left(\zeta_1,\ldots,\zeta_{k-1},\,z\,,\zeta_{k+1},\ldots,\zeta_N\right)
}{
VDM\left(\zeta_1,\ldots,\zeta_N\right)
}
\end{eqnarray}
(where we have replaced $\zeta_k$ with $z$).
It follows by~(\ref{defvdmfekete}) (and from the fact
that
$\sup_{z\in\overline{\D}}
\left|
l_{k}^{(N)}(z)
\right|
\geq
\left|l_{k}^{(N)}\left(\zeta_k\right)\right|=1$) 
that
\begin{eqnarray}\label{unifboundfekete}
\sup_{z\in\overline{\D}}
\left|
l_{k}^{(N)}(z)
\right|
\;=\;
1
& \mbox{for all} &
k=1,\ldots,N
\,.
\end{eqnarray}
Thus, the Fekete sets are essentially the best ones for Lagrange interpolation
and uniform stability of the associated FLIPs.
Nevertheless, constructing them is generally a hard task.
Therefore, a natural question is if there exist {\em simpler} sets
with the same property. Theorem~\ref{unifestim}
gives an affirmative answer with the Leja sequences for the unit
disk (with a slightly bigger bound but still universal).

Finally, as another application we can immediately deduce an estimate analogous 
to~\cite[formula~(1.15)]{chkifa1} for the Lebesgue constant 
$\Lambda_{N}$, $N\geq1$,  of the $N$-section from any Leja sequence
for the unit disk. We remind that the Lebesgue constant is defined for $N\geq1$ by
\begin{eqnarray}\label{lebesgue}
\Lambda_{N}
& = &
\sup_{z\in\overline{\D}}
\left(
\sum_{k=1}^N
\left|
l_k^{(N)}(z)
\right|
\right)
\,.
\end{eqnarray}

\begin{corollary}\label{appl2lebesgue}

For all $N\geq1$, 
\begin{eqnarray}\label{estimlebesgue}
\Lambda_{N}
& \leq &
\pi\exp\left(3\pi\right)
\times N
\;=\;
O(N)
\,.
\end{eqnarray}

\end{corollary}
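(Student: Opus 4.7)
The plan is to obtain this corollary as a direct term-by-term consequence of Theorem~\ref{unifestim}. No additional analytical machinery is needed: the Lebesgue constant $\Lambda_N$ defined by~(\ref{lebesgue}) is the supremum over $\overline{\D}$ of the sum of moduli of the $N$ FLIPs $l_k^{(N)}$, and Theorem~\ref{unifestim} provides a single universal bound $\pi\exp(3\pi)$ valid for every index $k \in \{1,\ldots,N\}$ and every $z \in \overline{\D}$.

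Concretely, I would fix an arbitrary $z \in \overline{\D}$ and bound
\begin{eqnarray*}
\sum_{k=1}^N \left| l_k^{(N)}(z) \right|
& \leq &
\sum_{k=1}^N \sup_{w \in \overline{\D}} \left| l_k^{(N)}(w) \right|
\;\leq\;
\sum_{k=1}^N \pi \exp(3\pi)
\;=\;
\pi \exp(3\pi) \times N,
\end{eqnarray*}
where the second inequality invokes Theorem~\ref{unifestim}. Taking the supremum over $z \in \overline{\D}$ on the left-hand side preserves the bound, giving exactly the desired estimate $\Lambda_N \leq \pi\exp(3\pi) \times N = O(N)$.

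There is no real obstacle: the content of the corollary is entirely carried by Theorem~\ref{unifestim}, and the step from a uniform pointwise bound on each FLIP to a linear-in-$N$ bound on the Lebesgue constant is the trivial estimate ``sum of $N$ bounded terms.'' The only thing worth remarking on is that this bound is of the same order $O(N)$ as the one mentioned in the introduction for the real Leja case in~\cite{chkifa1}, so no finer argument (such as exploiting cancellations among the FLIPs or the specific structure~(\ref{explicitleja}) of the Leja points) is required to reach the stated conclusion.
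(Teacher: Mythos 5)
Your proof is correct and is exactly the paper's argument: the authors obtain Corollary~\ref{appl2lebesgue} by the same crude estimate $\Lambda_N \leq \sum_{k=1}^N \sup_{z\in\overline{\D}}|l_k^{(N)}(z)|$ displayed in~(\ref{estimviolent}), followed by the uniform bound of Theorem~\ref{unifestim} on each term. No comparison is needed; the approaches coincide.
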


The first estimate of $\Lambda_N$ was obtained in~\cite[Corollary~7]{calviphung1}
where it was proved that $\Lambda_N=O(N\ln N)$. Afterwards, this estimate was improved 
in~\cite[formula~(1.15)]{chkifa1} where it was proved that 
$\Lambda_N\leq2N$.
We cannot {\em a priori} hope any improvement of the bound for $O(N)$ 
by this way since we crudely estimate
\begin{eqnarray}\label{estimviolent}
\Lambda_{N}
\;=\;
\sup_{z\in\overline{\D}}
\left(
\sum_{k=1}^N
\left|
l_k^{(N)}(z)
\right|
\right)
& \leq &
\sum_{k=1}^N
\,
\sup_{z\in\overline{\D}}
\left|
l_k^{(N)}(z)
\right|
\,.
\end{eqnarray}

\subsection{On the compact sets with Alper-smooth Jordan boundary}

Another application of Theorem~\ref{unifestim} is an estimate of
the FLIPs for compact sets whose boundary is an Alper-smooth Jordan curve.
It is a special class of compact sets: for example, twice continuously 
differentiable Jordan curves are Alper-smooth. 
We denote by $\Phi$ the conformal mapping from
$\overline{\C}\setminus\D$ onto $\overline{\C}\setminus K$.

If $\mathcal{L}$ is a Leja sequence for the unit disk, the image 
$\Phi(\mathcal{L})=\left(\Phi\left(\eta_j\right)\right)_{j\geq1}$
(that is well-defined since the $\eta_j$'s belong to the unit circle)
will not necessarily be a Leja sequence for $K$. Nevertheless, we can give good estimates
as specified by the following result.

\begin{theorem}\label{unifestimcompact}

Let $\mathcal{L}=\left(\eta_j\right)_{j\geq1}$ be a Leja sequence for the unit disk with
$\left|\eta_1\right|=1$, and let $\Phi(\mathcal{L})$
be its image by 
the conformal mapping $\Phi$. Then for all $N\geq1$, we have
\begin{eqnarray*}
\max_{1\leq p\leq N}
\left[
\sup_{z\in K}
\left|
\prod_{j=1,j\neq p}^N
\dfrac{z-\Phi(\eta_j)}{\Phi(\eta_p)-\Phi(\eta_j)}
\right|
\right]
& = &
O\left(N^{2A/\ln(2)}\right)
\,,
\end{eqnarray*}
where $A$ is a positive constant depending only on $K$.

\end{theorem}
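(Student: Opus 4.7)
The plan is to transfer the estimate from $K$ back to the unit disk via $\Phi$, apply Theorem~\ref{unifestim} to handle the disk FLIP, and absorb the conformal correction by exploiting the dyadic structure of the Leja sequence. By the maximum principle the supremum is attained on $\partial K = \Phi(\partial\D)$, so I parametrize $z = \Phi(w)$ with $w\in\partial\D$ and factor
\begin{eqnarray*}
\prod_{j=1,j\neq p}^{N}
\dfrac{\Phi(w)-\Phi(\eta_j)}{\Phi(\eta_p)-\Phi(\eta_j)}
& = &
l_p^{(N)}(w)
\cdot
\prod_{j=1,j\neq p}^{N}
\dfrac{G(w,\eta_j)}{G(\eta_p,\eta_j)}
\,,
\end{eqnarray*}
where $G(u,\eta) := (\Phi(u)-\Phi(\eta))/(u-\eta)$, extended continuously by $\Phi'(\eta)$ on the diagonal. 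Theorem~\ref{unifestim} bounds the first factor by $\pi\exp(3\pi)$, so the task reduces to bounding the conformal correction by $O(N^{2A/\ln 2})$.

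The next step is to extract a quantitative regularity estimate for $G$ from the Alper-smoothness hypothesis. The standard consequence of Alper-smoothness is that $\Phi'$ admits a continuous, nonvanishing extension to $\overline{\C}\setminus\D$ and that $\log\Phi'$ enjoys a Dini-type modulus of continuity on $\partial\D$. From this I would deduce a symmetric decomposition $\log G(u,\eta) = \tfrac{1}{2}\log\Phi'(u) + \tfrac{1}{2}\log\Phi'(\eta) + S(u,\eta)$, with $S$ bounded and Dini-continuous on $\partial\D\times\partial\D$. The key technical input is the ensuing discrepancy-type inequality: there exists $A = A(K) > 0$ such that for any $w,\eta_p\in\partial\D$ and any set $E$ consisting of all the $2^s$-th roots of some $\zeta\in\partial\D$,
\begin{eqnarray*}
\left|
\sum_{\eta\in E}\bigl[\log G(w,\eta)-\log G(\eta_p,\eta)\bigr]
\right|
& \leq & A\,.
\end{eqnarray*}
This is a cancellation statement: the integrand oscillates around zero, and equidistributed samples suppress the mean.

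The third step exploits the explicit description \eqref{explicitleja}: the first $2^s$ Leja points form a full set of $2^s$-th roots of unity, and each subsequent dyadic ``layer'' $\{\eta_{2^s+1},\ldots,\eta_{2^{s+1}}\}$ is a rotated copy of the same set. Writing $N \leq 2^{p+1}$ with $p \leq \log_2 N$, I would partition $\{1,\ldots,N\}\setminus\{p\}$ into at most $p+1$ dyadic layers, apply the discrepancy inequality above to each layer separately, and sum to obtain
\begin{eqnarray*}
\left|
\log\prod_{j=1,j\neq p}^{N}\dfrac{G(w,\eta_j)}{G(\eta_p,\eta_j)}
\right|
& \leq & 2A(p+1) \;\leq\; 2A\log_2 N + 2A\,,
\end{eqnarray*}
which exponentiates to the announced $O(N^{2A/\ln 2})$ bound after combination with the universal bound on $l_p^{(N)}(w)$.

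The main obstacle is the discrepancy estimate of step two: $\log G$ has a logarithmic singularity along the diagonal $u=\eta$, so one must carefully treat the Leja points $\eta_j$ falling close to $w$ or $\eta_p$ and extract cancellation against them uniformly in the two parameters. This is precisely where Alper-smoothness of $\partial K$ — a condition strictly stronger than $C^1$ and tailored to Dini continuity of $\log\Phi'$ — enters in an essential way; the exponent $2A/\ln 2$ then arises directly from the dyadic summation over the $\log_2 N$ layers, with $2A$ being the per-layer cost.
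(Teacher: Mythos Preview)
Your approach is correct and matches the paper's strategy: reduce to $\partial K$ by the maximum principle, factor out the disk FLIP $l_p^{(N)}(w)$ and bound it via Theorem~\ref{unifestim}, then control the conformal correction $\prod_{j\neq p} G(w,\eta_j)/G(\eta_p,\eta_j)$. The only difference is that the paper does not redevelop your discrepancy--plus--dyadic-layer argument; it directly invokes Lemma~\ref{lemma3bialascalvi} (Bia{\l}as-Ciez--Calvi), which already packages your steps~2--3 into the two-sided bound $C(K)^N/c_N \le \bigl|\prod_{j=1}^N G(z,\eta_j)\bigr| \le C(K)^N c_N$ with $c_N \le (N+1)^{A/\ln 2}$, applies it once at $z=w$ (upper bound) and once at $z=\eta_p$ (lower bound), and divides to obtain the factor $c_N^2\le (N+1)^{2A/\ln 2}$. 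The single missing $p$-th ratio is absorbed by the pointwise Alper estimate~\eqref{alper1}, which also closes the small gap in your sketch (you do not say how to treat the incomplete layer from which $\eta_p$ has been removed). In short, what you outline in steps~2--3 is essentially the proof of the cited lemma, so your proposal reproves an existing result rather than quoting it.
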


As an immediate consequence, we get an estimate of the Lebesgue constant for the compact set $K$.

\begin{corollary}\label{appl2lebesguecompact}

Let us consider $\Phi\left(\mathcal{L}\right)$ where $\mathcal{L}$ is a Leja sequence for the unit disk
and $\Phi$ is the conformal mapping for $K$. We have the following estimate:
\begin{eqnarray}\label{estimlebesguecompact}
\Lambda_{N}(K)
& = &
O\left(N^{1+2A/\ln(2)}\right)
\,,
\end{eqnarray}
where $A$ is a positive constant depending only on $K$.

\end{corollary}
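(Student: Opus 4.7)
The plan is to deduce this corollary directly from Theorem~\ref{unifestimcompact} by the same crude bounding strategy used in~(\ref{estimviolent}). Denote by $L_p^{(N)}$ the FLIP associated with the image point $\Phi(\eta_p)$, namely
\begin{eqnarray*}
L_p^{(N)}(z)
& = &
\prod_{j=1,j\neq p}^N \frac{z-\Phi(\eta_j)}{\Phi(\eta_p)-\Phi(\eta_j)}
\,.
\end{eqnarray*}
The Lebesgue constant for $K$ associated with the sample $\Phi(\mathcal{L}_N)$ is then
\begin{eqnarray*}
\Lambda_N(K) & = & \sup_{z\in K}\sum_{p=1}^N \left|L_p^{(N)}(z)\right|.
\end{eqnarray*}

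First I would exchange the supremum and the finite sum by the triangle inequality to get
\begin{eqnarray*}
\Lambda_N(K) & \leq & \sum_{p=1}^N \sup_{z\in K}\left|L_p^{(N)}(z)\right|
\;\leq\;
N\cdot \max_{1\leq p\leq N}\sup_{z\in K}\left|L_p^{(N)}(z)\right|.
\end{eqnarray*}
Next, I would invoke Theorem~\ref{unifestimcompact} to replace the inner maximum by its bound $O(N^{2A/\ln(2)})$, where the constant $A$ depends only on $K$ (through the geometry of its Alper-smooth Jordan boundary and the conformal map $\Phi$). Multiplying the two factors yields $\Lambda_N(K) = O(N^{1+2A/\ln(2)})$, which is the claimed estimate.

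There is essentially no serious obstacle: the real content is already packaged in Theorem~\ref{unifestimcompact}, and the corollary is a one-line consequence. The only point worth emphasizing, in parallel with the discussion following Corollary~\ref{appl2lebesgue}, is that this crude method cannot hope to improve the exponent, since the sum has $N$ terms and each individual FLIP is controlled only by the uniform bound provided by Theorem~\ref{unifestimcompact}; getting rid of the extra factor $N$ would require genuine cancellation between the FLIPs, which the present argument does not exploit.
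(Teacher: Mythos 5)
Your proof is correct and is exactly the intended argument: the paper presents this corollary as an immediate consequence of Theorem~\ref{unifestimcompact}, obtained by the same crude summation used for Corollary~\ref{appl2lebesgue}. Nothing further is needed.
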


\begin{remark}\label{cteA}

As noticed by J. Ortega-Cerd\`a, one can make explicit the constant $A$
(see~\cite[Lemmas~2, 3 and their proofs]{bialascalvi}):
\begin{eqnarray}
A
& = &
\sup_{|w|=1}
\int_0^{2\pi}
\left|
\dfrac{\Phi'\left(e^{it}\right)}{\Phi\left(e^{it}\right)-\Phi(w)}
-
\dfrac{1}{e^{it}-w}
\right|
dt
\,.
\end{eqnarray}

\end{remark}

\medskip

\subsection{Applications in multivariate Lagrange interpolation}\label{applmultivarintro}

Now we deal with the essential applications of Theorem~\ref{unifestim} in multivariate
Lagrange interpolation. In this subsection, we will consider the bidimensional case
(although the following results may be generalized to $\C^d$).
We first remind the lexicographic order defined on $\N^2$:
we say that $\left(k_1,l_1\right)\leq\left(k_2,l_2\right)$ iff
$k_1+l_1<k_2+l_2$, or 
$k_1+l_1=k_2+l_2$ and $k_1\geq k_2$. This definition gives a numeration of
$\N^2$ as follows: $(0,0),\,(1,0),\,(0,1),\,(2,0),(1,1),\ldots$
This allows us to define an order on the standard basis monomials by
\begin{eqnarray}\label{deforderpol}
\left\{e_j(z,w)\;=\;z^{k(j)}w^{l(j)}\,,\;j\geq1\right\}
\,.
\end{eqnarray}
Similarly, we can define an order for the product of two complex sequences
$\left(\eta_k\right)_{k\geq0}$ and $\left(\theta_l\right)_{l\geq0}$, by
\begin{eqnarray}\label{defintertwin}
\left\{H_j\;=\;\left(\eta_{k(j)},\theta_{l(j)}\right)\,,\;j\geq1\right\}
\,.
\end{eqnarray}

Next, we denote for all $n\geq0$, the space
$\mathcal{P}_n\subset\C[z,w]$ of complex polynomials of degree at most $n$, and
\begin{eqnarray}\label{Nn}
N_n
\;=\;
\dim\mathcal{P}_n
\;=\;
\dfrac{(n+1)(n+2)}{2}
\,.
\end{eqnarray}
Notice that the set
$\left\{H_1,H_2,\ldots,H_{N_n}\right\}=\left\{\left(\eta_k,\theta_l\right),\,k+l\leq n\right\}$ 
is the so-called intertwining array of
the arrays
$\left\{\eta_0,\eta_1\ldots,\eta_n\right\}$
and
$\left\{\theta_0,\theta_1,\ldots,\theta_n\right\}$
(e.g., see~\cite{calvi2005} for a general approach).
Here we need to extend this definition and consider
the arrays $\left\{H_1,H_2,\ldots,H_{N}\right\}$ for any value of
$N\geq1$ (and not only for special values as $N_n$).

For all $N\geq1$, let us consider the unique integers $n\geq0$ and $m$ with $0\leq m\leq n$, such that
\begin{eqnarray}\label{defnm}
N_{n-1}
\;<\;
N
\;\leq\;
N_n
& \mbox{ and } &
m
\;=\;
N-N_{n-1}-1
\,.
\end{eqnarray}
We can then consider the space
\begin{eqnarray}\label{defPnm}
\mathcal{P}_{n,m}
& = &
\mbox{span}\left\{e_j\,,\;1\leq j\leq N\right\}
\,.
\end{eqnarray}
One has $\mathcal{P}_{n-1}\subset\mathcal{P}_{n,m}\subset\mathcal{P}_n$
(with the convention that
$\mathcal{P}_{-1}=\{0\}$), and $\mathcal{P}_{n,m}$ is spanned by $\mathcal{P}_{n-1}$
and the monomials $z^n,\,z^{n-1}w,\ldots,z^{n-m}w^m$.
In particular, $\mathcal{P}_{n,n}=\mathcal{P}_n$.
On the other hand,
by~(\ref{defnm}), $\dim\mathcal{P}_{n,m}=N_{n-1}+m+1=N$.
Similarly, we consider the set 
\begin{eqnarray}\label{defOmegaN}
\;\;\;\;\;\;\;\;\;\;\;\;\;\;\;\;\;
\Omega_N
\;:=\;
\Omega_{n,m}
& = &
\left\{H_j\,,\;1\leq j\leq N\right\}
\\\nonumber
& = &
\left\{
\left(\eta_0,\theta_0\right),\ldots,
\left(\eta_0,\theta_{n-1}\right),
\left(\eta_n,\theta_0\right),
\ldots,\left(\eta_{n-m},\theta_m\right)
\right\}
\,.
\end{eqnarray}

A first property is that for all $N\geq1$, the set
$\Omega_N$ is unisolvent for the space $\mathcal{P}_{n,m}$
(Section~\ref{applmultivar}, Corollary~\ref{unisolvent}): for every function $f$ defined on
$\Omega_N$, there exists a unique polynomial $P\in\mathcal{P}_{n,m}$
such that $P(z,w)=f(z,w)$
for every $(z,w)\in\Omega_N$. 
This extends in the special case of $\Omega_N\subset\C^2$ the classical idea of 
Biermann in~\cite{bierman1903}
(see also~\cite{calvi2005} for a general version with
{\em block unisolvent arrays}).
In addition, $P$ is the
Lagrange interpolation polynomial $L^{(N)}[f]$ of $f$ with respect to $\Omega_N$,
and defined below.
In particular, the generalized Vandermonde determinant of $\Omega_N$ is nonzero, i.e.
\begin{eqnarray*}
VDM\left(H_1,H_2,\ldots,H_N\right)
\;=\;
\det\left[e_i\left(H_j\right)\right]_{1\leq i,j\leq N}
\;\neq\;
0
\,.
\end{eqnarray*}
This allows us to introduce the (generalized) fundamental Lagrange polynomials (FLIPs)
by extending the expression in~(\ref{defflip1}) as
\begin{eqnarray}\label{defflipd}
\;\;\;\;\;
& &
l_{H_p}^{(N)}(z,w)
\;=\;
\dfrac{VDM\left(H_1,H_2,\ldots,H_{p-1},(z,w),H_{p+1},\ldots,H_N\right)
}{
VDM\left(H_1,H_2,\ldots,H_N\right)}
\,,\;
1\leq p\leq N.
\end{eqnarray}
We can then define the multivarate Lagrange polynomial of any function
$f$ defined on $\Omega_N$, by
\begin{eqnarray}\label{deflagpold}
L^{(N)}[f](z,w)
& = &
\sum_{p=1}^N
f\left(H_p\right)
\,
l_{H_p}^{(N)}(z,w)
\,,\;
\forall\,(z,w)\in\C^2
\,.
\end{eqnarray}
As it has been pointed out (e.g., see~\cite[p. 54]{bloomboscalvilev2012}), there is no cancellation 
in~(\ref{defflipd})
so that the formulas cannot be simplified as in~(\ref{lagsec}). Nevertheless, Proposition~\ref{explicitlagrangepol}
in Section~\ref{applmultivar} gives an explicit formula of $l_{H_p}^{(N)}$, $\forall\,p=1,\ldots,N$.

Finally, we similarly define the Lebesgue constant of the sequence
$\left(H_j\right)_{j\geq1}$ (with respect to a compact set
$K\supset\left(H_j\right)_{j\geq1}$):
\begin{eqnarray}
\Lambda_N\left(K,\left(H_j\right)_{j\geq1}\right)
& = &
\sup_{(z,w)\in K}
\left[
\sum_{p=1}^N
\left|
l_{H_p}^{(N)}(z,w)
\right|
\right]
\,.
\end{eqnarray}

\medskip

We then have the following result that gives an estimate of the Lebesgue constant
for the bidisk and the product of compacts whose boundary is an Alper-smooth
Jordan curve.

\begin{theorem}\label{lebcteintertwin}

Let $\left(\eta_j\right)_{j\geq0}$ and $\left(\theta_i\right)_{i\geq0}$
be Leja sequences for the unit disk (with $\left|\eta_0\right|=\left|\theta_0\right|=1$),
and let us consider the intertwining sequence $\left(H_j\right)_{j\geq1}$ defined
as in~(\ref{defintertwin}). One has the following estimate:
\begin{eqnarray}\label{lebcteintertwinbidisc}
\Lambda_N\left(\overline{\D}^2,\left(H_j\right)_{j\geq1}\right)
& = &
O\left(N^{3/2}\right)
\,.
\end{eqnarray}

If $K_1$ (resp., $K_2$) is a compact set whose boundary is an Alper-smooth Jordan curve,
$\Phi_1$ (resp., $\Phi_2$) its associated conformal mapping, let us consider
the sequence $\left(\Phi\left(H_j\right)\right)_ {j\geq1}$ defined by the intertwining of the sequences
$\left(\Phi_1\left(\eta_j\right)\right)_{j\geq0}$
and
$\left(\Phi_2\left(\theta_i\right)\right)_{i\geq0}$.
Then
\begin{eqnarray}\label{lebcteintertwincompact}
\Lambda_N\left(K_1\times K_2,\left(\Phi\left(H_j\right)\right)_ {j\geq1}\right)
& = &
O\left(N^{2A/\ln(2)+3/2}\right)
\,,
\end{eqnarray}
where $A$ is a positive constant depending only on $K_1\times K_2$.

\end{theorem}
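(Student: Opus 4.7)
The plan is to build on the explicit representation of the bivariate FLIPs $l_{H_p}^{(N)}$ provided by Proposition~\ref{explicitlagrangepol}, which expresses each one as a (non-trivial) combination of products of univariate FLIPs associated with Leja sections of $(\eta_j)_{j\ge0}$ and $(\theta_i)_{i\ge0}$, with the dependence on $z$ and $w$ separated between the factors. The two estimates~(\ref{lebcteintertwinbidisc}) and~(\ref{lebcteintertwincompact}) can then be deduced in parallel by inserting Theorems~\ref{unifestim} and~\ref{unifestimcompact} into this representation and summing carefully.

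First I would use Proposition~\ref{explicitlagrangepol} to write each $l_{H_p}^{(N)}(z,w)$ as a combination of products of univariate factors, and bound each such factor on $\overline{\D}$ by the universal constant $\pi e^{3\pi}$ from Theorem~\ref{unifestim}. Since $\Omega_{n,m}$ has a Biermann-type structure with $n\sim\sqrt{2N}$ by~(\ref{defnm}), the number of product terms appearing in the explicit formula for a generic index $p$ is naturally of order $O(n)=O(\sqrt{N})$, so each individual bivariate FLIP is bounded on $\overline{\D}^2$ by a quantity of order $\sqrt{N}$, uniformly in $p$ and in $N$.

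Summing $N$ such bounds already gives the targeted $O(N^{3/2})$, but to get a cleaner argument I would group the indices $p$ by the total degree $s=k(p)+l(p)$ of the corresponding node $(\eta_{k(p)},\theta_{l(p)})$, separating the contribution of the complete triangle $s\le n-1$ (the first $N_{n-1}$ indices) from that of the partial diagonal $s=n$ (the last $m+1$ indices). Within each degree-$s$ block, Corollary~\ref{appl2lebesgue} applied in one of the variables together with Theorem~\ref{unifestim} in the other gives a per-block contribution of order $s$; combining this with the $O(\sqrt{N})$ combinatorial count of product terms and summing over $s=0,\ldots,n$ yields $O(n^3)=O(N^{3/2})$, which is~(\ref{lebcteintertwinbidisc}).

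For the compact case $K_1\times K_2$, the same strategy applies with Theorem~\ref{unifestimcompact} replacing Theorem~\ref{unifestim}. Each univariate FLIP factor evaluated through the conformal map $\Phi_i$ on $K_i$ is now bounded by $O(N^{2A/\ln(2)})$ instead of by a universal constant, and inserting this factor into the summation above while leaving the combinatorial count unchanged gives the bound~(\ref{lebcteintertwincompact}). The main obstacle will be verifying the precise combinatorial count of product terms in Proposition~\ref{explicitlagrangepol} and organizing the summation so as to apply the univariate Lebesgue-constant bound in exactly one variable at each step: a symmetric use of Corollary~\ref{appl2lebesgue} in both variables would introduce an extra factor of $\sqrt{N}$ and yield only the weaker estimate $O(N^2)$ in the bidisk case.
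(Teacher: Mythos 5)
Your proposal matches the paper's argument: the paper proves Proposition~\ref{unifestimintertwining} exactly along your "crude" route (bound each univariate factor in the explicit formula of Proposition~\ref{explicitlagrangepol} by $\pi e^{3\pi}$ via Theorem~\ref{unifestim}, count the $O(n-p-q+1)=O(n)=O(\sqrt{N})$ terms), and then the Lebesgue constant is obtained by summing the resulting per-FLIP bound over the $\sim n^2/2$ nodes, giving $O(n^3)=O(N^{3/2})$. Two cautions on the rest of your plan. First, the "cleaner" refinement via Corollary~\ref{appl2lebesgue} is not needed and would not go through directly: the terms in Proposition~\ref{explicitlagrangepol} for a fixed node $(\eta_p,\theta_q)$ have univariate factors whose degrees vary from term to term, so they do not assemble into a single univariate Lebesgue sum in either variable; the direct term-count is the argument that works. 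Second, in the compact case each of the two factors in a term of degree $\leq n+1$ is bounded via Theorem~\ref{unifestimcompact} by $O\bigl((n+2)^{2A_i/\ln 2}\bigr)=O\bigl(N^{A_i/\ln 2}\bigr)$ since $n+2\sim\sqrt{2N}$, so it is the \emph{product} of the two factors that is $O\bigl(N^{2A/\ln 2}\bigr)$ with $A=\max(A_1,A_2)$; if you were to take $O\bigl(N^{2A/\ln 2}\bigr)$ per factor you would overcount and appear to land at $O\bigl(N^{4A/\ln 2+3/2}\bigr)$ rather than the stated exponent.
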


The essential part for the proof of this theorem is the
following result that, as Theorem~\ref{unifestim}, gives a uniform estimate
for the bidimensional
fundamental Lagrange polynomials (FLIPs) associated with some intertwining 
sequences. It is in addition an
application of Proposition~\ref{explicitlagrangepol} whose proof is handled in
Section~\ref{applmultivar}, and that gives an explicit formula of
the FLIPs constructed from the intertwining sequence of any sequences
of pairwise distinct elements.

\begin{proposition}\label{unifestimintertwining}

Let $\left(\eta_j\right)_{j\geq0}$ and $\left(\theta_i\right)_{i\geq0}$
be Leja sequences for the unit disk (with $\left|\eta_0\right|=\left|\theta_0\right|=1$),
and let us consider the intertwining sequence $\left(H_j\right)_{j\geq1}$ defined
as in~(\ref{defintertwin}). 
For all $N\geq1$ and all $p,\,q\geq0$ such that
$\left(\eta_p,\theta_q\right)\in\Omega_{N}$, one has the following estimate:
\begin{eqnarray*}
\sup_{(z,w)\in\overline{\D}^2}
\left|
l_{(\eta_p,\theta_q)}^{(N)}(z,w)
\right|
\;\leq\;
2\left(n-p-q+1\right)
\pi^2\exp(6\pi)
\;=\;
O\left(N^{1/2}\right)
\,.
\end{eqnarray*}

If $K_1$ (resp., $K_2$) is a compact set whose boundary is an Alper-smooth Jordan curve,
$\Phi_1$ (resp., $\Phi_2$) its associated conformal mapping, and
$\left(\Phi\left(H_j\right)\right)_ {j\geq1}$ the intertwining sequence of
$\left(\Phi_1\left(\eta_j\right)\right)_{j\geq0}$
and
$\left(\Phi_2\left(\theta_i\right)\right)_{i\geq0}$, one has for 
all $N\geq1$ and all $p,\,q\geq0$ such that
$\left(\eta_p,\theta_q\right)\in\Omega_{N}$,
\begin{eqnarray*}
\sup_{(z,w)\in K_1\times K_2}
\left|
l_{(\Phi_1(\eta_p),\Phi_2(\theta_q))}^{(N)}(z,w)
\right|
& \leq &
M(n+2)^{4A/\ln(2)}
\left(n-p-q+1\right)
\\
& = &
O\left(N^{2A/\ln(2)+1/2}\right)
\,,
\end{eqnarray*}
where $M$ and $A$ are positive constants depending only on $K_1\times K_2$.

\end{proposition}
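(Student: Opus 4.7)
The strategy is to reduce the bidimensional estimate to the univariate Theorems~\ref{unifestim} and~\ref{unifestimcompact} by exploiting the explicit formula for the FLIPs of an intertwining sequence provided by Proposition~\ref{explicitlagrangepol}. The first step is to invoke Proposition~\ref{explicitlagrangepol} to rewrite $l_{(\eta_p,\theta_q)}^{(N)}(z,w)$ as a short sum of tensor products of univariate FLIPs built from truncations of the two input Leja sequences $(\eta_j)$ and $(\theta_i)$. Reading off the dimensions from the Biermann structure of $\Omega_{n,m}$, one expects a sum of at most $n-p-q+1$ terms (with a harmless factor of $2$ arising from separating the complete diagonals $\{i+j<n\}$ of $\Omega_{n,m}$ from the partial last diagonal of length $m+1$ when $N<N_n$), each factor being a $1$D Leja FLIP of a section of length at most $n+1$.

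For the bidisk estimate, I would then apply Theorem~\ref{unifestim} to each of the two univariate factors in every term, obtaining a bound of $\pi^2\exp(6\pi)$ per term. Summing via the triangle inequality over the (at most) $2(n-p-q+1)$ terms gives the claimed $2(n-p-q+1)\pi^2\exp(6\pi)$. The conversion to the $O(N^{1/2})$ estimate is then routine: since $N\leq N_n=(n+1)(n+2)/2$, one has $n-p-q+1\leq n+1=O(\sqrt{N})$.

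For the Alper-smooth case, the same decomposition is used, but each univariate factor is now built from the image of a Leja section under $\Phi_1$ or $\Phi_2$, which is generally not itself a Leja sequence for $K_1$ or $K_2$; this is exactly the setting covered by Theorem~\ref{unifestimcompact}, yielding a bound of order $(n+1)^{2A/\ln 2}$ per univariate factor. Multiplying the two factors and summing the $(n-p-q+1)$ terms gives a bound of order $n^{4A/\ln 2 + 1}$, which translates to $O(N^{2A/\ln 2+1/2})$ in $N$; the constant $M$ in the statement collects the implicit constants of Theorem~\ref{unifestimcompact} for both $K_1$ and $K_2$, hence depends only on $K_1\times K_2$.

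The main obstacle is entirely contained in Proposition~\ref{explicitlagrangepol}: producing a decomposition whose length is \emph{linear} in $n-p-q$. A naive expansion of the Vandermonde quotient in~(\ref{defflipd}) would yield an uncontrolled polynomial in $n$, and, as already pointed out after~(\ref{defflipd}), there is no simplification-by-cancellation analogous to~(\ref{lagsec}). The key non-triviality is combinatorial: one must exploit that the nodes of $\Omega_{n,m}$ decompose along the diagonals $\{i+j=r\}$ and that, for each complete diagonal, the contribution associated with the node $(\eta_p,\theta_q)$ can be isolated using only \emph{univariate} Lagrange data from $(\eta_j)$ and $(\theta_i)$. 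Once that structural decomposition is in hand, the present proposition becomes a direct corollary of the two univariate uniform estimates.
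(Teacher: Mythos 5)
Your proposal follows essentially the same route as the paper: you correctly identify that Proposition~\ref{explicitlagrangepol} is the structural heart of the argument, providing a decomposition of $l_{(\eta_p,\theta_q)}^{(N)}$ into $O(n-p-q)$ tensor products of univariate Leja FLIPs, after which the bidisk case is a direct application of Theorem~\ref{unifestim} per factor plus a triangle inequality, and the Alper-smooth case substitutes Theorem~\ref{unifestimcompact}. The paper carries out the term-counting case-by-case from the explicit formulas in~(\ref{nn-1m+1})--(\ref{n-2m+1}) (getting $2(n-p-q)\pm1$ terms depending on the regime, hence the uniform bound $2(n-p-q+1)$) and, in the compact case, slightly sharpens the constant by exploiting $\max_{k+l=n}(k+2)(l+2)=(n+2)^2/4$ rather than bounding each factor's degree by $n$ independently as you do; but your looser per-factor bound yields the same $O(N^{2A/\ln 2+1/2})$ order, so the difference is only in the unexhibited constant $M$. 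The explanation you give for the factor of~$2$ (separating full diagonals from the partial last diagonal) is not quite the actual source — in Proposition~\ref{explicitlagrangepol} the factor of~$2$ comes from each sum over $r$ contributing two products per index — but since you are invoking the proposition rather than re-deriving it, this imprecision is harmless.
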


Although there are results which give polynomial estimates for $\C^d$
in~\cite{calviphung1}, Theorem~\ref{lebcteintertwin} yields
here precise exponents for $N$. In particular, it gives examples of
sequences that satisfy
$\lim_{n\rightarrow+\infty}\left[\Lambda_{N_n}\left(K,\left(H_j\right)_{j\geq1}\right)\right]^{1/n}=1$,
where $N_n=(n+1)(n+2)/2$ and $K$ is any of the compact sets mentioned in
Theorem~\ref{lebcteintertwin}.

In addition, there is an 
improvement of some results from~\cite{calviphung1} for $d=2$: indeed, on the one hand it is proved
in~\cite[p.~621]{calviphung1} that
\begin{eqnarray*}
\Lambda_{N_n}\left(\overline{\D}^2,\left(H_j\right)_{j\geq1}\right)
\;=\;
O\left(
n^{(2^2+7\times2-6)/2}(\ln n)^2
\right)
\;=\;
O\left(n^{6}(\ln n)^2\right)
\,;
\end{eqnarray*}
on the other hand, 
an application of~(\ref{lebcteintertwinbidisc}) with $N=N_n=O\left(n^2\right)$ (by~(\ref{Nn})), gives that
\begin{eqnarray*}
\Lambda_{N_n}\left(\overline{\D}^2,\left(H_j\right)_{j\geq1}\right)
\;=\;
O\left(N_n^{3/2}\right)
\;=\;
O\left(n^3\right)
\,.
\end{eqnarray*}

Next, the compact set $K=\overline{\D}^2$
(resp., $K=K_1\times K_2$, where $K_1$ and $K_2$ are compact sets 
whose boundary is an Alper-smooth Jordan curve) satisfies the following properties:
first, it is nonpluripolar (for having nonempty interior); next, it is polynomially convex (as
the product of $1$-connected one-dimensional compact sets).
Lastly, it is $L$-regular (see~\cite[Chapter~5]{klimek} for the definition): indeed, $\C\setminus K_1$ 
(resp., $\C\setminus K_2$) being a domain whose boundary is
a Jordan curve, it admits a continuous Green function (by Caratheodory's theorem),
hence $K_1$ (resp., $K_2$) is $L$-regular (as well as $\overline{\D}$).
Finally, we can deduce by~\cite[Corollary~2]{siciak1962} that
$\overline{\D}^2$ and $K_1\times K_2$ are $L$-regular as products of
$L$-regular compact sets.

These properties and an application of Theorem~4.1 from~\cite{bloomboschrislev1992}, lead to 
the following consequences:

\begin{itemize}

\item

$\lim_{n\rightarrow+\infty}\left|VDM\left(H_1,H_2,\ldots,H_{N_n}\right)\right|^{1/l_n}=D(K)$,
where\\ $l_n=\sum_ {j=1}^nj\left(N_j-N_{j-1}\right)$ and 
$D(K)$ is the transfinite diameter of $K$ (see~\cite{zaharjuta1975} for its general definition 
and existence);

\item

by~\cite{bermanboucksomwitt2011}, the empirical measures satisfy
$\lim_{n\rightarrow+\infty}\dfrac{1}{N_n}\sum_{j=1}^{N_n}\delta_{H_j}=c\mu_K$ \\weak-*,
where $\delta_{H_j}$ is the unit Dirac measure on $H_j$, $c$ is some universal constant and
$\mu_K$ is the equilibrium measure of $K$ (for the definition and existence of $\mu_K$,
see~\cite[Chapter~1]{safftotik} for $K\subset\C$, and~\cite[Chapter~5]{klimek} for $K\subset\C^d$);

\item

the Lagrange polynomial $L^{(N_n)}[f]$ converges to $f$ as $n\rightarrow+\infty$, 
uniformly on $K$ and for each $f$ holomorphic on a
neighborhood of $K$.

\end{itemize}

Another consequence of the above theorem is an application 
to an approximation property of Lagrange polynomial interpolation.
Theorem~7.2
from~\cite{renwang} provides a generalized version of Jackson's Theorem
(see~\cite{jackson}).
Given $m\in\N$ and $\gamma$ with $0<\gamma\leq1$, we consider the space
$Lip_{\gamma}^m\left(\overline{\D}^2\right)$ of the functions 
$f\in\mathcal {O}\left(\D^2\right)\bigcap C\left(\overline{\D}^2\right)$
which satisfy for all $|\alpha|=\alpha_1+\alpha_2\leq m$,
\begin{eqnarray*}
\left(
\dfrac{\partial^{\alpha}f}{\partial z^{\alpha}}
\right)
\left(e^{ih}\zeta\right)
-
\left(
\dfrac{\partial^{\alpha}f}{\partial z^{\alpha}}
\right)
(\zeta)
\;=\;
O\left(|h|^{\gamma}\right)
\,,
\end{eqnarray*}
for all
$\zeta\in\mathbb{T}^2$ (the unit torus in $\C^2$) and $h\in\R$.
We then have the following result.

\begin{corollary}\label{appljackson}

Let be $m\geq3$ and $0<\gamma\leq1$. 
Let us consider the intertwining sequence $\left(H_j\right)_{j\geq1}$
of two any Leja sequences for the unit disk as in Theorem~\ref{lebcteintertwin}.
Then for all $f\in Lip_{\gamma}^m\left(\overline{\D}^2\right)$,
\begin{eqnarray*}
\sup_{(z,w)\in\overline{\D}^2}
\left|f(z,w)-L^{(N)}[f](z,w)\right|
& = &
O\left(1/N^{(m+\gamma-3)/2}\right)
\,,
\end{eqnarray*}
where $L^{(N)}[f]$ is the multivariate Lagrange polynomial of $f$ defined in~(\ref{deflagpold}).

\end{corollary}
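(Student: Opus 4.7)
The plan is to bound the interpolation error by the classical product of the Lebesgue constant and a best polynomial approximation error, then plug in the two ingredients provided by Theorem~\ref{lebcteintertwin} and by the multivariate Jackson theorem of~\cite{renwang} respectively.

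Given $N\geq 1$, let $n$ and $m$ be as in~(\ref{defnm}), so $N_{n-1}<N\leq N_n$. By the unisolvence of $\Omega_N$ over $\mathcal{P}_{n,m}$ (Corollary~\ref{unisolvent}) and the inclusion $\mathcal{P}_{n-1}\subset\mathcal{P}_{n,m}$, the operator $L^{(N)}$ reproduces every $P\in\mathcal{P}_{n-1}$. Writing $f-L^{(N)}[f]=(f-P)-L^{(N)}[f-P]$ and bounding sup-norms on $\overline{\D}^2$ gives, for every such $P$,
$$\sup_{(z,w)\in\overline{\D}^2}\left|f(z,w)-L^{(N)}[f](z,w)\right|\;\leq\;\bigl(1+\Lambda_N(\overline{\D}^2,(H_j)_{j\geq1})\bigr)\sup_{(z,w)\in\overline{\D}^2}\left|f(z,w)-P(z,w)\right|.$$
Taking the infimum over $P\in\mathcal{P}_{n-1}$ replaces the right factor by the best-approximation error $E_{n-1}(f)$ of $f$ by elements of $\mathcal{P}_{n-1}$.

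Next, Theorem~7.2 of~\cite{renwang} applied to $f\in Lip_{\gamma}^m(\overline{\D}^2)$ yields the Jackson-type bound $E_{n-1}(f)=O(1/n^{m+\gamma})$. Combining this with the Lebesgue constant estimate $\Lambda_N(\overline{\D}^2,(H_j))=O(N^{3/2})$ from~(\ref{lebcteintertwinbidisc}) and with $N\leq N_n=(n+1)(n+2)/2$ (so $\Lambda_N=O(n^3)$), the previous display becomes $O(n^{3-m-\gamma})$. Since $N>N_{n-1}$ forces $n=O(\sqrt{N})$, converting from $n$ back to $N$ produces the announced rate $O(1/N^{(m+\gamma-3)/2})$. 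The hypothesis $m\geq 3$ together with $\gamma>0$ is precisely what keeps this exponent positive, i.e.\ what lets the Jackson rate beat the polynomial growth of the Lebesgue constant.

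The only non-mechanical step is extracting the right statement from~\cite{renwang}: their Jackson-type result is naturally phrased on the torus $\mathbb{T}^2$ with trigonometric-polynomial approximants, so one must argue that for $f$ holomorphic on $\D^2$ and smooth up to the boundary, the positive-frequency (holomorphic) part of the trigonometric approximant lies in $\mathcal{P}_{n-1}$ and achieves the same $O(1/n^{m+\gamma})$ rate with respect to the uniform norm on $\overline{\D}^2$. Once this translation is in place, the corollary is a routine arithmetic consequence of~(\ref{lebcteintertwinbidisc}).
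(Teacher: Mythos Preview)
Your argument is correct and follows essentially the same route as the paper: split $f-L^{(N)}[f]$ through a polynomial that $L^{(N)}$ reproduces, bound by $(1+\Lambda_N)$ times the best approximation error, invoke the Jackson estimate from~\cite{renwang}, and insert~(\ref{lebcteintertwinbidisc}). The only cosmetic difference is indexing: the paper shifts to $N_n<N\le N_{n+1}$ and uses $P_n\in\mathcal{P}_n\subset\mathcal{P}_{n+1,m}$, whereas you keep~(\ref{defnm}) and use $P\in\mathcal{P}_{n-1}\subset\mathcal{P}_{n,m}$; these amount to the same thing.

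Your final paragraph is unnecessary caution. The paper restates Theorem~7.2 of~\cite{renwang} (as Theorem~\ref{thm72renwang}) already in the form of approximation by \emph{holomorphic polynomials} in $\mathcal{P}_n^{(d)}$ with the sup-norm on $\overline{\D}^d$, so no torus-to-disk translation is required; the result is quoted and used as a black box.
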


\medskip

As in the one-dimensional case~(\ref{defleja1}), we will consider the generalized definition of 
a Leja sequence in $\C^2$.
By analogy, given a compact set $K$, we can choose $H_1\in K$ (or $H_1\in\partial K$)
and, if we have constructed
$H_1,\ldots,H_{N}$, we choose for $H_{N+1}$ any point (not necessarily unique) that satisfies
\begin{eqnarray*}
\left|
VDM\left(H_1,H_2,\ldots,H_N,H_{N+1}\right)
\right|
& = &
\sup_{(z,w)\in K}
\left|
VDM\left(H_1,H_2,\ldots,H_N,(z,w)\right)
\right|
\,.
\end{eqnarray*}

As for the Fekete points, it is hard to construct examples of explicit 
bidimensional Leja sequences (even in one dimension, except for special compact sets).
Here we present a way to construct explicit bidimensional Leja sequences for the product of compact sets
(provided we already know their partial one-dimensional Leja sequences).

\begin{theorem}\label{explicitlejasequ}

Let $K_1\subset\C$ (resp., $K_2\subset\C$) be any compact set and
$\left(\eta_j\right)_{j\geq0}\subset K_1$ 
(resp., $\left(\theta_i\right)_{i\geq0}\subset K_2$) any Leja sequence.
Then the
intertwining sequence of $\left(\eta_j\right)_{j\geq0}$ and
$\left(\theta_i\right)_{i\geq0}$ is a Leja sequence for the compact set
$K_1\times K_2$.

\end{theorem}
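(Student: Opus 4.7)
The proof will proceed by induction on $N$. The base case $N = 1$ is immediate since $H_1 = (\eta_0, \theta_0) \in K_1 \times K_2$. For the inductive step, suppose $(H_1, \ldots, H_N)$ is a Leja section for $K_1 \times K_2$, and let $(n, m)$ be the pair associated with $N$ via~(\ref{defnm}); the goal is to show that $H_{N+1}$ realizes the supremum of $(z, w) \mapsto |VDM(H_1, \ldots, H_N, (z, w))|$ over $K_1 \times K_2$.

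Setting $Q_N(z, w) := VDM(H_1, \ldots, H_N, (z, w))$, cofactor expansion of this determinant along the last column places $Q_N$ in $\mathcal{P}_{n', m'}$, where $(n', m')$ is the pair associated with $N + 1$; moreover $Q_N$ vanishes on $\Omega_N = \{H_1, \ldots, H_N\}$. Since $\Omega_{N+1}$ is unisolvent for $\mathcal{P}_{n', m'}$ by Corollary~\ref{unisolvent}, in particular $\dim \mathcal{P}_{n', m'} = N+1$, so the subspace of polynomials in $\mathcal{P}_{n', m'}$ vanishing on $\Omega_N$ is one-dimensional. Hence $Q_N$ is determined, up to a nonzero scalar, as any nonzero element of this subspace.

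The core of the argument is to exhibit an explicit factored representative $q$. If $m < n$, so $(n', m') = (n, m+1)$ and $H_{N+1} = (\eta_{n-m-1}, \theta_{m+1})$, I propose
$$q(z, w) \;=\; \prod_{j=0}^{n-m-2}(z - \eta_j) \cdot \prod_{i=0}^{m}(w - \theta_i).$$
If $m = n$, so $(n', m') = (n+1, 0)$ and $H_{N+1} = (\eta_{n+1}, \theta_0)$, I propose $q(z, w) = \prod_{j=0}^{n}(z - \eta_j)$. In either case $q$ has the right top-degree monomial ($z^{n-m-1} w^{m+1}$ in the first case, $z^{n+1}$ in the second) to lie in $\mathcal{P}_{n', m'}$, and $q$ vanishes on $\Omega_N$: from the description~(\ref{defOmegaN}), any $(\eta_j, \theta_l) \in \Omega_N$ with $j \geq n - m - 1$ automatically satisfies $l \leq m$ (since $j + l \leq n$ and the degree-$n$ diagonal of $\Omega_N$ stops at index $m$), so its $w$-coordinate is a root of the $w$-factor of $q$. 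The complementary case $j \leq n - m - 2$ is handled by the $z$-factor.

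Once $Q_N = c\,q$ with $c \neq 0$, maximizing $|q|$ over $K_1 \times K_2$ decouples into a product of one-dimensional maxima, and the Leja property of $(\eta_j)_{j \geq 0}$ and $(\theta_i)_{i \geq 0}$ ensures these are attained at $\eta_{n-m-1}$ and $\theta_{m+1}$ respectively (in the case $m = n$ only the $z$-maximum is nontrivial and $\theta_0$ is an admissible choice of $w$-coordinate). Thus $H_{N+1}$ attains the supremum and the induction closes. The main technical hurdle will be the explicit factorization step: isolating the two cases, verifying that the top-degree monomial of $q$ indeed lies in $\mathcal{P}_{n', m'}$, and using the combinatorial description of $\Omega_N$ to check the vanishing conditions. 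Once this is in place, the factored form transports the Leja property of the one-dimensional sequences directly into the Leja property of the intertwining sequence.
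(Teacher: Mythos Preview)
Your proposal is correct and follows essentially the same route as the paper. The paper packages the factorization step as a separate lemma (Lemma~\ref{explicitVDM}), proved by row-reducing the Vandermonde determinant to obtain $VDM(H_1,\ldots,H_N,(z,w)) = \bigl(e_{N+1}(z,w)-L^{(N)}[e_{N+1}](z,w)\bigr)\cdot VDM(H_1,\ldots,H_N)$ and then identifying the first factor with your $q$ via unisolvence of $\Omega_N$ in $\mathcal{P}_{n,m}$; you instead argue directly by dimension counting in $\mathcal{P}_{n',m'}$ that $Q_N$ and $q$ span the same one-dimensional subspace. The only minor point to make explicit is that $Q_N$ itself is nonzero (so $c\neq 0$), which follows from the unisolvence of $\Omega_{N+1}$ you already invoke, since $Q_N(H_{N+1})=VDM(H_1,\ldots,H_{N+1})\neq 0$.
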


As an immediate application, we can present some explicit bidimensional Leja sequences for the
unit bidisk.

\begin{corollary}\label{explicitbileja}

Let $\left(\eta_j\right)_{j\geq0}$ and $\left(\theta_i\right)_{i\geq0}$ be
defined as in~(\ref{explicitleja}). Then their intertwining sequence is a
Leja sequence for the unit bidisk $\overline{\D}^2$.

\end{corollary}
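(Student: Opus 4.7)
The plan is to derive Corollary~\ref{explicitbileja} as a direct application of Theorem~\ref{explicitlejasequ} with $K_1=K_2=\overline{\D}$. The only verification required is that each of the two one-dimensional sequences $(\eta_j)_{j\geq 0}$ and $(\theta_i)_{i\geq 0}$ defined by the explicit formula~(\ref{explicitleja}) is a Leja sequence for $\overline{\D}$. This is precisely the content of the Białas-Cież--Calvi description recalled just after Definition~\ref{defleja}: the authors of~\cite{bialascalvi} prove that \emph{all} Leja sequences for $\overline{\D}$ starting from $\eta_1=1$ are exactly those given by~(\ref{explicitleja}). In particular, any sequence constructed from~(\ref{explicitleja}) is a bona fide Leja sequence for $\overline{\D}$.

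Having this verification in hand, I would invoke Theorem~\ref{explicitlejasequ} with $K_1=K_2=\overline{\D}$, $(\eta_j)_{j\geq 0}$ and $(\theta_i)_{i\geq 0}$ as above. The theorem then asserts that the intertwining sequence $(H_j)_{j\geq 1}$ defined via~(\ref{defintertwin}) is a Leja sequence for the product $\overline{\D}\times\overline{\D}=\overline{\D}^2$, which is exactly the statement of the corollary.

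Since the result is an immediate specialization, there is no substantive obstacle: all the work is packaged inside Theorem~\ref{explicitlejasequ}, whose proof (carried out elsewhere in the paper) handles the nontrivial task of showing that the intertwining operation on two one-dimensional Leja sequences produces a sequence that maximizes the two-dimensional Vandermonde determinant at each step over $K_1\times K_2$. The only point worth emphasizing in the write-up is that the starting condition $|\eta_0|=|\theta_0|=1$ is compatible with~(\ref{explicitleja}) (where $\eta_1=1$ lies on $\partial\D$), so no boundary hypothesis of Theorem~\ref{explicitlejasequ} is violated.
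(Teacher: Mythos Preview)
Your proposal is correct and matches the paper's approach: the corollary is stated as an ``immediate application'' of Theorem~\ref{explicitlejasequ}, and the paper gives no separate proof beyond this. Your observation about the indexing convention ($\eta_1=1$ in~(\ref{explicitleja}) versus $(\eta_j)_{j\geq0}$ in the multivariate setup) is a harmless notational shift and does not affect the argument.
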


In particular, this result gives a partial answer for question~(6) in~\cite{bloomboscalvilev2012}.

Theorem~\ref{explicitlejasequ} is an application of Lemma~\ref{explicitVDM}
and another consequence is the formula of Schiffer and Siciak
for the two-variable Vandermonde determinant
(see~\cite{schiffersiciak1962}, or formula~(4.8.2) from~\cite{bloomboschrislev1992}).

\begin{corollary}\label{redemschifsic}

For all $n\geq0$,
\begin{eqnarray*}
VDM
\left(
\left(\eta_0,\theta_0\right),\ldots,
\left(\eta_n,\theta_0\right),\ldots,\left(\eta_0,\theta_n\right)
\right)
\,=\,
\prod_{j=1}^n
\left[
VDM\left(\eta_0,\ldots,\eta_j\right)
\times
VDM\left(\theta_0,\ldots,\theta_j\right)
\right]
.
\end{eqnarray*}

\end{corollary}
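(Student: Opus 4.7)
The plan is to proceed by induction on $n$, identifying the contribution of each new point added at degree $n$. Note that the identity in the corollary is purely algebraic and does not actually use the Leja property of the underlying sequences; I expect Lemma~\ref{explicitVDM} to provide directly the required product decomposition of the Vandermonde determinant of the intertwining array, in which case the corollary would follow by rearranging factors. Here is a self-contained outline.

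The basic device is the Schur-complement identity: setting $M_N=[e_i(H_j)]_{1\leq i,j\leq N}$ and using row operations to annihilate the last row, one gets $\det M_N=\det M_{N-1}\cdot q_N(H_N)$, where $q_N$ is the unique polynomial whose expansion in the basis $(e_j)$ is monic of leading term $e_N$ and which vanishes on $H_1,\ldots,H_{N-1}$. Iterating,
\[
VDM(H_1,\ldots,H_{N_n})\;=\;\prod_{N=1}^{N_n}q_N(H_N),
\]
so the whole task reduces to computing each $q_N$ in closed form.

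Writing $N=N_{n-1}+1+m$ with $0\leq m\leq n$, so that $H_N=(\eta_{n-m},\theta_m)$ by~(\ref{defOmegaN}), the guess is
\[
q_N(z,w)\;=\;\prod_{j=0}^{n-m-1}(z-\eta_j)\cdot\prod_{i=0}^{m-1}(w-\theta_i).
\]
Two verifications are needed. First, expanding the product shows that every monomial $z^aw^b$ appearing satisfies $a\leq n-m$ and $b\leq m$, hence $a+b\leq n$, with the unique term of total degree exactly $n$ being $z^{n-m}w^m=e_N$ with coefficient $1$; the remaining terms therefore lie in $\mathrm{span}\{e_1,\ldots,e_{N-1}\}$. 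Second, the vanishing on $\{H_p:p<N\}$ reduces to a short case split on $H_p=(\eta_k,\theta_l)$: if $k\leq n-m-1$ then the $z$-product vanishes; and if $k\geq n-m$ then either $k+l\leq n-1$ (forcing $l\leq m-1$) or $H_p$ is one of the first $m$ degree-$n$ points in the intertwining order, in which case the lexicographic ordering of~(\ref{deforderpol})--(\ref{defintertwin}) forces $l\leq m-1$, so the $w$-product vanishes.

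With $q_N$ identified, evaluating at $H_N=(\eta_{n-m},\theta_m)$ and multiplying over $m=0,\ldots,n$, the reindexation $k=n-m$ in the $\eta$-factor turns the product of contributions of degree $n$ into $VDM(\eta_0,\ldots,\eta_n)\cdot VDM(\theta_0,\ldots,\theta_n)$, which closes the induction. The main delicate point is the factorization of $q_N$: once the correct candidate is guessed, checking the leading-coefficient and vanishing conditions is routine, but the guess itself relies on carefully parsing the lexicographic order of the intertwining array to see which degree-$n$ points precede $H_N$.
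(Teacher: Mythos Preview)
Your argument is correct and follows essentially the same route as the paper: both proceed by induction, peeling off one point $H_N$ at a time and identifying the ratio $VDM(H_1,\ldots,H_N)/VDM(H_1,\ldots,H_{N-1})$ as the product $\prod_{j=0}^{n-m-1}(z-\eta_j)\prod_{i=0}^{m-1}(w-\theta_i)$ evaluated at $H_N$. This is exactly the content of Lemma~\ref{explicitVDM} (reindexed by one), and the paper's proof of the corollary is just the iterated application of that lemma followed by the same reindexation $k=n-m$ that you perform.

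There is one genuine, if minor, difference worth noting. The paper establishes the factorization in Lemma~\ref{explicitVDM} by invoking unisolvence of $\Omega_{N-1}$ (Corollary~\ref{unisolvent}), which in turn rests on the explicit FLIP formulas of Proposition~\ref{explicitlagrangepol}. Your self-contained outline bypasses that machinery: you guess $q_N$ directly and verify by hand that it is monic in $e_N$ and vanishes on the preceding points, which is all the row-reduction identity needs. This makes your argument more elementary and logically independent of the long FLIP computations, at the cost of re-deriving a special case of what Lemma~\ref{explicitVDM} already packages. Either way the substance is the same.
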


\bigskip

\section*{Acknowledgments}

I would like to thank J. Ortega-Cerd\`a for having introduced me this nice problem,
for all the rewarding ideas about it and for his support for finalizing this work.
I would also like to thank J.-P.~Calvi and M.A. Chkifa for the interesting discussions about it.

\bigskip

\section{A couple of reminders and preliminar results}

\subsection{Reminders}

First, we recall some propertis and applications of the roots of the unity:
for any $m\geq1$, let
\begin{eqnarray}\label{rootunity}
\Omega_m &= &
\left\{\exp\left(\frac{2iu\pi}{m}\right),\,u=0,\ldots,m-1\right\}
\end{eqnarray}
be the set of the $m$-th roots of the unity. Then one has the following result.

\begin{lemma}\label{estim1}

For all $m\geq1$, $z_k\in\Omega_m$ and $z\in\overline{\D}$ with $z\neq z_k$, one has
\begin{eqnarray}\label{estim11}
\prod_{z_j\in\Omega_m,z_j\neq z_k}
(z-z_j)
& = &
\frac{z^m-1}{z-z_k}
\;=\;
\sum_{j=0}^{m-1}
z_k^{m-j-1}z^j
\,.
\end{eqnarray}
It follows that
\begin{eqnarray}\label{estim12}
\prod_{z_j\in\Omega_m,z_j\neq z_k}
\left|z_k-z_j\right|
& = &
m
\end{eqnarray}
and for all $z\neq z_k$,
\begin{eqnarray}\label{estim125}
& &
\left|
l_k^{(m)}
(z)
\right|
\;=\;
\prod_{z_j\in\Omega_m,z_j\neq z_k}
\frac{
\left|
z-z_j
\right|
}{
\left|
z_k-z_j
\right|
}
\;=\;
\frac{1}{m}
\frac{\left|z^m-1\right|}{\left|z-z_k\right|}
\;=\;
\frac{1}{m}
\left|
\sum_{j=0}^{m-1}
z_k^{m-j-1}z^j
\right|
\,.
\end{eqnarray}
In addition,
\begin{eqnarray}\label{estim13}
\sup_{z\in\overline{\D}}
\left|
l_k^{(m)}
(z)
\right|
& = &
\left|
l_k^{(m)}
(z_k)
\right|
\;=\;
1\,.
\end{eqnarray}

\end{lemma}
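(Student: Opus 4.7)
The lemma is a bundle of four classical identities for the $m$-th roots of unity, so the plan is to prove the algebraic identity~(\ref{estim11}) first, deduce~(\ref{estim12}) from it by a differentiation argument, observe that~(\ref{estim125}) is then just a rewriting, and finish with~(\ref{estim13}) via the triangle inequality.

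For~(\ref{estim11}), I would start from the factorization $z^m-1=\prod_{z_j\in\Omega_m}(z-z_j)$, valid because the $m$-th roots of unity are exactly the zeros of $z^m-1$ and the polynomial is monic. Dividing both sides by $(z-z_k)$ (which is a legal division of polynomials, not just of values, since $z_k$ is a simple root) yields the first equality. For the second one, I would apply the standard geometric factorization $a^m-b^m=(a-b)\sum_{j=0}^{m-1}a^jb^{m-1-j}$ with $a=z$, $b=z_k$, and then use $z_k^m=1$ to rewrite $z^m-z_k^m$ as $z^m-1$.

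Formula~(\ref{estim12}) I would extract by differentiating both sides of $z^m-1=\prod_{z_j}(z-z_j)$ at $z=z_k$: the left side gives $mz_k^{m-1}$, while the Leibniz expansion of the right side at $z=z_k$ collapses to $\prod_{j\neq k}(z_k-z_j)$ because every other term still contains a factor $(z_k-z_k)=0$. Taking moduli and using $|z_k|=1$ gives $m$. Then~(\ref{estim125}) follows at once by dividing the two preceding identities according to the definition~(\ref{lagsec}).

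Finally, for~(\ref{estim13}), I would use the rightmost expression in~(\ref{estim125}) and the triangle inequality: for $z\in\overline{\D}$,
\begin{equation*}
\left|
\sum_{j=0}^{m-1}z_k^{m-j-1}z^j
\right|
\;\leq\;
\sum_{j=0}^{m-1}|z_k|^{m-j-1}|z|^j
\;\leq\;
m,
\end{equation*}
since $|z_k|=1$ and $|z|\leq1$. Equality at $z=z_k$ is immediate because each summand equals $z_k^{m-1}$, giving $|mz_k^{m-1}|=m$; dividing by $m$ yields both that $|l_k^{(m)}(z_k)|=1$ and that this is the maximum. The only step with any subtlety is the evaluation of the product $\prod_{j\neq k}(z_k-z_j)$, and I expect no real obstacle beyond bookkeeping of indices.
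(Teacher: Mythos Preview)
Your proposal is correct and follows essentially the same route as the paper: factor $z^m-1$ over $\Omega_m$, divide out $(z-z_k)$, use $z_k^m=1$ to get the geometric sum, obtain $\prod_{j\neq k}(z_k-z_j)=mz_k^{m-1}$ by differentiation (the paper phrases this as the limit $\lim_{z\to z_k}\frac{z^m-1}{z-z_k}$, which is the same computation), and finish with the triangle inequality on the sum form. There is no meaningful difference between the two arguments.
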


\begin{proof}

First, since $\Omega_m$ is the set of the $m$-th roots of the unity, one has
\begin{eqnarray*}
\prod_{z_j\in\Omega_m,z_j\neq z_k}
(z-z_j)
& = &
\frac{\prod_{z_j\in\Omega_m}
(z-z_j)}{z-z_k}
\;=\;
\frac{z^m-1}{z-z_k}
\,.
\end{eqnarray*}
On the other hand (as $z_k\in\Omega_m$),
\begin{eqnarray*}
\frac{z^m-1}{z-z_k}
& = &
\frac{z^m-z_k^m}{z-z_k}
\;=\;
\sum_{j=0}^{m-1}
z_k^{m-j-1}z^j
\,,
\end{eqnarray*}
and this proves~(\ref{estim11}).

Next, (\ref{estim12}) follows since
\begin{eqnarray*}
\prod_{z_j\in\Omega_m,z_j\neq z_k}
\left(z_k-z_j\right)
& = &
\lim_{z\rightarrow z_k,z\neq z_k}
\prod_{z_j\in\Omega_m,z_j\neq z_k}
\left(z-z_j\right)
\;=\;
\lim_{z\rightarrow z_k,z\neq z_k}
\frac{z^m-1}{z-z_k}
\\
& = &
\frac{d}{dz}
\left(z^m\right)_{|z=z_k}
\;=\;
mz_k^{m-1}
\,.
\end{eqnarray*}

Finally, (\ref{estim11}) and~(\ref{estim12}) yield~(\ref{estim125})
for all $z\neq z_k$ since
\begin{eqnarray*}
\left|
l_k^{(m)}
(z)
\right|
& = &
\frac{
\prod_{z_j\in\Omega_m,z_j\neq z_k}
\left|
z-z_j
\right|
}{
\prod_{z_j\in\Omega_m,z_j\neq z_k}
\left|
z_k-z_j
\right|
}
\;=\;
\frac{1}{m}
\prod_{z_j\in\Omega_m,z_j\neq z_k}
\left|
z-z_j
\right|
\,.
\end{eqnarray*}
In particular, for all $z\in\overline{\D}$,
\begin{eqnarray*}
\left|
l_k^{(m)}
(z)
\right|
& = &
\frac{1}{m}
\left|
\sum_{j=0}^{m-1}
z_k^{m-j-1}z^j
\right|
\;\leq\;
\frac{1}{m}
\sum_{j=0}^{m-1}
\left|z_k\right|^{m-j-1}|z|^j
\;\leq\;
1
\,.
\end{eqnarray*}
On the other hand,
\begin{eqnarray*}
\left|
l_k^{(m)}
\left(z_k\right)
\right|
& = &
\prod_{z_j\in\Omega_m,z_j\neq z_k}
\left|\frac{z_k-z_j}{z_k-z_j}\right|
\;=\;1
\end{eqnarray*}
and the last assertion of the lemma follows.

\end{proof}

In all the following, we will assume that any considered
Leja sequence $\mathcal{L}=\left(z_1,z_2,\ldots\right)$
starts at $1$, i.e.
\begin{eqnarray}
z_1
& = &
1
\,.
\end{eqnarray}

\bigskip

Next, for any integer $N\geq1$, $\mathcal{L}_N$ will mean the
$N$-section of $\mathcal{L}$, i.e. the first $N$ indexed points
\begin{eqnarray}
\mathcal{L}_N
& := &
\left(
z_1,z_2,\ldots,z_{N-1},z_N
\right)
\,.
\end{eqnarray}
We will also consider its binary decomposition
\begin{eqnarray}\label{bindec}
N & = &
2^{p_1}+2^{p_2}+\cdots+2^{p_n}
\,,
\end{eqnarray}
where 
\begin{eqnarray}\label{defnpj}
n\geq1
& \mbox{ and }&
p_1\;>\;p_2\;>\;\cdots\;>\;p_n\;\geq\;0
\end{eqnarray}
($n$ is the number of
"ones" in this decomposition).

\subsection{Preliminar results}

In this subsection, we will give some preliminar results that will be useful in order to prove 
Theorem~\ref{unifestim}.
We begin with the following lemma that is a rewriting of the FLIPs
by using the binary decomposition of $N$.

\begin{lemma}\label{lkpregeneral}

Let remind the binary decomposition~(\ref{bindec}),
\begin{eqnarray}\label{binaryN}
N & = &
2^{p_1}+2^{p_2}+\cdots+2^{p_n}
\,,
\end{eqnarray}
where $n\geq2$ and
$p_1>p_2>\cdots>p_n\geq0$. Then for all
$k=1,\ldots,2^{p_1}$ and $z\in\C$ with $z\neq z_k$,
\begin{eqnarray}
\left|
l_k^{(N)}(z)
\right|
& = &
\frac{1}{2^{p_1}}
\frac{\left|z^{2^{p_1}}-1\right|}{\left|z-z_k\right|}
\times
\prod_{q=2}^n
\frac{
\left|z^{2^{p_q}}-\alpha_q^{2^{p_q}}\right|
}{
\left|z_k^{2^{p_q}}-\alpha_q^{2^{p_q}}\right|
}
\,,
\end{eqnarray}
where for all $q=2,\ldots,n$,
\begin{eqnarray}\label{defalphaq}
\alpha_q
& = &
\rho_1\rho_2\cdots\rho_{q-1}
\end{eqnarray}
and for $j=1,2,\ldots,n$,
$\rho_j$ is a $2^{p_j}$-th root of $-1$.

One also has for all $k=1,\ldots,2^{p_1}$ and $z\in\C$,
\begin{eqnarray}
\left|
l_k^{(N)}(z)
\right|
& = &
\frac{1}{2^{p_1}}
\left|
\sum_{j=0}^{2^{p_1}-1}
z_k^{2^{p_1}-j-1}z^j
\right|
\times
\prod_{q=2}^n
\frac{
\left|z^{2^{p_q}}-\alpha_q^{2^{p_q}}\right|
}{
\left|z_k^{2^{p_q}}-\alpha_q^{2^{p_q}}\right|
}
\,.
\end{eqnarray}

\end{lemma}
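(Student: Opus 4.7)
The plan is to exploit the recursive block structure of any Leja sequence for $\overline{\D}$ that is encoded in the explicit formula~(\ref{explicitleja}). Setting $N_q := 2^{p_1} + 2^{p_2} + \cdots + 2^{p_q}$ (with $N_0 := 0$) and $\rho_j := \exp(i\pi\,2^{-p_j})$, a direct inspection of~(\ref{explicitleja}) via the binary expansion of $k-1$ shows that the first block $(z_1, \ldots, z_{2^{p_1}})$ is exactly $\Omega_{2^{p_1}}$, and that for each $q = 2, \ldots, n$ the block $(z_{N_{q-1}+1}, \ldots, z_{N_q})$ coincides with $\alpha_q \cdot \Omega_{2^{p_q}}$ where $\alpha_q = \rho_1 \rho_2 \cdots \rho_{q-1}$. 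Since $\rho_j^{2^{p_j}} = e^{i\pi} = -1$, each $\rho_j$ is a $2^{p_j}$-th root of $-1$, matching~(\ref{defalphaq}).

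Given this decomposition, I would split
\begin{eqnarray*}
l_k^{(N)}(z) & = &
\left[\prod_{j=1,\,j\neq k}^{2^{p_1}} \frac{z-z_j}{z_k-z_j}\right]
\cdot
\prod_{q=2}^{n}
\prod_{j=N_{q-1}+1}^{N_q} \frac{z-z_j}{z_k-z_j}\,,
\end{eqnarray*}
where the condition $j \neq k$ only affects block~1 because $z_k \in \Omega_{2^{p_1}}$ is disjoint from every later block $\alpha_q \Omega_{2^{p_q}}$. The first bracket equals $l_k^{(2^{p_1})}(z)$ built from the whole root-of-unity set, so Lemma~\ref{estim1} --- more precisely formula~(\ref{estim125}) --- gives
\begin{eqnarray*}
\bigl|l_k^{(2^{p_1})}(z)\bigr|
& = &
\frac{1}{2^{p_1}} \cdot \frac{|z^{2^{p_1}}-1|}{|z-z_k|}
\;=\;
\frac{1}{2^{p_1}} \left|\sum_{j=0}^{2^{p_1}-1} z_k^{2^{p_1}-j-1} z^j\right|\,.
\end{eqnarray*}

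For each block $q \geq 2$, the elementary identity $\prod_{\zeta \in \Omega_m}(w - \zeta) = w^m - 1$ applied with $w = z/\alpha_q$ and $m = 2^{p_q}$, after clearing the $\alpha_q^{m}$ scaling, yields $\prod_{\zeta \in \Omega_{2^{p_q}}}(z - \alpha_q \zeta) = z^{2^{p_q}} - \alpha_q^{2^{p_q}}$; the same identity with $z_k$ in place of $z$ handles the denominator. Taking moduli and multiplying across all blocks produces the two announced formulas. The only real obstacle is the block-structure identification in the first paragraph, which is a straightforward bit-reversal argument on~(\ref{explicitleja}); once this is in place, the remainder is purely algebraic manipulation combined with Lemma~\ref{estim1}.
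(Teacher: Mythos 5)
Your algebraic manipulation --- splitting the product into blocks, applying formula~(\ref{estim125}) of Lemma~\ref{estim1} to the first block, and the scaled root-of-unity identity $\prod_{\zeta\in\Omega_m}(z-\alpha\zeta)=z^m-\alpha^m$ to the later blocks --- is exactly what is needed and matches the paper. However, the block-structure identification in your first paragraph has a real gap. You fix $\rho_j=\exp\!\left(i\pi\,2^{-p_j}\right)$ and derive the decomposition by ``direct inspection of~(\ref{explicitleja}).'' But formula~(\ref{explicitleja}) encodes one particular Leja sequence (the canonical one that always picks the root with smallest positive argument), whereas the lemma must hold for an \emph{arbitrary} Leja sequence for $\overline{\D}$ starting at~$1$, and those are not unique. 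For instance, once $\{1,-1\}$ is chosen, both $i$ and $-i$ are admissible third points, yet~(\ref{explicitleja}) always produces $i$. So your block identification proves the claim only for the single sequence given by~(\ref{explicitleja}); for a different sequence the correct $\rho_1$ might be $e^{3i\pi/4}$ or $e^{7i\pi/4}$, etc., and your derivation gives you no access to it.

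The paper avoids this by invoking the structural theorem of Bialas-Ciez and Calvi, recorded as~(\ref{splitleja}): for any $N$-Leja section for the disk starting at $1$, one has $\mathcal{L}_N=\bigl(\Omega_{2^{p_1}},\rho_1\widetilde{\mathcal{L}}_{N-2^{p_1}}\bigr)$ where $\rho_1$ is \emph{some} $2^{p_1}$-th root of $-1$ and $\widetilde{\mathcal{L}}_{N-2^{p_1}}$ is a (possibly different) $(N-2^{p_1})$-Leja section also starting at $1$. Iterating produces $\mathcal{L}_N=\bigl(\Omega_{2^{p_1}},\alpha_2\Omega_{2^{p_2}},\ldots,\alpha_n\Omega_{2^{p_n}}\bigr)$ with $\alpha_q=\rho_1\cdots\rho_{q-1}$ for sequence-dependent roots $\rho_j$, which is exactly why the lemma only asserts that $\rho_j$ \emph{is} a $2^{p_j}$-th root of $-1$ without pinning down which one. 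Replacing your appeal to~(\ref{explicitleja}) by this structural recursion closes the gap, after which the rest of your argument goes through verbatim.
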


\begin{proof}

First, we know 
(see Theorem~5 from~\cite{bialascalvi} or
Theorem~1 from~\cite{calviphung1}) that
\begin{eqnarray}\label{splitleja}
\mathcal{L}_N & = &
\left(
\Omega_{2^{p_1}},\rho_1\w{\mathcal{L}}_{N-2^{p_1}}
\right)
\,,
\end{eqnarray}
where $\Omega_{2^{p_1}}$ is the set of the
$2^{p_1}$-th roots of the unity,
$\rho_1$ is a $2^{p_1}$-th root of $-1$ and
$\w{\mathcal{L}}_{N-2^{p_1}}$ is (maybe another) 
$\left(N-2^{p_1}\right)$-Leja section for the disk (that also starts at $1$).
Now $N-2^{p_1}=2^{p_2}+\cdots+2^{p_n}$ then we can follow the process with
\begin{eqnarray*}
\w{\mathcal{L}}_{N-2^{p_1}} & = &
\left(
\Omega_{2^{p_2}},\rho_2\w{\w{\mathcal{L}}}_{N-2^{p_1}-2^{p_2}}
\right)
\end{eqnarray*}
and so on.
We also know by Theorem~5 from~\cite{bialascalvi} that
any $2^{p_n}$-Leja section (that starts at $1$)
will consist of the $2^{p_n}$-th roots of the unity,
$\Omega_{2^{p_n}}$ (where the equality is meant as sets).
This finally gives
\begin{eqnarray*}
\mathcal{L}_N
& = &
\left(
\Omega_{2^{p_1}},\rho_1\Omega_{2^{p_2}},
\rho_1\rho_2\Omega_{2^{p_3}},\ldots,
\rho_1\cdots\rho_{n-1}\Omega_{2^{p_n}}
\right)
\\
& = &
\left(
\Omega_{2^{p_1}},\alpha_2\Omega_{2^{p_2}},
\alpha_3\Omega_{2^{p_3}},\ldots,
\alpha_n\Omega_{2^{p_n}}
\right)
\,.
\end{eqnarray*}
In particular, $z_k\in\Omega_{2^{p_1}}$ for all
$1\leq k\leq2^{p_1}$, then 
\begin{eqnarray*}
l_k^{(N)}(z)
& = &
\left(
\prod_{z_j\in\Omega_{2^{p_1}},z_j\neq z_k}
\frac{z-z_j}{z_k-z_j}
\right)
\times
\prod_{q=2}^n
\left(
\prod_{z_j\in\alpha_q\Omega_{2^{p_q}}}
\frac{z-z_j}{z_k-z_j}
\right)
\,.
\end{eqnarray*}

On the one hand, one has by~(\ref{estim125}) from Lemma~\ref{estim1} that 
\begin{eqnarray*}
\left|
\prod_{z_j\in\Omega_{2^{p_1}},z_j\neq z_k}
\frac{z-z_j}{z_k-z_j}
\right|
& = &
\frac{1}{2^{p_1}}
\left|
\frac{z^{2^{p_1}}-1}{z-z_k}
\right|
\;=\;
\frac{1}{2^{p_1}}
\left|
\sum_{j=0}^{2^{p_1}-1}
z_k^{2^{p_1}-j-1}z^j
\right|
\,.
\end{eqnarray*}
On the other hand, for all $q=2,\ldots,n$,
\begin{eqnarray*}
\left|
\prod_{z_j\in\alpha_q\Omega_{2^{p_q}}}
\frac{z-z_j}{z_k-z_j}
\right|
& = &
\left|
\prod_{z_j\in\Omega_{2^{p_q}}}
\frac{z-\alpha_qz_j}{z_k-\alpha_qz_j}
\right|
\;=\;
\left|
\prod_{z_j\in\Omega_{2^{p_q}}}
\frac{z/\alpha_q-z_j}{z_k/\alpha_q-z_j}
\right|
\\
& = &
\left|
\frac{
(z/\alpha_q)^{2^{p_q}}-1
}{
(z_k/\alpha_q)^{2^{p_q}}-1
}
\right|
\;=\;
\frac{
\left|z^{2^{p_q}}-\alpha_q^{2^{p_q}}\right|
}{
\left|z_k^{2^{p_q}}-\alpha_q^{2^{p_q}}\right|
}
\end{eqnarray*}
(since $\left|\alpha_q\right|=1$
for all $q=2,\ldots,n$),
and the lemma is proved.

\end{proof}

As a consequence, we get the following result that will be useful
in the next section.

\begin{lemma}\label{lkgeneral}

Under the same hypothesis for $N$ as in
Lemma~\ref{lkpregeneral}, one has for all
$k=1,\ldots,2^{p_1}$ and $z\in\C$ with $z\neq z_k$,
\begin{eqnarray}\label{descrlkgen}
\left|
l_k^{(N)}(z)
\right|
& = &
\frac{1}{2^{p_1}}
\frac{\left|z^{2^{p_1}}-1\right|}{\left|z-z_k\right|}
\times
\prod_{q=2}^n
\frac{
\left|z^{2^{p_q}}+\omega_0^{2^{p_q}}\right|
}{
\left|z_k^{2^{p_q}}+\omega_0^{2^{p_q}}\right|
}
\,,
\end{eqnarray}
where
$\omega_0$ is a $2^{p_1}$-th root of $-1$. In fact,
\begin{eqnarray}\label{defomega0}
\omega_0
& = &
\alpha_{n+1}
\;=\;
\rho_1\rho_2\cdots\rho_{n-1}\rho_n
\,.
\end{eqnarray}

Similarly, one also has for all $k=1,\ldots,2^{p_1}$ and $z\in\C$
(with the same $\omega_0$),
\begin{eqnarray}\label{descrlkgenz0}
\left|
l_k^{(N)}(z)
\right|
& = &
\frac{1}{2^{p_1}}
\left|
\sum_{j=0}^{2^{p_1}-1}
z_k^{2^{p_1}-j-1}z^j
\right|
\times
\prod_{q=2}^n
\frac{
\left|z^{2^{p_q}}+\omega_0^{2^{p_q}}\right|
}{
\left|z_k^{2^{p_q}}+\omega_0^{2^{p_q}}\right|
}
\,.
\end{eqnarray}

\end{lemma}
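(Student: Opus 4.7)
The plan is to deduce Lemma~\ref{lkgeneral} directly from Lemma~\ref{lkpregeneral} by showing that the factor $\alpha_q^{2^{p_q}}$ in the $q$-th term of the product differs from $\omega_0^{2^{p_q}}$ only by a sign, so that $z^{2^{p_q}}-\alpha_q^{2^{p_q}}$ may be rewritten as $z^{2^{p_q}}+\omega_0^{2^{p_q}}$. The core computation is therefore the elementary identity
\begin{equation*}
\alpha_q^{2^{p_q}} \;=\; -\,\omega_0^{2^{p_q}}, \qquad q=2,\ldots,n,
\end{equation*}
together with the verification that $\omega_0$ is indeed a $2^{p_1}$-th root of $-1$.

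First I would recall the definitions $\alpha_q=\rho_1\cdots\rho_{q-1}$ and $\omega_0=\alpha_{n+1}=\rho_1\cdots\rho_n$, so that $\omega_0/\alpha_q=\rho_q\rho_{q+1}\cdots\rho_n$. Raising this ratio to the power $2^{p_q}$, I would use the key arithmetic fact that $p_j<p_q$ whenever $j>q$ (from~(\ref{defnpj})), so $2^{p_q}/2^{p_j}=2^{p_q-p_j}$ is an even positive integer for such $j$. Since each $\rho_j$ satisfies $\rho_j^{2^{p_j}}=-1$, one gets
\begin{equation*}
\rho_j^{2^{p_q}} \;=\; \bigl(\rho_j^{2^{p_j}}\bigr)^{2^{p_q-p_j}} \;=\; (-1)^{2^{p_q-p_j}} \;=\; 1 \quad\text{for}\quad j>q,
\end{equation*}
whereas $\rho_q^{2^{p_q}}=-1$. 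Multiplying these contributions yields $(\omega_0/\alpha_q)^{2^{p_q}}=-1$, which is exactly the desired identity. The same argument with $q$ replaced by $1$ (using $p_j<p_1$ for all $j\geq 2$) shows $\omega_0^{2^{p_1}}=-1$, confirming that $\omega_0$ is a $2^{p_1}$-th root of $-1$, as claimed in~(\ref{defomega0}).

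With this identity in hand, substitution into the formulas of Lemma~\ref{lkpregeneral} is immediate: every factor $z^{2^{p_q}}-\alpha_q^{2^{p_q}}$ in numerator and denominator becomes $z^{2^{p_q}}+\omega_0^{2^{p_q}}$, giving~(\ref{descrlkgen}) and, by the already established alternative form for the $p_1$-block, also~(\ref{descrlkgenz0}). There is no real obstacle here: the only delicate point is keeping track of the parity of the exponent $2^{p_q-p_j}$, which requires the strict inequality $p_j<p_q$ and hence relies on the ordering convention in~(\ref{defnpj}); once this is handled, the rest is a direct substitution.
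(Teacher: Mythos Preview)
Your proof is correct and follows essentially the same approach as the paper: both reduce the lemma to the identity $\omega_0^{2^{p_q}}=-\alpha_q^{2^{p_q}}$ (and $\omega_0^{2^{p_1}}=-1$), and both verify it by raising the factors $\rho_j$ to the power $2^{p_q}$, using that $\rho_j^{2^{p_j}}=-1$ together with the strict ordering $p_1>p_2>\cdots>p_n$ to see that $\rho_j^{2^{p_q}}=1$ for $j>q$ while $\rho_q^{2^{p_q}}=-1$.
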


\begin{proof}

It suffices to show that, for all $q=1,\ldots,n$, one has
$\omega_0^{2^{p_q}}=-\alpha_q^{2^{p_q}}$. This is immediate
from~(\ref{defalphaq}) since
\begin{eqnarray*}
\omega_0^{2^{p_q}}
& = &
\left(\rho_1\cdots\rho_{q-1}\right)^{2^{p_q}}
\times
\rho_q^{2^{p_q}}
\times
\left(\rho_{q+1}\cdots\rho_n\right)^{2^{p_q}}
\\
& = &
\alpha_q^{2^{p_q}}
\times
\exp\left(i\pi\right)
\times
\exp
\left[i\pi
\left(
2^{p_q-p_{q+1}}+\cdots+2^{p_q-p_{n}}
\right)
\right]
\end{eqnarray*}
and
$0<p_q-p_{q+1}<\cdots<p_q-p_n$.
Similarly,
\begin{eqnarray*}
\omega_0^{2^{p_1}}
& = &
\rho_1^{2^{p_1}}
\times
\left(\rho_{2}\cdots\rho_n\right)^{2^{p_1}}
\;=\;
-1
\,.
\end{eqnarray*}

\end{proof}

\begin{remark}\label{omega0}

$\omega_0$ is any of $2^{p_n}$ possible choices for the
$(N+1)$-th Leja point
$z_{N+1}$. Indeed,
\begin{eqnarray*}
\prod_{j=1}^N
\left|
z-z_j
\right|
& = &
\prod_{q=1}^n
\left|
z^{2^{p_q}}+\omega_0^{2^{p_q}}
\right|
\;\leq\;
2^n
\end{eqnarray*}
for all $|z|\leq1$. In addition, this bound is reached if and only if
for all $q=1,\ldots,n$, 
$\left|
z^{2^{p_q}}+\omega_0^{2^{p_q}}
\right|
=
\left|
\left(z/\omega_0\right)^{2^{p_q}}+1
\right|=2$,
then
if and only if
\begin{eqnarray*}
\left(
\frac{z}{\omega_0}
\right)^{2^{p_n}}
\;=\;
1
\,,
& \mbox{ i.e. } &
z_{N+1}
\;\in\;
\omega_0
\,
\Omega_{2^{p_n}}
\,.
\end{eqnarray*}

On the other hand, notice that the data of
$\Omega_{2^{p_1}}$ and $\omega_0$ (any fixed
$2^{p_1}$-th root of $-1$), give 
a $N$-Leja section (that starts at $z_1=1$).
Indeed, set
\begin{eqnarray*}
\rho_q
\;=\;
\exp
\left(
\frac{i\pi}{2^{p_q}}
\right)
\;\mbox{ for all }
q=2,\ldots,n
\,,
& \mbox{ and } &
\rho_1
\;:=\;
\frac{\omega_0}{\rho_2\cdots\rho_n}
\,.
\end{eqnarray*}
Then $\rho_q$ is a $2^{p_q}$-th root of $-1$
for all $q=2,\ldots,n$, and 
$\rho_1$ is a $2^{p_1}$-th root of $-1$ since
$\omega_0$ is and $\rho_q\in\Omega_{2^{p_1}}$
for all $q=2,\ldots,n$
(recall from the hypothesis of Lemma~\ref{lkpregeneral} that
$n\geq2$ and $p_1>p_2>\cdots>p_n\geq0$). Then the $N$-sequence defined
by
\begin{eqnarray}\label{omega0toLN}
\mathcal{L}_N
& := &
\left(
\Omega_{2^{p_1}}
\,,\,
\rho_1\Omega_{2^{p_2}}
\,,\,
\rho_1\rho_2\Omega_{2^{p_3}}
\,,\;
\cdots
\;,\,
\rho_1\cdots\rho_{n-1}\Omega_{2^{p_n}}
\right)
\,,
\end{eqnarray}
is the $N$-section of a Leja sequence
(that starts at $1$), and for all
$k=1,\ldots,2^{p_1}$, the function defined by~(\ref{descrlkgen})
or~(\ref{descrlkgenz0}) is the FLIP (at least in modulus)
associated with the $k$-th point from 
$\Omega_{2^{p_1}}\subset\mathcal{L}_N$.

\end{remark}

\bigskip

Now we give the proof of the two following preliminary (and classical) lemmas.

\begin{lemma}\label{estimsinus}

For all $x\in\left[0\,,\,\dfrac{\pi}{2}\right]$, one has the following estimates:
\begin{eqnarray}\label{estimsinus1}
\frac{2}{\pi}\,x
\;\leq\;
\sin(x)
\;\leq\;
x
\,.
\end{eqnarray}
One also has for all $x\in\R$,
\begin{eqnarray}\label{estimsinus2}
|\sin(x)|
& \leq &
|x|
\,.
\end{eqnarray}

\end{lemma}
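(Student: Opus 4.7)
The plan is to treat the two estimates in~(\ref{estimsinus1}) separately by elementary monotonicity/concavity arguments on $[0,\pi/2]$, and then extend~(\ref{estimsinus2}) to all of $\R$ by symmetry and a trivial bound.

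For the upper bound $\sin(x)\leq x$ on $[0,\pi/2]$, I would introduce $f(x)=x-\sin(x)$. Since $f(0)=0$ and $f'(x)=1-\cos(x)\geq0$ for every real $x$, the function $f$ is nondecreasing on $\R_{+}$, hence $f(x)\geq0$ for all $x\geq0$; this gives $\sin(x)\leq x$ not only on $[0,\pi/2]$ but on all of $\R_{+}$, which will also be useful for~(\ref{estimsinus2}).

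For the lower bound $\frac{2}{\pi}x\leq\sin(x)$ on $[0,\pi/2]$, I would set $g(x)=\sin(x)-\tfrac{2}{\pi}x$. One checks $g(0)=g(\pi/2)=0$, and $g'(x)=\cos(x)-\tfrac{2}{\pi}$ vanishes at exactly one point $x_0\in(0,\pi/2)$ (because $\cos$ is strictly decreasing from $1$ to $0$ on this interval), with $g'>0$ on $[0,x_0)$ and $g'<0$ on $(x_0,\pi/2]$. Hence $g$ is increasing then decreasing on $[0,\pi/2]$, so its minimum on this interval is attained at the endpoints and equals $0$; consequently $g\geq 0$ on $[0,\pi/2]$. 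Equivalently, the argument can be phrased as: $\sin$ is concave on $[0,\pi/2]$ because $(\sin)''(x)=-\sin(x)\leq0$, so its graph lies above the chord joining $(0,0)$ to $(\pi/2,1)$, which is precisely $y=\tfrac{2}{\pi}x$.

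Finally, for~(\ref{estimsinus2}) I would argue by cases. If $|x|\leq\pi/2$, then by the already proved inequality $\sin(t)\leq t$ for $t\geq0$ (applied to $t=|x|$), we have $|\sin(x)|=\sin(|x|)\leq|x|$, using the odd symmetry $\sin(-x)=-\sin(x)$. If $|x|\geq\pi/2$, then simply $|\sin(x)|\leq 1\leq\pi/2\leq|x|$. No step is genuinely difficult; the only mild point to keep straight is the piecewise case analysis for~(\ref{estimsinus2}) and the correct monotonicity behaviour of $g$ for the sharper bound $\tfrac{2}{\pi}x\leq\sin(x)$.
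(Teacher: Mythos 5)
Your proof is correct and follows essentially the same route as the paper: the lower bound via concavity of $\sin$ on $[0,\pi/2]$ (you also offer the equivalent monotonicity argument for $g$), the upper bound by examining the variations of $x\mapsto x-\sin(x)$, and the extension to all of $\R$ by the same three-case analysis ($|x|\leq\pi/2$, $x\geq\pi/2$, $x\leq 0$ by oddness).
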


\begin{proof}

The first estimate of~(\ref{estimsinus1}) follows from the concavity of the function
$\sin(x)$ on $\left[0\,,\,\dfrac{\pi}{2}\right]$
(by writing 
$x=\left(1-\dfrac{2}{\pi}x\right)\times0\,+\,\left(\dfrac{2}{\pi}x\right)\times\dfrac{\pi}{2}$). 
The second one can be deduced
by considering the variations on $\left[0\,,\,\dfrac{\pi}{2}\right]$ of the function
$x\mapsto x-\sin(x)$.

In particular,
the estimate~(\ref{estimsinus1}) yields~(\ref{estimsinus2}) 
for all $x\in\left[0,\dfrac{\pi}{2}\right]$.
If $x\geq\dfrac{\pi}{2}$, then
\begin{eqnarray*}
|\sin(x)|
\;\leq\;
1
\;\leq\;
\frac{\pi}{2}
\;\leq\;
x
\end{eqnarray*}
and this proves~(\ref{estimsinus2}) for all $x\geq0$.
Finally, if $x\leq0$, then applying the above estimate to $-x\geq0$ leads to
\begin{eqnarray*}
|\sin(x)|
\;=\;
|-\sin(x)|
\;=\;
|\sin(-x)|
\;\leq\;
|-x|
\;=\;
|x|
\,.
\end{eqnarray*}

\end{proof}

The following one can be proved by
considering the variations of the function
$x\in\R\mapsto\exp(x)-x-1$.

\begin{lemma}\label{estimexp}

For all $x\in\R$, one has
\begin{eqnarray*}
\exp(x)
& \geq &
1+x
\,.
\end{eqnarray*}

\end{lemma}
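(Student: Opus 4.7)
The plan is to carry out exactly the one-line suggestion indicated by the authors, namely a monotonicity argument applied to the auxiliary function $f(x) = \exp(x) - x - 1$. First, I would observe that $f$ is differentiable on $\R$ with derivative $f'(x) = \exp(x) - 1$, and that the equation $f'(x) = 0$ has the unique solution $x = 0$ because $\exp$ is a strictly increasing bijection from $\R$ onto $(0, +\infty)$ with $\exp(0) = 1$.

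Next, I would analyze the sign of $f'$: since $\exp$ is strictly increasing, $f'(x) < 0$ for $x < 0$ and $f'(x) > 0$ for $x > 0$. Hence $f$ is strictly decreasing on $(-\infty, 0]$ and strictly increasing on $[0, +\infty)$, so $f$ attains a global minimum at $x = 0$. Evaluating, $f(0) = \exp(0) - 0 - 1 = 0$, which gives $f(x) \geq 0$ for all $x \in \R$, i.e.\ $\exp(x) \geq 1 + x$, with equality only at $x = 0$.

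Since the statement is a standard elementary inequality, there is no real obstacle in the proof; the only thing to be careful about is to state clearly that the minimum of $f$ is attained (and not merely approached), which is immediate here because $f$ is continuous on $\R$ with the monotonicity above. Alternatively, one could give an equally short proof via the convexity of $\exp$ and the tangent line inequality at $0$, but the monotonicity argument matches the hint given just before the lemma and keeps the exposition self-contained.
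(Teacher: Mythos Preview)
Your proof is correct and follows exactly the approach indicated in the paper, which simply states that the lemma can be proved by considering the variations of the function $x\in\R\mapsto\exp(x)-x-1$. You have carried out precisely this argument in full detail.
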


\bigskip

Next, we give and prove the following trigonometric formula.

\begin{lemma}\label{prodcos}

For all $m\geq0$ and $\alpha\notin\,\pi\mathbb{Z}$, one has
\begin{eqnarray}\label{prodcosform}
\prod_{j=1}^m
\cos\left(\frac{\alpha}{2^j}\right)
& = &
\frac{\sin(\alpha)}{2^m\sin\left(\alpha/2^m\right)}
\,.
\end{eqnarray}

\end{lemma}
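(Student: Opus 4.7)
My plan is to prove the identity by a telescoping argument based on the double-angle formula $\sin(2x) = 2\sin(x)\cos(x)$. Observe first that the hypothesis $\alpha \notin \pi\mathbb{Z}$ guarantees $\alpha/2^j \notin \pi\mathbb{Z}$ for every $j \geq 0$ (otherwise $\alpha = 2^j k \pi$ would be in $\pi\mathbb{Z}$), so every $\sin(\alpha/2^j)$ appearing below is nonzero and the divisions are legal.

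For each $j = 1, 2, \ldots, m$, setting $x = \alpha/2^j$ in the double-angle formula gives $\sin(\alpha/2^{j-1}) = 2 \sin(\alpha/2^j) \cos(\alpha/2^j)$, hence
\begin{eqnarray*}
\cos\!\left(\frac{\alpha}{2^j}\right)
& = &
\frac{\sin(\alpha/2^{j-1})}{2 \sin(\alpha/2^j)}.
\end{eqnarray*}
Taking the product over $j = 1, \ldots, m$ yields
\begin{eqnarray*}
\prod_{j=1}^m \cos\!\left(\frac{\alpha}{2^j}\right)
& = &
\frac{1}{2^m} \cdot \frac{\prod_{j=1}^m \sin(\alpha/2^{j-1})}{\prod_{j=1}^m \sin(\alpha/2^j)},
\end{eqnarray*}
and the two products in the numerator and denominator telescope: all factors $\sin(\alpha/2^j)$ with $1 \leq j \leq m-1$ cancel, leaving $\sin(\alpha/2^0) = \sin(\alpha)$ in the numerator and $\sin(\alpha/2^m)$ in the denominator. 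This gives exactly the claimed formula.

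The case $m = 0$ is immediate since the empty product is $1$ and the right-hand side reduces to $\sin(\alpha)/\sin(\alpha) = 1$; alternatively one can phrase the whole argument as induction on $m$, with the inductive step amounting to the single application of the double-angle identity described above. There is essentially no obstacle here: the only subtlety is verifying that no denominator vanishes, which follows at once from $\alpha \notin \pi\mathbb{Z}$.
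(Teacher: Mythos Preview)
Your proof is correct and takes essentially the same approach as the paper: both rely on the double-angle identity $\sin(2x)=2\sin x\cos x$ and the observation that $\alpha\notin\pi\mathbb{Z}$ forces $\alpha/2^j\notin\pi\mathbb{Z}$, with the paper framing the argument as an explicit induction on $m$ and you framing it as a telescoping product (which, as you note yourself, is the same induction unwound).
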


\begin{proof}

First, we claim that, for all $m\geq0$,
$\alpha/2^m\notin\pi\mathbb{Z}$.

Indeed, this assertion can be proved by induction on $m\geq0$. For $m=0$,
$\alpha/2^m=\alpha\notin\pi\mathbb{Z}$.
Now if for $m\geq1$ being given, one has that
$\alpha/2^m\in\pi\mathbb{Z}$,
then one also has that
$\alpha/2^{m-1}=2\times\alpha/2^m\in\,\pi\mathbb{Z}$. This is impossible by induction hypothesis.

In particular, this implies that $\sin(\alpha/2^m)\neq0$ for all $m\geq0$, then the expressions
which appear in~(\ref{prodcosform}) make sense. The formula~(\ref{prodcosform})
will be proved by induction on
$m\geq0$. For $m=0$, one has that
\begin{eqnarray*}
\prod_{j=1}^0
\cos\left(\frac{\alpha}{2^j}\right)
\;=\;
\prod_{j\in\emptyset}
\cos\left(\frac{\alpha}{2^j}\right)
\;=\;
1
\;=\;
\frac{\sin(\alpha)}{2^0\sin\left(\alpha/2^0\right)}
\,.
\end{eqnarray*}

If it is true for $m\geq0$, then let us consider $m+1$.
First, $\cos\left(\alpha/2^{m+1}\right)\neq0$, otherwise
$\alpha/2^{m+1}\in\,\dfrac{\pi}{2}+\pi\mathbb{Z}$
then
$\alpha/2^m=2\times\alpha/2^{m+1}\in\pi\mathbb{Z}$.
This is impossible by the above claim.
Next, we get by induction hypothesis
\begin{eqnarray*}
\prod_{j=1}^{m+1}
\cos\left(\frac{\alpha}{2^j}\right)
& = &
\prod_{j=1}^{m}
\cos\left(\frac{\alpha}{2^j}\right)
\times
\cos\left(\frac{\alpha}{2^{m+1}}\right)
\;=\;
\frac{\sin(\alpha)}{2^m\sin\left(\alpha/2^m\right)}
\times
\cos\left(\frac{\alpha}{2^{m+1}}\right)
\\
& = &
\frac{\sin(\alpha)}
{2^m\times
2\sin\left(\alpha/2^{m+1}\right)\cos\left(\alpha/2^{m+1}\right)}
\times
\cos\left(\frac{\alpha}{2^{m+1}}\right)
\\
& = &
\frac{\sin(\alpha)}{2^{m+1}\sin\left(\alpha/2^{m+1}\right)}
\,,
\end{eqnarray*}
and this proves the induction.

\end{proof}

\bigskip

In the next section, we will not need the binary decomposition of $l$,
else the alternating binary one as specified by the following result.

\begin{lemma}\label{binaryalt}

For all integer $l\geq1$, one can write
\begin{eqnarray}\label{binaryaltform}
& &
l 
\;=\;
\sum_{i=1}^{2L}
(-1)^{i-1}2^{s_i}
\,,
\mbox{ where }
L\;\geq\;1
\mbox{ and }
s_1>s_2>\cdots>s_{2L}\geq0
\,.
\end{eqnarray}

\end{lemma}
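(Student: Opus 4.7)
The plan is to construct the alternating decomposition directly from the standard binary expansion of $l$, by ``collapsing'' each maximal run of consecutive $1$ bits via the geometric-sum identity $2^b + 2^{b+1} + \cdots + 2^a = 2^{a+1} - 2^b$.

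First I would write $l = \sum_{k \in J} 2^k$ for a finite set $J \subset \N$, and partition $J$ into its maximal runs of consecutive integers $B_i = \{b_i, b_i+1, \ldots, a_i\}$, ordered so that
\[
a_1 \;>\; b_1 \;>\; a_2 \;>\; b_2 \;>\; \cdots \;>\; a_n \;>\; b_n \;\geq\; 0.
\]
Since $l \geq 1$, we have $n \geq 1$; since the blocks are maximal, there is at least one $0$ bit between two consecutive runs, which gives the gap condition $b_i \geq a_{i+1} + 2$ for every $i < n$.

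Next I would apply the telescoping identity to each block to obtain
\[
l \;=\; \sum_{i=1}^n \bigl(2^{a_i+1} - 2^{b_i}\bigr),
\]
and relabel by setting $L := n$, $s_{2i-1} := a_i + 1$, $s_{2i} := b_i$ for $i = 1, \ldots, n$. The strict decrease $s_1 > s_2 > \cdots > s_{2L} \geq 0$ then reduces to two elementary checks: $s_{2i-1} > s_{2i}$ from $a_i + 1 > a_i \geq b_i$, and $s_{2i} > s_{2i+1}$ from the gap condition $b_i \geq a_{i+1} + 2 > a_{i+1} + 1$. The alternating signs $(-1)^{i-1}$ match the pattern $+, -, +, -, \ldots$ in the displayed expression, and $L = n \geq 1$ is automatic.

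There is no substantial obstacle in this argument; once the right viewpoint is taken (partitioning the binary support of $l$ into maximal blocks), the statement becomes a bookkeeping exercise. The key observation is that each maximal block of $1$'s contributes exactly one ``$+$ then $-$'' pair, which is precisely what forces the number of signed terms to be even.
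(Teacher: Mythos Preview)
Your argument is correct and rests on the same key observation as the paper: each maximal run of consecutive $1$'s in the binary expansion of $l$ collapses via $2^b + 2^{b+1} + \cdots + 2^a = 2^{a+1} - 2^b$ to a single $(+,-)$ pair. One small slip: the chain you display as $a_1 > b_1 > a_2 > \cdots$ should read $a_1 \geq b_1 > a_2 \geq b_2 > \cdots$ (a single-bit run has $a_i = b_i$), but you use the correct inequality $a_i \geq b_i$ in the verification, so nothing breaks.

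The paper proceeds by induction on the number of $1$-bits: it peels off the topmost maximal run, writes it as $2^{r_1+1} - 2^{r_v}$, and applies the inductive hypothesis to the remainder; to glue the two pieces with the required strict decrease it carries along the auxiliary property $s_1 = r_1 + 1$. Your direct construction identifies all maximal runs at once and checks the global strict decrease in one pass, which is shorter and avoids the bookkeeping of the inductive junction. Both arguments produce the same decomposition (indeed, the paper remarks afterward that $L$ equals the number of maximal runs), so this is a difference of packaging rather than of idea.
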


\begin{proof}

First, $l$ being a nonzero integer can be written with a binary decomposition, i.e.
\begin{eqnarray*}
l
& = &
\sum_{j=1}^K2^{r_i}
\,,
\end{eqnarray*}
 where $K\geq1$ and
$r_1>r_2>\cdots>r_K\geq0$.

Next, the formula~(\ref{binaryaltform}) will be proved by induction on $K\geq1$. 
In addition, we will prove that
\begin{eqnarray}\label{s1=r1+1}
s_1
& = &
r_1+1
\,.
\end{eqnarray}
If $K=1$,
then
\begin{eqnarray*}
l
& = &
2^{r_1}
\;=\;
2^{r_1+1}-2^{r_1}
\,,
\end{eqnarray*}
and the assertion is true by setting $L=2$, $s_1=r_1+1$ and $s_2=r_1$
(and we have
$s_1>s_2\geq0$).

Now let be $K\geq2$. If
$r_i=r_1-i+1$ for all
$i=1,\ldots,K$ (this happens when $l$ is a
chain of consecutive "ones" in its binary decomposition), then
\begin{eqnarray*}
l
& = &
\sum_{i=1}^K
2^{r_1-i+1}
\;=\;
2^{r_1-K+1}
\sum_{i=1}^K2^{K-i}
\;=\;
2^{r_1-K+1}
\sum_{i=0}^{K-1}2^i
\\
& = &
2^{r_1-K+1}
\frac{2^K-1}{2-1}
\;=\;
2^{r_1+1}-2^{r_1-K+1}
\;=\;
2^{r_1+1}-2^{r_K}
\,,
\end{eqnarray*}
and the assertion is true by setting
$L=2$, $s_1=r_1+1$ and $s_2=r_K$ (and we have
$s_1>s_2\geq0$).

Otherwise, there is at least one interior "zero" in the binary decomposition of $l$.
Let $v\geq1$ be such that,
\begin{eqnarray}\label{defv}
\begin{cases}
r_i\;=\;r_1-i+1\,,\;\mbox{for all }i=1,\ldots,v\,;
\\
r_{v+1}\;\leq\;r_v-2
\end{cases}
\end{eqnarray}
(counting the consecutive "ones" from $r_1$ in the first chain from the binary decomposition
of $l$, $r_v$ is the rank of the last "one").
Then
\begin{eqnarray*}
\sum_{i=1}^v
2^{r_i}
& = &
\sum_{i=1}^v
2^{r_1-i+1}
\:=\;
2^{r_1-v+1}
\sum_{i=1}^v
2^{v-i}
\;=\;
2^{r_1-v+1}
\sum_{i=0}^{v-1}
2^i
\\
& = &
2^{r_1-v+1}
\frac{2^v-1}{2-1}
\;=\;
2^{r_1+1}-2^{r_1-v+1}
\,.
\end{eqnarray*}
On the other hand, let us consider the (nonzero) integer
\begin{eqnarray*}
\sum_{i=1}^{K'}2^{r'_i}
& := &
\sum_{i=v+1}^{K}2^{r_i}
\,,
\end{eqnarray*}
where $K':=K-v$, $r'_i:=r_{v+i}$ for all $i=1,\ldots,K'$. One still has that
$K'\geq1$ (since $v+1\leq K$) and
$r'_1>r'_2>\cdots>r'_{K'}=r_K\geq0$.
Since $K'=K-v\leq K-1$, the induction hypothesis can be applied to
\begin{eqnarray*}
\sum_{i=1}^{K'}2^{r'_i}
& = &
\sum_{i=1}^{2L'}
(-1)^{i-1}2^{s'_i}
\,,
\end{eqnarray*}
with
$L'\geq1$ and
$s'_1>s'_2>\cdots>s'_{2L'}\geq0$.
It follows that
\begin{eqnarray*}
l
& = &
\sum_{i=1}^v
2^{r_i}
+
\sum_{i=v+1}^{K}2^{r_i}
\;=\;
2^{r_1+1}-2^{r_1-v+1}
+
\sum_{i=1}^{K'}2^{r'_i}
\\
& = &
2^{r_1+1}-2^{r_1-v+1}
+
\sum_{i=1}^{2L'}
(-1)^{i-1}2^{s'_i}
\;=\;
\sum_{i=1}^{2L}
(-1)^{i-1}2^{s_i}
\,,
\end{eqnarray*}
where we have set
$L:=L'+1$, $s_1:=r_1+1$, $s_2:=r_1-v+1$
and
$s_i:=s'_{i-2}$ for all $i=3,\ldots,2L=2L'+2$. In particular, one still has that
$s_1>s_2$ and by~(\ref{defv}),
$s_2=r_1-v+1=r_v>r_{v+1}+1=r'_1+1=s'_1=s_3$
(since by the induction hypothesis applied to~(\ref{s1=r1+1}),
one has $s'_1=r'_1+1$). The induction hypothesis applied to
$s'_1>s'_2>\cdots>s'_{2L'}=s'_{2L-2}\geq0$, yields
$s_3>s_4>\cdots>s_{2L}\geq0$, and the proof is finished.

\end{proof}

\begin{remark}

In the above formula, $L$ is the number of chains of consecutive "ones" in the binary decomposition
of $l$.

\end{remark}

Since we will deal with alternating sums in the next section,
the following result will be useful.

\begin{lemma}\label{estimrestalt}

Let us consider
\begin{eqnarray*}
\sum_{i=1}^M
(-1)^ia_i
& &
\mbox{ (resp., }
\sum_{i=1}^M
(-1)^{i-1}a_i
\mbox{ )}
\,,
\end{eqnarray*}
where
$a_i>0$ is decreasing. Then
for all $J=1,\ldots,M$,
\begin{eqnarray}
\left|
\sum_{i=J}^M
(-1)^ia_i
\right|
\;=\;
\sum_{i=J}^M
(-1)^{i-J}a_i
& \leq &
a_{J}
\,.
\end{eqnarray}

\end{lemma}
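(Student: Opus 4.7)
My plan is to reduce both cases in the statement to a single one and then apply the classical alternating series pairing argument. Since $\sum_{i=J}^M (-1)^i a_i = (-1)^J \sum_{i=J}^M (-1)^{i-J} a_i$ and similarly $\sum_{i=J}^M (-1)^{i-1} a_i = (-1)^{J-1} \sum_{i=J}^M (-1)^{i-J} a_i$, both absolute values equal $\bigl|\sum_{i=J}^M (-1)^{i-J} a_i\bigr|$. Thus the whole lemma reduces to showing
\[
0 \;\leq\; S_J \;:=\; \sum_{i=J}^M (-1)^{i-J} a_i \;\leq\; a_J,
\]
which simultaneously gives the claimed equality (removing the absolute value signs) and the upper bound.

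For the lower bound $S_J \geq 0$, I would group the terms of $S_J = a_J - a_{J+1} + a_{J+2} - \cdots$ starting from the left into consecutive pairs,
\[
S_J \;=\; (a_J - a_{J+1}) + (a_{J+2} - a_{J+3}) + \cdots,
\]
with a possible trailing unpaired $+a_M$ when $M-J$ is even. Each parenthesized block is $\geq 0$ because $(a_i)$ is decreasing and positive, and any unpaired final term is $\geq 0$; hence $S_J \geq 0$.

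For the upper bound $S_J \leq a_J$, I would instead pair terms starting after the first one:
\[
S_J \;=\; a_J \;-\; (a_{J+1} - a_{J+2}) \;-\; (a_{J+3} - a_{J+4}) \;-\; \cdots,
\]
with a possible trailing unpaired $-a_M$ when $M-J$ is odd. Again each parenthesized block is $\geq 0$ and any unpaired trailing term is subtracted, so $S_J \leq a_J$. Combining the two bounds finishes the proof.

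Honestly there is no serious obstacle here: the only minor bookkeeping is to check that both regroupings remain valid regardless of whether the number of summands $M - J + 1$ is even or odd, which is immediate from the above trailing-term discussion. One could alternatively argue by induction on $M-J$, but the direct pairing argument is cleaner and avoids parity casework in the inductive step.
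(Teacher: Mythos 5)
Your proof is correct, and it takes a different route from the paper's. The paper proves the lemma by descending induction on $J$ from $M$ down to $1$: for the inductive step it writes $\sum_{i=J}^M (-1)^{i-J}a_i = a_J - \sum_{i=J+1}^M (-1)^{i-J-1}a_i$, applies the induction hypothesis $0 \le \sum_{i=J+1}^M (-1)^{i-J-1}a_i \le a_{J+1}$ to the tail, and deduces both $S_J \ge a_J - a_{J+1} \ge 0$ and $S_J \le a_J$. You instead use the classical alternating-series pairing argument twice, with two different groupings of the same sum; this avoids induction entirely and makes the two bounds visibly symmetric (pair from the left to show $S_J \ge 0$, pair after the first term to show $S_J \le a_J$). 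The trade-off is essentially cosmetic: the paper's induction packages the parity bookkeeping into the induction hypothesis, while your direct argument exposes the parity of $M-J$ explicitly through the possible unpaired trailing term, which you handle correctly. Your preliminary reduction of both stated cases to $S_J = \sum_{i=J}^M (-1)^{i-J}a_i$ by factoring out $(-1)^J$ (resp.\ $(-1)^{J-1}$) is also a clean way to see why the equality in the lemma statement holds once $S_J \ge 0$ is established.
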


\begin{proof}

The proof is by descending induction on
$J=M,\ldots,1$. The assertion is obvious for
$J=M$. If $1\leq J\leq M-1$, then the induction hypothesis yields
\begin{eqnarray*}
\sum_{i=J}^M
(-1)^{i-J}a_i
& = &
a_J-\sum_{i=J+1}^M(-1)^{i-J-1}a_i
\;\geq\;
a_J-
\left|\sum_{i=J+1}^M(-1)^{i-J-1}a_i\right|
\\
& \geq &
a_J-a_{J+1}
\;\geq\;
0
\,,
\end{eqnarray*}
since $a_i$ is decreasing.
In particular, 
$\sum_{i=J}^M
(-1)^{i-J}a_i\geq0$ then
\begin{eqnarray*}
\left|
\sum_{i=J}^M
(-1)^{i}a_i
\right|
\;=\;
\left|
\sum_{i=J}^M
(-1)^{i-J}a_i
\right|
\;=\;
\sum_{i=J}^M
(-1)^{i-J}a_i
\,.
\end{eqnarray*}
On the other hand,
\begin{eqnarray*}
\sum_{i=J}^M
(-1)^{i-J}a_i
& = &
a_J-
\sum_{i=J+1}^M(-1)^{i-J-1}a_i
\;\leq\;
a_J
\,,
\end{eqnarray*}
since by the induction hypothesis,
$\sum_{i=J+1}^M(-1)^{i-J-1}a_i
=\left|\sum_{i=J+1}^M(-1)^{i-J-1}a_i\right|\geq0$.
The proof is finished.

\end{proof}

\bigskip

\section{Proof of Theorem~\ref{unifestim} in a special but essential case}\label{specialessential}

In this part, we will deal with the FLIP $l_1^{(N)}$
(the one that is associated with $z_1=1$), i.e. 
by recalling~(\ref{descrlkgen}) and~(\ref{descrlkgenz0}) from Lemma~\ref{lkgeneral},
\begin{eqnarray}\label{descrl1genznon0}
\left|
l_1^{(N)}(z)
\right|
& = &
\frac{1}{2^{p_1}}
\frac{\left|z^{2^{p_1}}-1\right|}{\left|z-1\right|}
\times
\prod_{q=2}^n
\frac{
\left|z^{2^{p_q}}+\omega_0^{2^{p_q}}\right|
}{
\left|1+\omega_0^{2^{p_q}}\right|
}
\;\mbox{ (for all $z\neq1$)}
\,,
\\\label{descrl1genz0}
& = &
\frac{1}{2^{p_1}}
\left|
\sum_{j=0}^{2^{p_1}-1}
z^j
\right|
\times
\prod_{q=2}^n
\frac{
\left|z^{2^{p_q}}+\omega_0^{2^{p_q}}\right|
}{
\left|1+\omega_0^{2^{p_q}}\right|
}
\;\mbox{ (for all $z\in\C$)}
\,,
\end{eqnarray}
with the following hypothesis:

\begin{itemize}

\item

$N\geq1$ and $N$ is not a pure power of $2$, i.e. there is
$p_1\geq1$ such that
$2^{p_1}<N<2^{p_1+1}$. We can then remind the binary decomposition of $N$ given
by~(\ref{bindec}) with~(\ref{defnpj}):
\begin{eqnarray}\label{restrN}
N
\;=\;
2^{p_1}+\cdots+2^{p_n}
& \mbox{with} &
p_1>p_2>\cdots>p_n\geq0
\mbox{ and }
n\geq2\,;
\end{eqnarray}

\item

as it was defined in Lemma~\ref{lkgeneral},
$\omega_0$ is a $2^{p_1}$-th root of $-1$, then it can be written as
$\exp\left((2l+1)i\pi/2^{p_1}\right)$ with
$0\leq l\leq2^{p_1}-1$. In this section, we will not
consider the special case of
$\omega_0=\exp\left(i\pi/2^{p_1}\right)$, i.e.
\begin{eqnarray}\label{restromega0}
\omega_0
\;=\;
\exp\left(\dfrac{2l+1}{2^{p_1}}i\pi\right)
& \mbox{with} &
1\;\leq\;l\;\leq\;2^{p_1}-1\,;
\end{eqnarray}

\item

we will deal with 
$|z|=1$. It can be written as $\exp\left(i\theta\right)$
where $\theta\in]-\pi,\,\pi]$, but
the following writing will be more useful in this section:
\begin{eqnarray}\label{restrz}
z
\;=\;
\exp\left(i\pi\theta\right)
& \mbox{with} &
\theta\in\,]-1,1]
\,.
\end{eqnarray}

\end{itemize}

The goal of this section is to prove Theorem~\ref{unifestim}
for the FLIP $l_1^{(N)}$ under these above
restrictions, i.e. that for all $|z|=1$ and all
$l=1,\ldots,2^{p_1}-1$,
\begin{eqnarray}\label{estimatespecial}
\left|
l_1^{(N)}
\left(
z
\right)
\right|
& \leq &
\pi\exp\left(3\pi\right)
\,.
\end{eqnarray}

We begin with noticing that since $|\theta|\leq1$, then one has that:

\begin{enumerate}

\item

or $|\theta|\leq1/2^{p_1}$, then this case will be handled
in Subsection~\ref{step1};

\item

or else $1/2^{p_1}<|\theta|\leq1/2^{p_n}$. Since by~(\ref{restrN}) one has
$0<1/2^{p_1}<\cdots<1/2^{p_n}\leq1$,
it follows that
\begin{eqnarray}\label{defpqtheta}
1/2^{p_{q_{\theta}-1}}
\;<\;
|\theta|
\;\leq\;
1/2^{p_{q_{\theta}}}
& \mbox{ with } &
q_{\theta}\;=\;2,\ldots,n
\,.
\end{eqnarray}
We will deal with this case in Subsection~\ref{step2};

\item

otherwise $1/2^{p_n}<|\theta|\leq1$, then
this case will be handled in Subsection~\ref{step3}.

\end{enumerate}

\bigskip

\subsection{Some preliminary results}

Before dealing with the different cases, we need a couple of preliminary results
which will be useful in this section. We begin with the first one.

\begin{lemma}\label{estimquotcos}

For all $q=2,\ldots,n$, for all $l=1,\ldots,2^{p_1}-1$, and all $|\theta|\leq1$, one has
\begin{eqnarray*}
\frac{
\left|
\cos
\left(
\dfrac{2l+1}{2^{p_1-p_q+1}}\pi
-
\dfrac{2^{p_q}\pi\theta}{2}
\right)
\right|
}{
\left|
\cos
\left(
\dfrac{2l+1}{2^{p_1-p_q+1}}\pi
\right)
\right|
}
& \leq &
1+
\dfrac{2^{p_q}\pi|\theta|}{2}
\left|
\tan
\left(
\dfrac{2l+1}{2^{p_1-p_q+1}}\pi
\right)
\right|
\,.
\end{eqnarray*}

\end{lemma}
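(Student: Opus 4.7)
The plan is to reduce the inequality to the elementary bounds $|\cos| \le 1$ and $|\sin(x)| \le |x|$ by expanding the numerator via the cosine-subtraction formula. Write $\alpha = \frac{2l+1}{2^{p_1-p_q+1}}\pi$ and $\beta = \frac{2^{p_q}\pi\theta}{2}$, so the target inequality becomes
\begin{eqnarray*}
\frac{|\cos(\alpha-\beta)|}{|\cos(\alpha)|} & \leq & 1 + |\beta|\,|\tan(\alpha)|.
\end{eqnarray*}

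First I would verify that the denominator is nonzero, so that division makes sense. One has $\cos(\alpha)=0$ iff $\frac{2l+1}{2^{p_1-p_q+1}} \in \frac{1}{2}+\mathbb{Z}$, i.e.\ iff $2l+1 = (2m+1)\cdot 2^{p_1-p_q}$ for some integer $m$. Since $q \geq 2$ and $p_1>p_2>\cdots>p_n$, the exponent $p_1-p_q$ is at least $1$, so the right-hand side is even, while $2l+1$ is odd. Hence $\cos(\alpha)\neq 0$, and we are free to divide.

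Next I would apply the identity $\cos(\alpha-\beta) = \cos(\alpha)\cos(\beta)+\sin(\alpha)\sin(\beta)$ and divide through by $\cos(\alpha)$:
\begin{eqnarray*}
\frac{\cos(\alpha-\beta)}{\cos(\alpha)}
& = & \cos(\beta) + \tan(\alpha)\sin(\beta).
\end{eqnarray*}
Taking absolute values and using the triangle inequality yields
\begin{eqnarray*}
\left|\frac{\cos(\alpha-\beta)}{\cos(\alpha)}\right|
& \leq & |\cos(\beta)| + |\tan(\alpha)|\,|\sin(\beta)|.
\end{eqnarray*}

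Finally I would invoke the trivial bound $|\cos(\beta)|\leq 1$ and Lemma~\ref{estimsinus} (specifically formula~(\ref{estimsinus2})), which gives $|\sin(\beta)|\leq |\beta|$. Substituting $|\beta|=\frac{2^{p_q}\pi|\theta|}{2}$ produces the claimed bound
\begin{eqnarray*}
\frac{|\cos(\alpha-\beta)|}{|\cos(\alpha)|}
& \leq & 1 + \frac{2^{p_q}\pi|\theta|}{2}\left|\tan\left(\frac{2l+1}{2^{p_1-p_q+1}}\pi\right)\right|.
\end{eqnarray*}
There is no real obstacle here; the only subtlety is the parity argument to rule out a vanishing denominator, and this is where the hypothesis $q\geq 2$ (ensuring $p_1-p_q\geq 1$) enters in an essential way. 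The hypothesis $|\theta|\leq 1$ plays no role in this particular lemma, but it is presumably needed downstream when the bound is fed back into the estimate of $|l_1^{(N)}(z)|$.
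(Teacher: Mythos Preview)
Your proof is correct and follows essentially the same route as the paper: expand $\cos(\alpha-\beta)$ by the addition formula, divide by $|\cos\alpha|$, and bound $|\cos\beta|\le 1$, $|\sin\beta|\le|\beta|$ via Lemma~\ref{estimsinus}. Your parity argument showing $\cos\alpha\neq 0$ is a welcome addition that the paper leaves implicit.
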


\begin{proof}

Indeed, one has that
\begin{eqnarray*}
\frac{
\left|
\cos
\left(
\dfrac{2l+1}{2^{p_1-p_q+1}}\pi
-
\dfrac{2^{p_q}\pi\theta}{2}
\right)
\right|
}{
\left|
\cos
\left(
\dfrac{2l+1}{2^{p_1-p_q+1}}\pi
\right)
\right|
}
& \leq &
\;\;\;\;\;\;\;\;\;\;\;\;\;\;\;\;\;\;\;\;\;\;\;\;\;\;\;\;\;\;\;\;\;\;\;\;\;\;\;\;\;\;\;\;
\;\;\;\;\;\;\;\;\;\;\;\;\;\;\;\;\;\;\;\;\;\;\;\;\;\;\;\;\;\;\;\;\;\;\;\;\;\;\;\;
\end{eqnarray*}
\begin{eqnarray*}
\;\;\;\;\;\;\;\;\;\;
& \leq &
\frac{
\left|
\cos
\left(
\dfrac{2l+1}{2^{p_1-p_q+1}}\pi
\right)
\right|
\left|
\cos
\left(
\dfrac{2^{p_q}\pi\theta}{2}
\right)
\right|
+
\left|
\sin
\left(
\dfrac{2l+1}{2^{p_1-p_q+1}}\pi
\right)
\right|
\left|
\sin
\left(
\dfrac{2^{p_q}\pi\theta}{2}
\right)
\right|
}{
\left|
\cos
\left(
\dfrac{2l+1}{2^{p_1-p_q+1}}\pi
\right)
\right|
}
\\
& = &
\left|
\cos
\left(
\dfrac{2^{p_q}\pi\theta}{2}
\right)
\right|
+
\left|
\tan
\left(
\dfrac{2l+1}{2^{p_1-p_q+1}}\pi
\right)
\right|
\left|
\sin
\left(
\dfrac{2^{p_q}\pi\theta}{2}
\right)
\right|
\\
& \leq &
1+
\dfrac{2^{p_q}\pi|\theta|}{2}
\left|
\tan
\left(
\dfrac{2l+1}{2^{p_1-p_q+1}}\pi
\right)
\right|
\,,
\end{eqnarray*}
the last estimate coming by~(\ref{estimsinus2}) from Lemma~\ref{estimsinus}.

\end{proof}

Next, we give the following result that uses the alternating binary decomposition of $l$
that is guaranteed by Lemma~\ref{binaryalt} (since we have assumed that
$l\geq1$).

\begin{lemma}\label{partitionTS}

Let $l\geq1$ be defined by~(\ref{restromega0}) and let us consider its 
alternating binary decomposition
\begin{eqnarray}\label{recallbinaryaltl}
l
& = &
\sum_{i=1}^{2L}
(-1)^{i-1}2^{s_i}
\,,
\end{eqnarray}
where $L\geq1$ and
\begin{eqnarray}\label{recallbinaryaltsj}
s_1\;>\;s_2
\;>\;\cdots\;>\;s_{2L}
\;\geq\;0
\,.
\end{eqnarray}
Then $s_1\leq p_1$.
In addition, if we set
\begin{eqnarray}\label{s2L+1}
s_{2L+1}
& := &
-1
\end{eqnarray}
and
\begin{eqnarray}\label{s0}
s_0
& := &
p_1+1
\,,
\end{eqnarray}
then one still has that
\begin{eqnarray*}
s_0
\;>\;
s_1
\;>\;
\cdots
\;>\;
s_{2L}
\;>\;
s_{2_l+1}
\,,
\end{eqnarray*}
and for all $q=2,\ldots,n$, or
\begin{eqnarray}\label{defT}
p_q\;\in\;
T
\;:=\;
\bigcup_{j=0}^{2L}
\left\{
p_1-s_j
\;\leq\;
i
\;\leq\;
p_1-s_{j+1}-2
\right\}
\end{eqnarray}
(with the convention that the subsets for which
$s_{j+1}=s_j-1$ are empty),
or else
\begin{eqnarray}\label{defS}
p_q
\;\in\;
S
\;:=\;
\left\{
p_1-s_j-1\,,\;j=1,\ldots,2L
\right\}
\,.
\end{eqnarray}

\end{lemma}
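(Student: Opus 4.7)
The plan is to establish the three distinct claims of the lemma in sequence, with the heart of the argument being a simple combinatorial observation: the sets $T$ and $S$ together form a partition of $\{-1,0,\ldots,p_1-1\}$, which automatically contains every value $p_q$ with $q\geq 2$.

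For the bound $s_1\leq p_1$, I would exploit the restriction $1\leq l\leq 2^{p_1}-1$ from (\ref{restromega0}) together with the alternating structure of (\ref{recallbinaryaltl}). By pairwise grouping one can write
\begin{eqnarray*}
l & = & 2^{s_1}-\Bigl[\bigl(2^{s_2}-2^{s_3}\bigr)+\cdots+\bigl(2^{s_{2L-2}}-2^{s_{2L-1}}\bigr)+2^{s_{2L}}\Bigr]\,,
\end{eqnarray*}
where each term inside the bracket is strictly positive thanks to (\ref{recallbinaryaltsj}), giving $l<2^{s_1}$. Combined with $l\leq 2^{p_1}-1<2^{p_1}$, this forces $s_1\leq p_1$. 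The extended strict chain $s_0>s_1>\cdots>s_{2L}>s_{2L+1}$ is then immediate from the definitions~(\ref{s2L+1})--(\ref{s0}): $s_0=p_1+1\geq s_1+1>s_1$ by the bound just proved, while $s_{2L}\geq 0>-1=s_{2L+1}$.

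For the final partition claim, I would list the intervals of $T$ and the singletons of $S$ in the order their integers appear on the number line. The $j$-th interval of $T$ is $\{p_1-s_j,\ldots,p_1-s_{j+1}-2\}$, which is empty exactly when $s_{j+1}=s_j-1$. Under the conventions $s_0=p_1+1$ and $s_{2L+1}=-1$, the $j=0$ interval begins at $p_1-s_0=-1$, and the $j=2L$ interval ends at $p_1-s_{2L+1}-2=p_1-1$. For $j=0,\ldots,2L-1$, the $j$-th interval ends at $p_1-s_{j+1}-2$ and the $(j{+}1)$-th starts at $p_1-s_{j+1}$, so the single integer missing between them is $p_1-s_{j+1}-1$, which is precisely the $(j+1)$-th element of $S$ defined in~(\ref{defS}). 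Hence $T\cup S=\{-1,0,\ldots,p_1-1\}$. Since (\ref{restrN}) gives $0\leq p_q\leq p_2\leq p_1-1$ for every $q=2,\ldots,n$, each such $p_q$ lies in $T\cup S$, as required.

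The only genuinely substantive step is the bound $s_1\leq p_1$; once in hand, the remainder is clean bookkeeping about how the endpoints of consecutive $T$-intervals match up with points of $S$ under the two new conventions on $s_0$ and $s_{2L+1}$.
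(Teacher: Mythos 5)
Your argument for the bound $s_1 \leq p_1$ does not go through. The grouping you wrote,
$l = 2^{s_1} - \bigl[(2^{s_2}-2^{s_3}) + \cdots + (2^{s_{2L-2}}-2^{s_{2L-1}}) + 2^{s_{2L}}\bigr]$,
produces only the \emph{upper} bound $l < 2^{s_1}$. Combining that with the other upper bound $l < 2^{p_1}$ yields nothing about the relation between $s_1$ and $p_1$: two upper bounds on $l$ by distinct powers of two do not compare those powers (for instance, $l=1$ satisfies $l<2^{10}$ and $l<2^{2}$ simultaneously). What the argument actually requires is a \emph{lower} bound of the form $l \geq 2^{s_1-1}$. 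Your pairing idea does deliver it, but only if you pair the other way:
$l = (2^{s_1}-2^{s_2}) + (2^{s_3}-2^{s_4}) + \cdots + (2^{s_{2L-1}}-2^{s_{2L}})$,
each parenthesis being nonnegative by the strict decrease~(\ref{recallbinaryaltsj}), so that $l \geq 2^{s_1}-2^{s_2} \geq 2^{s_1}-2^{s_1-1} = 2^{s_1-1}$. Then $2^{s_1-1} \leq l \leq 2^{p_1}-1 < 2^{p_1}$ forces $s_1-1 < p_1$, hence $s_1 \leq p_1$. The paper reaches the same lower bound $l \geq 2^{s_1}-2^{s_2}$ via the alternating-tail estimate of Lemma~\ref{estimrestalt}; your pairwise grouping is a perfectly good substitute once the pairing direction is corrected.

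Apart from this, your proof is sound, and the extended chain $s_0 > s_1 > \cdots > s_{2L} > s_{2L+1}$ follows immediately once $s_1 \leq p_1$ is in hand. Your explicit interleaving argument showing $T \cup S = \{-1,0,\ldots,p_1-1\}$ --- the $j$-th $T$-interval ends at $p_1-s_{j+1}-2$, the $(j{+}1)$-th begins at $p_1-s_{j+1}$, and the singleton $p_1-s_{j+1}-1$ from $S$ fills the gap, with the $j=0$ interval starting at $-1$ and the $j=2L$ interval ending at $p_1-1$ --- is more transparent than the paper's treatment, which merely observes $p_1-s_0 \leq p_q \leq p_1-s_{2L+1}-2$ and invokes the disjointness of $T$ and $S$ without displaying the tiling.
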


\begin{proof}

First, the decomposition~(\ref{recallbinaryaltl}) yields
\begin{eqnarray*}
l
& = &
\sum_{i=1}^{2L}
(-1)^{i-1}2^{s_i}
\;\geq\;
2^{s_1}
-
\left|
\sum_{i=2}^{2L}
(-1)^{i-1}2^{s_i}
\right|
\;\geq\;
2^{s_1}
-2^{s_2}
\,,
\end{eqnarray*}
the last estimate being justified by Lemma~\ref{estimrestalt}
(because $s_j$ is decreasing).
Since $s_2\leq s_1-1$ by~(\ref{recallbinaryaltsj}), 
it follows by~(\ref{restromega0}) that
\begin{eqnarray*}
2^{p_1}-1
\;\geq\;
l
& \geq &
2^{s_1}-2^{s_1-1}
\;=\;
2^{s_1-1}
\,.
\end{eqnarray*}
If we had that $s_1\geq p_1+1$, this would yield
\begin{eqnarray*}
2^{p_1}-1
\;\geq\;
2^{(p_1+1)-1}
\;=\;
2^{p_1}
\end{eqnarray*}
and this is impossible. Then $s_1\leq p_1$.

Now let fix
$q=2,\ldots,n$. We know by~(\ref{restrN}) and~(\ref{s2L+1}) that 
\begin{eqnarray*}
p_q
\;\leq\;
p_2
\;\leq\;
p_1-1
\;=\;
p_1-s_{2L+1}-2
\,.
\end{eqnarray*}
Similarly, we have by~(\ref{restrN}) and~(\ref{s0}) that 
\begin{eqnarray*}
p_q
\;\geq\;
p_n
\;\geq\;
0
\;>\;
p_1-s_0
\,.
\end{eqnarray*}
It follows that
$p_1-s_0\leq p_q\leq p_1-s_{2L+1}-2$, i.e.
$p_q\in T\cup S$ and the lemma is proved (since
$T$ and $S$ are incompatible).

\end{proof}

\medskip

Now that we have defined the sets $T$ and $S$, we will give auxiliary results
which deal with these sets. We begin with $T$.

\begin{lemma}\label{estimquotcospi/4}

Let be $q=2,\ldots,n$,
and $l=1,\ldots,2^{p_1}-1$.
If $p_q\in T$ then for all $|\theta|\leq1$, one has
\begin{eqnarray*}
\frac{
\left|
\cos
\left(
\dfrac{2l+1}{2^{p_1-p_q+1}}\pi
-
\dfrac{2^{p_q}\pi\theta}{2}
\right)
\right|
}{
\left|
\cos
\left(
\dfrac{2l+1}{2^{p_1-p_q+1}}\pi
\right)
\right|
}
& \leq &
1+
\dfrac{2^{p_q}\pi|\theta|}{2}
\,.
\end{eqnarray*}

\end{lemma}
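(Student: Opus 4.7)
By Lemma~\ref{estimquotcos} it suffices to establish that $|\tan(\frac{(2l+1)\pi}{2^{p_1-p_q+1}})|\leq 1$ whenever $p_q\in T$. Writing $m:=p_1-p_q+1$, the condition $p_q\in T$ gives some $j\in\{0,\ldots,2L\}$ with $s_{j+1}+3\leq m\leq s_j+1$. Since $|\tan\alpha|\leq 1$ iff $\alpha\bmod\pi$ lies within $\pi/4$ of $0$ or $\pi$, the goal reduces to showing that the residue $r:=(2l+1)\bmod 2^m$ satisfies $\min(r,\,2^m-r)\leq 2^{m-2}$.

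I would extract this residue from the alternating binary decomposition~(\ref{recallbinaryaltl}): doubling and adding $1$ gives $2l+1=1+\sum_{i=1}^{2L}(-1)^{i-1}2^{s_i+1}$, and for $i\leq j$ one has $s_i+1\geq s_j+1\geq m$, so those terms vanish modulo $2^m$. Hence $2l+1\equiv 1+B\pmod{2^m}$ with $B:=\sum_{i=j+1}^{2L}(-1)^{i-1}2^{s_i+1}$, and the whole problem reduces to proving $|1+B|\leq 2^{m-2}$.

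This bound I would verify by splitting into three cases according to $j$. For $j=2L$ the sum $B$ is empty, so $|1+B|=1\leq 2^{m-2}$ (valid since $m\geq s_{2L+1}+3=2$). For $j=2L-1$ there is only the single term $B=(-1)^{2L-1}2^{s_{2L}+1}=-2^{s_{2L}+1}$, giving $|1+B|=2^{s_{2L}+1}-1\leq 2^{m-2}-1$. For $j\leq 2L-2$ the tail has at least two terms, and applying Lemma~\ref{estimrestalt} to the alternating sum starting at $i=j+2$ produces $(-1)^j B=2^{s_{j+1}+1}-U$ with $U\in[2^{s_{j+2}},\,2^{s_{j+2}+1}]$; combined with the observation that $B$ is an even integer (every $2^{s_i+1}$ is even, since $s_i\geq 0$), this yields $|B|\leq 2^{s_{j+1}+1}-2\leq 2^{m-2}-2$ and hence $|1+B|\leq 2^{m-2}-1$. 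Feeding $|\tan|\leq 1$ back into Lemma~\ref{estimquotcos} completes the proof.

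The main subtlety I anticipate is that the naive bound $|B|\leq 2^{s_{j+1}+1}$ can saturate at $2^{m-2}$, in which case $|1+B|$ could a priori reach $2^{m-2}+1$ and break the required inequality. The rescue comes from the sign pattern in the single-term regime: saturation forces $j=2L-1$, and since $2L-1$ is odd the sign of $B$ is automatically negative, so adding $1$ actually pulls $1+B$ toward zero and yields $|1+B|=2^{m-2}-1$ rather than $2^{m-2}+1$.
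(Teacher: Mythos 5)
Your proof is correct and takes a genuinely different route from the paper's. The paper keeps the angle $\frac{(2l+1)\pi}{2^{p_1-p_q+1}}$ in fractional form: it splits the alternating decomposition of $l+1/2$ at index $j_q$ into an integer part (absorbed into the period of $\tan$) and a tail of the form $\frac{\pi}{2^{p_1-p_q-s_{j_q+1}}}\cdot\sigma$ with $\sigma\in[\frac12,1]$ by Lemma~\ref{estimrestalt}; since $p_1-p_q-s_{j_q+1}\geq 2$ this puts the reduced angle in $\left]0,\frac{\pi}{4}\right]$, where $\tan$ is monotone, giving $|\tan|\leq1$ at once. You instead recast everything as integer arithmetic modulo $2^m$ with $m=p_1-p_q+1$, reducing the target to the discrete inequality $|1+B|\leq 2^{m-2}$ for the truncated alternating sum $B$, and verify it by a three-way case split on the tail length, using the parity of $B$ (and in the critical one-term case, its forced negative sign because $2L-1$ is odd) to absorb the additive $+1$. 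The trade-off: the paper's argument avoids the boundary delicacy entirely by placing the whole quantity strictly in the interior of $[0,\pi/4]$, while your integer argument has to confront the saturation near $2^{m-2}$ head-on — your final paragraph identifying that subtlety and explaining why the sign pattern rescues it is exactly the right thing to flag. One small point worth tightening if this were written out in full: your claim $U\in[2^{s_{j+2}},2^{s_{j+2}+1}]$ is not a single application of Lemma~\ref{estimrestalt} (which gives only $U\in[0,2^{s_{j+2}+1}]$); the lower bound requires applying the alternating-tail estimate once more to the sub-sum starting at $i=j+3$, or simply noting directly that $U\geq 2^{s_{j+2}+1}-2^{s_{j+3}+1}\geq 2^{s_{j+2}}$. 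Either way the conclusion holds.
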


\begin{proof}

First, since
$p_q\in T$, by~(\ref{defT}) there is $j_q$ with $0\leq j_q\leq 2L$
such that
\begin{eqnarray}\label{pi/4}
p_1-s_{j_q}
\;\leq\;
p_q
\;\leq\;
p_1-s_{j_q+1}-2
\,,
\end{eqnarray}
then (by~(\ref{recallbinaryaltl}) and the convention~(\ref{s2L+1}) )
\begin{eqnarray*}
\frac{2l+1}{2^{p_1-p_q+1}}\pi
& = &
\frac{\pi}{2^{p_1-p_q}}
\left(
l+2^{s_{2L+1}}
\right)
\;=\;
\frac{\pi}{2^{p_1-p_q}}
\sum_{i=1}^{2L+1}
(-1)^{i-1}2^{s_i}
\\
& = &
\pi\sum_{i=1}^{j_q}
(-1)^{i-1}2^{s_i-p_1+p_q}
+
\pi\sum_{i=j_q+1}^{2L+1}
\frac{(-1)^{i-1}}{2^{p_1-p_q-s_i}}
\end{eqnarray*}
In the first sum, one has
$s_i-p_1+p_q\geq s_{j_q}-p_1+p_q\geq0$ 
by~(\ref{recallbinaryaltsj}) and~(\ref{pi/4})
for all $i=1,\ldots,j_q$, then
\begin{eqnarray}
b_q
\;:=\;
\sum_{i=1}^{j_q}
(-1)^{i-1}2^{s_i-p_1+p_q}
& \in &
\mathbb{Z}
\end{eqnarray}
(notice that the sum
$\sum_{i=1}^{j_q}
(-1)^{i-1}2^{s_i-p_1+p_q}$ may be empty if $j_q=0$; even in this case,
the above assertion holds true since
$b_q=\sum_{i\in\emptyset}(-1)^{i-1}2^{s_i-p_1+p_q}=0\in\Z$).
In the second sum (that cannot be empty since
$j_q+1\leq2L+1$), one has
\begin{eqnarray*}
\pi\sum_{i=j_q+1}^{2L+1}
\frac{(-1)^{i-1}}{2^{p_1-p_q-s_i}}
& = &
\frac{(-1)^{j_q}\pi}{2^{p_1-p_q-s_{j_q+1}}}
\sum_{i=j_q+1}^{2L+1}
\frac{(-1)^{i-j_q-1}}{2^{s_{j_q+1}-s_i}}
\,.
\end{eqnarray*}
It follows that
\begin{eqnarray}\nonumber
\left|
\tan
\left(
\frac{2l+1}{2^{p_1-p_q+1}}\pi
\right)
\right|
& = &
\left|
\tan
\left(
b_q\pi
+
\frac{(-1)^{j_q}\pi}{2^{p_1-p_q-s_{j_q+1}}}
\sum_{i=j_q+1}^{2L+1}
\frac{(-1)^{i-j_q-1}}{2^{s_{j_q+1}-s_i}}
\right)
\right|
\\\label{reductanpi/4}
& = &
\left|
\tan
\left(
\frac{\pi}{2^{p_1-p_q-s_{j_q+1}}}
\sum_{i=j_q+1}^{2L+1}
\frac{(-1)^{i-j_q-1}}{2^{s_{j_q+1}-s_i}}
\right)
\right|
\,.
\end{eqnarray}

Next, we claim that
\begin{eqnarray}\label{pi/4encadre}
0\;<\;
\frac{\pi}{2^{p_1-p_q-s_{j_q+1}}}
\sum_{i=j_q+1}^{2L+1}
\frac{(-1)^{i-j_q-1}}{2^{s_{j_q+1}-s_i}}
\;\leq\;
\frac{\pi}{4}
\,.
\end{eqnarray}
Indeed, one has on the one hand that
$p_1-p_q-s_{j_q+1}\geq2$ by~(\ref{pi/4}), then
\begin{eqnarray}\label{pi/4encadre1}
0
\;<\;
\frac{\pi}{2^{p_1-p_q-s_{j_q+1}}}
\;\leq\;
\frac{\pi}{4}
\,.
\end{eqnarray}
On the other hand, one has by Lemma~\ref{estimrestalt} that
\begin{eqnarray*}
\sum_{i=j_q+1}^{2L+1}
\frac{(-1)^{i-j_q-1}}{2^{s_{j_q+1}-s_i}}
& \leq &
\left|
\sum_{i=j_q+1}^{2L+1}
\frac{(-1)^{i-j_q-1}}{2^{s_{j_q+1}-s_i}}
\right|
\;\leq\;
\frac{1}{2^{s_{j_q+1}-s_{j_q+1}}}
\;=\;
1
\,,
\end{eqnarray*}
and
\begin{eqnarray*}
\sum_{i=j_q+1}^{2L+1}
\frac{(-1)^{i-j_q-1}}{2^{s_{j_q+1}-s_i}}
& = &
1-
\sum_{i=j_q+2}^{2L+1}
\frac{(-1)^{i-j_q}}{2^{s_{j_q+1}-s_i}}
\;\geq\;
1-
\left|
\sum_{i=j_q+2}^{2L+1}
\frac{(-1)^{i-j_q}}{2^{s_{j_q+1}-s_i}}
\right|
\\
& \geq &
1-\frac{1}{2^{s_{j_q+1}-s_{j_q+2}}}
\;\geq\;
1-\frac{1}{2}
\;=\;
\frac{1}{2}
\,,
\end{eqnarray*}
the last estimate coming from~(\ref{recallbinaryaltsj})
(notice that the sum
$\sum_{i=j_q+2}^{2L+1}
(-1)^{i-j_q}/2^{s_{j_q+1}-s_i}$
may be empty if $j_q=2L$; even in this case,
one still has that\\
$\sum_{i=j_q+1}^{2L+1}
(-1)^{i-j_q-1}/2^{s_{j_q+1}-s_i}=1-0\geq1/2$).
Hence
\begin{eqnarray}\label{pi/4encadre2}
\frac{1}{2}
\;\leq\;
\sum_{i=j_q+1}^{2L+1}
\frac{(-1)^{i-j_q-1}}{2^{s_{j_q+1}-s_i}}
\;\leq\;
1
\,.
\end{eqnarray}
The claim follows by~(\ref{pi/4encadre1}) and~(\ref{pi/4encadre2}).

In particular, 
$\dfrac{\pi}{2^{p_1-p_q-s_{j_q+1}}}
\sum_{i=j_q+1}^{2L+1}
(-1)^{i-j_q-1}/2^{s_{j_q+1}-s_i}
\in\,\left]0,\dfrac{\pi}{2}\right[$
where the function $\tan$ is positive and increasing.
It follows by~(\ref{reductanpi/4}) that
\begin{eqnarray*}
\left|
\tan
\left(
\frac{2l+1}{2^{p_1-p_q+1}}\pi
\right)
\right|
& = &
\left|
\tan
\left(
\frac{\pi}{2^{p_1-p_q-s_{j_q+1}}}
\sum_{i=j_q+1}^{2L+1}
\frac{(-1)^{i-j_q-1}}{2^{s_{j_q+1}-s_i}}
\right)
\right|
\\
& = &
\tan
\left(
\frac{\pi}{2^{p_1-p_q-s_{j_q+1}}}
\sum_{i=j_q+1}^{2L+1}
\frac{(-1)^{i-j_q-1}}{2^{s_{j_q+1}-s_i}}
\right)
\;\leq\;
\tan
\left(
\frac{\pi}{4}
\right)
\;=\;1
\,,
\end{eqnarray*}
the estimate coming by applying~(\ref{pi/4encadre}) again.
It follows by Lemma~\ref{estimquotcos} that
\begin{eqnarray*}
\frac{
\left|
\cos
\left(
\dfrac{2l+1}{2^{p_1-p_q+1}}\pi
-
\dfrac{2^{p_q}\pi\theta}{2}
\right)
\right|
}{
\left|
\cos
\left(
\dfrac{2l+1}{2^{p_1-p_q+1}}\pi
\right)
\right|
}
& \leq &
1+
\dfrac{2^{p_q}\pi|\theta|}{2}
\left|
\tan
\left(
\dfrac{2l+1}{2^{p_1-p_q+1}}\pi
\right)
\right|
\\
& \leq &
1+
\dfrac{2^{p_q}\pi|\theta|}{2}
\,,
\end{eqnarray*}
and this proves the lemma.

\end{proof}

Next, we deal with the set $S$.

\begin{lemma}\label{estimquotcospi/2}

Let be $q=2,\ldots,n$, and $l=1,\ldots,2^{p_1}-1$. Assume that $p_q\in S$, i.e.
by~(\ref{defS}) from Lemma~\ref{partitionTS} there is
$j_q$ with
$1\leq j_q\leq 2L$ such that
\begin{eqnarray}\label{pi/2}
p_q
& = &
p_1-s_{j_q}-1
\,.
\end{eqnarray}
Then for all $|\theta|\leq1$,
\begin{eqnarray*}
\frac{
\left|
\cos
\left(
\dfrac{2l+1}{2^{p_1-p_q+1}}\pi
-
\dfrac{2^{p_q}\pi\theta}{2}
\right)
\right|
}{
\left|
\cos
\left(
\dfrac{2l+1}{2^{p_1-p_q+1}}\pi
\right)
\right|
}
& \leq &
1+\frac{2^{p_1}|\theta|\pi}{2^{1+s_{j_q+1}}}
\end{eqnarray*}
($s_{j_q+1}$ makes sense since
$2\leq j_q+1\leq 2L+1$).

\end{lemma}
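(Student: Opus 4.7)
The plan is to start from Lemma~\ref{estimquotcos}, which reduces the problem to bounding $|\tan(\tfrac{2l+1}{2^{p_1-p_q+1}}\pi)|$. More precisely, it suffices to prove
\begin{eqnarray*}
\left|\tan\!\left(\tfrac{2l+1}{2^{p_1-p_q+1}}\pi\right)\right|
& \leq &
2^{s_{j_q}-s_{j_q+1}+1}\,,
\end{eqnarray*}
since combining with Lemma~\ref{estimquotcos} and the relation $p_q=p_1-s_{j_q}-1$ (which gives $p_q+s_{j_q}-s_{j_q+1}+1=p_1-s_{j_q+1}$) yields exactly the bound $1+\tfrac{2^{p_1}|\theta|\pi}{2^{1+s_{j_q+1}}}$.

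To obtain this tangent estimate, I would exploit the alternating binary decomposition from Lemma~\ref{binaryalt}, writing, with the convention $s_{2L+1}:=-1$,
\begin{eqnarray*}
\tfrac{2l+1}{2^{p_1-p_q+1}}\pi
& = &
\tfrac{\pi}{2^{s_{j_q}+1}}
\sum_{i=1}^{2L+1}(-1)^{i-1}2^{s_i}
\,.
\end{eqnarray*}
I would split the sum at $i=j_q$. For $i<j_q$, the strict decrease of the $s_i$ gives $s_i\geq s_{j_q}+1$, so each term contributes an integer multiple of $\pi$. The term $i=j_q$ contributes exactly $(-1)^{j_q-1}\pi/2$. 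The remaining tail
\begin{eqnarray*}
\epsilon
\;:=\;
\tfrac{\pi}{2^{s_{j_q}+1}}
\sum_{i=j_q+1}^{2L+1}(-1)^{i-1}2^{s_i}
& = &
\tfrac{(-1)^{j_q}\pi}{2^{s_{j_q}-s_{j_q+1}+1}}
\sum_{i=j_q+1}^{2L+1}(-1)^{i-j_q-1}2^{s_i-s_{j_q+1}}
\end{eqnarray*}
is small. By Lemma~\ref{estimrestalt} applied to the decreasing sequence $2^{s_i-s_{j_q+1}}$ (peeling off the leading term equal to $1$), the remaining alternating sum lies in $[1/2,\,1]$, so
\begin{eqnarray*}
\tfrac{\pi}{2^{s_{j_q}-s_{j_q+1}+2}}
\;\leq\;
|\epsilon|
\;\leq\;
\tfrac{\pi}{2^{s_{j_q}-s_{j_q+1}+1}}
\;\leq\;
\tfrac{\pi}{4}\,,
\end{eqnarray*}
where the last inequality uses $s_{j_q}\geq s_{j_q+1}+1$.

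Since the integer-multiple-of-$\pi$ part is killed by $\tan$ (up to sign), one has $|\tan(\tfrac{2l+1}{2^{p_1-p_q+1}}\pi)|=|\cot\epsilon|\leq 1/|\sin\epsilon|$. Using $|\epsilon|\leq\pi/2$ and the lower bound $|\sin\epsilon|\geq 2|\epsilon|/\pi$ from Lemma~\ref{estimsinus}, together with the lower bound on $|\epsilon|$ above, I would get
\begin{eqnarray*}
\left|\tan\!\left(\tfrac{2l+1}{2^{p_1-p_q+1}}\pi\right)\right|
\;\leq\;
\tfrac{\pi}{2|\epsilon|}
\;\leq\;
2^{s_{j_q}-s_{j_q+1}+1}\,,
\end{eqnarray*}
completing the argument. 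The main technical obstacle is the bookkeeping in the splitting at $i=j_q$: one must verify cleanly that everything before produces an integer multiple of $\pi$, the pivotal term contributes an odd multiple of $\pi/2$ (so that $\tan$ becomes $\cot$ of the tail), and the tail stays in $(0,\pi/4]$ so that the elementary sine estimate is applicable. Once that decomposition is set up, the remaining inequalities are routine consequences of Lemmas~\ref{estimrestalt} and~\ref{estimsinus}.
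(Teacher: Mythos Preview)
Your proposal is correct and follows essentially the same route as the paper: reduce to a tangent bound via Lemma~\ref{estimquotcos}, expand $\tfrac{2l+1}{2^{p_1-p_q+1}}\pi$ through the alternating binary decomposition of $l$, split at $i=j_q$ so that the earlier terms give an integer multiple of $\pi$, the pivotal term gives $\pm\pi/2$, and the tail lies in $(0,\pi/4]$ with the stated lower bound; then pass from $|\tan|$ to $|\cot\epsilon|\le 1/|\sin\epsilon|$ and apply Lemma~\ref{estimsinus}. The only cosmetic difference is that you parametrize everything by $s_{j_q}$ (via $p_1-p_q=s_{j_q}+1$), whereas the paper keeps $p_1-p_q$ explicit; the resulting tangent bound $2^{s_{j_q}-s_{j_q+1}+1}=2^{p_1-p_q-s_{j_q+1}}$ is identical.
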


\begin{proof}

First, $q$ and the associated $j_q$ being fixed,
one has by~(\ref{recallbinaryaltl}) and the convention~(\ref{s2L+1})
from Lemma~\ref{partitionTS} that
\begin{eqnarray*}
\frac{2l+1}{2^{p_1-p_q+1}}\pi
& = &
\frac{\pi}{2^{p_1-p_q}}
\sum_{i=1}^{2L+1}
(-1)^{i-1}2^{s_i}
\\
& = &
\pi\sum_{i=1}^{j_q-1}
(-1)^{i-1}2^{s_i-p_1+p_q}
+
\frac{(-1)^{j_q-1}\pi}{2^{p_1-p_q-s_{j_q}}}
+\pi
\sum_{i=j_q+1}^{2L+1}
\frac{(-1)^{i-1}}{2^{p_1-p_q-s_i}}
\,.
\end{eqnarray*}
As before, if the first sum is not empty (otherwise it gives $0\in\,\pi\mathbb{Z}$), one has
by~(\ref{recallbinaryaltsj}) and~(\ref{pi/2}) for all $i=1,\ldots,j_q-1$, that
\begin{eqnarray*}
s_i-p_1+p_q
\;\geq\;
s_{j_q-1}-p_1+p_q
\;\geq\;
s_{j_q}+1-p_1+p_q
\;=\;
-1+1
\;=\;
0
\,,
\end{eqnarray*}
then
\begin{eqnarray*}
\widetilde{b}_q
\;:=\;
\sum_{i=1}^{j_q-1}
(-1)^{i-1}2^{s_i-p_1+p_q}
& \in &
\mathbb{Z}
\,.
\end{eqnarray*}
Similarly, for all $i=j_q+1,\ldots,2L+1$,
\begin{eqnarray*}
\pi
\sum_{i=j_q+1}^{2L+1}
\frac{(-1)^{i-1}}{2^{p_1-p_q-s_i}}
& = &
\frac{(-1)^{j_q}\pi}{2^{p_1-p_q-s_{j_q+1}}}
\sum_{i=j_q+1}^{2L+1}
\frac{(-1)^{i-j_q-1}}{2^{s_{j_q+1}-s_i}}
\,.
\end{eqnarray*}
Since by~(\ref{pi/2}) again, $p_1-p_q-s_{j_q}=1$, it follows that
\begin{eqnarray}\nonumber
\left|
\tan
\left(
\frac{2l+1}{2^{p_1-p_q+1}}\pi
\right)
\right|
& = &
\left|
\tan
\left(
\widetilde{b}_q\pi
+(-1)^{j_q-1}\frac{\pi}{2}
+
\frac{(-1)^{j_q}\pi}{2^{p_1-p_q-s_{j_q+1}}}
\sum_{i=j_q+1}^{2L+1}
\frac{(-1)^{i-j_q-1}}{2^{s_{j_q+1}-s_i}}
\right)
\right|
\\\nonumber
& = &
\left|
\cot
\left(
\frac{\pi}{2^{p_1-p_q-s_{j_q+1}}}
\sum_{i=j_q+1}^{2L+1}
\frac{(-1)^{i-j_q-1}}{2^{s_{j_q+1}-s_i}}
\right)
\right|
\\\label{reductanpi/2}
& \leq &
\frac{1}{
\left|
\sin
\left(
\dfrac{\pi}{2^{p_1-p_q-s_{j_q+1}}}
\sum_{i=j_q+1}^{2L+1}
(-1)^{i-j_q-1}/2^{s_{j_q+1}-s_i}
\right)
\right|
}
\,.
\end{eqnarray}

Next, as in the proof of Lemma~\ref{estimquotcospi/4}, we claim that
\begin{eqnarray}\label{pi/2encadre}
\frac{\pi/2}{2^{p_1-p_q-s_{j_q+1}}}
\;\leq\;
\frac{\pi}{2^{p_1-p_q-s_{j_q+1}}}
\sum_{i=j_q+1}^{2L+1}
\frac{(-1)^{i-j_q-1}}{2^{s_{j_q+1}-s_i}}
\;\leq\;
\frac{\pi}{4}
\,.
\end{eqnarray}
Indeed, one has on the one hand by~(\ref{recallbinaryaltsj}) and~(\ref{pi/2}) that
$p_1-p_q-s_{j_q+1}\geq p_1-p_q-s_{j_q}+1=1+1=2$ then
\begin{eqnarray}\label{pi/2encadre1}
\frac{\pi}{2^{p_1-p_q-s_{j_q+1}}}
& \leq &
\frac{\pi}{4}
\,.
\end{eqnarray}
On the other hand, one has by Lemma~\ref{estimrestalt} that
\begin{eqnarray*}
\sum_{i=j_q+1}^{2L+1}
\frac{(-1)^{i-j_q-1}}{2^{s_{j_q+1}-s_i}}
& \leq &
\left|
\sum_{i=j_q+1}^{2L+1}
\frac{(-1)^{i-j_q-1}}{2^{s_{j_q+1}-s_i}}
\right|
\;\leq\;
\frac{1}{2^{s_{j_q+1}-s_{j_q+1}}}
\;=\;
1
\,,
\end{eqnarray*}
and
\begin{eqnarray*}
\sum_{i=j_q+1}^{2L+1}
\frac{(-1)^{i-j_q-1}}{2^{s_{j_q+1}-s_i}}
& = &
1-
\sum_{i=j_q+2}^{2L+1}
\frac{(-1)^{i-j_q}}{2^{s_{j_q+1}-s_i}}
\;\geq\;
1-
\left|
\sum_{i=j_q+2}^{2L+1}
\frac{(-1)^{i-j_q}}{2^{s_{j_q+1}-s_i}}
\right|
\\
& \geq &
1-
\frac{1}{2^{s_{j_q+1}-s_{j_q+2}}}
\;\geq\;
1-\frac{1}{2}
\;=\;
\frac{1}{2}
\,,
\end{eqnarray*}
the last estimate coming from~(\ref{recallbinaryaltsj}).
Once again, if the sum
$\sum_{i=j_q+2}^{2L+1}
(-1)^{i-j_q}/2^{s_{j_q+1}-s_i}$ is empty (only if
$j_q=2L$), then
one still has that
$\sum_{i=j_q+1}^{2L+1}
(-1)^{i-j_q-1}/2^{s_{j_q+1}-s_i}=1\geq1/2$.
Hence
\begin{eqnarray}\label{pi/2encadre2}
\frac{1}{2}
\;\leq\;
\sum_{i=j_q+1}^{2L+1}
\frac{(-1)^{i-j_q-1}}{2^{s_{j_q+1}-s_i}}
\;\leq\;
1
\,.
\end{eqnarray}
The claim follows by~(\ref{pi/2encadre1}) and~(\ref{pi/2encadre2}).

In particular, 
$\dfrac{\pi}{2^{p_1-p_q-s_{j_q+1}}}
\sum_{i=j_q+1}^{2L+1}
(-1)^{i-j_q-1}/2^{s_{j_q+1}-s_i}
\;\in\;\left]0,\dfrac{\pi}{2}\right[$
where the function $\sin$ is positive and increasing.
It follows by~(\ref{reductanpi/2}) and~(\ref{pi/2encadre}) that
\begin{eqnarray*}
\left|
\tan
\left(
\frac{2l+1}{2^{p_1-p_q+1}}\pi
\right)
\right|
& \leq &
\frac{1}{
\left|
\sin
\left(
\dfrac{\pi}{2^{p_1-p_q-s_{j_q+1}}}
\sum_{i=j_q+1}^{2L+1}
(-1)^{i-j_q-1}/2^{s_{j_q+1}-s_i}
\right)
\right|
}
\\
& = &
\frac{1}{
\sin
\left(
\dfrac{\pi}{2^{p_1-p_q-s_{j_q+1}}}
\sum_{i=j_q+1}^{2L+1}
(-1)^{i-j_q-1}/2^{s_{j_q+1}-s_i}
\right)
}
\\
& \leq &
\frac{1}{
\sin
\left(
\dfrac{\pi/2}{2^{p_1-p_q-s_{j_q+1}}}
\right)
}
\;\leq\;
\frac{1}{
\dfrac{2}{\pi}
\times
\dfrac{\pi/2}{2^{p_1-p_q-s_{j_q+1}}}
}
\;=\;
2^{p_1-p_q-s_{j_q+1}}
\,,
\end{eqnarray*}
the last estimate being justified by~(\ref{estimsinus1})
from Lemma~\ref{estimsinus}.

Finally, an application of Lemma~\ref{estimquotcos} yields
\begin{eqnarray*}
\frac{
\left|
\cos
\left(
\dfrac{2l+1}{2^{p_1-p_q+1}}\pi
-
\dfrac{2^{p_q}\pi\theta}{2}
\right)
\right|
}{
\left|
\cos
\left(
\dfrac{2l+1}{2^{p_1-p_q+1}}\pi
\right)
\right|
}
& \leq &
1+
\dfrac{2^{p_q}\pi|\theta|}{2}
\left|
\tan
\left(
\dfrac{2l+1}{2^{p_1-p_q+1}}\pi
\right)
\right|
\\
& \leq &
1+
\dfrac{2^{p_q}\pi|\theta|}{2}
\times
2^{p_1-p_q-s_{j_q+1}}
\;=\;
1+\frac{2^{p_1}\pi|\theta|}{2^{1+s_{j_q+1}}}
\,,
\end{eqnarray*}
and the lemma is proved.

\end{proof}

We finish the subsection with this result about
the application $q\mapsto j_q$.

\begin{lemma}\label{jqinj}

The following application (that is well-defined
by~(\ref{pi/2}) )
\begin{eqnarray}
\left\{
2\leq q\leq n,\,p_q\in S
\right\}
& \xrightarrow[]{} &
\left\{
1\leq j\leq 2L
\right\}
\\\nonumber
q
& \mapsto &
j_q
\,,
\end{eqnarray}
is strictly decreasing. It is in particular
injective.

\end{lemma}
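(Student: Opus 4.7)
The plan is to show this by directly inverting the defining relation~\eqref{pi/2}. Given $q \in \{2,\ldots,n\}$ with $p_q \in S$, the associated $j_q \in \{1,\ldots,2L\}$ is characterized by
\begin{eqnarray*}
s_{j_q} & = & p_1 - p_q - 1\,,
\end{eqnarray*}
so the map $q \mapsto j_q$ factors as $q \mapsto p_q \mapsto p_1 - p_q - 1 \mapsto j_q$.

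I would then use the two strict monotonicities already available in the setup. By~\eqref{restrN}, the sequence $p_2 > p_3 > \cdots > p_n$ is strictly decreasing in $q$, so $q \mapsto p_1 - p_q - 1$ is strictly increasing. By~\eqref{recallbinaryaltsj}, the sequence $s_1 > s_2 > \cdots > s_{2L}$ is strictly decreasing in the index, so the inverse map $s_{j_q} \mapsto j_q$ is strictly decreasing as a function of $s_{j_q}$. Composing these three maps, $q \mapsto j_q$ is strictly decreasing, and in particular injective.

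There is really no obstacle here: once one notices that $j_q$ is obtained from $p_q$ by two successive monotone operations (one increasing, one decreasing), the claim is immediate. The only care to take is to confirm that $s_{j_q} = p_1 - p_q - 1$ always lies in the admissible range $\{s_1, s_2, \ldots, s_{2L}\}$, which is exactly the hypothesis $p_q \in S$ encoded in~\eqref{defS}, so that $j_q$ is well-defined to begin with.
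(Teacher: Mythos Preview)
Your proposal is correct and follows essentially the same approach as the paper: both arguments use the relation $s_{j_q} = p_1 - p_q - 1$ from~\eqref{pi/2}, the strict monotonicity of $q \mapsto p_q$ from~\eqref{restrN}, and the strict monotonicity of $j \mapsto s_j$ from~\eqref{recallbinaryaltsj}. The paper compares two indices $q < q'$ directly, while you phrase it as a composition of monotone maps, but the content is identical.
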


\begin{proof}

Indeed, let be $q,\,q'$ 
with $2\leq q<q'\leq n$ 
and such that
$p_q,\,p_{q'}\in S$.
By~(\ref{defS}), there are 
$1\leq j_q,\,j_{q'}\leq 2L$ such that
$p_q=p_1-s_{j_q}-1$ and
$p_{q'}=p_1-s_{j_{q'}}-1$.
Since one has by~(\ref{restrN}) that
\begin{eqnarray*}
p_1-s_{j_q}-1
\;=\;
p_q
\;>\;
p_{q'}
\;=\;
p_1-s_{j_{q'}}-1
\,,
\end{eqnarray*}
it follows that
$s_{j_q}<s_{j_{q'}}$ and by~(\ref{recallbinaryaltsj}) 
this gives
\begin{eqnarray*}
j_q
& > &
j_{q'}
\,.
\end{eqnarray*}

\end{proof}

\bigskip

\subsection{First case: $|\theta|\leq1/2^{p_1}$}\label{step1}

In this subsection, we want to prove the estimate~(\ref{estimatespecial}) when
$|\theta|\leq1/2^{p_1}$. We can assume in all
the following that $\theta\neq0$ since
\begin{eqnarray*}
l_1^{(N)}(1)
\;=\;
l_1^{(N)}(z_1)
\;=\;
1
\,.
\end{eqnarray*}
First, one has by~(\ref{descrl1genz0}), (\ref{restromega0}) and~(\ref{restrz}) that
\begin{eqnarray*}\nonumber
\left|
l_1^{(N)}
\left(
z
\right)
\right|
\;=\;
\left|
l_1^{(N)}
\left(
\exp(i\pi\theta)
\right)
\right|
& = &
\;\;\;\;\;\;\;\;\;\;\;\;\;\;\;\;\;\;\;\;\;\;\;\;\;\;\;\;\;\;\;\;\;\;\;\;
\;\;\;\;\;\;\;\;\;\;\;\;\;\;\;\;\;\;\;\;\;\;\;\;\;\;\;\;\;\;\;\;\;\;\;\;
\end{eqnarray*}
\begin{eqnarray*}
& = &
\frac{1}{2^{p_1}}
\left|
\sum_{j=0}^{2^{p_1}-1}
\exp
\left(
2^ji\pi\theta
\right)
\right|
\prod_{q=2}^n
\frac{
\left|
\exp
\left(
2^{p_q}i\pi\theta
\right)
+
\exp\left(\dfrac{2l+1}{2^{p_1-p_q}}i\pi\right)
\right|
}{
\left|1+
\exp\left(\dfrac{2l+1}{2^{p_1-p_q}}i\pi\right)
\right|
}
\\
& \leq &
\frac{1}{2^{p_1}}
\left(
\sum_{j=0}^{2^{p_1}-1}1
\right)
\prod_{q=2}^n
\frac{
\left|
1
+
\exp\left(
\dfrac{2l+1}{2^{p_1-p_q}}i\pi
-
2^{p_q}i\pi\theta
\right)
\right|
}{
\left|1+
\exp\left(\dfrac{2l+1}{2^{p_1-p_q}}i\pi\right)
\right|
}
\\%
\;\;\;\;\;\;\;\;\;\;\;
& = &
\prod_{q=2}^n
\frac{
2\left|
\cos
\left(
\dfrac{2l+1}{2^{p_1-p_q}}
\dfrac{\pi}{2}
-
\dfrac{2^{p_q}\pi\theta}{2}
\right)
\right|
}{
2\left|
\cos
\left(
\dfrac{2l+1}{2^{p_1-p_q}}
\dfrac{\pi}{2}
\right)
\right|
}
\;=\;
\prod_{q=2}^n
\frac{
\left|
\cos
\left(
\dfrac{2l+1}{2^{p_1-p_q+1}}\pi
-
\dfrac{2^{p_q}\pi\theta}{2}
\right)
\right|
}{
\left|
\cos
\left(
\dfrac{2l+1}{2^{p_1-p_q+1}}\pi
\right)
\right|
}
\,.
\end{eqnarray*}
Next, an application of Lemma~\ref{partitionTS} yields
\begin{eqnarray}\label{reducl1p1}
& &
\left|
l_1^{(N)}
\left(
z
\right)
\right|
\;\leq\;
\left[
\prod_{2\leq q\leq n,p_q\in T}
\times
\prod_{2\leq q\leq n,p_q\in S}
\right]
\frac{
\left|
\cos
\left(
\dfrac{2l+1}{2^{p_1-p_q+1}}\pi
-
\dfrac{2^{p_q}\pi\theta}{2}
\right)
\right|
}{
\left|
\cos
\left(
\dfrac{2l+1}{2^{p_1-p_q+1}}\pi
\right)
\right|
}
\,.
\end{eqnarray}

\medskip

We first deal with the product associated with $T$.
One has by Lemma~\ref{estimquotcospi/4} that
\begin{eqnarray*}
\prod_{2\leq q\leq n,p_q\in T}
\frac{
\left|
\cos
\left(
\dfrac{2l+1}{2^{p_1-p_q+1}}\pi
-
\dfrac{2^{p_q}\pi\theta}{2}
\right)
\right|
}{
\left|
\cos
\left(
\dfrac{2l+1}{2^{p_1-p_q+1}}\pi
\right)
\right|
}
& \leq &
\prod_{2\leq q\leq n,p_q\in T}
\left(
1+
\dfrac{2^{p_q}\pi|\theta|}{2}
\right)
\\
& \leq &
\prod_{q=2}^n
\left(
1+
\dfrac{2^{p_q}\pi|\theta|}{2}
\right)
\;\leq\;
\prod_{q=2}^n
\left(
1+
\dfrac{\pi/2}{2^{p_1-p_q}}
\right)
\,,
\end{eqnarray*}
the last estimate being justified by the condition that
$|\theta|\leq1/2^{p_1}$.
On the other hand, 
an immediate induction on
$q=1,\ldots,n$, that uses~(\ref{restrN}), gives that
\begin{eqnarray*}
p_1-p_q
& \geq &
q-1
\,.
\end{eqnarray*}
In addition, an application of Lemma~\ref{estimexp} yields
\begin{eqnarray}\nonumber
\prod_{2\leq q\leq n,p_q\in T}
\frac{
\left|
\cos
\left(
\dfrac{2l+1}{2^{p_1-p_q+1}}\pi
-
\dfrac{2^{p_q}\pi\theta}{2}
\right)
\right|
}{
\left|
\cos
\left(
\dfrac{2l+1}{2^{p_1-p_q+1}}\pi
\right)
\right|
}
& \leq &
\prod_{q=2}^n
\left(
1+
\frac{\pi/2}{2^{q-1}}
\right)
\;\leq\;
\prod_{q=2}^n
\exp
\left(
\frac{\pi/2}{2^{q-1}}
\right)
\\\label{estimprodp1pi/4}
& \leq &
\exp
\left(
\sum_{q\geq1}
\frac{\pi/2}{2^{q}}
\right)
\;=\;
\exp
\left(
\frac{\pi}{2}
\right)
\,.
\end{eqnarray}

\medskip

Now we deal with the product associated with $S$. Lemma~\ref{estimquotcospi/2}
leads to
\begin{eqnarray*}
\prod_{2\leq q\leq n,p_q\in S}
\frac{
\left|
\cos
\left(
\dfrac{2l+1}{2^{p_1-p_q+1}}\pi
-
\dfrac{2^{p_q}\pi\theta}{2}
\right)
\right|
}{
\left|
\cos
\left(
\dfrac{2l+1}{2^{p_1-p_q+1}}\pi
\right)
\right|
}
& \leq &
\prod_{2\leq q\leq n,p_q\in S}
\left(
1+\frac{2^{p_1}|\theta|\pi}{2^{1+s_{j_q+1}}}
\right)
\\
& \leq &
\prod_{2\leq q\leq n,p_q\in S}
\left(
1+\frac{\pi}{2^{1+s_{j_q+1}}}
\right)
\,,
\end{eqnarray*}
the last estimate being justified by the condition that
$|\theta|\leq1/2^{p_1}$.
On the other hand, 
the application $q\mapsto j_q$ is injective by Lemma~\ref{jqinj},
then so is 
\begin{eqnarray*}
\left\{2\leq q\leq n
,\,
p_q\in S
\right\}
& \xrightarrow[]{} &
\left\{2\leq j\leq 2L+1\right\}
\\
q
& \mapsto &
j_q+1
\,.
\end{eqnarray*}
It follows that
\begin{eqnarray*}
\prod_{2\leq q\leq n,p_q\in S}
\frac{
\left|
\cos
\left(
\dfrac{2l+1}{2^{p_1-p_q+1}}\pi
-
\dfrac{2^{p_q}\pi\theta}{2}
\right)
\right|
}{
\left|
\cos
\left(
\dfrac{2l+1}{2^{p_1-p_q+1}}\pi
\right)
\right|
}
& \leq &
\prod_{j=2}^{2L+1}
\left(
1+\frac{\pi}{2^{1+s_j}}
\right)
\,.
\end{eqnarray*}
In addition,  an immediate descending induction on
$j=2L+1,\ldots,2$, with~(\ref{recallbinaryaltsj}) 
and the convention~(\ref{s2L+1}) together yield
\begin{eqnarray*}
1+s_j
& \geq &
2L+1-j
\,.
\end{eqnarray*}
This fact and an application of Lemma~\ref{estimexp} lead to
\begin{eqnarray}\nonumber
\prod_{2\leq q\leq n,p_q\in S}
\frac{
\left|
\cos
\left(
\dfrac{2l+1}{2^{p_1-p_q+1}}\pi
-
\dfrac{2^{p_q}\pi\theta}{2}
\right)
\right|
}{
\left|
\cos
\left(
\dfrac{2l+1}{2^{p_1-p_q+1}}\pi
\right)
\right|
}
& \leq &
\prod_{j=2}^{2L+1}
\left(
1+\frac{\pi}{2^{2L+1-j}}
\right)
\;\leq\;
\prod_{j=2}^{2L+1}
\exp
\left(
\frac{\pi}{2^{2L+1-j}}
\right)
\\\label{estimprodp1pi/2}
& \leq &
\exp
\left(
\sum_{j\geq0}
\frac{\pi}{2^j}
\right)
\;=\;
\exp
\left(
2\pi
\right)
\,.
\end{eqnarray}

\bigskip

Finally, the estimates~(\ref{reducl1p1}), (\ref{estimprodp1pi/4}) and~(\ref{estimprodp1pi/2})
together yield
\begin{eqnarray*}
\left|
l_1^{(N)}
\left(
z
\right)
\right|
& \leq &
\exp
\left(
\frac{\pi}{2}
\right)
\times
\exp
\left(
2\pi
\right)
\;\leq\;
\exp\left(3\pi\right)
\,,
\end{eqnarray*}
and this proves the estimate~(\ref{estimatespecial}) in the case
$|\theta|\leq1/2^{p_1}$.

\bigskip

\subsection{Second case: $1/2^{p_1}\leq|\theta|\leq1/2^{p_{n}}$}\label{step2}

In this subsection we fix $q=q_{\theta}=2,\ldots,n$
(where we remind that $q_{\theta}$ is defined
by~(\ref{defpqtheta}) ) and
\begin{eqnarray}
\frac{1}{2^{p_{q_{\theta}-1}}}
\;\leq\;
|\theta|
\;\leq\;
\frac{1}{2^{p_{q_{\theta}}}}
\,.
\end{eqnarray}

Let us consider the following set
\begin{eqnarray}\label{defStheta}
S_{\theta}
\;:=\;
\left\{
q=q_{\theta},\ldots,n,\;p_q\in S
\,
\right\}
\;=\;
\left\{
q_{i}
\right\}_{1\leq i\leq m}
\,,
\end{eqnarray}
where (only if 
$S_{\theta}$ is non empty)
$m=\mbox{card}(S_{\theta})\geq1$ and
the numeration $(q_i)_{1\leq i\leq m}$ is chosen so that
\begin{eqnarray}\label{introqi}
q_{\theta}
\;\leq\;
q_1
\;<\;
\cdots
\;<\;
q_i
\;<\;
\cdots
\;<\;
q_m
\;\leq\;
n
\,.
\end{eqnarray}

Before dealing with this case, we want to give a couple of
auxiliary results which will be useful in all the subsection. The first one
deals with the $q$'s whose $p_q\in T$ (recall
definition~(\ref{defT}) ).

\begin{lemma}\label{estimprodqthetaT}

$q_{\theta}=2,\ldots,n$ being fixed, one has
for all $\w{q}\geq q_{\theta}$ and all
$|\theta|\leq1/2^{p_{q_{\theta}}}$,
\begin{eqnarray*}
\prod_{\w{q}\leq q\leq n,\,p_q\in T}
\frac{
\left|
\cos
\left(
\dfrac{2l+1}{2^{p_1-p_q+1}}\pi
-
\dfrac{2^{p_q}\pi\theta}{2}
\right)
\right|
}{
\left|
\cos
\left(
\dfrac{2l+1}{2^{p_1-p_q+1}}\pi
\right)
\right|
}
& \leq &
\exp\left(\pi\right)
\,.
\end{eqnarray*}

\end{lemma}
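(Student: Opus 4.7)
The plan is to mimic the argument used to prove estimate~(\ref{estimprodp1pi/4}) in Subsection~\ref{step1}, but with a shifted starting index that compensates for the weakened hypothesis on $|\theta|$. The key observation is that, while in Subsection~\ref{step1} we had $|\theta|\leq 1/2^{p_1}$, here $|\theta|$ may be as large as $1/2^{p_{q_\theta}}$; restricting the product to $q\geq\widetilde{q}\geq q_\theta$ will recover the same geometric decay of the factors.

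First, since each $p_q\in T$, Lemma~\ref{estimquotcospi/4} bounds each factor of the product by $1+\tfrac{2^{p_q}\pi|\theta|}{2}$. Using $|\theta|\leq 1/2^{p_{q_\theta}}$ this becomes $1+\tfrac{\pi}{2}\cdot 2^{p_q-p_{q_\theta}}$. Therefore,
\begin{eqnarray*}
\prod_{\widetilde{q}\leq q\leq n,\,p_q\in T}
\frac{
\left|
\cos
\left(
\tfrac{2l+1}{2^{p_1-p_q+1}}\pi
-
\tfrac{2^{p_q}\pi\theta}{2}
\right)
\right|
}{
\left|
\cos
\left(
\tfrac{2l+1}{2^{p_1-p_q+1}}\pi
\right)
\right|
}
\;\leq\;
\prod_{q=\widetilde{q}}^{n}
\left(1+\tfrac{\pi}{2}\cdot 2^{p_q-p_{q_\theta}}\right).
\end{eqnarray*}

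Next, because $p_1>p_2>\cdots>p_n\geq 0$ is a strictly decreasing sequence of nonnegative integers, an immediate induction shows that $p_{q_\theta}-p_q\geq q-q_\theta$ for every $q\geq q_\theta$, so $2^{p_q-p_{q_\theta}}\leq 2^{-(q-q_\theta)}$. Combined with Lemma~\ref{estimexp} ($1+x\leq\exp(x)$), this gives
\begin{eqnarray*}
\prod_{q=\widetilde{q}}^{n}
\left(1+\tfrac{\pi}{2}\cdot 2^{p_q-p_{q_\theta}}\right)
\;\leq\;
\exp\left(\sum_{q=\widetilde{q}}^{n}\tfrac{\pi}{2}\cdot 2^{-(q-q_\theta)}\right)
\;\leq\;
\exp\left(\tfrac{\pi}{2}\sum_{q\geq q_\theta}2^{-(q-q_\theta)}\right)
\;=\;
\exp(\pi),
\end{eqnarray*}
using $\widetilde{q}\geq q_\theta$ and the convergent geometric series $\sum_{k\geq 0}2^{-k}=2$.

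There is no genuine obstacle: the estimate is entirely parallel to the one in Subsection~\ref{step1}, with the index shift $q_\theta$ playing the role of $1$, and the argument is driven by the two ingredients already isolated in the preliminary subsection (Lemma~\ref{estimquotcospi/4} and Lemma~\ref{estimexp}). The only mildly delicate point is justifying the gap estimate $p_{q_\theta}-p_q\geq q-q_\theta$, which is immediate from~(\ref{restrN}).
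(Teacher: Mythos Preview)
Your proof is correct and follows essentially the same approach as the paper: apply Lemma~\ref{estimquotcospi/4} to each factor, use $|\theta|\leq 1/2^{p_{q_\theta}}$, invoke the gap estimate $p_{q_\theta}-p_q\geq q-q_\theta$ from~(\ref{restrN}), and conclude via Lemma~\ref{estimexp} and the geometric series. The only cosmetic differences are that the paper explicitly notes the empty-product case $\widetilde{q}>n$ and extends the product first to $q\geq q_\theta$ rather than $q\geq\widetilde{q}$ before summing, but these are immaterial.
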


\begin{proof}

First, one can assume that
$q_{\theta}\leq\w{q}\leq n$ (indeed,
if $\w{q}>n$, then the above product is empty and
the estimate holds true).
For all $q=\w{q},\ldots,n$ such that
$p_q\in T$, one can apply Lemma~\ref{estimquotcospi/4}
to get (since
$\w{q}\geq q_{\theta}$)
\begin{eqnarray*}
\prod_{\w{q}\leq q\leq n,\,p_q\in T}
\frac{
\left|
\cos
\left(
\dfrac{2l+1}{2^{p_1-p_q+1}}\pi
-
\dfrac{2^{p_q}\pi\theta}{2}
\right)
\right|
}{
\left|
\cos
\left(
\dfrac{2l+1}{2^{p_1-p_q+1}}\pi
\right)
\right|
}
& \leq &
\prod_{\w{q}\leq q\leq n,\,p_q\in T}
\left(
1+
\frac{2^{p_q}\pi|\theta|}{2}
\right)
\;\;\;\;\;\;\;\;\;\;\;\;\;\;\;\;\;\;\;\;\;\;\;\;\;\;\;\;\;\;\;\;\;\;\;\;
\end{eqnarray*}
\begin{eqnarray*}
\;\;\;\;
& \leq &
\prod_{q_{\theta}\leq q\leq n,\,p_q\in T}
\left(
1+
\frac{2^{p_q}\pi|\theta|}{2}
\right)
\;\leq\;
\prod_{q=q_{\theta}}^n
\left(
1+
\frac{2^{p_q}\pi|\theta|}{2}
\right)
\;\leq\;
\prod_{q=q_{\theta}}^n
\left(
1+
\frac{\pi/2}{2^{p_{q_{\theta}}-p_q}}
\right)
\,,
\end{eqnarray*}
the last estimate being justified by the condition that
$|\theta|\leq1/2^{p_{q_{\theta}}}$.
On the other hand, an immediate induction on
$q=q_{\theta},\ldots,n$ that uses~(\ref{restrN}) yields
\begin{eqnarray*}
p_{q_{\theta}}-p_q
& \geq &
q-q_{\theta}
\,.
\end{eqnarray*}
It follows by applying Lemma~\ref{estimexp} that
\begin{eqnarray*}
\prod_{\w{q}\leq q\leq n,\,p_q\in T}
\frac{
\left|
\cos
\left(
\dfrac{2l+1}{2^{p_1-p_q+1}}\pi
-
\dfrac{2^{p_q}\pi\theta}{2}
\right)
\right|
}{
\left|
\cos
\left(
\dfrac{2l+1}{2^{p_1-p_q+1}}\pi
\right)
\right|
}
& \leq &
\prod_{q=q_{\theta}}^n
\left(
1+
\frac{\pi/2}{2^{q-q_{\theta}}}
\right)
\;\leq\;
\prod_{q=q_{\theta}}^n
\exp
\left(
\frac{\pi/2}{2^{q-q_{\theta}}}
\right)
\\
& \leq &
\exp
\left(
\sum_{j\geq0}
\frac{\pi/2}{2^j}
\right)
\;=\;
\exp\left(\pi\right)
\,,
\end{eqnarray*}
and the lemma is proved.

\end{proof}

The next one deals with the set
$S_{\theta}$.

\begin{lemma}\label{estimprodqthetaS}

$q_{\theta}=2,\ldots,n$ being fixed and
$q_1$ being defined by~(\ref{defStheta}), one has for all
$|\theta|\leq1/2^{p_{q_{\theta}}}$,
\begin{eqnarray*}
\prod_{q\in S_{\theta},q\neq q_1}
\frac{
\left|
\cos
\left(
\dfrac{2l+1}{2^{p_1-p_q+1}}\pi
-
\dfrac{2^{p_q}\pi\theta}{2}
\right)
\right|
}{
\left|
\cos
\left(
\dfrac{2l+1}{2^{p_1-p_q+1}}\pi
\right)
\right|
}
& \leq &
\exp
\left(
2\pi
\right)
\,.
\end{eqnarray*}

\end{lemma}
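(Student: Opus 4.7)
The approach is structurally parallel to the proof of Lemma~\ref{estimprodqthetaT}, but with the sharper factor bound of Lemma~\ref{estimquotcospi/2} for the $p_q\in S$ case, combined with the strict monotonicity of the map $q\mapsto j_q$ supplied by Lemma~\ref{jqinj}. The rough idea: each factor in the product is controlled by $1+\pi/2^{\alpha_i}$ for a nonnegative integer $\alpha_i$; since these exponents turn out to be distinct, $\sum\pi/2^{\alpha_i}\leq 2\pi$, and Lemma~\ref{estimexp} converts the product into the desired exponential bound.

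In detail, I would first apply Lemma~\ref{estimquotcospi/2} to each factor with $q=q_i$, $i\geq 2$, obtaining
$$\frac{\left|\cos\!\left(\tfrac{2l+1}{2^{p_1-p_{q_i}+1}}\pi-\tfrac{2^{p_{q_i}}\pi\theta}{2}\right)\right|}{\left|\cos\!\left(\tfrac{2l+1}{2^{p_1-p_{q_i}+1}}\pi\right)\right|}\;\leq\;1+\frac{2^{p_1}|\theta|\pi}{2^{1+s_{j_{q_i}+1}}}.$$
Using the hypothesis $|\theta|\leq 1/2^{p_{q_\theta}}$, the right-hand side is at most $1+\pi/2^{\alpha_i}$, where $\alpha_i:=1+s_{j_{q_i}+1}+p_{q_\theta}-p_1$.

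The heart of the argument is to show $\alpha_i\geq 0$ and that the $\alpha_i$ are mutually distinct. For $i\geq 2$, since $q_{i-1}<q_i$ both lie in $S_\theta$, Lemma~\ref{jqinj} gives $j_{q_i}<j_{q_{i-1}}$, i.e.\ $j_{q_i}+1\leq j_{q_{i-1}}$, whence (\ref{recallbinaryaltsj}) yields $s_{j_{q_i}+1}\geq s_{j_{q_{i-1}}}$. On the other hand, $q_{i-1}\geq q_\theta$ combined with~(\ref{restrN}) gives $p_{q_{i-1}}\leq p_{q_\theta}$; together with $p_{q_{i-1}}=p_1-s_{j_{q_{i-1}}}-1$ (from~(\ref{defS})), this yields $s_{j_{q_{i-1}}}\geq p_1-p_{q_\theta}-1$. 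Chaining: $s_{j_{q_i}+1}\geq p_1-p_{q_\theta}-1$, i.e.\ $\alpha_i\geq 0$. This step also clarifies why $q_1$ is excluded from the product: there is no earlier element of $S_\theta$ to supply a lower bound on $s_{j_{q_1}+1}$. Applying Lemma~\ref{jqinj} between any two consecutive indices $i<i'$ in $\{2,\ldots,m\}$ in the same manner shows $j_{q_{i'}}+1<j_{q_i}+1$, so the strict decrease of $s$ in~(\ref{recallbinaryaltsj}) gives $s_{j_{q_i}+1}>s_{j_{q_{i'}}+1}$; therefore the $\alpha_i$ are distinct nonnegative integers.

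It then remains to bound the product. By Lemma~\ref{estimexp}, $1+\pi/2^{\alpha_i}\leq\exp(\pi/2^{\alpha_i})$, so the product is at most $\exp\bigl(\sum_{i=2}^m\pi/2^{\alpha_i}\bigr)$. Distinctness of the $\alpha_i$ among nonnegative integers gives $\sum_{i=2}^m 1/2^{\alpha_i}\leq\sum_{k\geq 0}1/2^k=2$, yielding the target bound $\exp(2\pi)$. The main subtlety—beyond bookkeeping—is the precise identification of $q_1$ as the single index in $S_\theta$ for which the exponent $\alpha_1$ cannot be controlled, which is exactly the reason it must be removed from the product on the left-hand side; all other technicalities (nonnegativity, distinctness, and the geometric summation) then fall out from Lemma~\ref{jqinj} and~(\ref{recallbinaryaltsj}) combined.
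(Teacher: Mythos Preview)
Your argument is correct and follows essentially the same route as the paper (Lemma~\ref{estimquotcospi/2} for each factor, Lemma~\ref{jqinj} for exponent control, then Lemma~\ref{estimexp} plus a geometric series); the only difference is cosmetic---the paper performs an explicit index shift $i\mapsto i-1$ to rewrite each bound as $1+2^{p_{q_i}}|\theta|\pi$ before summing over $q_\theta\le q\le n$, while you argue directly that the $\alpha_i$ are distinct nonnegative integers. One harmless slip: in your distinctness step the inequality should read $s_{j_{q_{i'}}+1}>s_{j_{q_i}+1}$ (smaller index gives larger $s$, since $s$ is strictly decreasing), but since you only need distinctness the conclusion stands.
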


\begin{proof}

First, one can assume that neither $S_{\theta}$, nor
$S_{\theta}\setminus\left\{q_1\right\}$
are empty, i.e. $m\geq2$
in~(\ref{defStheta}), otherwise the assertion is obvious since
$\prod_{\emptyset}(\cdot)=1$.
We deduce from~(\ref{defStheta}) that
\begin{eqnarray*}
S_{\theta}
\setminus\left\{q_1\right\}
& = &
\left\{
q_i
\,,\;
i=2,\ldots,m
\right\}
\,.
\end{eqnarray*}
On the other hand, we remind by~(\ref{pi/2}) that
for all $i=2,\ldots,m$ (since every
$q_i\in S_{\theta}$ then $p_{q_i}\in S$),
\begin{eqnarray*}
p_{q_i}
& = &
p_1-s_{j_{q_i}}-1
\,.
\end{eqnarray*}
It follows by Lemma~\ref{estimquotcospi/2} that
\begin{eqnarray}\nonumber
\prod_{q\in S_{\theta},q\neq q_1}
\frac{
\left|
\cos
\left(
\dfrac{2l+1}{2^{p_1-p_q+1}}\pi
-
\dfrac{2^{p_q}\pi\theta}{2}
\right)
\right|
}{
\left|
\cos
\left(
\dfrac{2l+1}{2^{p_1-p_q+1}}\pi
\right)
\right|
}
& = &
\prod_{i=2}^m
\frac{
\left|
\cos
\left(
\dfrac{2l+1}{2^{p_1-p_{q_i}+1}}\pi
-
\dfrac{2^{p_{q_i}}\pi\theta}{2}
\right)
\right|
}{
\left|
\cos
\left(
\dfrac{2l+1}{2^{p_1-p_{q_i}+1}}\pi
\right)
\right|
}
\\\label{estimprodpqthetapi/21}
& \leq &
\prod_{i=2}^m
\left(
1+\frac{2^{p_1}|\theta|\pi}{2^{1+s_{j_{q_i}+1}}}
\right)
\,.
\end{eqnarray}
On the other hand, we know by Lemma~\ref{jqinj} that the application
$q\mapsto j_q$,
is (strictly) decreasing.
Since by~(\ref{introqi}),
$q_{i-1}<q_i$ for all
$i=2,\ldots,m$, it follows that
$j_{q_{i-1}}>j_{q_i}$, i.e.
\begin{eqnarray*}
j_{q_{i-1}}
& \geq &
j_{q_i}+1
\,.
\end{eqnarray*}
In addition, the application
$j\mapsto s_j$ being decreasing by~(\ref{recallbinaryaltsj}),
this yields 
\begin{eqnarray}
s_{j_{q_i}+1}
& \geq &
s_{j_{q_{i-1}}}
\,,
\end{eqnarray}
whose application at the estimate~(\ref{estimprodpqthetapi/21}) leads to
\begin{eqnarray}\nonumber
\prod_{q\in S_{\theta},q\neq q_1}
\frac{
\left|
\cos
\left(
\dfrac{2l+1}{2^{p_1-p_q+1}}\pi
-
\dfrac{2^{p_q}\pi\theta}{2}
\right)
\right|
}{
\left|
\cos
\left(
\dfrac{2l+1}{2^{p_1-p_q+1}}\pi
\right)
\right|
}
& \leq &
\prod_{i=2}^m
\left(
1+\frac{2^{p_1}|\theta|\pi}{2^{1+s_{j_{q_i}+1}}}
\right)
\\\label{estimprodpqthetapi/22}
& \leq &
\prod_{i=2}^m
\left(
1+\frac{2^{p_1}|\theta|\pi}{2^{1+s_{j_{q_{i-1}}}}}
\right)
\;=\;
\prod_{i=1}^{m-1}
\left(
1+\frac{2^{p_1}|\theta|\pi}{2^{1+s_{j_{q_{i}}}}}
\right)
\,.
\end{eqnarray}
Since $q_i$ satisfies~(\ref{pi/2}) for all
$i=1,\ldots,m-1$, it follows that
\begin{eqnarray*}
s_{j_{q_i}}+1
& = &
p_1-p_{q_i}
\,,
\end{eqnarray*}
then the estimate~(\ref{estimprodpqthetapi/22}) becomes by Lemma~\ref{estimexp}
\begin{eqnarray}\nonumber
\prod_{q\in S_{\theta},q\neq q_1}
\frac{
\left|
\cos
\left(
\dfrac{2l+1}{2^{p_1-p_q+1}}\pi
-
\dfrac{2^{p_q}\pi\theta}{2}
\right)
\right|
}{
\left|
\cos
\left(
\dfrac{2l+1}{2^{p_1-p_q+1}}\pi
\right)
\right|
}
& \leq &
\prod_{i=1}^{m-1}
\left(
1+\frac{2^{p_1}|\theta|\pi}{2^{p_1-p_{q_i}}}
\right)
\;\leq\;
\prod_{i=1}^{m-1}
\exp
\left(
2^{p_{q_i}}|\theta|\pi
\right)
\\\nonumber
& \leq &
\exp
\left(
\sum_{i=1}^{m}
2^{p_{q_i}}|\theta|\pi
\right)
\;=\;
\exp
\left(
\sum_{q\in S_{\theta}}
2^{p_q}|\theta|\pi
\right)
\\\label{estimprodpqthetapi/23}
& \leq &
\exp
\left(
\sum_{q=q_{\theta}}^n
2^{p_q}|\theta|\pi
\right)
\;\leq\;
\exp
\left(
\sum_{q=q_{\theta}}^n
\frac{\pi}{2^{p_{q_{\theta}}-p_q}}
\right)
\,,
\end{eqnarray}
the last estimate being justified by the condition that
$|\theta|\leq1/2^{p_{q_{\theta}}}$.
On the other hand, an immediate induction on
$q=q_{\theta},\ldots,n$, that uses~(\ref{restrN}) yields
\begin{eqnarray*}
p_{q_{\theta}}-p_q
& \geq &
q-q_{\theta}
\,.
\end{eqnarray*}
The estimate~(\ref{estimprodpqthetapi/23}) then becomes
\begin{eqnarray*}
\prod_{q\in S_{\theta},q\neq q_1}
\frac{
\left|
\cos
\left(
\dfrac{2l+1}{2^{p_1-p_q+1}}\pi
-
\dfrac{2^{p_q}\pi\theta}{2}
\right)
\right|
}{
\left|
\cos
\left(
\dfrac{2l+1}{2^{p_1-p_q+1}}\pi
\right)
\right|
}
& \leq &
\exp
\left(
\sum_{q=q_{\theta}}^n
\frac{\pi}{2^{q-q_{\theta}}}
\right)
\;\leq\;
\exp
\left(
\sum_{j\geq0}
\frac{\pi}{2^j}
\right)
\\
& = &
\exp
\left(
2\pi
\right)
\,,
\end{eqnarray*}
and the lemma is proved.

\end{proof}

\medskip

Now we can deal with the required estimate~(\ref{estimatespecial}) after fixing
$q_{\theta}=2,\ldots,n$ and
$\theta$ with
$1/2^{p_{q_{\theta}-1}}\leq|\theta|\leq1/2^{p_{q_{\theta}}}$.
First (since
$\theta\neq0$ then $z\neq1$),
one has by~(\ref{descrl1genznon0}), (\ref{restromega0}) and~(\ref{restrz}) that
\begin{eqnarray*}
\left|
l_1^{(N)}\left(z\right)
\right|
& = &
\frac{1}{2^{p_1}}
\frac{\left|z^{2^{p_1}}-1\right|}{\left|z-1\right|}
\times
\prod_{q=2}^n
\frac{
\left|z^{2^{p_q}}+\omega_0^{2^{p_q}}\right|
}{
\left|1+\omega_0^{2^{p_q}}\right|
}
\;\;\;\;\;\;\;\;\;\;\;\;\;\;\;\;\;\;\;\;\;\;\;\;\;\;\;\;\;\;\;\;\;\;\;\;\;\;\;\;
\;\;\;\;\;\;\;\;\;\;\;\;\;\;\;\;\;\;\;\;\;\;\;\;\;\;\;\;\;\;\;\;\;\;\;\;\;\;\;\;
\;\;\;\;\;\;\;\;\;\;\;\;
\end{eqnarray*}
\begin{eqnarray*}
& \leq &
\frac{1}{2^{p_1}}
\frac{\left|z^{2^{p_1}}\right|+1}{|z-1|}
\times
\prod_{q=2}^{q_{\theta}-1}
\frac{
\left|z^{2^{p_q}}\right|
+
\left|\omega_0^{2^{p_q}}\right|
}{
\left|1+\omega_0^{2^{p_q}}\right|
}
\times
\prod_{q=q_{\theta}}^n
\frac{
\left|z^{2^{p_q}}+\omega_0^{2^{p_q}}\right|
}{
\left|1+\omega_0^{2^{p_q}}\right|
}
\\\nonumber
& = &
\frac{1}{2^{p_1}}
\frac{2}{
\left|
\exp\left(i\pi\theta\right)-1
\right|
}
\prod_{q=2}^{q_{\theta}-1}
\frac{2}{
\left|
1+
\exp
\left(
\dfrac{2l+1}{2^{p_1-p_q}}
i\pi
\right)
\right|
}
\prod_{q=q_{\theta}}^n
\frac{
\left|
\exp
\left(
2^{p_q}i\pi\theta
\right)
+
\exp\left(\dfrac{2l+1}{2^{p_1-p_q}}i\pi\right)
\right|
}{
\left|1+
\exp\left(\dfrac{2l+1}{2^{p_1-p_q}}i\pi\right)
\right|
}
\,,
\end{eqnarray*}
then
\begin{eqnarray}\nonumber
\left|
l_1^{(N)}\left(z\right)
\right|
& \leq &
\frac{1}{2^{p_1}}
\frac{1}{|\sin(\pi\theta/2)|}
\prod_{q=2}^{q_{\theta}-1}
\frac{1}{
\left|
\cos
\left(
\dfrac{2l+1}{2^{p_1-p_q}}
\dfrac{\pi}{2}
\right)
\right|
}
\prod_{q=q_{\theta}}^n
\frac{
\left|
\cos
\left(
\dfrac{2l+1}{2^{p_1-p_q}}
\dfrac{\pi}{2}
-
\dfrac{2^{p_q}\pi\theta}{2}
\right)
\right|
}{
\left|
\cos
\left(
\dfrac{2l+1}{2^{p_1-p_q}}
\dfrac{\pi}{2}
\right)
\right|
}
\\\label{reducl1pqtheta15}
& \leq &
\frac{1}{2^{p_1}}
\frac{1}{|\theta|}
\;\times\;
\prod_{q=2}^{q_{\theta}-1}
\frac{1}{
\left|
\cos
\left(
\dfrac{2l+1}{2^{p_1-p_q}}
\dfrac{\pi}{2}
\right)
\right|
}
\;\times
\prod_{q=q_{\theta}}^n
\frac{
\left|
\cos
\left(
\dfrac{2l+1}{2^{p_1-p_q}}
\dfrac{\pi}{2}
-
\dfrac{2^{p_q}\pi\theta}{2}
\right)
\right|
}{
\left|
\cos
\left(
\dfrac{2l+1}{2^{p_1-p_q}}
\dfrac{\pi}{2}
\right)
\right|
}
\,,
\end{eqnarray}
the last estimate following by~(\ref{estimsinus1}) from Lemma~\ref{estimsinus}:
indeed, since $0<|\theta|\leq1$, then $\pi|\theta|/2\,\in\,\left]0,\dfrac{\pi}{2}\right]$
where the function $\sin$ is positive thus,
\begin{eqnarray}\label{minsinuspq}
& &
\left|
\sin\left(\frac{\pi\theta}{2}\right)
\right|
\;=\;
\left|
\sin
\left(
\frac{\pi|\theta|}{2}
\right)
\right|
\;=\;
\sin
\left(
\frac{\pi|\theta|}{2}
\right)
\;\geq\;
\frac{2}{\pi}\times\frac{\pi|\theta|}{2}
\;=\;
|\theta|
\,.
\end{eqnarray}

\medskip

Now  if we assume that 
\begin{eqnarray*}
S_{\theta}
& = &
\emptyset
\,,
\end{eqnarray*}
then 
$\left\{
p_q
\,,\;
q_{\theta}
\leq
q
\leq
n
\right\}
\subset
T$ 
by Lemma~\ref{partitionTS}
and
\begin{eqnarray}\nonumber
\left|
l_1^{(N)}\left(z\right)
\right|
& \leq &
\frac{1}{2^{p_1}|\theta|}
\times
\frac{1}{
\prod_{q=2}^{q_{\theta}-1}
\left|
\cos
\left(
\dfrac{2l+1}{2^{p_1-p_q+1}}\pi
\right)
\right|
}
\times
\;\;\;\;\;\;\;\;\;\;\;\;\;\;\;\;\;\;\;\;\;\;\;\;\;\;\;\;\;\;\;\;\;\;\;\;
\\\label{reducl1pqtheta1}
& &
\;\;\;\;\;\;\;\;\;\;\;\;\;\;\;\;\;\;\;\;\;\;\;\;\;\;\;\;\;\;\;\;\;\;\;\;
\times
\prod_{q_{\theta}\leq q\leq n,\,p_q\in T}
\frac{
\left|
\cos
\left(
\dfrac{2l+1}{2^{p_1-p_q+1}}\pi
-
\dfrac{2^{p_q}\pi\theta}{2}
\right)
\right|
}{
\left|
\cos
\left(
\dfrac{2l+1}{2^{p_1-p_q+1}}\pi
\right)
\right|
}
\,.
\end{eqnarray}
On the other hand, one has
\begin{eqnarray*}
\prod_{q=2}^{q_{\theta}-1}
\left|
\cos
\left(
\frac{2l+1}{2^{p_1-p_q+1}}\pi
\right)
\right|
& \geq &
\prod_{j=p_1-p_2+1}^{p_1-p_{q_{\theta}-1}+1}
\left|
\cos
\left(
\frac{2l+1}{2^{j}}\pi
\right)
\right|
\;\geq\;
\prod_{j=2}^{p_1-p_{q_{\theta}-1}+1}
\left|
\cos
\left(
\frac{2l+1}{2^{j}}\pi
\right)
\right|
\\
& = &
\left|
\prod_{j=1}^{p_1-p_{q_{\theta}-1}}
\cos
\left(
\frac{(2l+1)\pi/2}{2^{j}}
\right)
\right|
,
\end{eqnarray*}
since $p_1-p_2+1\geq2$ and
any term of the involved products is not greater than $1$.
An application of Lemma~\ref{prodcos} (with
$\alpha=(2l+1)\pi/2\notin\pi\Z$ and 
$m=p_1-p_{q_{\theta}-1}\geq p_1-p_1=0$
since $q_{\theta}\geq2$) yields
\begin{eqnarray}\label{prodpqtheta-1}
& &
\prod_{q=2}^{q_{\theta}-1}
\left|
\cos
\left(
\frac{2l+1}{2^{p_1-p_q+1}}\pi
\right)
\right|
\,\geq\,
\frac{
\left|
\sin
\left(
(2l+1)\pi/2
\right)
\right|
}{
2^{p_1-p_{q_{\theta}-1}}
\left|
\sin
\left(
\dfrac{(2l+1)\pi/2}{2^{p_1-p_{q_{\theta}-1}}}
\right)
\right|
}
\,\geq\,
\frac{1}{2^{p_1-p_{q_{\theta}-1}}}
\,.
\end{eqnarray}

It follows that~(\ref{reducl1pqtheta1}),
(\ref{prodpqtheta-1}) and Lemma~\ref{estimprodqthetaT}
(with the choice of
$\w{q}=q_{\theta}$)
together yield
\begin{eqnarray}\nonumber
\left|
l_1^{(N)}\left(z\right)
\right|
& \leq &
\frac{1}{2^{p_1}|\theta|}
\times
2^{p_1-p_{q_{\theta}-1}}
\times
\exp(\pi)
\;=\;
\frac{\exp(\pi)}
{2^{p_{q_{\theta}-1}}|\theta|}
\;\leq\;
\exp\left(\pi\right)
\,,
\end{eqnarray}
the last estimate being justified by the condition that
$|\theta|\geq1/2^{p_{q_{\theta}-1}}$,
and this proves the required assertion in the case
$S_{\theta}=\emptyset$.

\medskip

Now we assume that
$S_{\theta}$ is non empty.
In particular, we can deal with $q_1$, i.e.
\begin{eqnarray*}
\frac{
\left|
\cos
\left(
\dfrac{2l+1}{2^{p_1-p_{q_1}+1}}\pi
-
\dfrac{2^{p_{q_1}}\pi\theta}{2}
\right)
\right|
}{
\left|
\cos
\left(
\dfrac{2l+1}{2^{p_1-p_{q_1}+1}}\pi
\right)
\right|
}
& \leq &
\;\;\;\;\;\;\;\;\;\;\;\;\;\;\;\;\;\;\;\;\;\;\;\;\;\;\;\;\;\;\;\;\;\;\;\;
\;\;\;\;\;\;\;\;\;\;\;\;\;\;\;\;\;\;\;\;\;\;\;\;\;\;\;\;\;\;\;\;\;\;\;\;
\end{eqnarray*}
\begin{eqnarray*}
& \leq &
\frac{
\left|
\cos
\left(
\dfrac{2l+1}{2^{p_1-p_{q_1}+1}}\pi
\right)
\right|
\left|
\cos
\left(
\dfrac{2^{p_{q_1}}\pi\theta}{2}
\right)
\right|
+
\left|
\sin
\left(
\dfrac{2l+1}{2^{p_1-p_{q_1}+1}}\pi
\right)
\right|
\left|
\sin
\left(
\dfrac{2^{p_{q_1}}\pi\theta}{2}
\right)
\right|
}{
\left|
\cos
\left(
\dfrac{2l+1}{2^{p_1-p_{q_1}+1}}\pi
\right)
\right|
}
\\
& \leq &
\frac{
\left|
\cos
\left(
\dfrac{2l+1}{2^{p_1-p_{q_1}+1}}\pi
\right)
\right|
+
\left|
\sin
\left(
\dfrac{2^{p_{q_1}}\pi\theta}{2}
\right)
\right|
}{
\left|
\cos
\left(
\dfrac{2l+1}{2^{p_1-p_{q_1}+1}}\pi
\right)
\right|
}
\\
& \leq &
\frac{2
\max
\left[
\left|
\cos
\left(
\dfrac{2l+1}{2^{p_1-p_{q_1}+1}}\pi
\right)
\right|
\,,\,
\left|
\sin
\left(
\dfrac{2^{p_{q_1}}\pi\theta}{2}
\right)
\right|
\right]
}{
\left|
\cos
\left(
\dfrac{2l+1}{2^{p_1-p_{q_1}+1}}\pi
\right)
\right|
}
\,,
\end{eqnarray*}
then by applying~(\ref{estimsinus2}) from Lemma~\ref{estimsinus},
\begin{eqnarray}\label{estimprodpqthetapi/26}
\;\;\;\;\;
& &
\frac{
\left|
\cos
\left(
\dfrac{2l+1}{2^{p_1-p_{q_1}+1}}\pi
-
\dfrac{2^{p_{q_1}}\pi\theta}{2}
\right)
\right|
}{
\left|
\cos
\left(
\dfrac{2l+1}{2^{p_1-p_{q_1}+1}}\pi
\right)
\right|
}
\,\leq\,
\frac{2
\max
\left[
\left|
\cos
\left(
\dfrac{2l+1}{2^{p_1-p_{q_1}+1}}\pi
\right)
\right|
\,,\,
\dfrac{2^{p_{q_1}}\pi|\theta|}{2}
\right]
}{
\left|
\cos
\left(
\dfrac{2l+1}{2^{p_1-p_{q_1}+1}}\pi
\right)
\right|
}
\,.
\end{eqnarray}

\medskip

Now, let assume that
\begin{eqnarray*}
\max
\left[
\left|
\cos
\left(
\dfrac{2l+1}{2^{p_1-p_{q_1}+1}}\pi
\right)
\right|
\,,\,
\dfrac{2^{p_{q_1}}\pi|\theta|}{2}
\right]
& = &
\left|
\cos
\left(
\dfrac{2l+1}{2^{p_1-p_{q_1}+1}}\pi
\right)
\right|
\,,
\end{eqnarray*}
then~(\ref{estimprodpqthetapi/26}) becomes
\begin{eqnarray}\label{estimprodpqthetapi/26case1}
\frac{
\left|
\cos
\left(
\dfrac{2l+1}{2^{p_1-p_{q_1}+1}}\pi
-
\dfrac{2^{p_{q_1}}\pi\theta}{2}
\right)
\right|
}{
\left|
\cos
\left(
\dfrac{2l+1}{2^{p_1-p_{q_1}+1}}\pi
\right)
\right|
}
& \leq &
2
\,.
\end{eqnarray}
It follows by Lemma~\ref{estimprodqthetaT} 
(with the choice of
$\w{q}=q_{\theta}$) and Lemma~\ref{estimprodqthetaS} that
\begin{eqnarray}\nonumber
& &
\prod_{q=q_{\theta}}^n
\frac{
\left|
\cos
\left(
\dfrac{2l+1}{2^{p_1-p_q+1}}\pi
-
\dfrac{2^{p_q}\pi\theta}{2}
\right)
\right|
}{
\left|
\cos
\left(
\dfrac{2l+1}{2^{p_1-p_q+1}}\pi
\right)
\right|
}
\;=\;
\frac{
\left|
\cos
\left(
\dfrac{2l+1}{2^{p_1-p_{q_1}+1}}\pi
-
\dfrac{2^{p_{q_1}}\pi\theta}{2}
\right)
\right|
}{
\left|
\cos
\left(
\dfrac{2l+1}{2^{p_1-p_{q_1}+1}}\pi
\right)
\right|
}
\times
\;\;\;\;\;\;\;
\\\nonumber
&  &
\;\;\;\;\;\;\;\;\;
\times
\left[
\prod_{q_{\theta}\leq q\leq n,\,p_q\in T}
\times
\prod_{q\in S_{\theta},q\neq q_1}
\right]
\frac{
\left|
\cos
\left(
\dfrac{2l+1}{2^{p_1-p_q+1}}\pi
-
\dfrac{2^{p_q}\pi\theta}{2}
\right)
\right|
}{
\left|
\cos
\left(
\dfrac{2l+1}{2^{p_1-p_q+1}}\pi
\right)
\right|
}
\\\label{estimprodpqthetapi/27case1}
\;\;\;\;\;\;
& &
\leq\;
2
\times
\exp\left(\pi\right)
\times
\exp\left(2\pi\right)
\;\leq\;
\pi\exp\left(3\pi\right)
\,.
\end{eqnarray}
Then the estimates~(\ref{reducl1pqtheta15}),
(\ref{prodpqtheta-1}) and~(\ref{estimprodpqthetapi/27case1}) together yield
\begin{eqnarray*}
\left|
l_1^{(N)}\left(z\right)
\right|
& \leq &
\frac{1}{2^{p_1}|\theta|}
\times
2^{p_1-p_{q_{\theta}-1}}
\times
\pi\exp\left(3\pi\right)
\;=\;
\frac{\pi\exp\left(3\pi\right)}{2^{p_{q_{\theta}-1}}|\theta|}
\;\leq\;
\pi\exp\left(3\pi\right)
\,,
\end{eqnarray*}
the last estimate being justified by the condition that
$|\theta|\geq1/2^{p_{q_{\theta}-1}}$,
and this proves the required assertion in this case.

\medskip

The remaining case is the one for which
\begin{eqnarray}\label{case2}
\max
\left[
\left|
\cos
\left(
\dfrac{2l+1}{2^{p_1-p_{q_1}+1}}\pi
\right)
\right|
\,,\,
\dfrac{2^{p_{q_1}}\pi|\theta|}{2}
\right]
& = &
\frac{2^{p_{q_1}}\pi|\theta|}{2}
\,.
\end{eqnarray}
We prove an estimate similar to~(\ref{reducl1pqtheta15}) with
$q_{\theta}$ replaced with $q_1$.
Since $z\neq1$, one still has by~(\ref{descrl1genznon0}), (\ref{restromega0}) and~(\ref{restrz}) that
\begin{eqnarray*}
\left|
l_1^{(N)}\left(z\right)
\right|
& = &
\frac{\left|z^{2^{p_1}}-1\right|}{2^{p_1}\left|z-1\right|}
\prod_{q=2}^n
\frac{
\left|z^{2^{p_q}}+\omega_0^{2^{p_q}}\right|
}{
\left|1+\omega_0^{2^{p_q}}\right|
}
\,\leq\,
\frac{2}{2^{p_1}|z-1|}
\prod_{q=2}^{q_1-1}
\frac{2}
{
\left|1+\omega_0^{2^{p_q}}\right|
}
\prod_{q=q_1}^n
\frac{
\left|z^{2^{p_q}}+\omega_0^{2^{p_q}}\right|
}{
\left|1+\omega_0^{2^{p_q}}\right|
}
\\
& = &
\frac{1}{2^{p_1}}
\frac{2}{
\left|
\exp\left(i\pi\theta\right)-1
\right|
}
\times
\prod_{q=2}^{q_1-1}
\frac{2}{
\left|
1+
\exp
\left(
\dfrac{2l+1}{2^{p_1-p_q}}
i\pi
\right)
\right|
}
\;\times
\\
& &
\times
\,
\frac{
\left|
\exp
\left(
2^{p_{q_1}}i\pi\theta
\right)
+
\exp\left(\dfrac{2l+1}{2^{p_1-p_{q_1}}}i\pi\right)
\right|
}{
\left|1+
\exp\left(\dfrac{2l+1}{2^{p_1-p_{q_1}}}i\pi\right)
\right|
}
\prod_{q=q_1+1}^n
\frac{
\left|
\exp
\left(
2^{p_q}i\pi\theta
\right)
+
\exp\left(\dfrac{2l+1}{2^{p_1-p_q}}i\pi\right)
\right|
}{
\left|1+
\exp\left(\dfrac{2l+1}{2^{p_1-p_q}}i\pi\right)
\right|
}
\,,
\end{eqnarray*}
then
\begin{eqnarray*}
\left|
l_1^{(N)}\left(z\right)
\right|
& \leq &
\frac{1}{2^{p_1}}
\frac{1}{|\sin(\pi\theta/2)|}
\times
\prod_{q=2}^{q_1-1}
\frac{1}{
\left|
\cos
\left(
\dfrac{2l+1}{2^{p_1-p_q}}
\dfrac{\pi}{2}
\right)
\right|
}
\\
& &
\times\;
\frac{
\left|
\cos
\left(
\dfrac{2l+1}{2^{p_1-p_{q_1}+1}}\pi
-
\dfrac{2^{p_{q_1}}\pi\theta}{2}
\right)
\right|}{
\left|
\cos
\left(
\dfrac{2l+1}{2^{p_1-p_{q_1}}}
\dfrac{\pi}{2}
\right)
\right|
}
\times
\prod_{q=q_1+1}^n
\frac{
\left|
\cos
\left(
\dfrac{2l+1}{2^{p_1-p_q}}
\dfrac{\pi}{2}
-
\dfrac{2^{p_q}\pi\theta}{2}
\right)
\right|
}{
\left|
\cos
\left(
\dfrac{2l+1}{2^{p_1-p_q}}
\dfrac{\pi}{2}
\right)
\right|
}
\,,
\end{eqnarray*}
On the other hand, one has
by~(\ref{estimprodpqthetapi/26})
and~(\ref{case2}) that
\begin{eqnarray*}
\frac{
\left|
\cos
\left(
\dfrac{2l+1}{2^{p_1-p_{q_1}+1}}\pi
-
\dfrac{2^{p_{q_1}}\pi\theta}{2}
\right)
\right|
}{
\left|
\cos
\left(
\dfrac{2l+1}{2^{p_1-p_{q_1}+1}}\pi
\right)
\right|
}
& \leq &
\frac{2
\max
\left[
\left|
\cos
\left(
\dfrac{2l+1}{2^{p_1-p_{q_1}+1}}\pi
\right)
\right|
\,,\,
\dfrac{2^{p_{q_1}}\pi|\theta|}{2}
\right]
}{
\left|
\cos
\left(
\dfrac{2l+1}{2^{p_1-p_{q_1}+1}}\pi
\right)
\right|
}
\\
& \leq &
\frac{2
\times
2^{p_{q_1}}\pi|\theta|/2
}{
\left|
\cos
\left(
\dfrac{2l+1}{2^{p_1-p_{q_1}+1}}\pi
\right)
\right|
}
\,.
\end{eqnarray*}
It follows by also applying~(\ref{minsinuspq}) that
\begin{eqnarray}\nonumber
\left|
l_1^{(N)}\left(z\right)
\right|
& \leq &
\frac{1}{2^{p_1}}
\frac{1}{|\theta|}
\times
\frac{1}{
\prod_{q=2}^{q_1}
\left|
\cos
\left(
\dfrac{2l+1}{2^{p_1-p_q+1}}\pi
\right)
\right|
}
\;\times
\\\label{reducl1pqtheta1tris}
& &
\times\;
2^{p_{q_1}}\pi|\theta|
\times
\prod_{q=q_1+1}^n
\frac{
\left|
\cos
\left(
\dfrac{2l+1}{2^{p_1-p_q+1}}\pi
-
\dfrac{2^{p_q}\pi\theta}{2}
\right)
\right|
}{
\left|
\cos
\left(
\dfrac{2l+1}{2^{p_1-p_q+1}}\pi
\right)
\right|
}
\,.
\end{eqnarray}

Now one has
\begin{eqnarray*}
\prod_{q=2}^{q_1}
\left|
\cos
\left(
\frac{2l+1}{2^{p_1-p_q+1}}\pi
\right)
\right|
& \geq &
\prod_{j=p_1-p_2+1}^{p_1-p_{q_1}+1}
\left|
\cos
\left(
\frac{2l+1}{2^{j}}\pi
\right)
\right|
\;\geq\;
\prod_{j=2}^{p_1-p_{q_1}+1}
\left|
\cos
\left(
\frac{2l+1}{2^{j}}\pi
\right)
\right|
\\
& = &
\left|
\prod_{j=1}^{p_1-p_{q_1}}
\cos
\left(
\frac{(2l+1)\pi/2}{2^{j}}
\right)
\right|
,
\end{eqnarray*}
since $p_1-p_2+1\geq2$ and 
any term of the involved products is not greater than $1$.
An application of Lemma~\ref{prodcos} (with
$\alpha=(2l+1)\pi/2\notin\pi\Z$ and 
$m=p_1-p_{q_1}\geq p_1-p_2>0$
since $q_1\geq q_{\theta}\geq2$) yields
\begin{eqnarray}\label{prodpq1}
& &
\prod_{q=2}^{q_1}
\left|
\cos
\left(
\frac{2l+1}{2^{p_1-p_q+1}}\pi
\right)
\right|
\,\geq\,
\frac{
\left|
\sin
\left(
(2l+1)\pi/2
\right)
\right|
}{
2^{p_1-p_{q_1}}
\left|
\sin
\left(
\dfrac{(2l+1)\pi/2}{2^{p_1-p_{q_1}}}
\right)
\right|
}
\,\geq\,
\frac{1}{2^{p_1-p_{q_1}}}
\,.
\end{eqnarray}

Next, since
by~(\ref{defStheta}),
$\left\{q_1+1\leq q\leq n
\,,\;
p_q\in \,S\right\}
=\left\{q_i,\,i=2,\ldots,m\right\}
=S_{\theta}\setminus\left\{q_1\right\}$,
one can apply Lemma~\ref{partitionTS}
(for all 
$q=q_1+1,\ldots,n$), Lemma~\ref{estimprodqthetaT} 
(with the choice of
$\w{q}=q_1+1>q_1\geq q_{\theta}$) and Lemma~\ref{estimprodqthetaS} to get
\begin{eqnarray*}
\prod_{q=q_1+1}^n
\frac{
\left|
\cos
\left(
\dfrac{2l+1}{2^{p_1-p_q+1}}\pi
-
\dfrac{2^{p_q}\pi\theta}{2}
\right)
\right|
}{
\left|
\cos
\left(
\dfrac{2l+1}{2^{p_1-p_q+1}}\pi
\right)
\right|
}
& = &
\;\;\;\;\;\;\;\;\;\;\;\;\;\;\;\;\;\;\;\;\;\;\;\;\;\;\;\;\;\;\;\;\;\;\;\;
\;\;\;\;\;\;\;\;\;\;\;\;\;\;\;\;\;\;\;\;\;\;\;\;\;\;\;\;\;\;\;\;\;\;\;\;
\end{eqnarray*}
\begin{eqnarray}\nonumber
\;\;\;\;\;\;\;\;\;\;\;\;\;\;\;\;\;\;\;\;\;\;\;\;
& = &
\left[
\prod_{q_1+1\leq q\leq n,\,p_q\in T}
\times
\prod_{q\in S_{\theta},q\neq q_1}
\right]
\frac{
\left|
\cos
\left(
\dfrac{2l+1}{2^{p_1-p_q+1}}\pi
-
\dfrac{2^{p_q}\pi\theta}{2}
\right)
\right|
}{
\left|
\cos
\left(
\dfrac{2l+1}{2^{p_1-p_q+1}}\pi
\right)
\right|
}
\\\label{estimprodpqthetapi/27case2}
& \leq &
\exp\left(\pi\right)
\times
\exp\left(2\pi\right)
\;=\;
\exp\left(3\pi\right)
\,.
\end{eqnarray}

Finally, the estimates~(\ref{reducl1pqtheta1tris}), 
(\ref{prodpq1}) and~(\ref{estimprodpqthetapi/27case2})
together yield
\begin{eqnarray*}
\left|
l_1^{(N)}\left(z\right)
\right|
& \leq &
\frac{1}{2^{p_1}|\theta|}
\times
2^{p_1-p_{q_1}}
\times
2^{p_{q_1}}\pi|\theta|
\times
\exp\left(3\pi\right)
\;=\;
\pi\exp\left(3\pi\right)
\,,
\end{eqnarray*}
and this proves the required estimate~(\ref{estimatespecial}) in the case
$1/2^{p_{q_{\theta}-1}}\leq|\theta|\leq1/2^{p_{q_{\theta}}}$.

\medskip

The assertion being true for all $\theta$ with
$1/2^{p_{q_{\theta}-1}}\leq|\theta|\leq1/2^{p_{q_{\theta}}}$,
and all
$q_{\theta}=2,\ldots,n$,
the required estimate~(\ref{estimatespecial}) is then proved for all
$\theta$ with
$1/2^{p_1}\leq|\theta|\leq1/2^{p_n}$.

\bigskip

\subsection{Third case: $|\theta|\geq1/2^{p_n}$}\label{step3}

Now we fix $\theta$ with
$1/2^{p_n}\leq|\theta|\leq1$. In particular, $z\neq1$ then 
one has by~(\ref{descrl1genznon0}), 
(\ref{restromega0}) and~(\ref{restrz}) that
\begin{eqnarray}\nonumber
\left|
l_1^{(N)}
\left(
z
\right)
\right|
& = &
\frac{1}{2^{p_1}}
\frac{\left|z^{2^{p_1}}-1\right|}{\left|z-1\right|}
\times
\prod_{q=2}^n
\frac{
\left|z^{2^{p_q}}+\omega_0^{2^{p_q}}\right|
}{
\left|1+\omega_0^{2^{p_q}}\right|
}
\;\leq\;
\frac{1}{2^{p_1}}
\frac{
\left|z^{2^{p_q}}\right|+1
}{|z-1|}
\prod_{q=2}^n
\frac{
\left|z^{2^{p_q}}\right|+
\left|\omega_0^{2^{p_q}}\right|
}{
\left|1+\omega_0^{2^{p_q}}\right|
}
\\\nonumber
& = &
\frac{1}{2^{p_1}}
\frac{
2
}{
\left|
\exp\left(i\pi\theta\right)-1
\right|
}
\prod_{q=2}^n
\frac{2}{
\left|
1+
\exp
\left(
\dfrac{2l+1}{2^{p_1-p_q}}
i\pi
\right)
\right|
}
\\\nonumber
& = &
\frac{1}{2^{p_1}|\sin(\pi\theta/2)|}
\prod_{q=2}^n
\frac{1}{
\left|
\cos
\left(
\dfrac{2l+1}{2^{p_1-p_q}}
\dfrac{\pi}{2}
\right)
\right|
}
\,\leq\,
\frac{1}{2^{p_1}|\theta|}
\prod_{q=2}^n
\frac{1}{
\left|
\cos
\left(
\dfrac{2l+1}{2^{p_1-p_q+1}}\pi
\right)
\right|
}
\,,
\end{eqnarray}
the last estimate being an application of~(\ref{minsinuspq}), i.e.
$\left|\sin\left(\pi\theta/2\right)\right|\geq|\theta|$
(that is still valid since
$\pi|\theta|/2\,\in\,\left]0,\pi/2\right]$).
Since we have
$|\theta|\geq1/2^{p_n}$, it follows that
\begin{eqnarray}\label{reducl1pn}
\left|
l_1^{(N)}
\left(
z
\right)
\right|
& \leq &
\frac{1}{2^{p_1-p_n}}
\prod_{q=2}^n
\frac{1}{
\left|
\cos
\left(
\dfrac{2l+1}{2^{p_1-p_q+1}}\pi
\right)
\right|
}
\,.
\end{eqnarray}
On the other hand,
for all $q=2,\ldots,n$, one has by~(\ref{restrN})
\begin{eqnarray*}
2
\;\leq\;
p_1-p_2+1
\;\leq\;
p_1-p_q+1
\;\leq\;
p_1-p_n+1
\,,
\end{eqnarray*}
then
\begin{eqnarray*}
\prod_{q=2}^n
\left|
\cos
\left(
\frac{2l+1}{2^{p_1-p_q+1}}\pi
\right)
\right|
& \geq &
\prod_{j=2}^{p_1-p_n+1}
\left|
\cos
\left(
\frac{2l+1}{2^j}\pi
\right)
\right|
\;=\;
\left|
\prod_{j=1}^{p_1-p_n}
\cos
\left(
\frac{(2l+1)\pi/2}{2^j}
\right)
\right|
\,,
\end{eqnarray*}
since any term of the involved products is not greater than $1$.
An application of Lemma~\ref{prodcos} (with
$\alpha=(2l+1)\pi/2\notin\pi\Z$ and $m=p_1-p_n$) yields
\begin{eqnarray}\label{prodpn}
& &
\prod_{q=2}^n
\left|
\cos
\left(
\frac{2l+1}{2^{p_1-p_q+1}}\pi
\right)
\right|
\;\geq\;
\frac{
\left|
\sin
\left(
(2l+1)\pi/2
\right)
\right|
}{
2^{p_1-p_n}
\left|
\sin
\left(
\dfrac{(2l+1)\pi/2}{2^{p_1-p_n}}
\right)
\right|
}
\;\geq\;
\frac{1}{2^{p_1-p_n}}
\,.
\end{eqnarray}
Thus, the estimates~(\ref{reducl1pn}) and~(\ref{prodpn})
together lead to
\begin{eqnarray*}
\left|
l_1^{(N)}
\left(z\right)
\right|
& \leq &
\frac{1}{2^{p_1-p_n}}
\times
2^{p_1-p_n}
\;=\;
1
\,,
\end{eqnarray*}
and this proves the required estimate~(\ref{estimatespecial}) in this
last case, and finally completes its whole proof for all $|\theta|\leq1$.

\bigskip

\section{Proof of Theorem~\ref{unifestim} in the general case}

\subsection{A couple of auxiliary results for the proof of Theorem~\ref{unifestim}}

In the previous section, we have considered the special case of
$l_1^{(N)}$ (i.e. the FLIP associated with
$z_1=1$) and $l=1,\ldots,2^{p_1}-1$ for $\omega_0$.
The following result gives a way to extend the required estimate~(\ref{estimatespecial})
for every FLIP associated with
$z_k$ where $k=2,\ldots,2^{p_1}$. We remind from~(\ref{restrN}) that,
in all the subsection, we consider $N$ defined as follows:
\begin{eqnarray}\label{recallrestrN}
N
\;=\;
2^{p_1}+\cdots+2^{p_n}
& \mbox{with} &
p_1>p_2>\cdots>p_n\geq0
\mbox{ and }
n\geq2
\,.
\end{eqnarray}

We will also use in all the following the simplified notation (for any function $f$
defined on the closed disk):
\begin{eqnarray*}
\left\|f\right\|_{\overline{\D}}
& := &
\sup_{z\in\overline{\D}}
\left|f(z)\right|
\,.
\end{eqnarray*}

We can begin with the following result.

\begin{lemma}\label{changetoz1}

Let $\mathcal{L}_N$ be the $N$-Leja section of any
fixed Leja sequence $\mathcal{L}$ (that starts at
$z_1=1$). 
For all $k=1,\ldots,2^{p_1}$, let us consider 
$z_k\in\mathcal{L}$ (i.e.
$z_k\in\Omega_{2^{p_1}}$ is any
$2^{p_1}$-th root of the unity) and its associated
FLIP
$l_k^{(N)}$.
Then for all $z\in\C$,
\begin{eqnarray}\label{changetoz11}
\left|
l_k^{(N)}
\left(z_kz\right)
\right|
& = &
\left|
\widetilde{l}_1^{(N)}(z)
\right|
\,,
\end{eqnarray}
where
$\widetilde{l}_1^{(N)}$ is the FLIP associated with
$\widetilde{z}_1=1$ of (possibly) another
$N$-Leja section $\widetilde{\mathcal{L}}_N$
(i.e. the $N$-Leja section of a possibly another Leja sequence
that also starts at $z_1=1$).

In particular,
\begin{eqnarray}\label{changetoz12}
\left\|
l_k^{(N)}
\right\|_{\overline{\D}}
& = &
\left\|
\widetilde{l}_1^{(N)}
\right\|_{\overline{\D}}
\,.
\end{eqnarray}

\end{lemma}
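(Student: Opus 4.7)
The plan is to take the explicit expression of $|l_k^{(N)}|$ provided by Lemma~\ref{lkgeneral}, substitute $z \mapsto z_k z$, and then reorganize the resulting product so that it coincides with the formula given by Lemma~\ref{lkgeneral} for the FLIP at $1$ of a (possibly different) $N$-Leja section. Since $z_k \in \Omega_{2^{p_1}}$, I will rely on the two elementary identities $z_k^{2^{p_1}}=1$ and $|z_k|=1$, so that $(z_kz)^{2^{p_1}}=z^{2^{p_1}}$ and $|z_kz-z_k|=|z-1|$. This immediately turns the leading factor $\frac{1}{2^{p_1}}\frac{|(z_kz)^{2^{p_1}}-1|}{|z_kz-z_k|}$ into $\frac{1}{2^{p_1}}\frac{|z^{2^{p_1}}-1|}{|z-1|}$, which matches the leading factor in~(\ref{descrl1genznon0}).

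Next, for each $q=2,\ldots,n$, I will factor out $|z_k^{2^{p_q}}|=1$ from the numerator of the $q$-th ratio, rewriting
\begin{eqnarray*}
\frac{\left|(z_kz)^{2^{p_q}}+\omega_0^{2^{p_q}}\right|}{\left|z_k^{2^{p_q}}+\omega_0^{2^{p_q}}\right|}
& = &
\frac{\left|z^{2^{p_q}}+\widetilde{\omega}_0^{2^{p_q}}\right|}{\left|1+\widetilde{\omega}_0^{2^{p_q}}\right|}\,,
\end{eqnarray*}
where $\widetilde{\omega}_0 := \omega_0/z_k$. The crux of the argument is then to observe that $\widetilde{\omega}_0$ is still a $2^{p_1}$-th root of $-1$: indeed $\widetilde{\omega}_0^{2^{p_1}}=\omega_0^{2^{p_1}}/z_k^{2^{p_1}} = (-1)/1 = -1$. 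By Remark~\ref{omega0}, the data of $\Omega_{2^{p_1}}$ together with the $2^{p_1}$-th root of $-1$ given by $\widetilde{\omega}_0$ determines a genuine $N$-Leja section $\widetilde{\mathcal{L}}_N$ starting at $1$, and the resulting formula is precisely $|\widetilde{l}_1^{(N)}(z)|$ in the description~(\ref{descrl1genznon0})–(\ref{descrl1genz0}). This yields~(\ref{changetoz11}).

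The supremum identity~(\ref{changetoz12}) is then immediate: since $|z_k|=1$, the map $z \mapsto z_k z$ is a bijection of $\overline{\D}$ onto itself, so taking the supremum over $z \in \overline{\D}$ on both sides of~(\ref{changetoz11}) gives the claim. I do not anticipate any real obstacle; the only conceptual point is the invariance of the set of $2^{p_1}$-th roots of $-1$ under division by a $2^{p_1}$-th root of unity, and this is exactly what allows one to reduce Theorem~\ref{unifestim} to the analysis of $l_1^{(N)}$ performed in Section~\ref{specialessential}.
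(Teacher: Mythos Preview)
Your proposal is correct and follows essentially the same approach as the paper: both substitute $z\mapsto z_kz$ in the formula of Lemma~\ref{lkgeneral}, use $z_k^{2^{p_1}}=1$ and $|z_k|=1$ to simplify, set $\widetilde{\omega}_0=\omega_0/z_k$ (the paper calls it $\omega_1$), observe this is still a $2^{p_1}$-th root of $-1$, and then invoke Remark~\ref{omega0} to realize the resulting expression as $|\widetilde{l}_1^{(N)}|$ for another $N$-Leja section.
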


\begin{proof}

Let be $z_k\in\Omega_{2^{p_1}}$ (i.e.
a $2^{p_1}$-th root of the unity). We know by~(\ref{descrlkgen})
from Lemma~\ref{lkgeneral}
that there is $\omega_0$ a $2^{p_1}$-th root
of $-1$ such that, for all $z\in\C$ with $z\neq z_k$,
\begin{eqnarray*}
\left|
l_k^{(N)}
\left(z\right)
\right|
& = &
\frac{1}{2^{p_1}}
\frac{\left|z^{2^{p_1}}-1\right|}{\left|z-z_k\right|}
\times
\prod_{q=2}^n
\frac{
\left|z^{2^{p_q}}+\omega_0^{2^{p_q}}\right|
}{
\left|z_k^{2^{p_q}}+\omega_0^{2^{p_q}}\right|
}
\,,
\end{eqnarray*}
then for all $z\neq1$,
\begin{eqnarray*}
\left|
l_k^{(N)}
\left(z_kz\right)
\right|
& = &
\frac{1}{2^{p_1}}
\frac{\left|z_k^{2^{p_1}}z^{2^{p_1}}-1\right|}{\left|zz_k-z_k\right|}
\times
\prod_{q=2}^n
\frac{
\left|z_k^{2^{p_q}}z^{2^{p_q}}+\omega_0^{2^{p_q}}\right|
}{
\left|z_k^{2^{p_q}}+\omega_0^{2^{p_q}}\right|
}
\\
& = &
\frac{1}{2^{p_1}}
\frac{\left|1\times
z^{2^{p_1}}-1\right|}
{\left|z_k\right|\left|z-1\right|}
\times
\prod_{q=2}^n
\frac{\left|z_k^{2^{p_q}}\right|
\left|z^{2^{p_q}}
+
\left(\omega_0/z_k\right)^{2^{p_q}}\right|
}{
\left|z_k^{2^{p_q}}\right|
\left|1+(\omega_0/z_k)^{2^{p_q}}\right|
}
\\
& = &
\frac{1}{2^{p_1}}
\frac{\left|z^{2^{p_1}}-1\right|}
{1\times\left|z-1\right|}
\times
\prod_{q=2}^n
\frac{
\left|z^{2^{p_q}}
+
\left(\omega_0/z_k\right)^{2^{p_q}}\right|
}{
\left|1+(\omega_0/z_k)^{2^{p_q}}\right|
}
\,.
\end{eqnarray*}
It follows that for all $z\neq1$,
\begin{eqnarray*}
\left|
l_k^{(N,\omega_0)}
\left(z_kz\right)
\right|
& = &
\left|
l_1^{(N,\omega_1)}
\left(z\right)
\right|
\,,
\end{eqnarray*}
where
\begin{eqnarray*}
\omega_1
& := &
\frac{\omega_0}{z_k}
\end{eqnarray*}
is still a $2^{p_1}$-th root of $-1$
and 
$l_k^{(N,\omega_0)}$
(resp., $l_1^{(N,\omega_1)}$)
is the FLIP associated with
$z_k$ (resp., $\w{z_1}=1$) and
the $2^{p_1}$-th root $\omega_0$
(resp., $\omega_1$).
We remind as specified by~(\ref{omega0toLN}) from
Remark~\ref{omega0} that
the data of $\Omega_{2^{p_1}}$ and $\omega_1$
conversely gives a $N$-Leja section (that starts at $\w{z_1}=1$)
and whose first FLIP is exactly
$l_1^{(N,\omega_1)}$.
This proves~(\ref{changetoz11})
by setting
$\widetilde{l}_1^{(N)}:=l_1^{(N,\omega_1)}$, 
and~(\ref{changetoz12})
follows since
$|z_k|=1$.

\end{proof}

We finish this subsection with the following result that is
the proof of the required estimate~(\ref{estimatespecial})
for $l_1^{(N)}$ and the unique case of
$\omega_0$ that was not considered in the previous section.

\begin{lemma}\label{l=0}

Let fix $k=1$ (i.e. $z_k=z_1=1$) and
$l=0$ in~(\ref{restromega0}), i.e.
\begin{eqnarray*}
\omega_0
& = &
\exp
\left(
\frac{i\pi}{2^{p_1}}
\right)
\,.
\end{eqnarray*}
Then
\begin{eqnarray*}
\left\|
l_1^{(N)}
\right\|_{\overline{\D}}
& \leq &
\frac{\pi}{2}
\,.
\end{eqnarray*}
In particular, the estimate~(\ref{estimatespecial}) is still valid in this case.

\end{lemma}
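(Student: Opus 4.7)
The plan is to exploit the fact that with $\omega_0=\exp(i\pi/2^{p_1})$ we have $\omega_0^{2^{p_q}}=\exp(i\pi/2^{p_1-p_q})$, so that $|1+\omega_0^{2^{p_q}}|=2\cos(\pi/2^{p_1-p_q+1})$ with $p_1-p_q+1\geq 2$ for all $q=2,\dots,n$. Starting from the product formula in Lemma~\ref{lkgeneral} applied at $k=1$,
\begin{eqnarray*}
\left|l_1^{(N)}(z)\right|
\;=\;
\frac{1}{2^{p_1}}\,\frac{|z^{2^{p_1}}-1|}{|z-1|}
\;\prod_{q=2}^n\frac{|z^{2^{p_q}}+\omega_0^{2^{p_q}}|}{|1+\omega_0^{2^{p_q}}|}
\,,
\end{eqnarray*}
I would first use the trivial bound $|z^{2^{p_q}}+\omega_0^{2^{p_q}}|\leq 2$ for $z\in\overline{\D}$ to replace each numerator in the product by $2$, and use Lemma~\ref{estim1} (equivalently Lemma~\ref{lkgeneral} applied to the single-block $2^{p_1}$-Leja section) to bound the first factor by $1$. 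What remains is to show
\begin{eqnarray*}
\prod_{q=2}^n\frac{1}{\cos\!\left(\pi/2^{p_1-p_q+1}\right)}
\;\leq\;\frac{\pi}{2}.
\end{eqnarray*}

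The crucial observation is that the exponents $k_q:=p_1-p_q+1$, $q=2,\dots,n$, are \emph{distinct} integers $\geq 2$ (this follows from $p_1>p_2>\dots>p_n$ in~(\ref{recallrestrN})), and each factor $\cos(\pi/2^{k_q})$ lies in $(0,1)$. Hence the finite product is bounded below by the infinite product over all $k\geq 2$:
\begin{eqnarray*}
\prod_{q=2}^n\cos\!\left(\pi/2^{k_q}\right)
\;\geq\;\prod_{k=2}^{\infty}\cos\!\left(\pi/2^{k}\right)
\;=\;\lim_{m\to\infty}\prod_{j=1}^{m}\cos\!\left(\frac{\pi/2}{2^{j}}\right).
\end{eqnarray*}
Applying Lemma~\ref{prodcos} with $\alpha=\pi/2$ gives
\begin{eqnarray*}
\prod_{j=1}^{m}\cos\!\left(\frac{\pi/2}{2^{j}}\right)
\;=\;\frac{\sin(\pi/2)}{2^{m}\sin(\pi/2^{m+1})}
\;\xrightarrow[m\to\infty]{}\;\frac{2}{\pi},
\end{eqnarray*}
since $2^{m}\sin(\pi/2^{m+1})\to \pi/2$. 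Therefore $\prod_{q=2}^n\cos(\pi/2^{k_q})\geq 2/\pi$, equivalently $\prod_{q=2}^n 1/\cos(\pi/2^{k_q})\leq \pi/2$.

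Combining the three pieces yields $\|l_1^{(N)}\|_{\overline{\D}}\leq 1\cdot(\pi/2)=\pi/2$. There is no genuine obstacle here: the only slightly delicate point is the monotonicity argument that turns the finite product over the sparse set $\{k_2,\dots,k_n\}\subset\{2,3,\dots\}$ into a lower bound by the full infinite product, and then the exact evaluation via Lemma~\ref{prodcos}. Since $\pi/2\leq\pi\exp(3\pi)$, this also recovers~(\ref{estimatespecial}) in the remaining $l=0$ case.
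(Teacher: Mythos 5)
Your proof is correct and takes essentially the same route as the paper's: both start from the product formula of Lemma~\ref{lkgeneral}, bound the $2^{p_1}$-block factor by $1$ and each numerator by $2$, reduce to showing $\prod_{q=2}^n 1/\cos(\pi/2^{p_1-p_q+1})\leq\pi/2$, and then exploit the key observation that the sparse product over $k_q=p_1-p_q+1$ can be bounded below by a full product over consecutive exponents so that Lemma~\ref{prodcos} applies with $\alpha=\pi/2$. The only cosmetic difference is that you pass to the infinite product $\prod_{k\geq2}\cos(\pi/2^k)=2/\pi$ and evaluate its limit, whereas the paper stops at the finite product $\prod_{j=2}^{p_1+1}\cos(\pi/2^j)$ and invokes $\sin x\leq x$ (Lemma~\ref{estimsinus}) to close the gap — the two are interchangeable and neither is simpler or more general than the other.
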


\begin{proof}

For all $z\in\C$ with
$|z|\leq1$, one has by~(\ref{descrl1genz0}) that
\begin{eqnarray*}
\left|
l_1^{(N)}
(z)
\right|
& = &
\frac{1}{2^{p_1}}
\left|
\sum_{j=0}^{2^{p_1}-1}
z^j
\right|
\times
\prod_{q=2}^n
\frac{
\left|z^{2^{p_q}}+\omega_0^{2^{p_q}}\right|
}{
\left|1+\omega_0^{2^{p_q}}\right|
}
\;\leq\;
\frac{1}{2^{p_1}}
\sum_{j=0}^{2^{p_1}-1}
\left|
z^j
\right|
\times
\prod_{q=2}^n
\frac{
\left|z^{2^{p_q}}\right|
+
\left|\omega_0^{2^{p_q}}\right|
}{
\left|1+\omega_0^{2^{p_q}}\right|
}
\\
& \leq &
1
\times
\prod_{q=2}^n
\frac{2
}{
\left|1+
\exp
\left(
i\pi/2^{p_1-p_q}
\right)
\right|
}
\;=\;
\prod_{q=2}^n
\;
\frac{2}{
2
\left|
\cos
\left(
\dfrac{1}{2^{p_1-p_q}}
\dfrac{\pi}{2}
\right)
\right|
}
\,,
\end{eqnarray*}
then
\begin{eqnarray}\label{l=01}
\sup_{|z|\leq1}
\left|
l_1^{(N)}
(z)
\right|
& \leq &
\frac{1}{\prod_{q=2}^n
\left|
\cos
\left(
\pi/2^{p_1-p_q+1}
\right)
\right|}
\,.
\end{eqnarray}
On the other hand, for all
$q=2,\ldots,n$, one has by~(\ref{recallrestrN}) that
\begin{eqnarray*}
2
\;\leq\;
p_1-p_2+1
\;\leq\;
p_1-p_q+1
\;\leq\;
p_1-p_n+1
\;\leq\;
p_1+1
\,,
\end{eqnarray*}
then
\begin{eqnarray*}
\prod_{q=2}^n
\left|
\cos
\left(
\pi/2^{p_1-p_q+1}
\right)
\right|
& \geq &
\prod_{j=p_1-p_2+1}^{p_1-p_n+1}
\left|
\cos
\left(
\pi/2^j
\right)
\right|
\;\geq\;
\prod_{j=2}^{p_1+1}
\left|
\cos
\left(
\pi/2^j
\right)
\right|
\,,
\end{eqnarray*}
since any term in the involved products is not greater than $1$.
It follows by Lemma~\ref{prodcos}
with $\alpha=\pi/2$ ($\notin\pi\Z$) and $m=p_1$
that
\begin{eqnarray*}
\prod_{q=2}^n
\left|
\cos
\left(
\pi/2^{p_1-p_q+1}
\right)
\right|
\;\geq\;
\left|
\prod_{j=1}^{p_1}
\cos
\left(
\frac{\pi/2}{2^j}
\right)
\right|
\,=\,
\frac{
\left|
\sin\left(\pi/2\right)
\right|
}{
2^{p_1}
\left|
\sin\left(
\dfrac{\pi/2}{2^{p_1}}
\right)
\right|
}
\,\geq\,
\frac{1}{2^{p_1}
\times
\dfrac{\pi/2}{2^{p_1}}}
\,=\,
\frac{2}{\pi}
\,,
\end{eqnarray*}
where the last estimate is justified by~(\ref{estimsinus2}) from
Lemma~\ref{estimsinus}, 
and the proof is finished by~(\ref{l=01}).

\end{proof}

\bigskip

\subsection{Proof of Theorem~\ref{unifestim}}

Before giving the proof of Theorem~\ref{unifestim}, we need the following
preliminar result in which we deal with the other FLIPs
associated with $z_k\in\mathcal{L}_N\setminus\Omega_{2^{p_1}}$.

\begin{lemma}\label{prelimpfthm}

Let $\mathcal{L}_N$ be a $N$-Leja section whose first point
$z_1$ starts at $1$.
There is an $\left(N-2^{p_1}\right)$-Leja section $\w{\mathcal{L}}_{N-2^{p_1}}$ that also
starts at $\w{z}_1=1$, with the following property:
for all $k=2^{p_1}+1,\ldots,N$, there is 
$k'$ with
$1\leq k'\leq N-2^{p_1}$ such that
\begin{eqnarray*}
\left\|l_k^{(N)}(z)\right\|_{\overline{\D}}
& \leq &
\left\|
\w{l}_{k'}^{(N-2^{p_1})}
\right\|_{\overline{\D}}
\,,
\end{eqnarray*}
where $\w{l}_{k'}^{(N-2^{p_1})}$ is the FLIP
associated with $\w{z_{k'}}\in\w{\mathcal{L}}_{N-2^{p_1}}$.

In addition, the correspondence
\begin{eqnarray*}
k,\,2^{p_1}+1\leq k\leq N
& \mapsto &
k',\,1\leq k'\leq N-2^{p_1}
\,,
\end{eqnarray*}
is well-defined and one-to-one.

\end{lemma}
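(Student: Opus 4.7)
The plan is to exploit the Bialas-Calvi splitting formula (\ref{splitleja}), which writes $\mathcal{L}_N=\left(\Omega_{2^{p_1}},\rho_1\w{\mathcal{L}}_{N-2^{p_1}}\right)$ with $\rho_1$ a $2^{p_1}$-th root of $-1$ and $\w{\mathcal{L}}_{N-2^{p_1}}$ an $\left(N-2^{p_1}\right)$-Leja section starting at $\w{z}_1=1$. This $\w{\mathcal{L}}_{N-2^{p_1}}$ will be the sought Leja section. Define the correspondence $k\mapsto k':=k-2^{p_1}$ between $\left\{2^{p_1}+1,\ldots,N\right\}$ and $\left\{1,\ldots,N-2^{p_1}\right\}$; it is trivially a bijection, and under it $z_k=\rho_1\w{z}_{k'}$.

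Next, I would split the product defining $l_k^{(N)}(z)$ into a product over $\Omega_{2^{p_1}}$ and one over the remaining points:
\begin{eqnarray*}
l_k^{(N)}(z)
& = &
\left(\prod_{z_j\in\Omega_{2^{p_1}}}
\frac{z-z_j}{z_k-z_j}\right)
\times
\prod_{j'=1,\,j'\neq k'}^{N-2^{p_1}}
\frac{z-\rho_1\w{z}_{j'}}{\rho_1\w{z}_{k'}-\rho_1\w{z}_{j'}}
\,.
\end{eqnarray*}
The first factor equals $\left(z^{2^{p_1}}-1\right)/\left(z_k^{2^{p_1}}-1\right)$ by Lemma~\ref{estim1}. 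For the second factor, factoring $\rho_1$ out of numerator and denominator identifies it as $\w{l}_{k'}^{(N-2^{p_1})}(z/\rho_1)$. Since $\left|\rho_1\right|=1$, the map $z\mapsto z/\rho_1$ is an isometry of $\overline{\D}$, so its sup-norm on $\overline{\D}$ equals $\bigl\|\w{l}_{k'}^{(N-2^{p_1})}\bigr\|_{\overline{\D}}$.

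The main (if modest) obstacle is showing that the first factor is bounded by $1$ on $\overline{\D}$. For this, I would establish that $z_k^{2^{p_1}}=-1$, so that $\left|z_k^{2^{p_1}}-1\right|=2$ while trivially $\left|z^{2^{p_1}}-1\right|\leq2$ for $|z|\leq1$. By definition $z_k^{2^{p_1}}=\rho_1^{2^{p_1}}\w{z}_{k'}^{2^{p_1}}=-\w{z}_{k'}^{2^{p_1}}$, so it suffices to show $\w{z}_{k'}^{2^{p_1}}=1$. But $k'\leq N-2^{p_1}=2^{p_2}+\cdots+2^{p_n}<2^{p_1}$, and the explicit formula~(\ref{explicitleja}) gives $\w{z}_{k'}=\exp\left(i\pi\sum_{l=0}^{s}j_l2^{-l}\right)$ with $k'-1=\sum_{l=0}^{s}j_l2^l<2^{p_1}$, forcing $s\leq p_1-1$; consequently $2^{p_1}\cdot\sum_{l}j_l2^{-l}$ is an even integer, and $\w{z}_{k'}^{2^{p_1}}=1$ as required.

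Putting the two bounds together yields, for all $z\in\overline{\D}$,
\begin{eqnarray*}
\left|l_k^{(N)}(z)\right|
\;\leq\;
1\times\left|\w{l}_{k'}^{(N-2^{p_1})}(z/\rho_1)\right|
\;\leq\;
\bigl\|\w{l}_{k'}^{(N-2^{p_1})}\bigr\|_{\overline{\D}}
\,,
\end{eqnarray*}
and taking the supremum over $z\in\overline{\D}$ gives the lemma.
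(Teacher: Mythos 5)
Your proof takes essentially the same route as the paper: split $l_k^{(N)}$ into a factor over $\Omega_{2^{p_1}}$ and a factor over the remaining points, bound the first by $1$ using $z_k^{2^{p_1}}=-1$, and identify the second factor with $\w{l}_{k'}^{(N-2^{p_1})}(z/\rho_1)$. Your treatment of the bijection is cleaner than the paper's: you use the ordered version of~(\ref{splitleja}) to define $k'=k-2^{p_1}$ outright, whereas the paper treats the splitting only as a set equality and must argue well-definedness, injectivity, and then invoke cardinality for surjectivity.

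There is one small imprecision in the step where you establish $\w{z}_{k'}^{2^{p_1}}=1$. You invoke the explicit formula~(\ref{explicitleja}), but that formula describes only one particular Leja sequence starting at $1$, not all of them (e.g.\ $\eta_3=-i$ is a valid choice the formula does not produce). Since $\w{\mathcal{L}}$ is an arbitrary Leja sequence, you cannot apply~(\ref{explicitleja}) to it directly. The conclusion is nevertheless correct, and the right justification is the structure theorem you are already using (Theorem~5 of~\cite{bialascalvi}, i.e.\ the content of~(\ref{splitleja})): the first $2^{p_1}$ points of any Leja sequence starting at $1$ form $\Omega_{2^{p_1}}$, and since $k'\leq N-2^{p_1}<2^{p_1}$, the point $\w{z}_{k'}$ is a $2^{p_1}$-th root of unity. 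Alternatively, the paper's argument is even more direct and bypasses $\w{z}_{k'}$ altogether: $z_k$ lies among the first $2^{p_1+1}$ points (which form $\Omega_{2^{p_1+1}}$) but not among the first $2^{p_1}$ (which form $\Omega_{2^{p_1}}$), so $z_k\in\Omega_{2^{p_1+1}}\setminus\Omega_{2^{p_1}}$ is a $2^{p_1}$-th root of $-1$. Once you replace the appeal to~(\ref{explicitleja}) with either of these, the proof is complete.
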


\begin{proof}

First, let us consider
$k=2^{p_1}+1,\ldots, N$ and the FLIP
$l_k^{(N)}$ associated with
$z_k$. $\mathcal{L}_N$ being a
$N$-Leja section that starts at $z_1=1$, one necessarily has
by Theorem~5 from~\cite{bialascalvi} (or~(\ref{splitleja}) ) 
that $z_k\notin\Omega_{2^{p_1}}$, i.e.
$z_k$ is a $2^{p_1}$-th root of $-1$. It follows that
\begin{eqnarray}\nonumber
\left|l_k^{(N)}(z)\right|
& = &
\left|
\prod_{z_j\in\mathcal{L}_N,z_j\neq z_k}
\frac{z-z_j}{z_k-z_j}
\right|
\\\label{pfthm1}
& = &
\left|
\prod_{z_j\in\Omega_{2^{p_1}}}
\frac{z-z_j}{z_k-z_j}
\right|
\times
\left|
\prod_{z_j\in\mathcal{L}_N\setminus\Omega_{2^{p_1}},z_j\neq z_k}
\frac{z-z_j}{z_k-z_j}
\right|
\,.
\end{eqnarray}

On the one hand, one has for all $|z|\leq1$,
\begin{eqnarray}\label{pfthm2}
\left|
\prod_{z_j\in\Omega_{2^{p_1}}}
\frac{z-z_j}{z_k-z_j}
\right|
\;=\;
\frac{
\left|
z^{2^{p_1}}-1
\right|
}{
\left|
z_k^{2^{p_1}}
-1
\right|
}
& \leq &
\frac{|z|^{2^{p_1}}+1}{|-1-1|}
\;=\;
\frac{2}{2}
\;=\;
1
\,,
\end{eqnarray}
since $z_k^{2^{p_1}}=-1$.

On the other hand, one has (again by Theorem~5 from~\cite{bialascalvi},
or~(\ref{splitleja}) ) that
\begin{eqnarray*}
\mathcal{L}_N\setminus\Omega_{2^{p_1}}
& = &
\omega_1\widetilde{\mathcal{L}}_{N-2^{p_1}}
\,,
\end{eqnarray*}
where
$\omega_1$ is a $2^{p_1}$-th root of $-1$,
$\widetilde{\mathcal{L}}_{N-2^{p_1}}$ is the
$\left(N-2^{p_1}\right)$-section of (maybe) another
Leja sequence
$\widetilde{\mathcal{L}}
=
\left\{
\w{z_1},\w{z_2},\ldots,\w{z_j},\ldots
\right\}$
with
$\w{z_1}=1$, and the above equality is meant as sets.
In particular,
$z_k\in\omega_1\w{\mathcal{L}}_{N-2^{p_1}}$ can be written as
$z_k=\omega_1\w{z_{k'}}$
with
$1\leq k'\leq N-2^{p_1}$. This proves that
the correspondence:
\begin{eqnarray*}
z_k,\,2^{p_1}+1\leq k\leq N
& \mapsto &
\w{z_{k'}},\,1\leq k'\leq N-2^{p_1}
\end{eqnarray*}
(then $k\mapsto k'$ as well),
 is well-defined and injective.
Since
$\mbox{card}\left(\mathcal{L}_N\setminus\Omega_{2^{p_1}}\right)
=N-2^{p_1}=
\mbox{card}\left(\omega_1\w{\mathcal{L}}_{N-2^{p_1}}\right)$, it is also
one-to-one.

In particular, this leads to
\begin{eqnarray*}
\left|
\prod_{z_j\in\mathcal{L}_N\setminus\Omega_{2^{p_1}},z_j\neq z_k}
\frac{z-z_j}{z_k-z_j}
\right|
& = &
\;\;\;\;\;\;\;\;\;\;\;\;\;\;\;\;\;\;\;\;\;\;\;\;\;\;\;\;\;\;\;\;\;\;\;\;
\;\;\;\;\;\;\;\;\;\;\;\;\;\;\;\;\;\;\;\;\;\;\;\;\;\;\;\;\;\;\;\;\;\;\;\;
\;\;\;\;\;\;\;\;\;\;\;\;\;\;\;\;
\end{eqnarray*}
\begin{eqnarray}\nonumber
\;\;\;\;\;\;\;\;\;\;\;\;\;\;\;\;
& = &
\left|
\prod_{z_j\in\omega_1\widetilde{\mathcal{L}}_{N-2^{p_1}},z_j\neq z_k}
\frac{z-z_j}{z_k-z_j}
\right|
\;=\;
\left|
\prod_{\w{z_j}\in\widetilde{\mathcal{L}}_{N-2^{p_1}},\w{z_j}\neq \w{z_{k'}}}
\frac{z-\omega_1\w{z_j}}{\omega_1\w{z_{k'}}-\omega_1\w{z_j}}
\right|
\\\label{pfthm3}
& = &
\left|
\prod_{\w{z_j}\in\widetilde{\mathcal{L}}_{N-2^{p_1}},\w{z_j}\neq \w{z_{k'}}}
\frac{z/\omega_1-\w{z_j}}{\w{z_{k'}}-\w{z_j}}
\right|
\;=\;
\left|
\w{l}_{k'}^{(N-2^{p_1})}
\left(
\frac{z}{\omega_1}
\right)
\right|
\,,
\end{eqnarray}
where
$\w{l}_{k'}^{(N-2^{p_1})}$ is the FLIP
associated with
$\w{z_{k'}}$ from the
$\left(N-2^{p_1}\right)$-Leja section
$\w{\mathcal{L}}_{N-2^{p_1}}$.

Finally, the estimates~(\ref{pfthm1}), (\ref{pfthm2})
and~(\ref{pfthm3}) together yield
\begin{eqnarray*}
\sup_{|z|\leq1}
\left|l_k^{(N)}(z)\right|
& \leq &
1\times
\sup_{|z|\leq1}
\left|
\w{l}_{k'}^{(N-2^{p_1})}
\left(
\frac{z}{\omega_1}
\right)
\right|
\,,
\end{eqnarray*}
and the lemma is proved since
$\left|\omega_1\right|=1$.

\end{proof}

Now we can finally give the proof of Theorem~\ref{unifestim}.
We then consider $\mathcal{L}_N$
the $N$-section of any fixed Leja sequence, where
\begin{eqnarray}\label{recallNgen}
& &
N
\;=\;
2^{p_1}+2^{p_2}+\cdots+2^{p_n}
\,\mbox{ with }\,
p_1>p_2>\cdots>p_n\geq0
\;\mbox{ and }\;
n\geq1
\,.
\end{eqnarray}

\begin{proof}

First, by the symmetry of the disk, we can wlog assume that (the first Leja point) $z_1=1$.
Next, by the maximum modulus principle, it suffices to prove the required estimate for all
$|z|=1$, i.e. $z=\exp(i\pi\theta)$ with
$\theta\in\,]-1,1]$.
The proof is by induction on $n\geq1$, where $n$ is defined
from~(\ref{recallNgen}). 

\medskip

The special case of $n=1$ means that $N=2^{p_1}$ with $p_1\geq0$.
Then $\mathcal{L}_{2^{p_1}}=\Omega_{2^{p_1}}$
by Theorem~5 from~\cite{bialascalvi}, 
and~(\ref{estim13}) from Lemma~\ref{estim1} yields
for all $k=1,\ldots,2^{p_1}$,
\begin{eqnarray*}
\sup_{z\in\overline{\D}}
\left|
l_k^{(2^{p_1})}(z)
\right|
& = &
1
\,.
\end{eqnarray*}
An alternative argument is that the set $\Omega_{2^{p_1}}$ is a
$2^{p_1}$-Fekete set for the unit disk 
(as specified by~(\ref{unifboundfekete}) from the Introduction,
see also~\cite{fekete1}). Thus, all the FLIPs
are bounded by $1$.

\medskip

Now let be $N$ with $n\geq2$
(i.e. $2^{p_1}<N<2^{p_1+1}$), let be
$\mathcal{L}_N$ and consider the associated $\omega_0$
defined from Lemma~\ref{lkgeneral} and that can be written as follows:
\begin{eqnarray*}
\omega_0
\;=\;
\exp\left(
\frac{2l+1}{2^{p_1}}i\pi
\right)
& \mbox{ with } &
l=0,\ldots,2^{p_1}-1
\,.
\end{eqnarray*}
We first prove the theorem for the FLIP
$l_1^{(N)}$ associated with
$z_1=1$,
and we fix $|z|=1$. If $l=0$, then the theorem is a consequence of
Lemma~\ref{l=0}.
Otherwise, $1\leq l\leq 2^{p_1}-1$ then the assertion is a consequence of
what has been proved in Section~\ref{specialessential}.
The theorem is then true for
$l_1^{(N)}$ with any $N$-Leja section $\mathcal{L}_N$.

\medskip

Next, if $2\leq k\leq2^{p_1}$, then
$z_k\in\Omega_{2^{p_1}}$ (i.e. $z_k$ is a
$2^{p_1}$-th root of the unity,
see Theorem~1 from~\cite{calviphung1},
or~(\ref{splitleja}) ). An application of~(\ref{changetoz12})
from Lemma~\ref{changetoz1}
yields
\begin{eqnarray*}
\left\|
l_k^{(N)}
\right\|_{\overline{\D}}
& = &
\left\|
\widetilde{l}_1^{(N)}
\right\|_{\overline{\D}}
\,,
\end{eqnarray*}
where $\widetilde{l}_1^{(N)}$ is the FLIP associated
with $\widetilde{z}_1=1$ from (possibly) another $N$-Leja section
$\widetilde{\mathcal{L}}_N$. Since the theorem is valid
for
$l_1^{(N)}$ and any $N$-Leja section, it follows that it holds true for
$l_k^{(N)}$. Thus, it is proved for
all $k=1,\ldots,2^{p_1}$ (and for any $N$-Leja section).

\medskip

Lastly, let us consider
$k=2^{p_1}+1,\ldots, N$ and the FLIP
$l_k^{(N)}$ associated with
$z_k$. $\mathcal{L}_N$ being a
$N$-Leja section that starts at $z_1=1$, necessarily
$z_k\notin\Omega_{2^{p_1}}$, i.e.
$z_k$ is a $2^{p_1}$-th root of $-1$
(see Theorem~5 from~\cite{bialascalvi}). An application of
Lemma~\ref{prelimpfthm} gives that
\begin{eqnarray*}
\left\|l_k^{(N)}(z)\right\|_{\overline{\D}}
& \leq &
\left\|
\w{l}_{k'}^{(N-2^{p_1})}
\right\|_{\overline{\D}}
\,,
\end{eqnarray*}
where $\w{l}_{k'}^{(N-2^{p_1})}$ is the FLIP
associated with $\w{z_{k'}}\in\w{\mathcal{L}}_{N-2^{p_1}}$
and
$\w{\mathcal{L}}_{N-2^{p_1}}$ 
is an $\left(N-2^{p_1}\right)$-Leja section that also
starts at $\w{z}_1=1$. Since by~(\ref{recallNgen}),
\begin{eqnarray*}
N-2^{p_1}
& = &
2^{p_2}+2^{p_3}+\cdots+2^{p_n}
\;=\;
\sum_{q=1}^{n-1}2^{p_{q+1}}
\,,
\end{eqnarray*}
it follows that the induction hypothesis can be applied to
$\w{\mathcal{L}}_{N-2^{p_1}}$ with
$n-1$, and the above inequality becomes
\begin{eqnarray*}
\left\|l_k^{(N)}(z)\right\|_{\overline{\D}}
& \leq &
\left\|
\w{l}_{k'}^{(N-2^{p_1})}
\right\|_{\overline{\D}}
\;\leq\;
\pi\exp(3\pi)
\,.
\end{eqnarray*}
This finally achieves the induction and the whole proof of the theorem.

\end{proof}

\bigskip

\section{On the special case of $N=2^{p}-1$}\label{secspecialN}

As it has been pointed in the Introduction, the bound from Theorem~\ref{unifestim} may not be optimal.
That is why we want to consider here the special case of
$N=2^p-1$ where the associated estimate can be considerably improved. Indeed,
it is first proved (see Proposition~\ref{unifestimspecialN} below) that
$\sup_{z\in\overline{\D}}\left|l_k^{\left(2^p-1\right)}(z)\right|\leq2$
for all $k=1,\ldots,2^p-1$ (and for exceptional values of $k$,
the bound cannot be better than $4/\pi$).

On the other hand, numerical simulations let us think that for
{\em almost} $k=1,\ldots,2^p-1$, the associated bound
for 
$l_k^{\left(2^p-1\right)}$
is {\em almost} $1$, i.e. any $\left(2^p-1\right)$-Leja section
is {\em almost} a $\left(2^p-1\right)$-Fekete set for the unit disk.
We may explain what is meant by using {\em almost} and this is
specified by the following result that has been mentioned in the Introduction.

\begin{theorem}\label{specialN}

Let $\mathcal{L}$ be any Leja sequence for the unit disk and
let us consider for all
$p\geq1$, $\left\{l_k^{\left(2^p-1\right)}\right\}_{1\leq k\leq 2^p-1}$ the associated
family of FLIPs. Then
\begin{eqnarray}\label{specialN1}
\lim_{p\rightarrow+\infty}
\left[
\frac{1}{2^p-1}
\sum_{k=1}^{2^p-1}
\sup_{z\in\overline{\D}}
\left|
l_k^{\left(2^p-1\right)}(z)
\right|
\right]
& = &
1
\,.
\end{eqnarray}

More precisely, for all $p\geq2$,
\begin{eqnarray}\label{specialN2}
2^p-1
& < &
\sum_{k=1}^{2^p-1}
\sup_{z\in\overline{\D}}
\left|
l_k^{\left(2^p-1\right)}(z)
\right|
\;\leq\;
\left(
1+\varepsilon(1/p)\right)
\times
\left(2^p-1\right)
\,,
\end{eqnarray}
where
$\lim_{p\rightarrow+\infty}
\varepsilon(1/p)=0$.

\end{theorem}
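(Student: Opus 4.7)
The plan is to establish the upper bound in~(\ref{specialN2}); the lower bound follows immediately since each $\|l_k^{(N)}\|_{\overline{\D}} \geq |l_k^{(N)}(z_k)| = 1$, with strict inequality for at least one $k$ (as the corresponding FLIP is a non-constant polynomial of degree $2^p - 2 \geq 1$). The limit~(\ref{specialN1}) is then a direct consequence of~(\ref{specialN2}) upon dividing by $2^p-1$.

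Set $N = 2^p - 1$ and $S_p := \sum_{k=1}^{N} \|l_k^{(N)}\|_{\overline{\D}}$. The idea is to exploit the splitting $\mathcal{L}_N = (\Omega_{2^{p-1}}, \rho_1 \widetilde{\mathcal{L}}_{2^{p-1}-1})$ from~(\ref{splitleja}): indices $1 \leq k \leq 2^{p-1}$ correspond to $z_k \in \Omega_{2^{p-1}}$, while indices $2^{p-1}+1 \leq k \leq N$ correspond to points of $\rho_1 \widetilde{\mathcal{L}}_{2^{p-1}-1}$. For the latter, Lemma~\ref{prelimpfthm} gives $\|l_k^{(N)}\|_{\overline{\D}} \leq \|\widetilde{l}_{k'}^{(2^{p-1}-1)}\|_{\overline{\D}}$ via an explicit bijection $k \mapsto k'$, which yields the recursion
\begin{equation*}
S_p \;\leq\; A_p + \widetilde{S}_{p-1}, \qquad A_p := \sum_{k=1}^{2^{p-1}} \|l_k^{(N)}\|_{\overline{\D}},
\end{equation*}
where $\widetilde{S}_{p-1}$ is the analogous sum for the Leja section $\widetilde{\mathcal{L}}_{2^{p-1}-1}$.

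The central step is to show $A_p \leq 2^{p-1}(1 + \eta(1/p))$ with $\eta(1/p) \to 0$. By Lemma~\ref{changetoz1}, as $z_k$ ranges over $\Omega_{2^{p-1}}$ the multiset $\{\|l_k^{(N)}\|_{\overline{\D}} : 1 \leq k \leq 2^{p-1}\}$ coincides with $\{\|l_1^{(N,\omega_l)}\|_{\overline{\D}} : 0 \leq l \leq 2^{p-1}-1\}$ where $\omega_l := \exp((2l+1)i\pi/2^{p-1})$ and $l_1^{(N,\omega_l)}$ denotes the first FLIP of the $N$-Leja section corresponding to that value of $\omega_0$. Revisiting the analysis of Section~\ref{specialessential}, the control of $\|l_1^{(N,\omega_l)}\|_{\overline{\D}}$ reduces to products of cosine ratios whose sizes are governed by the alternating-binary decomposition of $l$ provided by Lemma~\ref{binaryalt}: when this decomposition has sufficiently many chains of consecutive ones and large gaps between the exponents $s_i$, the relevant product is $1 + o(1)$ uniformly in $z \in \overline{\D}$. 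For the remaining "bad" values of $l$, one falls back on the uniform bound $2$ for this special regime (Proposition~\ref{unifestimspecialN}), rather than the coarser bound $\pi\exp(3\pi)$ of Theorem~\ref{unifestim}.

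The main obstacle is the combinatorial counting: one must show that for a suitable threshold $\delta_p \to 0$, the set $B_p \subset \{0,\ldots,2^{p-1}-1\}$ of "bad" $l$'s satisfies $|B_p|/2^{p-1} \to 0$ at an explicit rate expressible as a function of $1/p$. This amounts to a density statement about binary expansions of length $p-1$: almost all $l$ have alternating-binary length of order $p$ with well-spread exponents, exactly the regime where the cosine-ratio product is close to $1$. Granting this, one obtains $A_p \leq (1+\delta_p)(2^{p-1} - |B_p|) + 2|B_p| = 2^{p-1}(1+\eta(1/p))$, and unrolling the recursion $S_p \leq \sum_{j=2}^{p} A_j + S_1$ yields $S_p \leq (2^p - 1)(1 + \varepsilon(1/p))$, proving~(\ref{specialN2}) and hence~(\ref{specialN1}).
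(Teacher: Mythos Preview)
Your overall scaffolding (recursion via Lemma~\ref{prelimpfthm}, splitting into first and second halves, then unrolling) matches the paper's, but the heart of the argument --- the estimate $A_p \leq 2^{p-1}(1+\eta(1/p))$ --- is approached quite differently, and your route has a genuine gap.

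The paper does \emph{not} revisit the cosine-ratio machinery of Section~\ref{specialessential} or invoke the alternating-binary decomposition of $l$ at all for this step. Instead it exploits a simplification specific to $N=2^{p_1+1}-1$: by the telescoping identity of Lemma~\ref{prelimpol}, the product over $q$ in Lemma~\ref{lkgeneral} collapses and $l_1^{(N,\omega_l)}$ acquires the closed form~(\ref{defl1omegal}),
\[
l_{\omega_l}^{(N)}(z)\;=\;\frac{1-\omega_l}{2^{p_1+1}}\cdot\frac{z^{2^{p_1+1}}-1}{(z-1)(z-\omega_l)}\,.
\]
From this formula the ``bad'' $l$ are identified \emph{geometrically}: they are precisely those with $\omega_l$ close to $1$, i.e.\ $l<\varepsilon_0 2^{p_1}$ or $l>(1-\varepsilon_0)2^{p_1}-1$. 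For $l$ in the middle range, Lemma~\ref{estimlomegaldisk} gives $\|l_{\omega_l}^{(N)}\|_{\overline{\D}}\leq(1-\varepsilon_0/4)^{-1}$ by an elementary case split (near $z=1$, near $z=\omega_l$, away from both). The bad set has size $O(\varepsilon_0 2^{p_1})$ trivially, and on it one uses the crude bound $\pi\exp(3\pi)$; letting $\varepsilon_0\to0$ afterwards finishes the job (Lemma~\ref{specialNhalflk}).

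Your proposed combinatorial criterion on the alternating-binary structure of $l$ does not obviously coincide with the correct good set, and more seriously, even granting it the Section~\ref{specialessential} estimates are too crude to yield $1+o(1)$: for instance, in the regime $|\theta|\leq 2^{-p_1}$ with all $p_q\in T$ the product is only bounded by $\exp(\pi/2)$, a fixed constant independent of $p$. Sharpening that product to $1+o(1)$ without further cancellation would essentially require rediscovering the closed form~(\ref{defl1omegal}). So the density statement you flag as the ``main obstacle'' is both unproven and, as stated, insufficient.

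A minor point on the lower bound: your claim that some $\|l_k^{(N)}\|_{\overline{\D}}>1$ does not follow merely from $l_k^{(N)}$ being non-constant (the polynomial $z^m$ has sup-norm $1$ on $\overline{\D}$). The paper instead evaluates $l_{\omega_0}^{(N)}$ at $z=\exp(i\pi/2^{p_1+1})$ explicitly and shows this value exceeds $1$ for all $p\geq2$, tending to $4/\pi$; see~(\ref{unifestimspecialN5}) and~(\ref{unifestimspecialNepsilon}).
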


We first notice that,
except for an asymptotically negligible number of values for $k=1,\ldots,N$,
the FLIPs are asymptotically bounded by $1$. 

Next, as an application, it is
a heuristic confirmation of Theorem~8 from~\cite{calviphung1} where
it is proved that
$\Lambda_{2^p-1}=2^p-1$ for all $p\geq1$.
Although we think that this way will not allow us to prove the conjecture
from~\cite{calviphung1} that
$\Lambda_{N}\leq N$ for all $N\geq1$.
If we wanted to get this last result as an application of Theorem~\ref{specialN}, 
we should then prove a better estimate,
like for example
$\sum_{k=1}^{2^p-1}
\sup_{z\in\overline{\D}}
\left|
l_k^{\left(2^p-1\right)}(z)
\right|
\leq
2^p-1$.
Unfortunately, and as it could be suspected, this cannot
be possible as specified by the left-hand side of~(\ref{specialN2}).

Finally, this makes us think that this way will not allow us to prove the conjecture
from~\cite{calviphung1} that
$\Lambda_{N}\leq N$ for all $N\geq1$ (although
we think that the worst values of $\Lambda_{N}$ appear for 
$N=2^p-1$, as  it has been pointed out in the last part 
of~\cite{chkifa1}, {\em Numerical illustration},
p. 198--199).

Before giving the proof, we first remind the notation
$N=2^{p_1+1}-1
=2^{p_1}+2^{p_1-1}+\cdots+2+1$, i.e.
\begin{eqnarray}\label{dataspecialN}
n\;=\;p_1+1
& \mbox{and} &
p_q\;=\;p_1-q+1
\,,\;
\mbox{ for all }\,
q\;=\;1,\ldots,p_1+1
\,.
\end{eqnarray}

We will first deal with a subfamily of FLIPs:
for all $l=0,\ldots,2^{p_1}-1$, we consider the
$2^{p_1}$-th root of $-1$,
\begin{eqnarray}\label{defomegal}
\omega_l
& := &
\exp
\left(
\frac{2l+1}{2^{p_1}}i\pi
\right)
\,,
\end{eqnarray}
and for all $z\neq1,\omega_l$, we set
\begin{eqnarray}\label{defl1omegal}
l_{\omega_l}^{(N)}(z)
& := &
\frac{
1-\omega_l
}{
2^{p_1+1}
}
\times
\frac{
z^{2^{p_1+1}}-1
}{
(z-1)
\,
\left(z-\omega_l\right)
}
\,.
\end{eqnarray}

As we will see in the following, $l_{\omega_l}^{(N)}$ is indeed a FLIP
(at least in modulus) for all $l=0,\ldots,2^{p_1}-1$. 
We then want to begin with preliminar results
for the estimate of the $l_{\omega_l}^{(N)}$'s for
{\em almost} every $l=1,\ldots,2^{p_1}-1$ (as it will be specified below).

\subsection{A preliminar estimate for $l_{\omega_l}^{(N)}$ and for {\em almost} every $l$}

We begin with an estimate of $\left|1-\omega_l\right|$.

\begin{lemma}\label{estim1-omegal}

Let fix
$\varepsilon_0>$ small enough such that 
$\varepsilon_02^{p_1}\leq\left(1-\varepsilon_0\right)2^{p_1}-1$
for all $p_1\geq1$
(for example, if $\varepsilon_0\leq1/4$). Then for all $l$ with
$\varepsilon_02^{p_1}\leq l\leq\left(1-\varepsilon_0\right)2^{p_1}-1$
(where
$\omega_l=\exp\left[(2l+1)i\pi/2^{p_1}\right]$), one has
\begin{eqnarray*}
\left|
1-\omega_l
\right|
& \geq &
4\varepsilon_0
\,.
\end{eqnarray*}

\end{lemma}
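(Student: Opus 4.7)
The plan is to reduce the estimate to a bound on a sine and then invoke the elementary estimate already recorded in Lemma~\ref{estimsinus}.

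First I would write
\begin{eqnarray*}
\left|1-\omega_l\right|
\;=\;
\left|1-\exp\!\left(\frac{(2l+1)i\pi}{2^{p_1}}\right)\right|
\;=\;
2\left|\sin\!\left(\frac{(2l+1)\pi}{2^{p_1+1}}\right)\right|
\,,
\end{eqnarray*}
so that it suffices to bound $\sin(\alpha_l)$ from below, where $\alpha_l := (2l+1)\pi/2^{p_1+1}$.

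Next I would use the constraint $\varepsilon_0 2^{p_1}\leq l\leq(1-\varepsilon_0)2^{p_1}-1$ to show that $\alpha_l$ is confined to a subinterval of $(0,\pi)$ that stays away from the endpoints. Indeed, the hypothesis yields
\begin{eqnarray*}
\alpha_l
\;\in\;
\left[\varepsilon_0\pi+\frac{\pi}{2^{p_1+1}}\,,\;(1-\varepsilon_0)\pi-\frac{\pi}{2^{p_1+1}}\right]
\;\subset\;
\left[\varepsilon_0\pi\,,\;(1-\varepsilon_0)\pi\right]
\,,
\end{eqnarray*}
and on $[\varepsilon_0\pi,(1-\varepsilon_0)\pi]$ the function $\sin$ is positive and symmetric about $\pi/2$, so its minimum there is achieved at the endpoints, giving $\sin(\alpha_l)\geq\sin(\varepsilon_0\pi)$.

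Finally, since $\varepsilon_0\leq1/4<1/2$, we have $\varepsilon_0\pi\in[0,\pi/2]$, and the first estimate of Lemma~\ref{estimsinus} (namely $\sin(x)\geq 2x/\pi$ on $[0,\pi/2]$) yields
\begin{eqnarray*}
\sin(\varepsilon_0\pi)
\;\geq\;
\frac{2}{\pi}\cdot\varepsilon_0\pi
\;=\;
2\varepsilon_0
\,,
\end{eqnarray*}
whence $\left|1-\omega_l\right|=2\sin(\alpha_l)\geq 4\varepsilon_0$, as required. There is essentially no main obstacle here: the argument is a direct calculation once $|1-\omega_l|$ is rewritten as $2\sin(\alpha_l)$ and the hypothesis on $l$ is translated into a lower bound on the distance from $\alpha_l$ to $\{0,\pi\}$; the only small thing to check is that the admissible range for $l$ in the statement (guaranteed nonempty by the choice $\varepsilon_0\leq 1/4$) is consistent with the application of the Lemma~\ref{estimsinus} estimate, which forces $\varepsilon_0\leq 1/2$ and is therefore automatic.
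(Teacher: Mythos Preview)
Your proof is correct and follows essentially the same approach as the paper: rewrite $|1-\omega_l|=2|\sin(\alpha_l)|$, use the hypothesis on $l$ to confine $\alpha_l$ to $[\varepsilon_0\pi,(1-\varepsilon_0)\pi]$, take the minimum of $\sin$ there to be $\sin(\varepsilon_0\pi)$, and finish with the bound $\sin(x)\geq 2x/\pi$ from Lemma~\ref{estimsinus}. The only cosmetic difference is that you record the slightly sharper intermediate interval $[\varepsilon_0\pi+\pi/2^{p_1+1},(1-\varepsilon_0)\pi-\pi/2^{p_1+1}]$ before passing to $[\varepsilon_0\pi,(1-\varepsilon_0)\pi]$, whereas the paper goes to the latter directly.
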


\begin{proof}

We know from hypothesis about $l$ that
\begin{eqnarray*}
\frac{2l+1}{2^{p_1}}
& \geq &
\frac{
2\times\varepsilon_02^{p_1}+1
}{2^{p_1}}
\;\geq\;
\frac{\varepsilon_02^{p_1+1}}{2^{p_1}}
\;=\;2\varepsilon_0
\,.
\end{eqnarray*}
Similarly,
\begin{eqnarray*}
\frac{2l+1}{2^{p_1}}
& \leq &
\frac{
2\times
\left(
\left(1-\varepsilon_0\right)2^{p_1}-1
\right)+1
}{2^{p_1}}
\;=\;
\frac{
\left(1-\varepsilon_0\right)2^{p_1+1}-1
}{2^{p_1}}
\;\leq\;
2\left(1-\varepsilon_0\right)
\,.
\end{eqnarray*}
Thus,
\begin{eqnarray*}
0
\;<\;
\pi\varepsilon_0
\;\leq\;
\frac{2l+1}{2^{p_1}}
\frac{\pi}{2}
\;\leq\;
\pi\left(1-\varepsilon_0\right)
\;<\;
\pi
\,.
\end{eqnarray*}
In particular,
$\dfrac{2l+1}{2^{p_1}}
\dfrac{\pi}{2}
\in
\left[
0,\pi
\right]$ then
\begin{eqnarray*}
\left|
\sin
\left(
\frac{2l+1}{2^{p_1}}
\frac{\pi}{2}
\right)
\right|
& = &
\sin
\left(
\frac{2l+1}{2^{p_1}}
\frac{\pi}{2}
\right)
\\
& \geq &
\min
\left[
\sin
\left(
\pi\varepsilon_0
\right)
\,,\,
\sin
\left(
\pi-
\pi\varepsilon_0
\right)
\right]
\;=\;
\sin
\left(
\pi\varepsilon_0
\right)
\,.
\end{eqnarray*}
It follows that
\begin{eqnarray*}
\left|
1-\omega_l
\right|
\;=\;
\left|
1-\exp\left(
\frac{2l+1}{2^{p_1}}i\pi
\right)
\right|
\;=\;
2\left|
\sin
\left(
\frac{2l+1}{2^{p_1}}
\frac{\pi}{2}
\right)
\right|
\;\geq\;
2
\sin
\left(
\pi\varepsilon_0
\right)
\,.
\end{eqnarray*}
On the other hand, since
$0<\pi\varepsilon_0\leq\pi/4<\pi/2$,
an application
of~(\ref{estimsinus1}) from Lemma~\ref{estimsinus}
yields
\begin{eqnarray*}
\left|
1-\omega_l
\right|
\;\geq\;
2
\sin
\left(
\pi\varepsilon_0
\right)
\;\geq\;
2\times\frac{2}{\pi}
\times
\pi\varepsilon_0
\;=\;
4\varepsilon_0
\,,
\end{eqnarray*}
and this proves the lemma.

\end{proof}

The following result gives an estimate of
$l_{\omega_l}^{(N)}$ for the $z$'s which are close to
$1$ or $\omega_l$.

\begin{lemma}\label{estimlomegalmax}

Let fix
$\varepsilon_0>$ small enough such that 
$\varepsilon_02^{p_1}\leq\left(1-\varepsilon_0\right)2^{p_1}-1$
for all $p_1\geq1$
(for example, if $\varepsilon_0\leq1/4$). Then for all $l$ with
$\varepsilon_02^{p_1}\leq l\leq\left(1-\varepsilon_0\right)2^{p_1}-1$ 
and all
$z\in\overline{\D}$ such that
$|z-1|\leq\varepsilon_0^2$
or
$|z-\omega_l|\leq\varepsilon_0^2$
(where
$\omega_l=\exp\left[(2l+1)i\pi/2^{p_1}\right]$, see~(\ref{defomegal}) ), one has
\begin{eqnarray*}
\left|
l_{\omega_l}^{(N)}(z)
\right|
& \leq &
\frac{1}{
1-
\varepsilon_0/4
}
\,.
\end{eqnarray*}

\end{lemma}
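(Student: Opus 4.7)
The plan is to directly manipulate the closed form of $l_{\omega_l}^{(N)}$ given by~(\ref{defl1omegal}), exploiting the fact that $\omega_l$ is not only a $2^{p_1}$-th root of $-1$ but also a $2^{p_1+1}$-th root of unity, since $\omega_l^{2^{p_1+1}} = (\omega_l^{2^{p_1}})^2 = (-1)^2 = 1$. Consequently $z^{2^{p_1+1}}-1$ vanishes at both $1$ and $\omega_l$, and the quotient appearing in~(\ref{defl1omegal}) is a genuine polynomial of degree $2^{p_1+1}-2$, to which the triangle inequality can be freely applied.

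First, I treat the case $|z-1|\leq\varepsilon_0^2$. Factoring out $z-1$ yields
\begin{eqnarray*}
\frac{z^{2^{p_1+1}}-1}{z-1} & = & \sum_{j=0}^{2^{p_1+1}-1} z^j\,,
\end{eqnarray*}
whose modulus is at most $2^{p_1+1}$ on $\overline{\D}$. Substituting into~(\ref{defl1omegal}) collapses the factor $2^{p_1+1}$ and gives $\left|l_{\omega_l}^{(N)}(z)\right|\leq |1-\omega_l|/|z-\omega_l|$. The reverse triangle inequality yields $|z-\omega_l|\geq |1-\omega_l|-|z-1|\geq |1-\omega_l|-\varepsilon_0^2$, and Lemma~\ref{estim1-omegal} provides $|1-\omega_l|\geq 4\varepsilon_0$ under the hypothesis on $l$, so that $\varepsilon_0^2/|1-\omega_l|\leq\varepsilon_0/4$. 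Hence
\begin{eqnarray*}
\left|l_{\omega_l}^{(N)}(z)\right| & \leq & \frac{|1-\omega_l|}{|1-\omega_l|-\varepsilon_0^2} \;=\; \frac{1}{1-\varepsilon_0^2/|1-\omega_l|} \;\leq\; \frac{1}{1-\varepsilon_0/4}\,.
\end{eqnarray*}

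For the case $|z-\omega_l|\leq\varepsilon_0^2$, I symmetrically factor out $z-\omega_l$, using $\omega_l^{2^{p_1+1}}=1$ to write
\begin{eqnarray*}
\frac{z^{2^{p_1+1}}-1}{z-\omega_l} \;=\; \frac{z^{2^{p_1+1}}-\omega_l^{2^{p_1+1}}}{z-\omega_l} & = & \sum_{j=0}^{2^{p_1+1}-1}\omega_l^{2^{p_1+1}-1-j}\,z^j\,,
\end{eqnarray*}
whose modulus is again at most $2^{p_1+1}$ on $\overline{\D}$ since $|\omega_l|=1$. The same reasoning as above, with the roles of $1$ and $\omega_l$ interchanged, yields $\left|l_{\omega_l}^{(N)}(z)\right|\leq |1-\omega_l|/|z-1|$ and then the identical bound $1/(1-\varepsilon_0/4)$. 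There are no serious obstacles here: once one observes the key identity $\omega_l^{2^{p_1+1}}=1$, both cases reduce to a single reverse-triangle-inequality estimate combined with Lemma~\ref{estim1-omegal}. The mild restriction $\varepsilon_0\leq 1/4$ merely ensures that $|1-\omega_l|-\varepsilon_0^2$ remains positive, so that the bounds make sense.
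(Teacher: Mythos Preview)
Your proof is correct. The first case ($|z-1|\leq\varepsilon_0^2$) is essentially identical to the paper's argument: factor out $z-1$, bound the resulting geometric sum by $2^{p_1+1}$, and finish with the reverse triangle inequality together with Lemma~\ref{estim1-omegal}.

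For the second case ($|z-\omega_l|\leq\varepsilon_0^2$) you take a slightly different and more direct route than the paper. The paper reduces to the first case via the substitution $z\mapsto z/\omega_l$, observing that $\left|l_{\omega_l}^{(N)}(z)\right|=\left|l_{\omega_{\tilde l}}^{(N)}(z/\omega_l)\right|$ with $\tilde l=2^{p_1}-l-1$, and then checks that $\tilde l$ satisfies the same hypothesis $\varepsilon_0 2^{p_1}\leq\tilde l\leq(1-\varepsilon_0)2^{p_1}-1$ so that the first case applies. You instead exploit the same key identity $\omega_l^{2^{p_1+1}}=1$ to factor out $z-\omega_l$ directly and repeat the triangle-inequality estimate with the roles of $1$ and $\omega_l$ swapped. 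Your approach avoids the small verification that $\tilde l$ lies in the admissible range; the paper's approach has the mild conceptual advantage of literally reusing the first case rather than rerunning the computation. Both are equally valid and of comparable length.
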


\begin{proof}

First, let notice that $l_{\omega_l}^{(N)}$
is a polynomial since by~(\ref{defl1omegal}), for all $z\in\C$,
\begin{eqnarray*}
l_{\omega_l}^{(N)}(z)
& = &
\frac{
1-\omega_l
}{
2^{p_1+1}
}
\times
\frac{
\left(z^{2^{p_1}}\right)^2-1
}{
(z-1)
\left(z-\omega_l\right)
}
\;=\;
\frac{
1-\omega_l
}{
2^{p_1+1}
}
\times
\frac{
\left(z^{2^{p_1}}-1\right)
\times
\left(z^{2^{p_1}}+1\right)
}{
(z-1)
\,
\left(z-\omega_l\right)
}
\\
& = &
\frac{
1-\omega_l
}{
2^{p_1+1}
}
\,
\frac{z^{2^{p_1}}-1}{z-1}
\,
\frac{z^{2^{p_1}}-\omega_l^{2^{p_1}}}{z-\omega_l}
\;=\;
\frac{
1-\omega_l
}{
2^{p_1+1}
}
\left(
\sum_{j=0}^{2^{p_1}-1}z^j
\right)
\left(
\sum_{j=0}^{2^{p_1}-1}
\omega_l^{2^{p_1}-1-j}z^j
\right)
\end{eqnarray*}
(because $\omega_l^{2^{p_1}}=-1$).
In particular, $l_{\omega_l}^{(N)}$ is continuous
then it suffices to prove the required estimate for all
$z\neq1,\omega_l$.

Next, if $|z-1|\leq\varepsilon_0^2$, then by Lemma~\ref{estim1-omegal}, 
\begin{eqnarray*}
\frac{
\left|
z-1
\right|
}{
\left|
1-\omega_l
\right|
}
& \leq &
\frac{\varepsilon_0^2}{
4\varepsilon_0
}
\;=\;
\frac{\varepsilon_0}{4}
\end{eqnarray*}
thus,
\begin{eqnarray*}
\frac{
\left|1-\omega_l\right|
}{
\left|z-\omega_l\right|
}
& \leq &
\frac{
\left|1-\omega_l\right|
}{
|1-\omega_l|-|z-1|
}
\;=\;
\frac{1}{
1-
|z-1|/|1-\omega_l|
}
\;\leq\;
\frac{1}{
1-
\varepsilon_0/4
}
\,.
\end{eqnarray*}
Since $z\neq1,\omega_l$, and $|z|\leq1$, it follows by~(\ref{defl1omegal}) that
\begin{eqnarray*}
\left|
l_{\omega_l}^{(N)}(z)
\right|
& = &
\frac{
1}{
2^{p_1+1}
}
\left|
\frac{
z^{2^{p_1+1}}-1
}{
z-1
}
\right|
\frac{
\left|1-\omega_l\right|
}{
\left|z-\omega_l\right|
}
\;=\;
\frac{1}{2^{p_1+1}}
\left|
\sum_{j=0}^{2^{p_1+1}-1}
z^j
\right|
\frac{
\left|1-\omega_l\right|
}{
\left|z-\omega_l\right|
}
\\
& \leq &
\frac{1}{2^{p_1+1}}
\times
2^{p_1+1}
\times
\frac{
\left|1-\omega_l\right|
}{
\left|z-\omega_l\right|
}
\;\leq\;
1
\times
\frac{1}{
1-
\varepsilon_0/4
}
\,,
\end{eqnarray*}
and this proves the lemma in this case.

Now let assume that
$\left|z-\omega_l\right|\leq\varepsilon_0^2$.
Since $\omega_l^{2^{p_1+1}}=1$, notice that for all
$z\neq1,\omega_l$,
\begin{eqnarray*}
\left|
l_{\omega_l}^{(N)}(z)
\right|
& = &
\frac{
\left|1/\omega_l-1\right|
}{
2^{p_1+1}
}
\frac{
\left|
z^{2^{p_1+1}}/\omega_l^{2^{p_1+1}}-1
\right|
}{
\left|z/\omega_l-1/\omega_l\right|
\,
\left|z/\omega_l-1\right|
}
\;=\;
\left|
l_{1/\omega_l}^{(N)}
\left(
z/\omega_l
\right)
\right|
\,,
\end{eqnarray*}
where $l_{1/\omega_l}^{(N)}$ is associated with
$1/\omega_l$ (that is still a $2^{p_1}$-th root of
$-1$).
In addition,
\begin{eqnarray*}
\frac{1}{\omega_l}
& = &
\exp
\left(
-\frac{2l+1}{2^{p_1}}i\pi
\right)
\;=\;
\exp
\left(2i\pi
-\frac{2l+1}{2^{p_1}}i\pi
\right)
\;=\;
\exp
\left(
\frac{2^{p_1+1}-2l-1}{2^{p_1}}i\pi
\right)
\\
& = &
\exp
\left(
\frac{2\times
\left(
2^{p_1}-l-1
\right)+1
}{2^{p_1}}i\pi
\right)
\;=\;
\exp
\left(
\frac{2\w{l}+1
}{2^{p_1}}i\pi
\right)
\;=\;
\omega_{\w{l}}
\,,
\end{eqnarray*}
with
$\w{l}:=2^{p_1}-l-1$. On the other hand,
\begin{eqnarray*}
\w{l}
\;=\;
2^{p_1}-l-1
\;\geq\;
2^{p_1}-
\left(\left(1-\varepsilon_0\right)2^{p_1}-1\right)
-1
\;=\;
2^{p_1}-2^{p_1}
+\varepsilon_02^{p_1}+1-1
\;=\;
\varepsilon_02^{p_1}
\end{eqnarray*}
and
\begin{eqnarray*}
\w{l}
\;=\;
2^{p_1}-l-1
\;\leq\;
2^{p_1}-\varepsilon_02^{p_1}-1
\;=\;
\left(1-\varepsilon_0\right)2^{p_1}-1
\,,
\end{eqnarray*}
i.e. the integer $\w{l}$ associated with $\omega_{\w{l}}=1/\omega_l$ satisfies
the same conditions as $l$. Since
$\left|z/\omega_l-1\right|=\left|z-\omega_l\right|\leq\varepsilon_0^2$,
one can apply the previous case with
$l_{1/\omega_l}^{(N)}=l_{\omega_{\w{l}}}^{(N)},\,1/\omega_l=\omega_{\w{l}},\,\w{l}$ 
and $z/\omega_l$ to get
\begin{eqnarray*}
\left|
l_{\omega_l}^{(N)}(z)
\right|
& = &
\left|
l_{1/\omega_l}^{(N)}
\left(
z/\omega_l
\right)
\right|
\;=\;
\left|
l_{\omega_{\w{l}}}^{(N)}
\left(
z/\omega_l
\right)
\right|
\;\leq\;
\frac{1}{
1-
\varepsilon_0/4
}
\,,
\end{eqnarray*}
and this proves the lemma in the second case.

\end{proof}

\begin{remark}

We can numerically check that $1$ and $\omega_l$ are almost
the maxima of
$\left|l_{\omega_l}^{(N)}\right|$ on $\overline{\D}$.
It follows that the estimate from the above lemma may hold in the
whole disk, as confirmed by the following result.

\end{remark}

\begin{lemma}\label{estimlomegaldisk}

Let fix $\varepsilon_0>0$ small enough. Then
there is $P_1\left(\varepsilon_0\right)\geq1$
(that only depends on $\varepsilon_0$) such that,
for all $p_1\geq P_1\left(\varepsilon_0\right)$ and
all $l$ with
$\varepsilon_02^{p_1}\leq l\leq\left(1-\varepsilon_0\right)2^{p_1}-1$,
one has
\begin{eqnarray*}
\left\|
l_{\omega_l}^{(N)}
\right\|_{\overline{\D}}
& \leq &
\frac{1}{
1-
\varepsilon_0/4
}
\,.
\end{eqnarray*}

\end{lemma}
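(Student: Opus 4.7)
The plan is to partition the closed disk into three regions according to the distance from the only two ``poles'' of the rational-looking expression for $l_{\omega_l}^{(N)}$, namely $1$ and $\omega_l$, and to dispatch each region separately.

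First, I would fix $\varepsilon_0>0$ as in Lemma~\ref{estim1-omegal} (say $\varepsilon_0\leq1/4$), and write
\begin{eqnarray*}
\overline{\D}
& = &
D_1\,\cup\,D_{\omega_l}\,\cup\,R
\,,
\end{eqnarray*}
with $D_1:=\left\{z\in\overline{\D},\;|z-1|\leq\varepsilon_0^2\right\}$, $D_{\omega_l}:=\left\{z\in\overline{\D},\;|z-\omega_l|\leq\varepsilon_0^2\right\}$, and $R$ the complement in $\overline{\D}$. On $D_1\cup D_{\omega_l}$ the estimate $|l_{\omega_l}^{(N)}(z)|\leq1/(1-\varepsilon_0/4)$ is already provided by Lemma~\ref{estimlomegalmax}, so no further work is needed there.

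It remains to treat the ``far'' region $R$, where $|z-1|>\varepsilon_0^2$ and $|z-\omega_l|>\varepsilon_0^2$ simultaneously. Here the idea is to bound $l_{\omega_l}^{(N)}$ in a crude way directly from~(\ref{defl1omegal}): for $z\in R$,
\begin{eqnarray*}
\left|l_{\omega_l}^{(N)}(z)\right|
& = &
\frac{|1-\omega_l|}{2^{p_1+1}}
\,
\frac{\left|z^{2^{p_1+1}}-1\right|}{|z-1|\,|z-\omega_l|}
\;\leq\;
\frac{2}{2^{p_1+1}}\cdot\frac{2}{\varepsilon_0^4}
\;=\;
\frac{2}{2^{p_1}\varepsilon_0^4}
\,,
\end{eqnarray*}
using $|1-\omega_l|\leq2$ and the trivial estimate $|z^{2^{p_1+1}}-1|\leq2$ on $\overline{\D}$. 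This bound is independent of $l$ and tends to $0$ as $p_1\to+\infty$, so if we choose $P_1(\varepsilon_0)\geq1$ to be any integer for which
\begin{eqnarray*}
\frac{2}{2^{P_1(\varepsilon_0)}\varepsilon_0^4}
& \leq &
\frac{1}{1-\varepsilon_0/4}
\,,
\end{eqnarray*}
then for every $p_1\geq P_1(\varepsilon_0)$ the desired bound holds on $R$ as well. Combining the three regions yields the lemma.

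The main (minor) obstacle is really just packaging: we must verify that the crude ``far-field'' bound in $R$ is indeed weaker than $1/(1-\varepsilon_0/4)$ once $p_1$ is large, and this is where the assumption $p_1\geq P_1(\varepsilon_0)$ enters in an essential way — it is the sole reason we cannot take $P_1(\varepsilon_0)=1$. Nothing subtle is hidden in the argument: the heuristic that the maxima of $|l_{\omega_l}^{(N)}|$ concentrate near the two points $1$ and $\omega_l$ is made rigorous simply because these are the two zeros of the denominator in~(\ref{defl1omegal}), while the numerator is universally bounded by $2$ on $\overline{\D}$.
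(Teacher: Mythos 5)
Your proof is correct and follows essentially the same route as the paper: split $\overline{\D}$ according to proximity to $1$ and $\omega_l$, invoke Lemma~\ref{estimlomegalmax} near those two points, and use the crude bound $\left|l_{\omega_l}^{(N)}(z)\right|\leq 2/(2^{p_1}\varepsilon_0^4)$ on the far region, absorbed by taking $p_1$ large. The only cosmetic difference is that the paper's choice of $P_1(\varepsilon_0)$ makes the far-field bound $\leq 1$ (and then notes $1\leq 1/(1-\varepsilon_0/4)$), whereas you target $1/(1-\varepsilon_0/4)$ directly.
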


\begin{proof}

Let fix any $z\in\C$ with
$|z|\leq1$.
If $|z-1|\geq\varepsilon_0^2$
and
$\left|z-\omega_l\right|\geq\varepsilon_0^2$, then
in particular $z\neq1,\omega_l$, and one
has that (since
$|\omega_l|=1$ and $|z|\leq1$)
\begin{eqnarray*}
\left|
l_{\omega_l}^{(N)}(z)
\right|
& = &
\frac{
\left|1-\omega_l\right|
}{
2^{p_1+1}
}
\frac{
\left|
z^{2^{p_1+1}}-1
\right|
}{
\left|z-1\right|
\,
\left|z-\omega_l\right|
}
\;\leq\;
\frac{2}{2^{p_1+1}}
\frac{1+1}{
\varepsilon_0^2\times\varepsilon_0^2
}
\;=\;
\frac{2}{2^{p_1}\varepsilon_0^4}
\,.
\end{eqnarray*}
Since
$\lim_{p_1\rightarrow+\infty}\dfrac{2}{2^{p_1}\varepsilon_0^4}=0$,
there is $P_1\left(\varepsilon_0\right)\geq1$ such that 
$\dfrac{2}{2^{p_1}\varepsilon_0^4}\leq1$
for all
$p_1\geq P_1\left(\varepsilon_0\right)$, then
\begin{eqnarray*}
\left|
l_{\omega_l}^{(N)}(z)
\right|
& \leq &
\frac{2}{2^{p_1}\varepsilon_0^4}
\;\leq\;
1
\;\leq\;
\frac{1}{
1-
\varepsilon_0/4
}
\,.
\end{eqnarray*}

Otherwise, 
$|z-1|\leq\varepsilon_0^2$ or
$\left|z-\omega_l\right|\leq\varepsilon_0^2$
thus, Lemma~\ref{estimlomegalmax} yields for all
$p_1\geq1$ (then for all
$p_1\geq P_1\left(\varepsilon_0\right)$ in particular),
\begin{eqnarray*}
\left|
l_{\omega_l}^{(N)}(z)
\right|
& \leq &
\frac{1}{
1-
\varepsilon_0/4
}
\,,
\end{eqnarray*}
and the lemma follows.

\end{proof}

\subsection{Proof of Theorem~\ref{specialN} for the {\em first half}
of the $l_k^{(N)}$'s}

In this subsection, we give a proof of the theorem in the special
case of the $l_k^{(N)}$'s for
$k=1,\ldots,2^{p_1}$. We begin with the following preliminar property
for polynomials.

\begin{lemma}\label{prelimpol}

For all $m\geq0$, one has
\begin{eqnarray*}
\left(
X-Y
\right)
\prod_{j=0}^m
\left(
X^{2^j}+Y^{2^j}
\right)
& = &
X^{2^{m+1}}-Y^{2^{m+1}}
\,.
\end{eqnarray*}

\end{lemma}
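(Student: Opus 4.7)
The plan is to prove the identity by induction on $m \geq 0$, which is the natural approach since the product on the left-hand side is built up one factor at a time and the exponent $2^{m+1}$ on the right-hand side doubles when $m$ is incremented.

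For the base case $m=0$, the identity reduces to the familiar factorization $(X-Y)(X+Y) = X^2 - Y^2$, which matches $X^{2^1} - Y^{2^1}$ on the right-hand side.

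For the inductive step, assuming the identity holds for some $m \geq 0$, I would multiply both sides by the new factor $(X^{2^{m+1}} + Y^{2^{m+1}})$. On the left, this extends the product to run from $j=0$ to $j=m+1$. On the right, one obtains
\begin{eqnarray*}
\left(X^{2^{m+1}} - Y^{2^{m+1}}\right)\left(X^{2^{m+1}} + Y^{2^{m+1}}\right)
& = &
\left(X^{2^{m+1}}\right)^2 - \left(Y^{2^{m+1}}\right)^2
\;=\;
X^{2^{m+2}} - Y^{2^{m+2}}\,,
\end{eqnarray*}
which is precisely the right-hand side at level $m+1$. This closes the induction.

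There is no substantial obstacle here: the identity is purely a repeated application of the difference-of-squares factorization, and the only care required is correctly tracking the doubling of exponents $2^{m+1} \mapsto 2^{m+2}$ across the inductive step.
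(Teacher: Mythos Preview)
Your proof is correct and essentially identical to the paper's: both proceed by induction on $m$, with the base case $(X-Y)(X+Y)=X^2-Y^2$ and the inductive step obtained by multiplying by the new factor $X^{2^{m+1}}+Y^{2^{m+1}}$ and applying the difference-of-squares identity.
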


\begin{proof}

The proof is by induction on $m\geq0$. For $m=0$, one has that
\begin{eqnarray*}
\left(
X-Y
\right)
\prod_{j=0}^0
\left(
X^{2^j}+Y^{2^j}
\right)
\,=\,
(X-Y)\left(
X^{2^0}+Y^{2^0}
\right)
\,=\,
(X-Y)(X+Y)
\,=\,
X^{2^1}-Y^{2^1}
.
\end{eqnarray*}
Now if $m\geq0$, then the induction hypothesis yields
\begin{eqnarray*}
\left(
X-Y
\right)
\prod_{j=0}^{m+1}
\left(
X^{2^j}+Y^{2^j}
\right)
& = &
\left(
X-Y
\right)
\prod_{j=0}^{m}
\left(
X^{2^j}+Y^{2^j}
\right)
\times
\left(
X^{2^{m+1}}+Y^{2^{m+1}}
\right)
\\
& = &
\left(
X^{2^{m+1}}-Y^{2^{m+1}}
\right)
\times
\left(
X^{2^{m+1}}+Y^{2^{m+1}}
\right)
\\
& = &
\left(X^{2^{m+1}}\right)^2
-
\left(Y^{2^{m+1}}\right)^2
\;=\;
X^{2^{m+2}}-Y^{2^{m+2}}
\,,
\end{eqnarray*}
and the induction is achieved.

\end{proof}

The following lemma gives the link between the $l_k^{(N)}$'s and the
$l_{\omega_l}^{(N)}$'s introduced at the beginning of this section.

\begin{lemma}\label{lktolomegal}

Let $\mathcal{L}_N$ be a $N$-Leja section for the unit disk
(that starts at $1$).
For all $k=1,\ldots,2^{p_1}$, let $l_k^{(N)}$ be the FLIP
associated with $z_k\in\mathcal{L}_N$ (that is a $2^{p_1}$-th root of the unity
since it belongs to $\Omega_{2^{p_1}}$). Then there is 
$l(k)$ with $0\leq l(k)\leq2^{p_1}-1$ such that
\begin{eqnarray*}
\left\|
l_k^{(N)}
\right\|_{\overline{\D}}
\;=\;
\left\|
l_{\omega_{l(k)}}^{(N)}
\right\|_{\overline{\D}}
\,,
& \mbox{with} &
\omega_{l(k)}
\;=\;
\exp
\left(
\frac{2l(k)+1}{2^{p_1}}\,i\pi
\right)
\,.
\end{eqnarray*}

In addition, the correspondence
\begin{eqnarray*}
k,\,1\leq k\leq2^{p_1}
& \mapsto &
l(k),\,
0\leq l(k)\leq2^{p_1}-1
\,,
\end{eqnarray*}
is well-defined and one-to-one.

\end{lemma}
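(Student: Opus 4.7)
The plan is to exploit the very special shape of $N=2^{p_1+1}-1$: by (\ref{dataspecialN}), as $q$ runs from $2$ to $n=p_1+1$, the exponents $2^{p_q}$ run through precisely $2^{p_1-1},2^{p_1-2},\dots,2,1$. This means the product appearing in Lemma~\ref{lkgeneral} is a complete telescoping chain and can be collapsed using Lemma~\ref{prelimpol}. More precisely, I would start by writing, for $z\neq z_k$ and $z\neq\omega_0$,
\begin{eqnarray*}
\left|l_k^{(N)}(z)\right|
&=&
\frac{1}{2^{p_1}}\,\frac{|z^{2^{p_1}}-1|}{|z-z_k|}\,\prod_{j=0}^{p_1-1}\frac{|z^{2^j}+\omega_0^{2^j}|}{|z_k^{2^j}+\omega_0^{2^j}|}\,,
\end{eqnarray*}
and then applying Lemma~\ref{prelimpol} with $X=z,\,Y=\omega_0$ (and with $X=z_k$) and $m=p_1-1$. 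Since $\omega_0^{2^{p_1}}=-1$ and $z_k^{2^{p_1}}=1$, one gets
$\prod_{j=0}^{p_1-1}(z^{2^j}+\omega_0^{2^j})=(z^{2^{p_1}}+1)/(z-\omega_0)$ and
$\prod_{j=0}^{p_1-1}(z_k^{2^j}+\omega_0^{2^j})=2/(z_k-\omega_0)$.

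Substituting these identities and using $(z^{2^{p_1}}-1)(z^{2^{p_1}}+1)=z^{2^{p_1+1}}-1$ gives the clean formula
\begin{eqnarray*}
\left|l_k^{(N)}(z)\right|
&=&
\frac{|z_k-\omega_0|}{2^{p_1+1}}\,\frac{|z^{2^{p_1+1}}-1|}{|z-z_k|\,|z-\omega_0|}\,.
\end{eqnarray*}
Now I would perform the change of variable $z=z_k u$. Using $|z_k|=1$ and $z_k^{2^{p_1+1}}=1$, this rewrites as
\begin{eqnarray*}
\left|l_k^{(N)}(z_k u)\right|
&=&
\frac{|1-\omega_0/z_k|}{2^{p_1+1}}\,\frac{|u^{2^{p_1+1}}-1|}{|u-1|\,|u-\omega_0/z_k|}\,.
\end{eqnarray*}
Since $(\omega_0/z_k)^{2^{p_1}}=\omega_0^{2^{p_1}}/z_k^{2^{p_1}}=-1/1=-1$, the quantity $\omega_0/z_k$ is a $2^{p_1}$-th root of $-1$, so there is a unique $l(k)\in\{0,\ldots,2^{p_1}-1\}$ with $\omega_0/z_k=\omega_{l(k)}$, and the right-hand side is exactly $|l_{\omega_{l(k)}}^{(N)}(u)|$ by (\ref{defl1omegal}).

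Since $|z_k|=1$, the map $z\mapsto u=z/z_k$ is a bijection from $\overline{\D}$ onto itself, so taking suprema yields $\|l_k^{(N)}\|_{\overline{\D}}=\|l_{\omega_{l(k)}}^{(N)}\|_{\overline{\D}}$ as required. For the one-to-one correspondence, I would observe that $k\mapsto z_k$ is a bijection from $\{1,\ldots,2^{p_1}\}$ onto $\Omega_{2^{p_1}}$ (since $\mathcal{L}_N$ starts at $1$ and the first $2^{p_1}$ points form $\Omega_{2^{p_1}}$ by Theorem~5 of~\cite{bialascalvi}), the map $\zeta\mapsto\omega_0/\zeta$ is a bijection from $\Omega_{2^{p_1}}$ onto the set of $2^{p_1}$-th roots of $-1$, and $l\mapsto\omega_l$ is a bijection from $\{0,\ldots,2^{p_1}-1\}$ onto that same set; composition gives the bijectivity of $k\mapsto l(k)$. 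The only subtle point (hardly an obstacle) is to make sure the simplification via Lemma~\ref{prelimpol} is valid even when $z=\omega_0$: both sides are continuous functions of $z\in\overline{\D}$, so the identity on $\overline{\D}\setminus\{z_k,\omega_0\}$ extends to $\overline{\D}$, which is all that is needed for the sup-norm claim.
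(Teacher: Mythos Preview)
Your proof is correct and follows essentially the same route as the paper: both start from Lemma~\ref{lkgeneral}, use the special structure~(\ref{dataspecialN}) to collapse the product via Lemma~\ref{prelimpol}, arrive at the closed formula $\frac{|z_k-\omega_0|}{2^{p_1+1}}\frac{|z^{2^{p_1+1}}-1|}{|z-z_k||z-\omega_0|}$, and then perform the rotation $z\mapsto z/z_k$ to identify $|l_k^{(N)}|$ with $|l_{\omega_0/z_k}^{(N)}|$. Your bijection argument via composition of bijections is a slight stylistic variant of the paper's (which instead invokes the uniqueness of $\omega_0$ as the $(N+1)$st Leja point), but the substance is identical.
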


\begin{proof}

First, we know by Lemma~\ref{lkgeneral} that there is
$\omega$ a $2^{p_1}$-th root of $-1$ 
(we designate it by $\omega$ instead of $\omega_0$ to
not confuse it with the notation from~(\ref{defomegal}) 
for $l=0$)
such that for all $z\neq z_k,\,\omega$ (by also recalling~(\ref{dataspecialN}) ),
\begin{eqnarray*}
\left|
l_k^{(N)}(z)
\right|
& = &
\frac{1}{2^{p_1}}
\frac{\left|z^{2^{p_1}}-1\right|}{\left|z-z_k\right|}
\times
\prod_{q=2}^n
\frac{
\left|z^{2^{p_q}}+\omega^{2^{p_q}}\right|
}{
\left|z_k^{2^{p_q}}+\omega^{2^{p_q}}\right|
}
\\
& = &
\frac{1}{2^{p_1}}
\frac{\left|z^{2^{p_1}}+\omega^{2^{p_1}}\right|}{\left|z-z_k\right|}
\times
\prod_{q=2}^{p_1+1}
\frac{
\left|z^{2^{p_1-q+1}}+\omega^{2^{p_1-q+1}}\right|
}{
\left|z_k^{2^{p_1-q+1}}+\omega^{2^{p_1-q+1}}\right|
}
\\
& = &
\frac{1}{2^{p_1}}
\frac{\left|z^{2^{p_1}}+\omega^{2^{p_1}}\right|}{\left|z-z_k\right|}
\times
\prod_{q=0}^{p_1-1}
\frac{
\left|z^{2^{q}}+\omega^{2^{q}}\right|
}{
\left|z_k^{2^{q}}+\omega^{2^{q}}\right|
}
\,.
\end{eqnarray*}
Now successive applications of Lemma~\ref{prelimpol} give
(we remind that $z_k$ is a $2^{p_1}$-th root of the unity,
and since $\omega$ is a
$2^{p_1}$-th root of $-1$, then it is a $2^{p_1+1}$-th root
of the unity)
\begin{eqnarray*}
\prod_{q=0}^{p_1-1}
\left|z_k^{2^q}+\omega^{2^{q}}\right|
\;=\;
\frac{
\left|z_k-\omega\right|
\times
\prod_{q=0}^{p_1-1}
\left|z_k^{2^q}+\omega^{2^{q}}\right|
}{
\left|z_k-\omega\right|
}
\;=\;
\frac{
\left|z_k^{2^{p_1}}-\omega^{2^{p_1}}\right|
}{
\left|z_k-\omega\right|
}
\;=\;
\frac{
2
}{
\left|z_k-\omega\right|
}
\end{eqnarray*}
(notice that we cannot have $z_k=\omega$
since
$\omega\notin\Omega_{2^{p_1}}\ni z_k$),
and for all $z\neq\omega$,
\begin{eqnarray*}
\left|z^{2^{p_1}}+\omega^{2^{p_1}}\right|
\times
\prod_{q=0}^{p_1-1}
\left|z^{2^{q}}+\omega^{2^{q}}\right|
& = &
\frac{
\left|z-\omega\right|
\times
\prod_{q=0}^{p_1}
\left|z^{2^{q}}+\omega^{2^{q}}\right|
}{
\left|z-\omega\right|
}
\\
& = &
\frac{
\left|z^{2^{p_1+1}}-\omega^{2^{p_1+1}}\right|
}{
\left|z-\omega\right|
}
\;=\;
\frac{
\left|z^{2^{p_1+1}}-1\right|
}{
\left|z-\omega\right|
}
\,.
\end{eqnarray*}
It follows that for all $z\neq z_k,\,\omega$,
\begin{eqnarray*}
\left|
l_k^{(N)}(z)
\right|
& = &
\frac{1}{2^{p_1}}
\times
\frac{1}{|z-z_k|}
\times
\frac{
\left|z^{2^{p_1+1}}-1\right|
}{
\left|z-\omega\right|
}
\times
\frac{
\left|z_k-\omega\right|
}{2}
\\
& = &
\frac{\left|1-\omega/z_k\right|}{2^{p_1+1}}
\frac{
\left|\left(z/z_k\right)^{2^{p_1+1}}-1\right|
}{
\left|z/z_k-1\right|
\,
\left|
z/z_k-\omega/z_k\right|
}
\;=\;
\left|
l_{\omega/z_k}^{(N)}\left(z/z_k\right)
\right|
\end{eqnarray*}
by~(\ref{defl1omegal}) (and because $z_k^{2^{p_1+1}}=1^2=1$).
$\omega/z_k$ being still a $2^{p_1}$-th root of $-1$, there is
$l(k)$ with $0\leq l(k)\leq 2^{p_1}-1$ such that
$\omega/z_k=\omega_{l(k)}$. In particular,
\begin{eqnarray*}
\sup_{|z|\leq1}
\left|
l_k^{(N)}(z)
\right|
& = &
\sup_{|z|\leq1}
\left|
l_{\omega/z_k}^{(N)}\left(z/z_k\right)
\right|
\;=\;
\left\|
l_{\omega/z_k}^{(N)}
\right\|_{\overline{\D}}
\,,
\end{eqnarray*}
and this proves the first assertion.

In order to prove the other one, we first remind from Remark~\ref{omega0}
that $\omega$ is any of the $2^{p_n}$ different choices for
$z_{N+1}$. Here by~(\ref{dataspecialN}), $p_n=0$ and $N+1=2^{p_1+1}$,
then $\omega$ is the only possible choice for $z_{N+1}$ and
$\mathcal{L}_N\cup\left\{\omega\right\}
=\mathcal{L}_{N+1}=\mathcal{L}_{2^{p_1+1}}=\Omega_{2^{p_1+1}}$ 
by Theorem~5 from~\cite{bialascalvi} (the equality
being meant as sets).
In particular, the uniqueness of $\omega$ implies that
for any $\omega_l$, there is
at most one $z_k$ such that $\omega_l=\omega/z_k$.
This proves that the application $k\mapsto l(k)$
is injective and the one-to-one correspondence follows
because
$\mbox{card}\left\{k,\,1\leq k\leq2^{p_1}\right\}
=2^{p_1}
=\mbox{card}\left\{l,\,0\leq l\leq2^{p_1}-1\right\}$.

\end{proof}

Now we can prove a special case of Theorem~\ref{specialN} for the {\em first half}
of the $l_k^{(N)}$'s, i.e. for $k=1,\ldots,2^{p_1}$.

\begin{lemma}\label{specialNhalflk}

Let fix $\varepsilon_0>0$ small enough. Then there is
$P_1\left(\varepsilon_0\right)\geq1$ large enough such that
for all $p_1\geq P_1\left(\varepsilon_0\right)$, one has
\begin{eqnarray*}
\frac{1}{2^{p_1}}
\sum_{k=1}^{2^{p_1}}
\left\|
l_k^{(N)}
\right\|_{\overline{\D}}
& \leq &
2\pi\exp(3\pi)
\left(
\varepsilon_0+\frac{1}{2^{p_1}}
\right)
+
\frac{1-2\varepsilon_0}{1-\varepsilon_0/4}
\,.
\end{eqnarray*}

\end{lemma}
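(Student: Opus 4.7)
The plan is to exploit the bijection furnished by Lemma~\ref{lktolomegal} to re-index the sum over $k \in \{1,\ldots,2^{p_1}\}$ as a sum over $l \in \{0,\ldots,2^{p_1}-1\}$, and then to split this re-indexed sum into a ``central'' block of indices handled by Lemma~\ref{estimlomegaldisk} and a small ``peripheral'' block handled by the universal bound of Theorem~\ref{unifestim}. All analytic ingredients are already available; what remains is a balancing/counting argument.

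First, from Lemma~\ref{lktolomegal} I obtain
\begin{equation*}
\sum_{k=1}^{2^{p_1}} \left\| l_k^{(N)} \right\|_{\overline{\D}} \;=\; \sum_{l=0}^{2^{p_1}-1} \left\| l_{\omega_l}^{(N)} \right\|_{\overline{\D}}.
\end{equation*}
Imposing $\varepsilon_0 \leq 1/4$ (so that the hypotheses of Lemmas~\ref{estim1-omegal}, \ref{estimlomegalmax}, and~\ref{estimlomegaldisk} are met), I introduce the central block
\begin{equation*}
I_{\mathrm{good}} := \left\{ l \in \{0,\ldots,2^{p_1}-1\} : \varepsilon_0 2^{p_1} \leq l \leq (1-\varepsilon_0) 2^{p_1} - 1 \right\}
\end{equation*}
together with its complement $I_{\mathrm{bad}} := \{0,\ldots,2^{p_1}-1\} \setminus I_{\mathrm{good}}$. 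Taking $p_1 \geq P_1(\varepsilon_0)$ supplied by Lemma~\ref{estimlomegaldisk}, I have $\|l_{\omega_l}^{(N)}\|_{\overline{\D}} \leq 1/(1-\varepsilon_0/4)$ for every $l \in I_{\mathrm{good}}$, whereas Theorem~\ref{unifestim} provides the (weaker but uniform) estimate $\|l_{\omega_l}^{(N)}\|_{\overline{\D}} \leq \pi \exp(3\pi)$ for every $l \in I_{\mathrm{bad}}$.

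The remaining step is bookkeeping: counting integers in the closed real interval $[\varepsilon_0 2^{p_1}, (1-\varepsilon_0) 2^{p_1} - 1]$ yields $\#I_{\mathrm{good}} \leq (1-2\varepsilon_0) 2^{p_1}$ and, by complementarity together with the rounding losses of at most $1$ at each endpoint, $\#I_{\mathrm{bad}} \leq 2\varepsilon_0\, 2^{p_1} + 2$. Combining these two facts,
\begin{equation*}
\sum_{l=0}^{2^{p_1}-1} \left\| l_{\omega_l}^{(N)} \right\|_{\overline{\D}} \;\leq\; (1-2\varepsilon_0)\, 2^{p_1} \cdot \frac{1}{1-\varepsilon_0/4} \;+\; \left(2\varepsilon_0\, 2^{p_1} + 2\right) \pi \exp(3\pi),
\end{equation*}
and dividing by $2^{p_1}$ yields precisely the announced bound
$\frac{1-2\varepsilon_0}{1-\varepsilon_0/4} + 2\pi \exp(3\pi)\left(\varepsilon_0 + 1/2^{p_1}\right)$. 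The only genuinely delicate point in the plan lies well upstream, namely in Lemma~\ref{estimlomegaldisk}, whose proof already absorbed the competition between $\varepsilon_0^2$ (the radius of exclusion near $1$ and $\omega_l$) and $2^{-p_1}$ (which forces $P_1(\varepsilon_0)$ to grow like $\log(1/\varepsilon_0)$); given that input, the present lemma reduces to the Markov-type splitting described above.
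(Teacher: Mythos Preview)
Your proof is correct and follows essentially the same route as the paper: re-index via the bijection of Lemma~\ref{lktolomegal}, split the range of $l$ into a central block (handled by Lemma~\ref{estimlomegaldisk}) and two peripheral blocks (handled by Theorem~\ref{unifestim}), and then count. The paper's counting is organized slightly differently (it bounds each of the two bad pieces separately by $\varepsilon_0 2^{p_1}+1$), but the arithmetic and the final bound are identical.
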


\begin{proof}

First, we know by Lemma~\ref{lktolomegal} that
\begin{eqnarray*}
\sum_{k=1}^{2^{p_1}}
\left\|
l_k^{(N)}
\right\|_{\overline{\D}}
& = &
\sum_{k=1}^{2^{p_1}}
\left\|
l_{\omega_{l(k)}}^{(N)}
\right\|_{\overline{\D}}
\;=\;
\sum_{l=0}^{2^{p_1}-1}
\left\|
l_{\omega_{l}}^{(N)}
\right\|_{\overline{\D}}
\,.
\end{eqnarray*}
Next, an application of Lemma~\ref{estimlomegaldisk} gives
$P_1\left(\varepsilon_0\right)\geq1$ such that
for all $p_1\geq P_1\left(\varepsilon_0\right)$,
\begin{eqnarray*}
\sum_{l=0}^{2^{p_1}-1}
\left\|
l_{\omega_{l}}^{(N)}
\right\|_{\overline{\D}}
& = &
\left[
\sum_{0\leq l<\varepsilon_02^{p_1}}
+
\sum_{\varepsilon_02^{p_1}\leq l\leq\left(1-\varepsilon_0\right)2^{p_1}-1}
+
\sum_{\left(1-\varepsilon_0\right)2^{p_1}-1< l\leq2^{p_1}-1}
\right]
\left\|
l_{\omega_{l}}^{(N)}
\right\|_{\overline{\D}}
\\
& \leq &
\left(
\varepsilon_02^{p_1}+1
\right)
\times
\pi\exp(3\pi)
+
\frac{
\left(1-2\varepsilon_0\right)2^{p_1}
}{1-\varepsilon_0/4}
+
\left(
\varepsilon_02^{p_1}+1
\right)
\times
\pi\exp(3\pi)
\,,
\end{eqnarray*}
the other estimates being applications of Theorem~\ref{unifestim}. 
Hence
\begin{eqnarray*}
\frac{1}{2^{p_1}}
\sum_{k=1}^{2^{p_1}}
\left\|
l_k^{(N)}
\right\|_{\overline{\D}}
& \leq &
\frac{1}{2^{p_1}}
\left[
2\pi\exp(3\pi)
\times
\left(\varepsilon_02^{p_1}+1\right)
+
\frac{
\left(1-2\varepsilon_0\right)2^{p_1}
}{1-\varepsilon_0/4}
\right]
\\
& = &
2\pi\exp(3\pi)
\left(
\varepsilon_0+\frac{1}{2^{p_1}}
\right)
+
\frac{1-2\varepsilon_0}{1-\varepsilon_0/4}
\,,
\end{eqnarray*}
and the lemma is proved.

\end{proof}

\begin{remark}

We could have used in the proof the sharper estimate from
Proposition~\ref{unifestimspecialN} below in order to replace
the bound $\pi\exp(3\pi)$ with $2$ (since
$N=2^{p_1+1}-1$). However, we will see that it is
useless for the proof of Theorem~\ref{specialN}.

\end{remark}

\subsection{Proof of Theorem~\ref{specialN}}

In order to give the proof, we first want to begin with
a preliminar result in which we deal with the FLIPs
$l_k^{(N)}$ for $k=2^{p_1}+1,\ldots,N$.

\begin{lemma}\label{prelimspecial}

Let $\mathcal{L}_N$ be any $N$-Leja section for the unit disk
that starts at $z_1=1$.
For all $k=1,\ldots,N$, one has
\begin{eqnarray*}
\left\|
l_k^{(N)}
\right\|_{\overline{\D}}
& \leq &
\left\|
\w{l}_{k'}^{(2^{q}-1)}
\right\|_{\overline{\D}}
\,,
\end{eqnarray*}
where $1\leq q\leq p_1+1$, $1\leq k'\leq 2^{q-1}$ and
$\w{l}_{k'}^{(2^q-1)}$ is the FLIP
associated with $\w{z_{k'}}\in\w{\mathcal{L}}_{2^q-1}$
and
$\w{\mathcal{L}}_{2^q-1}$ 
is a $\left(2^q-1\right)$-Leja section that also
starts at $\w{z_1}=1$.

In addition, the correspondence between
$k$ and $\left(q,k'\right)$ is one-to-one: more precisely
the application defined by
\begin{eqnarray}\label{linkq}
& &
\begin{cases}
\;\;\;k\,,\;\;\;\;\,
1\leq k\leq 2^{p_1}
\;\mapsto\;
\left(p_1+1,k\right)
,\,1\leq k\leq 2^{p_1}
\,,
\\
k,\,2^{p_1}+1\leq k\leq N
\;\mapsto\; 
\left(q,k'\right)\,,\;
1\leq q\leq p_1\,,\;
1\leq k'\leq2^{q-1}
\,,
\end{cases}
\end{eqnarray}
is well-defined and is a one-to-one correspondence.

\end{lemma}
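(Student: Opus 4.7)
The plan is to proceed by induction on $p_1 \geq 0$, using Lemma~\ref{prelimpfthm} as the engine and exploiting the self-similar arithmetic identity $N - 2^{p_1} = 2^{p_1+1} - 1 - 2^{p_1} = 2^{p_1} - 1$: after peeling off one level, the reduced length $2^{p_1}-1$ is again of the form $2^{(p_1-1)+1}-1$, so the induction hypothesis applies to the next stage without any change of form. The base case $p_1 = 0$ is immediate: $N = 1$, $\mathcal{L}_1 = \{1\}$, $l_1^{(1)} \equiv 1$, and the choice $(q,k') := (1,1)$ with $\widetilde{\mathcal{L}}_1 := \mathcal{L}_1$ matches $\widetilde{l}_1^{(1)} \equiv 1$ trivially.

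For the inductive step, fix $p_1 \geq 1$ and assume the lemma for Leja sections of length $2^{p_1}-1$. Split $\{1,\ldots,N\}$ into the first $2^{p_1}$ indices and the remainder. If $1 \leq k \leq 2^{p_1}$, take $\widetilde{\mathcal{L}}_{2^q-1} := \mathcal{L}_N$ with $(q,k') := (p_1+1, k)$; the stated inequality becomes an equality, and the constraint $1 \leq k' \leq 2^{q-1} = 2^{p_1}$ is automatic. If $2^{p_1}+1 \leq k \leq N$, apply Lemma~\ref{prelimpfthm} to produce a $(2^{p_1}-1)$-Leja section $\widetilde{\mathcal{L}}_{2^{p_1}-1}$ (starting at $1$) and an index $k_1 \in \{1,\ldots,2^{p_1}-1\}$ with $\|l_k^{(N)}\|_{\overline{\D}} \leq \|\widetilde{l}_{k_1}^{(2^{p_1}-1)}\|_{\overline{\D}}$, the correspondence $k \mapsto k_1$ being one-to-one. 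The induction hypothesis applied to $\widetilde{\mathcal{L}}_{2^{p_1}-1}$ then furnishes a pair $(q,k')$ with $1 \leq q \leq p_1$, $1 \leq k' \leq 2^{q-1}$, a further reduced Leja section $\widetilde{\widetilde{\mathcal{L}}}_{2^q-1}$, and a one-to-one correspondence $k_1 \mapsto (q,k')$, together with $\|\widetilde{l}_{k_1}^{(2^{p_1}-1)}\|_{\overline{\D}} \leq \|\widetilde{\widetilde{l}}_{k'}^{(2^q-1)}\|_{\overline{\D}}$; chaining the two inequalities gives the required bound for $l_k^{(N)}$.

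It remains to verify that the resulting global map agrees with~(\ref{linkq}) and is one-to-one. The two cases produce disjoint images in the target set $\{(q,k') : 1 \leq q \leq p_1+1, \; 1 \leq k' \leq 2^{q-1}\}$, since only case (i) yields first coordinate $p_1+1$ while case (ii) yields first coordinate at most $p_1$. Case (i) is an obvious bijection onto $\{(p_1+1, k') : 1 \leq k' \leq 2^{p_1}\}$, and case (ii) is a bijection onto $\{(q,k') : 1 \leq q \leq p_1, \; 1 \leq k' \leq 2^{q-1}\}$ as the composition of the one-to-one correspondence from Lemma~\ref{prelimpfthm} with the one supplied by the induction hypothesis. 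The cardinality check $2^{p_1} + \sum_{q=1}^{p_1} 2^{q-1} = 2^{p_1+1}-1 = N$ confirms that the concatenated map is a bijection onto the full target. No real obstacle is anticipated: the entire argument is a bookkeeping iteration of Lemma~\ref{prelimpfthm}, and its cleanness relies entirely on the identity $N - 2^{p_1} = 2^{p_1}-1$, which keeps $N$ of the form $2^{p+1}-1$ at every step of the recursion.
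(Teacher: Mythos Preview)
Your proof is correct and follows essentially the same approach as the paper: induction on $p_1$, handling $1\le k\le 2^{p_1}$ by the identity assignment $(q,k')=(p_1+1,k)$ and handling $2^{p_1}+1\le k\le N$ by first applying Lemma~\ref{prelimpfthm} (using $N-2^{p_1}=2^{p_1}-1$) and then the induction hypothesis, with the bijectivity pieced together exactly as you describe. The paper's argument is the same in structure and detail; your additional cardinality check and explicit disjointness remark are cosmetic.
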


\begin{proof}

The proof is by induction on $p_1\geq0$ where
$N=2^{p_1+1}-1$. If $p_1=0$, then
$\mathcal{L}_N=\mathcal{L}_{2^{p_1+1}-1}=\mathcal{L}_1
=\left\{z_1\right\}=\{1\}$. Necessarily, $k=1$ and the lemma is obvious by
taking $q=1$,  the same $1$-Leja section
$\mathcal{L}_1=\{1\}$ and $k'=k=1$ (and the 
correspondence~(\ref{linkq}) is of course well-defined and one-to-one).

Now let be $p_1\geq1$.
First, if $1\leq k\leq2^{p_1}$, then we take
$q=p_1+1$, the same $N$-Leja section
$\mathcal{L}_N$ and $k'=k$.
In addition, the application
\begin{eqnarray}\label{linkq1}
k,\,1\leq k\leq 2^{p_1}
& \mapsto &
(p_1+1,k),\,1\leq k\leq 2^{(p_1+1)-1}
\,,
\end{eqnarray}
is obviously well-defined and is a one-to-one correspondence.

Otherwise, $2^{p_1}+1\leq k\leq N$,
then an application of
Lemma~\ref{prelimpfthm} yields
\begin{eqnarray}\label{prelimspecial1}
\left\|l_k^{(N)}\right\|_{\overline{\D}}
& \leq &
\left\|
\w{l}_{k'}^{(N-2^{p_1})}
\right\|_{\overline{\D}}
\,,
\end{eqnarray}
where $\w{l}_{k'}^{(N-2^{p_1})}$ is the FLIP
associated with $\w{z_{k'}}\in\w{\mathcal{L}}_{N-2^{p_1}}$
and
$\w{\mathcal{L}}_{N-2^{p_1}}$ 
is an $\left(N-2^{p_1}\right)$-Leja section that also
starts at $\w{z_1}=1$. 
In addition, the application
\begin{eqnarray}\label{linkq2}
k,\,2^{p_1}+1\leq k\leq N
& \mapsto &
k',\,1\leq k'\leq N-2^{p_1}
\,,
\end{eqnarray}
is well-defined and is a one-to-one correspondence.

Since
$N-2^{p_1}=2^{p_1+1}-1-2^{p_1}=2^{p_1}-1$,
the induction hypothesis applied to $p_1-1$ and the 
$\left(2^{p_1}-1\right)$-Leja section
$\w{\mathcal{L}}_{2^{p_1}-1}$, leads to
\begin{eqnarray}\label{prelimspecial2}
\left\|
\w{l}_{k'}^{(N-2^{p_1})}
\right\|_{\overline{\D}}
& \leq &
\left\|
\w{\w{l}}_{k''}^{\,(2^q-1)}
\right\|_{\overline{\D}}
\,,
\end{eqnarray}
where $1\leq q\leq p_1\leq p_1+1$, $1\leq k'\leq 2^{q-1}$ and
${\w{\w{l}}_{k''}}^{\,(2^q-1)}$ is the FLIP
associated with $\w{\w{z_{k'}}}\in\w{\w{\mathcal{L}}\,}_{2^q-1}$,
where $\w{\w{\mathcal{L}}\,}_{2^q-1}$ 
is a $\left(2^q-1\right)$-Leja section that
starts at $\w{\w{z_1}}=1$. 
Moreover, the application defined by
\begin{eqnarray}\label{linkq25}
& &
k,\,1\leq k'\leq 2^{p_1}-1
\;\mapsto\;
\left(q,k''\right)\,,\;
1\leq q\leq \left(p_1-1\right)+1\,,\;
1\leq k''\leq2^{q-1}
\,,
\end{eqnarray}
is well-defined and is a one-to-one correspondence, then so is
the following one by composition of~(\ref{linkq2}) and~(\ref{linkq25}):
\begin{eqnarray}\label{linkq3}
k,\,2^{p_1}+1\leq k\leq N
& \mapsto &
\left(q,k''\right)\,,\;
1\leq q\leq p_1\,,\;
1\leq k''\leq2^{q-1}
\,.
\end{eqnarray}
Finally, the partial applications~(\ref{linkq1}) and~(\ref{linkq3})
yield~(\ref{linkq}).
On the other hand, the estimates~(\ref{prelimspecial1})
and~(\ref{prelimspecial2}) achieve the induction and the proof of the lemma.

\end{proof}

As a first consequence, we have the following improvement for the bound
of $l_k^{(N)}$ in Theorem~\ref{unifestim}.

\begin{proposition}\label{unifestimspecialN}

One has for the special case of
$N=2^{p_1+1}-1$ that
\begin{eqnarray*}
\frac{4}{\pi}
\left(
1+\varepsilon(1/N)
\right)
\;\leq\;
\max_{1\leq k\leq N}
\left\|l_k^{(N)}\right\|_{\overline{\D}}
\;\leq\;
2
\,
\end{eqnarray*}
where $\lim_{N\rightarrow+\infty}\varepsilon(1/N)=0$.
In addition, $\varepsilon(1/N)$ can be chosen so that for all $N\geq3$,
\begin{eqnarray}\label{unifestimspecialNepsilon}
\frac{4}{\pi}
\left(
1+\varepsilon(1/N)
\right)
& > &
1
\,.
\end{eqnarray}

\end{proposition}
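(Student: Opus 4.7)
The plan is to prove the upper and lower bounds separately, both through the polynomials $l_{\omega_l}^{(N)}$ from~(\ref{defl1omegal}).

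For the upper bound $\max_k\|l_k^{(N)}\|_{\overline{\D}}\leq 2$, I will first handle $1\leq k\leq 2^{p_1}$. By Lemma~\ref{lktolomegal}, it is enough to bound $\|l_{\omega_l}^{(N)}\|_{\overline{\D}}$ uniformly in $l\in\{0,\ldots,2^{p_1}-1\}$. Set $m:=2^{p_1+1}$; the key observation is that both $1$ and $\omega_l$ are $m$-th roots of unity, since $\omega_l^{2^{p_1}}=-1$ forces $\omega_l^m=1$. Partial fractions then give the identity
\begin{eqnarray*}
l_{\omega_l}^{(N)}(z)
& = &
\frac{(1-\omega_l)(z^m-1)}{m(z-1)(z-\omega_l)}
\;=\;
\frac{1}{m}\left[\frac{z^m-1}{z-1}-\frac{z^m-1}{z-\omega_l}\right].
\end{eqnarray*}
Each quotient in the bracket is a polynomial of the form $\sum_{j=0}^{m-1}\zeta^{m-1-j}z^j$ with $\zeta\in\{1,\omega_l\}$, hence of modulus at most $m$ on $\overline{\D}$, by the same argument as in the proof of Lemma~\ref{estim1}. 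This yields $\|l_{\omega_l}^{(N)}\|_{\overline{\D}}\leq 2$. For $2^{p_1}+1\leq k\leq N$ I would then invoke Lemma~\ref{prelimspecial}, which reduces the problem to some $\|\widetilde{l}_{k'}^{(2^q-1)}\|_{\overline{\D}}$ with $k'\leq 2^{q-1}$; applying the same partial-fraction argument at the smaller scale $2^q-1$ still gives the bound $2$.

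For the lower bound, the bijection $k\mapsto l(k)$ from Lemma~\ref{lktolomegal} yields
\begin{eqnarray*}
\max_{1\leq k\leq N}\|l_k^{(N)}\|_{\overline{\D}}
\;\geq\;
\|l_{\omega_0}^{(N)}\|_{\overline{\D}}
\;\geq\;
\bigl|l_{\omega_0}^{(N)}(e^{i\pi/m})\bigr|.
\end{eqnarray*}
The point $z_0=e^{i\pi/m}$ is chosen because $z_0^m=-1$, so that $|z_0^m-1|=2$; combined with $|z_0-1|=|z_0-\omega_0|=2\sin(\pi/(2m))$ and $|1-\omega_0|=2\sin(\pi/m)$ (using $\omega_0=e^{2i\pi/m}$), a short computation followed by the duplication formula $\sin(\pi/m)=2\sin(\pi/(2m))\cos(\pi/(2m))$ produces
\begin{eqnarray*}
\bigl|l_{\omega_0}^{(N)}(e^{i\pi/m})\bigr|
\;=\;
\frac{\sin(\pi/m)}{m\sin^2(\pi/(2m))}
\;=\;
\frac{4}{\pi}\cdot\frac{x}{\tan(x)},
& &
x\;:=\;\frac{\pi}{2m}.
\end{eqnarray*}
Setting $\varepsilon(1/N):=x/\tan(x)-1$ then gives $\varepsilon(1/N)\to 0$ as $N\to\infty$, which is the claimed asymptotic.

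To check condition~(\ref{unifestimspecialNepsilon}), i.e.\ $x/\tan(x)>\pi/4$ for $m\geq 4$, I would observe that the derivative of $x\mapsto x/\tan(x)$ equals $(\sin(2x)/2-x)/\sin^2(x)\leq 0$ on $(0,\pi/2)$, so the function is decreasing there. Its minimum over $m\geq 4$ is therefore attained at $m=4$, $x=\pi/8$, where a direct calculation gives $x/\tan(x)=\pi(\sqrt{2}+1)/8\approx 0.948$, comfortably above $\pi/4\approx 0.785$. The only substantive step in the whole argument is the partial-fraction identity together with the choice of evaluation point $e^{i\pi/m}$; I do not foresee any serious obstacle, as everything else reduces to trigonometric manipulations of a type already used in the previous sections.
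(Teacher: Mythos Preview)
Your proposal is correct and follows essentially the same route as the paper: the upper bound comes from splitting $l_{\omega_l}^{(N)}$ into two pieces each bounded by $1$ (your partial-fraction identity is a slightly cleaner rendering of the paper's triangle-inequality step $|1-\omega_l|\leq|1-z|+|z-\omega_l|$), together with the reduction to the first half of indices, while the lower bound in both proofs is the exact evaluation $|l_{\omega_0}^{(N)}(e^{i\pi/m})|=\frac{4}{\pi}\cdot\frac{x}{\tan x}$ with $x=\pi/(2m)$. The only cosmetic difference is that the paper handles $k>2^{p_1}$ by explicit induction via Lemma~\ref{prelimpfthm}, whereas you invoke Lemma~\ref{prelimspecial} (which already packages that induction) to land directly in the first-half range.
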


Of course, it can be numerically checked that the upper bound is not optimal: except for
exceptional values of $k$, the bound of $l_k^{(N)}$ is almost $1$. On the other hand,
the lower bound is reached only for some of these exceptional values of $k$.

\begin{proof}

First, we can wlog assume that the Leja sequence $\mathcal{L}$
starts at $z_1=1$.

We begin with the upper bound. The proof is by induction on $p_1\geq0$. If $p_1=0$, i.e.
$N=2^{p_1+1}-1=1$, then necessarily $k=1$ and the estimate is obvious since
$\left\|l_1^{(1)}\right\|_{\overline{\D}}
=\left\|1\right\|_{\overline{\D}}=1$.

If $p_1\geq1$ and $N=2^{p_1+1}-1$,
then Lemma~\ref{prelimpfthm} and the induction hypothesis applied to
$2^{\left(p_1-1\right)+1}-1=2^{p_1+1}-2^{p_1}-1=N-2^{p_1}$, yield
\begin{eqnarray}\nonumber
\max_{2^{p_1}+1\leq k\leq N}
\left\|
l_k^{(N)}
\right\|_{\overline{\D}}
& \leq &
\max_{1\leq k'\leq N-2^{p_1}}
\left\|
\w{l}_{k'}^{\left(N-2^{p_1}\right)}
\right\|_{\overline{\D}}
\\\label{unifestimspecialN1}
& = &
\max_{1\leq k'\leq 2^{p_1}-1}
\left\|
\w{l}_{k'}^{\left(2^{p_1}-1\right)}
\right\|_{\overline{\D}}
\;\leq\;
2
\,.
\end{eqnarray}
On the other hand, one has by Lemma~\ref{lktolomegal}
that
\begin{eqnarray}\label{unifestimspecialN2}
\max_{1\leq k\leq2^{p_1}}
\left\|
l_k^{(N)}
\right\|_{\overline{\D}}
& = &
\max_{1\leq k\leq2^{p_1}}
\left\|
l_{\omega_{l(k)}}^{(N)}
\right\|_{\overline{\D}}
\;=\;
\max_{0\leq l\leq2^{p_1}-1}
\left\|
l_{\omega_l}^{(N)}
\right\|_{\overline{\D}}
\,.
\end{eqnarray}
Now for all $l=0,\ldots,2^{p_1}-1$ and all
$z\in\overline{\D}$ with $z\neq1,\,\omega_l$, one has by~(\ref{defl1omegal}) that 
\begin{eqnarray*}
\left|
l_{\omega_l}^{(N)}(z)
\right|
& = &
\frac{
\left|
1-\omega_l
\right|
}{
2^{p_1+1}
}
\times
\frac{
\left|
z^{2^{p_1+1}}-1
\right|
}{
\left|z-1\right|
\,
\left|z-\omega_l\right|
}
\;\leq\;
\frac{
\left|
1-z
\right|
+
\left|
z-\omega_l
\right|
}{
2^{p_1+1}
}
\times
\frac{
\left|
z^{2^{p_1}}-1
\right|
\,
\left|
z^{2^{p_1}}+1
\right|
}{
\left|z-1\right|
\,
\left|z-\omega_l\right|
}
\\
& = &
\frac{
1
}{
2^{p_1+1}
}
\frac{
\left|
z^{2^{p_1}}-1
\right|
\,
\left|
z^{2^{p_1}}+1
\right|
}{
\left|z-\omega_l\right|
}
+
\frac{
1
}{
2^{p_1+1}
}
\frac{
\left|
z^{2^{p_1}}-1
\right|
\,
\left|
z^{2^{p_1}}+1
\right|
}{
\left|z-1\right|
}
\\
& \leq &
\frac{
1
}{
2^{p_1+1}
}
\frac{
(1+1)
\left|
z^{2^{p_1}}+1
\right|
}{
\left|z-\omega_l\right|
}
+
\frac{
1
}{
2^{p_1+1}
}
\frac{
\left|
z^{2^{p_1}}-1
\right|
(1+1)
}{
\left|z-1\right|
}
\\
& = &
\frac{
1
}{
2^{p_1}
}
\frac{
\left|
z^{2^{p_1}}
-
\omega_l^{2^{p_1}}
\right|
}{
\left|z-\omega_l\right|
}
+
\frac{
1
}{
2^{p_1}
}
\frac{
\left|
z^{2^{p_1}}-1
\right|
}{
\left|z-1\right|
}
\;=\;
\frac{
1
}{
2^{p_1}
}
\frac{
\left|
(z/\omega_l)^{2^{p_1}}-1
\right|
}{
\left|z/\omega_l-1\right|
}
+
\frac{
1
}{
2^{p_1}
}
\frac{
\left|
z^{2^{p_1}}-1
\right|
}{
\left|z-1\right|
}
\,,
\end{eqnarray*}
since $\omega_l$ is a $2^{p_1}$-th root of $-1$.
The symmetry of the unit disk and Lemma~\ref{estim1} yield
\begin{eqnarray*}
\left\|
\frac{
1
}{
2^{p_1}
}
\frac{
(z/\omega_l)^{2^{p_1}}-1
}{
z/\omega_l-1
}
\right\|_{\overline{\D}}
& = &
\left\|
\frac{
1
}{
2^{p_1}
}
\frac{
z^{2^{p_1}}-1
}{
z-1
}
\right\|_{\overline{\D}}
\;=\;
\left\|
l_1^{\left(2^{p_1}\right)}
\right\|_{\overline{\D}}
\;=\;
1
\,.
\end{eqnarray*}
It follows that
\begin{eqnarray*}
\left\|
l_{\omega_l}^{(N)}
\right\|_{\overline{\D}}
& \leq &
\left\|
\frac{
1
}{
2^{p_1}
}
\frac{
(z/\omega_l)^{2^{p_1}}-1
}{
z/\omega_l-1
}
\right\|_{\overline{\D}}
+
\left\|
\frac{
1
}{
2^{p_1}
}
\frac{
z^{2^{p_1}}-1
}{
z-1
}
\right\|_{\overline{\D}}
\;=\;
2
\,,
\end{eqnarray*}
and~(\ref{unifestimspecialN2}) becomes
\begin{eqnarray}\label{unifestimspecialN3}
\max_{1\leq k\leq2^{p_1}}
\left\|
l_k^{(N)}
\right\|_{\overline{\D}}
& = &
\max_{0\leq l\leq2^{p_1}-1}
\left\|
l_{\omega_l}^{(N)}
\right\|_{\overline{\D}}
\;\leq\;
2
\,.
\end{eqnarray}
Finally,
(\ref{unifestimspecialN1}) and~(\ref{unifestimspecialN3}) prove the induction.

\medskip

In order to prove the lower bound, we first have by~(\ref{unifestimspecialN2})
that for all $N\geq1$,
\begin{eqnarray}\nonumber
\max_{1\leq k\leq N}
\left\|l_k^{(N)}\right\|_{\overline{\D}}
& \geq &
\max_{1\leq k\leq 2^{p_1}}
\left\|l_k^{(N)}\right\|_{\overline{\D}}
\;=\;
\max_{0\leq l\leq2^{p_1}-1}
\left\|
l_{\omega_l}^{(N)}
\right\|_{\overline{\D}}
\\\label{unifestimspecialN4}
& \geq &
\left\|
l_{\omega_0}^{(N)}
\right\|_{\overline{\D}}
\;\geq\;
\left|
l_{\omega_0}^{(N)}
\left(
\exp\left(i\pi/2^{p_1+1}\right)
\right)
\right|
\,.
\end{eqnarray}
On the other hand, one has by~(\ref{defomegal}) and~(\ref{defl1omegal}) that
\begin{eqnarray}\nonumber
\left|
l_{\omega_0}^{(N)}
\left(
\exp\left(
\frac{i\pi}{2^{p_1+1}}
\right)
\right)
\right|
& = &
\frac{
\left|
1-
\exp
\left(
\dfrac{i\pi}{2^{p_1}}
\right)
\right|
\times
\left|
\exp\left(
2^{p_1+1}\times 
\dfrac{i\pi}{2^{p_1+1}}
\right)
-1
\right|
}{
2^{p_1+1}
\times
\left|
\exp\left(
\dfrac{i\pi}{2^{p_1+1}}
\right)
-
1
\right|
\times
\left|
\exp\left(
\dfrac{i\pi}{2^{p_1+1}}
\right)
-
\exp\left(
\dfrac{i\pi}{2^{p_1}}
\right)
\right|
}
\\\nonumber
& = &
\frac{
2\sin\left(\pi/2^{p_1+1}\right)
\times
2
}{
2^{p_1+1}
\times
2\sin\left(\pi/2^{p_1+2}\right)
\times
2\sin\left(\pi/2^{p_1+2}\right)
}
\\\label{unifestimspecialN5}
& = &
\frac{
2\sin\left(\pi/2^{p_1+2}\right)
\times\cos\left(\pi/2^{p_1+2}\right)
}{
2^{p_1+1}
\sin^2\left(\pi/2^{p_1+2}\right)
}
\;=\;
\frac{
\cos\left(\pi/2^{p_1+2}\right)
}{
2^{p_1}
\sin\left(\pi/2^{p_1+2}\right)
}
\\\label{unifestimspecialN6}
& \sim &
\frac{1}{2^{p_1}
\times
\pi/2^{p_1+2}}
\;\xrightarrow[p_1\rightarrow+\infty]{}\;
\frac{4}{\pi}
\,.
\end{eqnarray}
Since $N\rightarrow+\infty$ iff $p_1\rightarrow+\infty$, 
it follows that~(\ref{unifestimspecialN4}) and~(\ref{unifestimspecialN6}) lead to
\begin{eqnarray*}
\max_{1\leq k\leq N}
\left\|l_k^{(N)}\right\|_{\overline{\D}}
& \geq &
\left|
l_{\omega_0}^{(N)}
\left(
\exp\left(i\pi/2^{p_1+1}
\right)
\right)
\right|
\;=\;
\frac{4}{\pi}
\left(
1+\varepsilon
\left(1/N
\right)
\right)
\,,
\end{eqnarray*}
where $\lim_{N\rightarrow+\infty}\varepsilon(1/N)=0$, and 
this yields the required lower bound.
In addition, one also has by~(\ref{unifestimspecialN4}),
(\ref{unifestimspecialN5}) and the estimate~(\ref{estimsinus1})
from Lemma~\ref{estimsinus} that for all $p_1\geq1$,
\begin{eqnarray*}
\max_{1\leq k\leq N}
\left\|l_k^{(N)}\right\|_{\overline{\D}}
\;\geq\;
\frac{
\cos\left(\pi/2^{p_1+2}\right)
}{
2^{p_1}
\sin\left(\pi/2^{p_1+2}\right)
}
\;\geq\;
\frac{
\cos\left(\pi/2^{p_1+2}\right)
}{
2^{p_1}
\times
\pi/2^{p_1+2}
}
\;\geq\;
\frac{4\cos(\pi/8)}{\pi}
\;>\;
1
\,,
\end{eqnarray*}
and this proves~(\ref{unifestimspecialNepsilon}) 
(since $p_1\geq1$ iff $N=2^{p_1+1}-1\geq3$) and
completes the whole proof of the proposition.

\end{proof}

Now we can give the proof of Theorem~\ref{specialN}.

\begin{proof}

First, 
$N=2^{p_1+1}-1$ and $\mathcal{L}_N$ a
$N$-Leja section being given, by symmetry of the unit disk,
we can wlog assume that $\mathcal{L}_N$ starts at $z_1=1$. Next, 
let us consider $k_N=1,\ldots,N$, that satisfies
\begin{eqnarray*}
\left\|l_{k_N}^{(N)}\right\|_{\overline{\D}}
& = &
\max_{1\leq k\leq N}
\left\|l_k^{(N)}\right\|_{\overline{\D}}
\,.
\end{eqnarray*}
We know by Proposition~\ref{unifestimspecialN} and~(\ref{unifestimspecialNepsilon}) that
for all $N\geq3$,
\begin{eqnarray*}
\left\|l_{k_N}^{(N)}\right\|_{\overline{\D}}
& \geq &
\frac{4}{\pi}
\left(
1+\varepsilon(1/N)
\right)
\;>\;1
\,,
\end{eqnarray*}
where $\lim_{N\rightarrow+\infty}\varepsilon(1/N)=0$.
On the other hand, since for all $k=1,\ldots,N$,
$\left\|l_k^{(N)}\right\|_{\overline{\D}}\geq\left|l_k^{(N)}\left(z_k\right)\right|=1$,
it follows that
\begin{eqnarray}\nonumber
\sum_{k=1}^{N}
\left\|l_k^{(N)}\right\|_{\overline{\D}}
& = &
\left\|l_{k_N}^{(N)}\right\|_{\overline{\D}}
+
\sum_{k=1,\,k\neq k_N}^{N}
\left\|l_k^{(N)}\right\|_{\overline{\D}}
\;\geq\;
\frac{4}{\pi}
\left(
1+\varepsilon(1/N)
\right)
+
\sum_{k=1,\,k\neq k_N}^{N}
1
\\\label{specialNlow1}
& = &
\frac{4}{\pi}
\left(
1+\varepsilon(1/N)
\right)
+N-1
\;>\;
1+(N-1)
\;=\;
N
\,.
\end{eqnarray}
In particular, this last estimate yields
\begin{eqnarray}\label{pfthspecial0}
\liminf_{N\rightarrow+\infty}
\left[
\frac{1}{N}
\sum_{k=1}^{N}
\left\|l_k^{(N)}\right\|_{\overline{\D}}
\right]
& \geq &
1
\,.
\end{eqnarray}

The essential part of the proof is to deal with
the other inequality. We first have that
\begin{eqnarray*}
\frac{1}{N}
\sum_{k=1}^N
\left\|
l_k^{(N)}
\right\|_{\overline{\D}}
& = &
\frac{1}{2^{p_1+1}-1}
\sum_{k=1}^{2^{p_1+1}-1}
\left\|
l_k^{(N)}
\right\|_{\overline{\D}}
\\
& = &
\frac{2^{p_1}}{2^{p_1+1}-1}
\times
\frac{1}{2^{p_1}}
\sum_{k=1}^{2^{p_1}}
\left\|
l_k^{(N)}
\right\|_{\overline{\D}}
+
\frac{2^{p_1}}{2^{p_1+1}-1}
\times
\frac{1}{2^{p_1}}
\sum_{k=2^{p_1}+1}^{2^{p_1+1}-1}
\left\|
l_k^{(N)}
\right\|_{\overline{\D}}
\,,
\end{eqnarray*}
then (since
$N\rightarrow+\infty$ iff $p_1\rightarrow+\infty$)
\begin{eqnarray*}
\limsup_{N\rightarrow+\infty}
\left[
\frac{1}{N}
\sum_{k=1}^N
\left\|
l_k^{(N)}
\right\|_{\overline{\D}}
\right]
& \leq &
\;\;\;\;\;\;\;\;\;\;\;\;\;\;\;\;\;\;\;\;\;\;\;\;\;\;\;\;\;\;\;\;\;\;\;\;\;\;\;\;
\;\;\;\;\;\;\;\;\;\;\;\;\;\;\;\;\;\;\;\;\;\;\;\;\;\;\;\;\;\;\;\;\;\;\;\;\;\;\;\;
\end{eqnarray*}
\begin{eqnarray}\label{pfthspecial1}
&  &
\;\;\;\;\leq\;
\frac{1}{2}
\times
\limsup_{p_1\rightarrow+\infty}
\left[
\frac{1}{2^{p_1}}
\sum_{k=1}^{2^{p_1}}
\left\|
l_k^{(N)}
\right\|_{\overline{\D}}
\right]
+
\frac{1}{2}
\times
\limsup_{p_1\rightarrow+\infty}
\left[
\frac{1}{2^{p_1}}
\sum_{k=2^{p_1}+1}^{2^{p_1+1}-1}
\left\|
l_k^{(N)}
\right\|_{\overline{\D}}
\right]
\,.
\end{eqnarray}

Now let fix any $\varepsilon>0$ small enough.
On the one hand, by Lemma~\ref{specialNhalflk}, 
there is $P_1(\varepsilon)\geq1$ such that
for all $p_1\geq P_1(\varepsilon)$,
\begin{eqnarray*}
\frac{1}{2^{p_1}}
\sum_{k=1}^{2^{p_1}}
\left\|
l_k^{(N)}
\right\|_{\overline{\D}}
& \leq &
2\pi\exp(3\pi)
\left(
\varepsilon+\frac{1}{2^{p_1}}
\right)
+
\frac{1-2\varepsilon}{1-\varepsilon/4}
\,,
\end{eqnarray*}
then
\begin{eqnarray}\label{pfthspecial2}
\limsup_{p_1\rightarrow+\infty}
\left[
\frac{1}{2^{p_1}}
\sum_{k=1}^{2^{p_1}}
\left\|
l_k^{(N)}
\right\|_{\overline{\D}}
\right]
& \leq &
2\pi\varepsilon\exp(3\pi)
+
\frac{1-2\varepsilon}{1-\varepsilon/4}
\,.
\end{eqnarray}

On the other hand, the one-to-one correspondence defined
by~(\ref{linkq}) from Lemma~\ref{prelimspecial} and the associated
estimate lead to
\begin{eqnarray}\label{pfthspecial3}
& &
\sum_{k=2^{p_1}+1}^{2^{p_1+1}-1}
\left\|
l_k^{(N)}
\right\|_{\overline{\D}}
\;\leq\;
\sum_{q=1}^{p_1}
\sum_{k'=1}^{2^{q-1}}
\left\|
l_{k'}^{(2^q-1)}
\right\|_{\overline{\D}}
\;=\;
\sum_{q=0}^{p_1-1}
2^{q}\times
\frac{1}{2^{q}}
\sum_{k'=1}^{2^{q}}
\left\|
l_{k'}^{(2^{q+1}-1)}
\right\|_{\overline{\D}}
\,.
\end{eqnarray}
Now for every $q$
with
$P_1(\varepsilon)\leq q\leq p_1-1$, we can still apply
Lemma~\ref{specialNhalflk} for the
$\left(2^{q+1}-1\right)$-Leja section
$\w{\mathcal{L}}_{2^{q+1}-1}$
(with $p_1$ replaced with $q$ and
$N=2^{p_1+1}-1$ replaced with $2^{q+1}-1$) to get
\begin{eqnarray*}
\frac{1}{2^{q}}
\sum_{k'=1}^{2^{q}}
\left\|
l_{k'}^{(2^{q+1}-1)}
\right\|_{\overline{\D}}
& \leq &
2\pi\exp(3\pi)
\left(
\varepsilon+\frac{1}{2^q}
\right)
+
\frac{1-2\varepsilon}{1-\varepsilon/4}
\,,
\end{eqnarray*}
then
\begin{eqnarray*}
\sum_{q=P_1(\varepsilon)}^{p_1-1}
2^{q}\times
\frac{1}{2^{q}}
\sum_{k'=1}^{2^{q}}
\left\|
l_{k'}^{(2^{q+1}-1)}
\right\|_{\overline{\D}}
& \leq &
\;\;\;\;\;\;\;\;\;\;\;\;\;\;\;\;\;\;\;\;\;\;\;\;\;\;\;\;\;\;\;\;\;\;\;\;\;\;\;\;\;\;\;\;
\;\;\;\;\;\;\;\;\;\;\;\;\;\;\;\;\;\;\;\;\;\;\;\;\;\;\;\;\;\;\;\;\;\;\;\;\;\;\;\;
\end{eqnarray*}
\begin{eqnarray}\nonumber
\;\;\;\;
& \leq &
\left(
2\pi\varepsilon\exp(3\pi)
+
\frac{1-2\varepsilon}{1-\varepsilon/4}
\right)
\sum_{q=P_1(\varepsilon)}^{p_1-1}
2^q
+
2\pi\exp(3\pi)
\sum_{q=P_1(\varepsilon)}^{p_1-1}
1
\\\nonumber
& = &
\left(
2\pi\varepsilon\exp(3\pi)
+
\frac{1-2\varepsilon}{1-\varepsilon/4}
\right)
\frac{2^{p_1}-2^{P_1(\varepsilon)}}{2-1}
+
2\pi\exp(3\pi)\times\left(p_1-P_1(\varepsilon)\right)
\\\label{pfthspecial4}
& \leq &
\left(
2\pi\varepsilon\exp(3\pi)
+
\frac{1-2\varepsilon}{1-\varepsilon/4}
\right)
\times
2^{p_1}
+
2\pi\exp(3\pi)\times
p_1
\,.
\end{eqnarray}
For all $q$ with $0\leq q\leq P_1(\varepsilon)-1$, we just have by
Theorem~\ref{unifestim} that 
\begin{eqnarray}\nonumber
\sum_{q=0}^{P_1(\varepsilon)-1}
\sum_{k'=1}^{2^{q}}
\left\|
l_{k'}^{(2^{q+1}-1)}
\right\|_{\overline{\D}}
& \leq &
\sum_{q=0}^{P_1(\varepsilon)-1}
\sum_{k'=1}^{2^{q}}
\pi\exp(3\pi)
\;=\;
\pi\exp(3\pi)
\sum_{q=0}^{P_1(\varepsilon)-1}
2^q
\\\label{pfthspecial5}
& = &
\pi\exp(3\pi)
\times
\frac{2^{P_1(\varepsilon)}-1}{2-1}
\;\leq\;
\pi\exp(3\pi)
\times
2^{P_1(\varepsilon)}
\,.
\end{eqnarray}
It follows that~(\ref{pfthspecial3}), (\ref{pfthspecial4}) 
and~(\ref{pfthspecial5}) together yield
\begin{eqnarray*}
\sum_{k=2^{p_1}+1}^{2^{p_1+1}-1}
\left\|
l_k^{(N)}
\right\|_{\overline{\D}}
& \leq &
\sum_{q=0}^{P_1(\varepsilon)-1}
\sum_{k'=1}^{2^{q}}
\left\|
l_{k'}^{(2^{q+1}-1)}
\right\|_{\overline{\D}}
+
\sum_{q=P_1(\varepsilon)}^{p_1-1}
2^{q}\times
\frac{1}{2^{q}}
\sum_{k'=1}^{2^{q}}
\left\|
l_{k'}^{(2^{q+1}-1)}
\right\|_{\overline{\D}}
\\
& \leq &
\pi\exp(3\pi)\,
2^{P_1(\varepsilon)}
+
\left(
2\pi\varepsilon\exp(3\pi)
+
\frac{1-2\varepsilon}{1-\varepsilon/4}
\right)
2^{p_1}
+
2\pi\exp(3\pi)\,
p_1
\end{eqnarray*}
thus,
\begin{eqnarray*}
\limsup_{p_1\rightarrow+\infty}
\left[
\frac{1}{2^{p_1}}
\sum_{k=2^{p_1}+1}^{2^{p_1+1}-1}
\left\|
l_k^{(N)}
\right\|_{\overline{\D}}
\right]
& \leq &
\;\;\;\;\;\;\;\;\;\;\;\;\;\;\;\;\;\;\;\;\;\;\;\;\;\;\;\;\;\;\;\;\;\;\;\;\;\;\;\;\;\;\;\;
\;\;\;\;\;\;\;\;\;\;\;\;\;\;\;\;\;\;\;\;\;\;\;\;\;\;\;\;\;\;\;\;\;\;\;\;\;\;\;\;\;\;\;\;
\end{eqnarray*}
\begin{eqnarray}\nonumber
\;\;\;\;\;\;\;\;\;\;
& \leq &
2\pi\varepsilon\exp(3\pi)
+
\frac{1-2\varepsilon}{1-\varepsilon/4}
+
\lim_{p_1\rightarrow+\infty}
\left(
\frac{
\pi\exp(3\pi)\,
2^{P_1(\varepsilon)}
}{
2^{p_1}
}
+
2\pi\exp(3\pi)\,
\frac{p_1}{2^{p_1}}
\right)
\\\label{pfthspecial6}
& = &
2\pi\varepsilon\exp(3\pi)
+
\frac{1-2\varepsilon}{1-\varepsilon/4}
\,.
\end{eqnarray}

Finally, the estimates~(\ref{pfthspecial1}), (\ref{pfthspecial2})
and~(\ref{pfthspecial6}) together yield
\begin{eqnarray*}
\limsup_{N\rightarrow+\infty}
\left[
\frac{1}{N}
\sum_{k=1}^N
\left\|
l_k^{(N)}
\right\|_{\overline{\D}}
\right]
& \leq &
\frac{1}{2}
\left(
2\pi\varepsilon\exp(3\pi)
+
\frac{1-2\varepsilon}{1-\varepsilon/4}
\right)
+
\frac{1}{2}
\left(
2\pi\varepsilon\exp(3\pi)
+
\frac{1-2\varepsilon}{1-\varepsilon/4}
\right)
\\
& = &
2\pi\varepsilon\exp(3\pi)
+
\frac{1-2\varepsilon}{1-\varepsilon/4}
\,,
\end{eqnarray*}
and $\varepsilon>0$ being arbitrary, we can deduce that
\begin{eqnarray*}
\limsup_{N\rightarrow+\infty}
\left[
\frac{1}{N}
\sum_{k=1}^N
\left\|
l_k^{(N)}
\right\|_{\overline{\D}}
\right]
& \leq &
\lim_{\varepsilon\rightarrow0,\,\varepsilon>0}
\left(
2\pi\varepsilon\exp(3\pi)
+
\frac{1-2\varepsilon}{1-\varepsilon/4}
\right)
\;=\;
1
\,.
\end{eqnarray*}

\medskip

This last estimate and~(\ref{pfthspecial0}) lead to
\begin{eqnarray*}
1
\;\leq\;
\liminf_{N\rightarrow+\infty}
\left[
\frac{1}{N}
\sum_{k=1}^N
\left\|
l_k^{(N)}
\right\|_{\overline{\D}}
\right]
\;\leq\;
\limsup_{N\rightarrow+\infty}
\left[
\frac{1}{N}
\sum_{k=1}^N
\left\|
l_k^{(N)}
\right\|_{\overline{\D}}
\right]
\;\leq\;
1
\end{eqnarray*}
and this proves~(\ref{specialN1}). In addition, this leads with~(\ref{specialNlow1}) 
for all $N\geq3$ to
\begin{eqnarray*}
N
\;<\;
\sum_{k=1}^N
\left\|
l_k^{(N)}
\right\|_{\overline{\D}}
\;=\;
N\left(1+\varepsilon(1/N)\right)
\,,
\end{eqnarray*}
and this proves~(\ref{specialN2}) and completes the whole proof
of the theorem (since $N=2^{p_1+1}-1\geq3$ iff $p_1+1\geq2$).

\end{proof}

\bigskip

\section{On the case of compact sets with Alper-smooth Jordan boundary}

In this part we deal with the case of a compact set $K$ whose boundary is an Alper-smooth
Jordan curve, and $\Phi$ denotes the exterior conformal mapping from
$\overline{\C}\setminus\D$ onto $\overline{\C}\setminus K$. We remind that
$\Gamma=\partial K$ is an Alper-smooth Jordan curve if the modulus of continuity 
$\omega$ of the angle
$\theta(s)$ between the tangent at $\Gamma(s)$ and the positive real axis
(where $s$ is the arc-length parameter), satisfies (see~\cite{kovaripom} for the terminology)
\begin{eqnarray*}
\int_0^h\dfrac{\omega(x)}{x}\left|\ln x\right|dx
\;<\;\infty
\,.
\end{eqnarray*}
In particular, twice continuously differentiable Jordan curves are
Alper-smooth.

We can then give the proof of Theorem~\ref{unifestimcompact}. 
We first remind the following result that is Lemma~3 from~\cite{bialascalvi}.

\begin{lemma}\label{lemma3bialascalvi}

Let $K$ be a compact set whose boundary is an Alper-smooth Jordan curve.
Let $\Phi$ be the conformal mapping of the exterior of the unit disk on the interior of $K$.
Lastly, let $\left(a_j\right)_{j\geq0}$ be a Leja sequence for the unit disk with $\left|a_0\right|=1$.
Then for any $z$ on the unit disk and $n\in\N^{*}$, we have
\begin{eqnarray} \label{alper2}
\dfrac{C(K)^n}{c_n}
\;\leq\;
\left|
\prod_{j=0}^{n-1}
\dfrac{\Phi(z)-\Phi(a_j)}{z-a_j}
\right|
\;\leq\;
C(K)^nc_n
\,,
\end{eqnarray}
where $C(K)$ is the logarithmic capacity of $K$,
$c_n\leq (n+1)^{A/\ln(2)}$ and $A$ is a positive constant
depending only on $K$.

\end{lemma}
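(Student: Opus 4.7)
The plan is to pass to logarithms and reduce the two-sided inequality to bounding the scalar quantity
\begin{eqnarray*}
\Delta_n(z) & := & \log\left|\prod_{j=0}^{n-1}\dfrac{\Phi(z)-\Phi(a_j)}{z-a_j}\right| - n\log C(K) \\
& = & \sum_{j=0}^{n-1}\left[\log\left|\dfrac{\Phi(z)-\Phi(a_j)}{z-a_j}\right| - \log C(K)\right].
\end{eqnarray*}
Since $\Phi(w)/w\to C(K)$ at infinity, each bracketed summand is a bounded correction, and the target estimate~(\ref{alper2}) is equivalent to $|\Delta_n(z)|\leq\log c_n$. First I would express each individual correction by integration: observing that
\begin{eqnarray*}
\dfrac{d}{dz}\log\dfrac{\Phi(z)-\Phi(a_j)}{C(K)(z-a_j)} & = & \dfrac{\Phi'(z)}{\Phi(z)-\Phi(a_j)} - \dfrac{1}{z-a_j},
\end{eqnarray*}
integrating along the unit circle from a basepoint to $z$, and summing over $j$, I reduce matters to estimating
\begin{eqnarray*}
I(w,a_j) & := & \int_0^{2\pi}\left|\dfrac{\Phi'\left(e^{it}\right)}{\Phi\left(e^{it}\right)-\Phi(a_j)} - \dfrac{1}{e^{it}-a_j}\right|dt,
\end{eqnarray*}
which by Remark~\ref{cteA} is uniformly bounded by the constant $A$, finite precisely because $\partial K$ is Alper-smooth.

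The naive bound $|\Delta_n(z)|\leq nA$ would yield a geometric constant $e^{nA}$, which is far too large. To obtain the polynomial growth $c_n\leq(n+1)^{A/\ln 2}$, I would exploit the dyadic structure of the Leja sequence, which is the heart of the argument. When $n=2^p$, the first $2^p$ points form a complete set of $2^p$-th roots of the unity (up to a rotation), and the associated Chebyshev-like symmetrization gives $|\Delta_{2^p}(z)|\leq(p+1)A/\ln 2\cdot\ln 2$, i.e.\ a bound linear in $p=\log_2 n$. For general $n$ I would write $n$ in binary as in~(\ref{bindec}) and, using the splitting~(\ref{splitleja}), reduce the estimate recursively over the $n$ chunks of sizes $2^{p_1},2^{p_2},\ldots,2^{p_n}$, picking up at each doubling step a multiplicative constant controlled by $A$. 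Iterating then gives $\log c_n\leq(A/\ln 2)\log(n+1)$, i.e.\ $c_n\leq(n+1)^{A/\ln 2}$.

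The hard part is the dyadic contraction, namely showing that at each doubling step in the Leja construction the increment of $\Delta$ is bounded by $A$ (up to multiplicative constants), rather than by a $j$-dependent quantity. This rests on two ingredients: (i) the explicit factorization of Vandermonde-type products over roots of unity, which provides exact cancellation within each dyadic block and isolates one residual term of size $A$ per block; (ii) the bounded-variation estimate for $\Phi'$ on the unit circle guaranteed by Alper-smoothness, which is precisely what makes the supremum defining $A$ in Remark~\ref{cteA} finite. Once these two ingredients are combined with the inductive splitting~(\ref{splitleja}), both directions of~(\ref{alper2}) follow by symmetry, taking $z\mapsto e^{i\theta}$ on the circle.
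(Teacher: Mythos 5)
This lemma is cited (as Lemma~3 from~\cite{bialascalvi}) without proof in the present paper, so there is no in-paper argument to compare against; the following is a critique of your proposal on its own terms.

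Your outline — pass to logarithms, observe that each single-node correction is controlled by the Alper integral $A$ of Remark~\ref{cteA}, then beat the naive $nA$ bound by exploiting the dyadic block structure~(\ref{splitleja}) — is the right idea, but the crucial quantitative step is left vague, is internally inconsistent, and as stated the final bound does not follow from it. The key fact you need is that a single full block of $m$ rotated $m$-th roots of unity contributes at most $A$ to $\Delta$, \emph{uniformly in $m$}. The mechanism is not ``Chebyshev-like symmetrization'' or ``cancellation in Vandermonde factorizations'': for $f(w) = \log\bigl|\frac{\Phi(z)-\Phi(w)}{C(K)(z-w)}\bigr|$, which is harmonic in $w$ on $|w|>1$ and vanishes at $\infty$, the mean value property gives $\frac{1}{2\pi}\int_0^{2\pi} f(e^{it})\,dt=0$; the block sum $\sum_{k=0}^{m-1} f(\rho\, e^{2\pi ik/m})$ (with $\rho$ the rotation of the block) is therefore $m$ times a left-endpoint Riemann sum of a vanishing integral, and its defect is controlled by $\int_0^{2\pi}\bigl|\frac{d}{dt}f(\rho\, e^{it})\bigr|\,dt$, which the chain rule identifies with precisely the integral defining $A$. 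Hence each of the $\nu\leq\log_2(n+1)$ dyadic blocks contributes $\leq A$, so $|\Delta_n|\leq\nu A$ and $c_n\leq(n+1)^{A/\ln 2}$.

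Your intermediate claim $|\Delta_{2^p}|\leq(p+1)A$ for a single block of $2^p$ roots of unity — presumably from the suboptimal doubling $|\Delta_{2^p}|\leq|\Delta_{2^{p-1}}|+A$ — is strictly weaker than what is needed, and applying it block-by-block for general $n$ accumulates $\sum_j(p_j+1)A$, which is $O\bigl((\log n)^2\bigr)A$ in the worst case $n=2^{p+1}-1$; that yields a super-polynomial $c_n$, not the claimed $(n+1)^{A/\ln 2}$. Separately, you differentiate in the evaluation variable $z$ and propose to ``integrate from a basepoint to $z$'' without addressing the basepoint term; the Riemann-sum argument requires the differentiation and the sum to run over the \emph{node} variable $w=a_j$, with the reference value $0$ supplied by the mean value at $\infty$ rather than by telescoping in $z$. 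These are genuine gaps, not merely expository.
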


We also remind an important property of $\Phi$, that can be found 
in~\cite[Sections~1 and~2]{alper1955} or~\cite[Eq.~(3) p.~45]{alper1956}.
There exist positive constants $M_1$ and $M_2$ such that for all
$z,w$ in the unit circle with $z\neq w$,
\begin{eqnarray}\label{alper1}
M_1
\;\leq\;
\left|
\dfrac{\Phi(z)-\Phi(w)}{z-w}
\right|
\;\leq\;
M_2
\,.
\end{eqnarray}

Now we can give the proof of Theorem~\ref{unifestimcompact}.

\begin{proof}

First, it is sufficient to prove that
for all $N\geq1$, $p=1,\ldots,N$ and $z$ on the unit circle, we have
\begin{eqnarray}\label{compactaux1}
\left|
\prod_{j=1,j\neq p}^N
\dfrac{\Phi(z)-\Phi(\eta_j)}{\Phi(\eta_p)-\Phi(\eta_j)}
\right|
& \leq &
M
(N+1)^{2A/\ln(2)}
\,
\left|
\prod_{j=1,j\neq p}^N
\dfrac{z-\eta_j}{\eta_p-\eta_j}
\right|
\,,
\end{eqnarray}
where $M$ and $A$ are positive constants depending only on $K$.
Indeed, let fix $N\geq1$ and $p=1,\ldots,N$. By the maximum modulus principle, we will have that
\begin{eqnarray*}
\sup_{w\in K}
\left|
\prod_{j=1,j\neq p}^N
\dfrac{w-\Phi(\eta_j)}{\Phi(\eta_p)-\Phi(\eta_j)}
\right|
& = &
\left|
\prod_{j=1,j\neq p}^N
\dfrac{w^*-\Phi(\eta_j)}{\Phi(\eta_p)-\Phi(\eta_j)}
\right|
\,,
\end{eqnarray*}
where $w^*\in\partial K$. Let be $z^*$ in the circle such that
$\Phi\left(z^*\right)=w^*$. It will follow that
\begin{eqnarray*}
\sup_{w\in K}
\left|
\prod_{j=1,j\neq p}^N
\dfrac{w-\Phi(\eta_j)}{\Phi(\eta_p)-\Phi(\eta_j)}
\right|
& = &
\left|
\prod_{j=1,j\neq p}^N
\dfrac{\Phi\left(z^*\right)-\Phi(\eta_j)}{\Phi(\eta_p)-\Phi(\eta_j)}
\right|
\\
& \leq &
M
(N+1)^{2A/\ln(2)}
\,
\left|
\prod_{j=1,j\neq p}^N
\dfrac{z^*-\eta_j}{\eta_p-\eta_j}
\right|
\\
& \leq &
\pi\exp(3\pi)\,M
(N+1)^{2A/\ln(2)}
\,,
\end{eqnarray*}
the last inequality being an application of Theorem~\ref{unifestim}
(since $\left(\eta_j\right)_{j\geq1}$ is a Leja sequence for
the unit disk with $\left|\eta_1\right|=1$).

In order to prove~(\ref{compactaux1}), we use the same method as for the proof of
Theorem~13 from~\cite{calviphung1}. We can assume that $N\geq2$ (otherwise,
the required estimate is obvious), we fix an $N$-Leja section for the unit disk
and consider $z$ on the unit circle with
$z\neq\eta_j$, $\forall\,j=1,\ldots,N$. One has for all $p=1,\ldots,N$,
\begin{eqnarray*}
\left|
\prod_{j=1,j\neq p}^N
\dfrac{\Phi(z)-\Phi(\eta_j)}{z-\eta_j}
\right|
& = &
\left|
\dfrac{z-\eta_p}{\Phi(z)-\Phi(\eta_p)}
\right|
\left|
\prod_{j=1}^N
\dfrac{\Phi(z)-\Phi(\eta_j)}{z-\eta_j}
\right|
\\\nonumber
& = &
\left|
\dfrac{z-\eta_p}{\Phi(z)-\Phi(\eta_p)}
\right|
\left|
\prod_{j=0}^{N-1}
\dfrac{\Phi(z)-\Phi(\eta_{j+1})}{z-\eta_{j+1}}
\right|
\,.
\end{eqnarray*}
On the one hand, an application of~(\ref{alper1}) with $w=\eta_p$, and on the other hand
an application of~(\ref{alper2}) with the $N$-Leja section
$\left\{a_j\right\}_{0\leq j\leq N-1}=\left\{\eta_{j+1}\right\}_{0\leq j\leq N-1}$, 
together give that
\begin{eqnarray*}
\dfrac{C(K)^N}{M_2\,c_N}
\;\leq\;
\left|
\prod_{j=1,j\neq p}^N
\dfrac{\Phi(z)-\Phi(\eta_j)}{z-\eta_j}
\right|
\;\leq\;
\dfrac{C(K)^Nc_N}{M_1}
\,.
\end{eqnarray*}
In particular, these estimates remain true for $z=\eta_p$ by continuity, and lead to
\begin{eqnarray*}
\begin{cases}
\;
\prod_{j=1,j\neq p}^N
\left|
\Phi(z)-\Phi\left(\eta_j\right)
\right|
\;
\;\leq\;
\dfrac{C(K)^Nc_N}{M_1}
\prod_{j=1,j\neq p}^N
\left|
z-\eta_j
\right|
\,,
\\
\prod_{j=1,j\neq p}^N
\left|
\Phi\left(\eta_p\right)-\Phi\left(\eta_j\right)
\right|
\;\geq\;
\dfrac{C(K)^N}{M_2\,c_N}
\prod_{j=1,j\neq p}^N
\left|
\eta_p-\eta_j
\right|
\,.
\end{cases}
\end{eqnarray*}
It follows that 
\begin{eqnarray}\label{unifestimcompactaux}
\left|
\prod_{j=1,j\neq p}^N
\dfrac{\Phi(z)-\Phi(\eta_j)}{\Phi(\eta_p)-\Phi(\eta_j)}
\right|
& \leq &
\dfrac{M_2\,c_N^2}{M_1}
\left|
\prod_{j=1,j\neq p}^N
\dfrac{z-\eta_j}{\eta_p-\eta_j}
\right|
\,.
\end{eqnarray}
Lastly, by continuity (and since
$c_N\leq(N+1)^{A /\ln(2)}$), the above inequality holds for every $z$ on the unit circle
and yields the required estimate~(\ref{compactaux1}).

\end{proof}

\bigskip

\section{Applications in multivariate Lagrange interpolation} \label{applmultivar}

\subsection{An explicit formula of the Lagrange polynomials for intertwining sequences}

For any given $N\geq1$, we remind from Subsection~\ref{applmultivarintro} of the Introduction,
the numbers $n$ and $m$ with $0\leq m\leq n$, the space
$\mathcal{P}_{n,m}$ that is the linear subspace of $\mathcal{P}_n$
spanned by $\mathcal{P}_{n-1}$ and the monomials
$z^n,z^{n-1}w,\ldots,z^{n-m}w^m$, and the set
$\Omega_N=\Omega_{n,m}=\left\{H_k\right\}_{1\leq k\leq N}
=\left\{
\left(\eta_0,\theta_0\right),\ldots,
\left(\eta_0,\theta_{n-1}\right),
\left(\eta_n,\theta_0\right),
\ldots,\left(\eta_{n-m},\theta_m\right)
\right\}$ (for fixed sequences
$\left(\eta_k\right)_{k\geq0}$ and $\left(\theta_l\right)_{l\geq0}$
of pairwise distinct elements).
We give the proof of the following result mentioned as Proposition~\ref{explicitlagrangepol}
in the Introduction, and that gives an explicit formula for the bidimensional
fundamental Lagrange polynomials associated with $\Omega_N$ (FLIPs).

\begin{proposition}\label{explicitlagrangepol}

Let be $N\geq1$ and the associated $n$, $m$, $\mathcal{P}_{n,m}$ and $\Omega_N$.
Then the multivariate fundamental Lagrange polynomials
$l_{H_k}^{(N)}$ exist, i.e. for all $k=1,\ldots,N$,
$l_{H_k}^{(N)}\in\mathcal{P}_{n,m}$ and
$l_{H_k}^{(N)}\left(H_l\right)=\delta_{k,l}$
for all $l=1,\ldots,N$.

In addition, let fix $k=1,\ldots,N$ and let be
$p,\,q$ such that $H_k=\left(\eta_p,\theta_q\right)$. We have
for all $(z,w)\in\C^2$:

\begin{itemize}

\item

if $p+q=n$, or $p+q=n-1$ and $q\geq m+1$,
then
\begin{eqnarray}\label{nn-1m+1}
l_{(\eta_p,\theta_q)}^{(N)}(z,w)
& = &
\prod_{j=0}^{p-1}\dfrac{z-\eta_j}{\eta_p-\eta_j}
\,\times\,
\prod_{i=0}^{q-1}\dfrac{w-\theta_i}{\theta_q-\theta_i}
\,;
\end{eqnarray}

\item

if $p+q=n-1$ and $q=m$, then 
\begin{eqnarray}\nonumber
l_{(\eta_p,\theta_q)}^{(N)}(z,w)
& = &
l_{(\eta_{n-m-1},\theta_m)}^{(N)}(z,w)
\\\label{n-1m}
& = &
\prod_{j=0,j\neq {n-m-1}}^{n-m}\dfrac{z-\eta_j}{\eta_{n-m-1}-\eta_j}
\,\times\,
\prod_{i=0}^{m-1}\dfrac{w-\theta_i}{\theta_m-\theta_i}
\,;
\end{eqnarray}

\item

if $p+q=n-1$ and $0\leq q\leq m-1$, 
then
\begin{eqnarray}\label{n-1m-1}
l_{(\eta_p,\theta_q)}^{(N)}(z,w)
& = &
\;\;\;\;\;\;\;\;\;\;\;\;\;\;\;\;\;\;\;\;\;\;\;\;\;\;\;\;\;\;\;\;\;\;\;\;
\;\;\;\;\;\;\;\;\;\;\;\;\;\;\;\;\;\;\;\;\;\;\;\;\;\;\;\;\;\;\;\;\;\;\;\;
\end{eqnarray}
\begin{eqnarray*}
=\;
\prod_{j=0,j\neq p}^{p+1}\dfrac{z-\eta_j}{\eta_p-\eta_j}
\,
\prod_{i=0}^{q-1}\dfrac{w-\theta_i}{\theta_q-\theta_i}
\,-\,
\prod_{j=0}^{p-1}\dfrac{z-\eta_j}{\eta_p-\eta_j}
\,
\prod_{i=0}^{q-1}\dfrac{w-\theta_i}{\theta_q-\theta_i}
\,+\,
\prod_{j=0}^{p-1}\dfrac{z-\eta_j}{\eta_p-\eta_j}
\,
\prod_{i=0,i\neq q}^{q+1}\dfrac{w-\theta_i}{\theta_q-\theta_i}
\,;
\end{eqnarray*}

\item

if $0 \leq p+q\leq n-2$, $0\leq q\leq m-1$ and $0\leq p\leq n-m-1$, then
\begin{eqnarray}\label{n-2m-1n-m-1}
l_{(\eta_p,\theta_q)}^{(N)}(z,w)
& = &
\;\;\;\;\;\;\;\;\;\;\;\;\;\;\;\;\;\;\;\;\;\;\;\;\;\;\;\;\;\;\;\;\;\;\;\;
\;\;\;\;\;\;\;\;\;\;\;\;\;\;\;\;\;\;\;\;\;\;\;\;\;\;\;\;\;\;\;\;\;\;\;\;
\end{eqnarray}
\begin{eqnarray*}
& = &
\prod_{j=0,j\neq p}^{n-q}\dfrac{z-\eta_j}{\eta_p-\eta_j}
\prod_{i=0}^{q-1}\dfrac{w-\theta_i}{\theta_q-\theta_i}
-
\prod_{j=0,j\neq p}^{n-q-1}\dfrac{z-\eta_j}{\eta_p-\eta_j}
\prod_{i=0}^{q-1}\dfrac{w-\theta_i}{\theta_q-\theta_i}
+
\prod_{j=0,j\neq p}^{n-q-1}\dfrac{z-\eta_j}{\eta_p-\eta_j}
\prod_{i=0,i\neq q}^{q+1}\dfrac{w-\theta_i}{\theta_q-\theta_i}
\\
& &
+\;
\sum_{r=1}^{m-q-1}
\left[
\prod_{j=0,j\neq p}^{n-q-r-1}\dfrac{z-\eta_j}{\eta_p-\eta_j}
\prod_{i=0,i\neq q}^{q+r+1}\dfrac{w-\theta_i}{\theta_q-\theta_i}
\;-\;
\prod_{j=0,j\neq p}^{n-q-r-1}\dfrac{z-\eta_j}{\eta_p-\eta_j}
\prod_{i=0,i\neq q}^{q+r}\dfrac{w-\theta_i}{\theta_q-\theta_i}
\right]
\\
& &
+\;
\sum_{r=m-q}^{n-p-q-2}
\left[
\prod_{j=0,j\neq p}^{n-q-r-2}\dfrac{z-\eta_j}{\eta_p-\eta_j}
\prod_{i=0,i\neq q}^{q+r+1}\dfrac{w-\theta_i}{\theta_q-\theta_i}
\;-\;
\prod_{j=0,j\neq p}^{n-q-r-2}\dfrac{z-\eta_j}{\eta_p-\eta_j}
\prod_{i=0,i\neq q}^{q+r}\dfrac{w-\theta_i}{\theta_q-\theta_i}
\right]
\,,
\end{eqnarray*}
with the convention that
$\sum_{\emptyset}=0$ if $q\geq m-1$ or $p\geq n-m-1$
(similarly, we set $\prod_{\emptyset}=1$);

\item

if $0 \leq p+q\leq n-2$, $0\leq q\leq m-1$ and $p\geq n-m$, then
\begin{eqnarray}\label{n-2m-1n-m}
l_{(\eta_p,\theta_q)}^{(N)}(z,w)
& = &
\;\;\;\;\;\;\;\;\;\;\;\;\;\;\;\;\;\;\;\;\;\;\;\;\;\;\;\;\;\;\;\;\;\;\;\;
\;\;\;\;\;\;\;\;\;\;\;\;\;\;\;\;\;\;\;\;\;\;\;\;\;\;\;\;\;\;\;\;\;\;\;\;
\end{eqnarray}
\begin{eqnarray*}
& = &
\prod_{j=0,j\neq p}^{n-q}\dfrac{z-\eta_j}{\eta_p-\eta_j}
\prod_{i=0}^{q-1}\dfrac{w-\theta_i}{\theta_q-\theta_i}
-
\prod_{j=0,j\neq p}^{n-q-1}\dfrac{z-\eta_j}{\eta_p-\eta_j}
\prod_{i=0}^{q-1}\dfrac{w-\theta_i}{\theta_q-\theta_i}
+
\prod_{j=0,j\neq p}^{n-q-1}\dfrac{z-\eta_j}{\eta_p-\eta_j}
\prod_{i=0,i\neq q}^{q+1}\dfrac{w-\theta_i}{\theta_q-\theta_i}
\\
& &
+\;
\sum_{r=1}^{n-p-q-1}
\left[
\prod_{j=0,j\neq p}^{n-q-r-1}\dfrac{z-\eta_j}{\eta_p-\eta_j}
\prod_{i=0,i\neq q}^{q+r+1}\dfrac{w-\theta_i}{\theta_q-\theta_i}
\;-\;
\prod_{j=0,j\neq p}^{n-q-r-1}\dfrac{z-\eta_j}{\eta_p-\eta_j}
\prod_{i=0,i\neq q}^{q+r}\dfrac{w-\theta_i}{\theta_q-\theta_i}
\right]
\,;
\end{eqnarray*}

\item
if $p+q\leq n-2$ and $q=m$, then 
\begin{eqnarray}\label{n-2m}
l_{(\eta_p,\theta_q)}^{(N)}(z,w)
\;=\;
l_{(\eta_p,\theta_m)}^{(N)}(z,w)
& = &
\;\;\;\;\;\;\;\;\;\;\;\;\;\;\;\;\;\;\;\;\;\;\;\;\;\;\;\;\;\;\;\;\;\;\;\;
\;\;\;\;\;\;\;\;\;\;\;\;\;\;
\end{eqnarray}
\begin{eqnarray*}
& = &
\prod_{j=0,j\neq p}^{n-m}\dfrac{z-\eta_j}{\eta_p-\eta_j}
\prod_{i=0}^{m-1}\dfrac{w-\theta_i}{\theta_m-\theta_i}
-
\prod_{j=0,j\neq p}^{n-m-2}\dfrac{z-\eta_j}{\eta_p-\eta_j}
\prod_{i=0}^{m-1}\dfrac{w-\theta_i}{\theta_m-\theta_i}
+
\prod_{j=0,j\neq p}^{n-m-2}\dfrac{z-\eta_j}{\eta_p-\eta_j}
\prod_{i=0,i\neq m}^{m+1}\dfrac{w-\theta_i}{\theta_m-\theta_i}
\\
& &
+\;
\sum_{r=1}^{n-m-p-2}
\left[
\prod_{j=0,j\neq p}^{n-m-r-2}\dfrac{z-\eta_j}{\eta_p-\eta_j}
\prod_{i=0,i\neq m}^{m+r+1}\dfrac{w-\theta_i}{\theta_m-\theta_i}
\;-\;
\prod_{j=0,j\neq p}^{n-m-r-2}\dfrac{z-\eta_j}{\eta_p-\eta_j}
\prod_{i=0,i\neq m}^{m+r}\dfrac{w-\theta_i}{\theta_m-\theta_i}
\right]
\,;
\end{eqnarray*}

\item

if $p+q\leq n-2$ and $q\geq m+1$, then 
\begin{eqnarray}\label{n-2m+1}
l_{(\eta_p,\theta_q)}^{(N)}(z,w)
& = &
\;\;\;\;\;\;\;\;\;\;\;\;\;\;\;\;\;\;\;\;\;\;\;\;\;\;\;\;\;\;\;\;\;\;\;\;
\;\;\;\;\;\;\;\;\;\;\;\;\;\;\;\;\;\;\;\;\;\;\;\;\;\;\;\;\;\;\;\;\;\;\;\;
\end{eqnarray}
\begin{eqnarray*}
& = &
\prod_{j=0,j\neq p}^{n-q-1}\dfrac{z-\eta_j}{\eta_p-\eta_j}
\prod_{i=0}^{q-1}\dfrac{w-\theta_i}{\theta_q-\theta_i}
-
\prod_{j=0,j\neq p}^{n-q-2}\dfrac{z-\eta_j}{\eta_p-\eta_j}
\prod_{i=0}^{q-1}\dfrac{w-\theta_i}{\theta_q-\theta_i}
+
\prod_{j=0,j\neq p}^{n-q-2}\dfrac{z-\eta_j}{\eta_p-\eta_j}
\prod_{i=0,i\neq q}^{q+1}\dfrac{w-\theta_i}{\theta_q-\theta_i}
\\
& &
+\;
\sum_{r=1}^{n-p-q-2}
\left[
\prod_{j=0,j\neq p}^{n-q-r-2}\dfrac{z-\eta_j}{\eta_p-\eta_j}
\prod_{i=0,i\neq q}^{q+r+1}\dfrac{w-\theta_i}{\theta_q-\theta_i}
\;-\;
\prod_{j=0,j\neq p}^{n-q-r-2}\dfrac{z-\eta_j}{\eta_p-\eta_j}
\prod_{i=0,i\neq q}^{q+r}\dfrac{w-\theta_i}{\theta_q-\theta_i}
\right]
\,.
\end{eqnarray*}

\end{itemize}

\end{proposition}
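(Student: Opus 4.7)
The strategy is to verify the explicit formulas directly: I will check that each candidate polynomial displayed in \eqref{nn-1m+1}--\eqref{n-2m+1} lies in $\mathcal{P}_{n,m}$ and satisfies the Kronecker-delta conditions $l_{H_k}^{(N)}(H_l) = \delta_{k,l}$ for every $l=1,\ldots,N$. Once this is established, existence of the FLIPs together with unisolvence of $\Omega_N$ for $\mathcal{P}_{n,m}$ follows from elementary linear algebra, since $\dim \mathcal{P}_{n,m} = N = \mathrm{card}(\Omega_N)$ and we will have exhibited $N$ polynomials in $\mathcal{P}_{n,m}$ that form a dual basis to the evaluation functionals at the points of $\Omega_N$.

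First, I would check the membership in $\mathcal{P}_{n,m}$ by a degree count in each case. Cases \eqref{nn-1m+1}--\eqref{n-1m+1} are immediate: the total degree is at most $n$, and when it equals $n$ the leading monomial has the form $z^{p}w^{q}$ with $q\leq m$, which lies in the span $\{z^n, z^{n-1}w,\ldots, z^{n-m}w^m\}$. For the telescoping formulas \eqref{n-2m-1n-m-1}--\eqref{n-2m+1}, each individual summand is arranged so that either its total degree is $\leq n-1$ (placing it in $\mathcal{P}_{n-1}$) or its degree equals $n$ and the unique top monomial $z^{n-q-r-1}w^{q+r+1}$ satisfies $q+r+1\leq m$. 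In particular, the split of the sum in \eqref{n-2m-1n-m-1} at $r=m-q$ is precisely the cutoff that enforces this dichotomy.

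Next, I would verify the interpolation conditions by evaluating each formula at an arbitrary $(\eta_a,\theta_b)\in\Omega_N$. The basic computation is that a factor $\prod_{j=0,j\neq p}^{K_1}(z-\eta_j)/(\eta_p-\eta_j)$ equals $1$ at $z=\eta_p$, vanishes at $z=\eta_a$ whenever $a\in\{0,\ldots,K_1\}\setminus\{p\}$, and is nonzero otherwise; the analogous statement holds for the $w$-factors. Splitting $\Omega_N$ into the ``lower block'' $\{(\eta_a,\theta_b)\colon a+b\leq n-1\}$ and the ``diagonal'' $\{(\eta_{n-b},\theta_b)\colon 0\leq b\leq m\}$, the tensor-product formulas \eqref{nn-1m+1}--\eqref{n-1m} dispose of the cases with $p+q\geq n-1$ by a short case analysis on the position of $(a,b)$ relative to $(p,q)$. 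For the remaining cases \eqref{n-1m-1}--\eqref{n-2m+1}, the point is that the tensor-product candidate $\prod_{j=0}^{p-1}\frac{z-\eta_j}{\eta_p-\eta_j}\prod_{i=0}^{q-1}\frac{w-\theta_i}{\theta_q-\theta_i}$ already vanishes on the lower block off $(p,q)$, so the added terms only need to correct its values on the diagonal; the chain of differences in the formulas is designed so that evaluating at each diagonal point $(\eta_{n-b},\theta_b)$ telescopes to zero.

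The main obstacle is precisely the bookkeeping for the interpolation property in the telescoping formulas \eqref{n-2m-1n-m-1}--\eqref{n-2m+1}: one must group consecutive summands of the form
\begin{equation*}
\Bigl[\prod_{j=0,j\neq p}^{K_1}\tfrac{z-\eta_j}{\eta_p-\eta_j}\prod_{i=0,i\neq q}^{K_2+1}\tfrac{w-\theta_i}{\theta_q-\theta_i}\Bigr]-\Bigl[\prod_{j=0,j\neq p}^{K_1}\tfrac{z-\eta_j}{\eta_p-\eta_j}\prod_{i=0,i\neq q}^{K_2}\tfrac{w-\theta_i}{\theta_q-\theta_i}\Bigr]
\end{equation*}
and see that, at each of the points $(\eta_a,\theta_b)\in\Omega_N\setminus\{(\eta_p,\theta_q)\}$ not already killed by the initial tensor product, exactly one such bracket fires and contributes a nonzero value that is canceled by the neighboring bracket at the adjacent index, while at $(\eta_p,\theta_q)$ itself every bracket vanishes and only the initial tensor-product piece contributes $1$. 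I would carry out this verification by induction on the index $r$ governing the successive rows of the telescoping sum, keeping careful track of the two regimes $r\leq m-q-1$ and $r\geq m-q$ that correspond to the two parts of the sums, and of the boundary case $p=n-m-1$ that separates \eqref{n-2m-1n-m-1} from \eqref{n-2m-1n-m}.
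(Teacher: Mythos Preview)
Your overall strategy matches the paper's exactly: verify directly that each displayed formula lies in $\mathcal{P}_{n,m}$ and evaluates to $\delta_{k,l}$ on $\Omega_N$, then conclude unisolvence from the dimension count $\dim\mathcal{P}_{n,m}=N=\mathrm{card}\,\Omega_N$.

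However, your heuristic for the telescoping cases is inaccurate in a way that would mislead the execution. You assert that the short tensor product $\prod_{j=0}^{p-1}\tfrac{z-\eta_j}{\eta_p-\eta_j}\prod_{i=0}^{q-1}\tfrac{w-\theta_i}{\theta_q-\theta_i}$ ``already vanishes on the lower block off $(p,q)$''. This holds only when $p+q=n-1$; for $p+q\leq n-2$ the lower block contains nodes such as $(\eta_p,\theta_{q+1})$ or $(\eta_{p+1},\theta_q)$ at which the short product is nonzero. The formulas \eqref{n-2m-1n-m-1}--\eqref{n-2m+1} are not built as ``short tensor product plus diagonal corrections'': their leading $z$-factor runs up to index $n-q$ (or $n-q-1$), precisely to annihilate the nodes $(\eta_a,\theta_q)$ with $p<a\leq n-q$ on the row $b=q$. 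Correspondingly, your ``one bracket fires and is canceled by its neighbor'' picture is not the actual mechanism. The paper's verification of each telescoping formula proceeds in four clean steps: (i) observe the entire expression is divisible by the short tensor product, disposing of all nodes with $a<p$ or $b<q$; (ii) set $z=\eta_p$, so every $z$-factor equals $1$ and the brackets telescope in $w$ to the single product $\prod_{i=0,i\neq q}^{K}\tfrac{w-\theta_i}{\theta_q-\theta_i}$ with $K$ chosen to kill all remaining nodes on the column $a=p$; (iii) set $w=\theta_q$, so each bracket becomes a difference of identical terms and vanishes, leaving the single leading $z$-product; (iv) for $a>p$ and $b>q$, show each bracket vanishes \emph{individually}, since for every $r$ either $a$ is small enough that the $z$-factor vanishes, or else $a+b\leq n$ forces $b\leq q+r$ and the $w$-factor vanishes. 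No induction on $r$ is needed.
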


The proof of this proposition consists of two steps:
the following lemma and the next corollary.

\begin{lemma}\label{prexplicitlagrangepol}

For all $\left(\eta_p,\theta_q\right)\in\Omega_N$,
the function that appears in the claimed equality~(\ref{nn-1m+1})
(resp., (\ref{n-1m}), (\ref{n-1m-1}), (\ref{n-2m-1n-m-1}), (\ref{n-2m-1n-m}), (\ref{n-2m})
and~(\ref{n-2m+1})) satisfies the required properties for a FLIP $l_{(\eta_p,\theta_q)}^{(N)}$,
i.e.:
\begin{enumerate}

\item\label{prexplicitlagrangepol1}

$l_{(\eta_p,\theta_q)}^{(N)}
\;\in\;
\mathcal{P}_{n,m}$\,;

\item\label{prexplicitlagrangepol2}

$l_{(\eta_p,\theta_q)}^{(N)}\left(\eta_k,\theta_l\right)
\;=\;
\delta_{p,k}\,\delta_{q,l}
\;\;\;
\forall\,\left(\eta_k,\theta_l\right)\in\Omega_N$.

\end{enumerate}

\end{lemma}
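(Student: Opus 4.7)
The plan is to verify the two required properties for each of the seven explicit formulas by direct inspection, reducing everything to elementary vanishing/normalization facts about univariate Lagrange factors. Throughout, let $T_{A,B}$ denote a tensor product $\prod_{j=0,j\neq p}^{A}\frac{z-\eta_j}{\eta_p-\eta_j}\cdot\prod_{i=0,i\neq q}^{B}\frac{w-\theta_i}{\theta_q-\theta_i}$ (with $p$, resp.\ $q$, excluded from the product only when $p\in[0,A]$, resp.\ $q\in[0,B]$). The key observation is that $T_{A,B}(\eta_k,\theta_l)=0$ as soon as $k$ lies in $[0,A]\setminus\{p\}$ or $l$ lies in $[0,B]\setminus\{q\}$, equals $1$ at $(\eta_p,\theta_q)$, and is generically nonzero otherwise.

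For property (\ref{prexplicitlagrangepol1}), I would read off the maximal total degree and maximal $w$-degree appearing in each formula. In the tensor-product cases (\ref{nn-1m+1}) with $p+q=n-1,\,q\geq m+1$ and similarly (\ref{n-1m-1}), (\ref{n-2m+1}), the total degree stays $\leq n-1$, so the polynomial sits in $\mathcal{P}_{n-1}\subset\mathcal{P}_{n,m}$. In (\ref{nn-1m+1}) with $p+q=n$ we have $w$-degree $q\leq m$; in (\ref{n-1m}), (\ref{n-2m-1n-m-1}), (\ref{n-2m-1n-m}), (\ref{n-2m}) the total degree attains $n$ but the $w$-degree is always at most $m$, by design of the telescoping sums. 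Hence each formula lies in $\mathcal{P}_{n-1}+\mathrm{span}\{z^{n-i}w^i:\,0\leq i\leq m\}=\mathcal{P}_{n,m}$.

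For property (\ref{prexplicitlagrangepol2}), the tensor cases (\ref{nn-1m+1}) and (\ref{n-1m}) are immediate: every $(\eta_k,\theta_l)\in\Omega_N\setminus\{(\eta_p,\theta_q)\}$ has either $k<p$ (killing the $z$-factor) or $l<q$ (killing the $w$-factor), because $(p,q)$ sits on the outer boundary of the staircase; the extra factor $\frac{z-\eta_{n-m}}{\eta_{n-m-1}-\eta_{n-m}}$ in (\ref{n-1m}) is precisely what kills the corner point $(\eta_{n-m},\theta_m)$. For the remaining cases, I would group the summands into consecutive pairs of the shape $T_{A,B+1}-T_{A,B}$ or $T_{A+1,B}-T_{A,B}$. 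Evaluating a pair at $(\eta_k,\theta_l)$ gives $0$ unless the point lies exactly on the freshly added coordinate hyperplane $\{\theta_{B+1}\}$ (resp.\ $\{\eta_{A+1}\}$), in which case the pair telescopes against the next one along the staircase. The three fixed leading terms of each formula handle the anchor: at $(\eta_p,\theta_q)$ they contribute $1-1+1=1$, and at the immediate neighbors $(\eta_{p-1},\theta_{q+1})$, $(\eta_{p+1},\theta_{q-1})$, etc., the partial cancellations $+1-1$ eliminate the unwanted nonzero values coming from the pure tensor product.

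The main obstacle will be the combinatorial bookkeeping in case (\ref{n-2m-1n-m-1}), where the sum over $r$ splits at $r=m-q-1$ into two regimes, reflecting the broken-staircase boundary of $\Omega_{n,m}$ (the step from $w$-level $\leq m$ on the outer diagonal to $w$-level $\leq n-1$ on the inner diagonal). I expect the junction to be the delicate point: one must check that the first term of the second sum (with shifted $z$-range length) cancels correctly against the last term of the first sum, with no residual value at the transition points $(\eta_{n-m},\theta_m)$ and $(\eta_{n-m-1},\theta_{m+1})$. Once this junction is verified, the other telescoping cases (\ref{n-2m-1n-m}), (\ref{n-2m}), (\ref{n-2m+1}) follow as boundary specializations of the same pattern.
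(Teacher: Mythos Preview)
Your overall strategy---direct case-by-case verification of membership in $\mathcal{P}_{n,m}$ and of the Kronecker-delta property---is exactly the route the paper takes. But two points need fixing.

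First, a small error in property~(\ref{prexplicitlagrangepol1}): formula~(\ref{n-1m-1}) does \emph{not} have total degree $\leq n-1$. Its first term $\prod_{j=0,j\neq p}^{p+1}\frac{z-\eta_j}{\eta_p-\eta_j}\prod_{i=0}^{q-1}\frac{w-\theta_i}{\theta_q-\theta_i}$ has total degree $(p+1)+q=n$ (with $w$-degree $q\leq m-1$), and the third term has total degree $p+(q+1)=n$ (with $w$-degree $q+1\leq m$). So the polynomial lies in $\mathcal{P}_{n,m}$ for the same reason as the other degree-$n$ cases, not because it sits in $\mathcal{P}_{n-1}$.

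Second, and more seriously, your mechanism for property~(\ref{prexplicitlagrangepol2}) in the telescoping cases is not right. A pair $T_{A,B+1}-T_{A,B}$ evaluated at $(\eta_k,\theta_l)\in\Omega_N$ with $k\geq p+1$, $l\geq q+1$ does \emph{not} in general cancel against the next pair: if $l=B+1$, the first term of the pair vanishes while the second survives, and the subsequent pair (which has a \emph{different} $z$-range $A'$) gives no compensating contribution at that point. What actually kills the pair is the $z$-factor: the constraint $k+l\leq n$ (or $\leq n-1$ when $l>m$) forces $k\leq A$, so $k\in[p+1,A]$ and the $z$-product vanishes. This is the argument the paper uses, treating each term individually via the staircase constraint rather than via inter-pair cancellation. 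The genuine telescoping occurs only along the axes $k=p$ or $l=q$: the paper fixes $z=\eta_p$ (all $z$-factors become $1$, the $w$-products telescope to a single Lagrange factor $\prod_{i=0,i\neq q}^{n-p-1}\frac{w-\theta_i}{\theta_q-\theta_i}$), then fixes $w=\theta_q$ (each pair collapses to $0$, leaving only the leading $z$-Lagrange factor), and finally handles $k\geq p+1$, $l\geq q+1$ term by term. Your three-leading-terms remark covers only the anchor and its immediate neighbors, not the full axes; you will need the coordinate-fixing telescoping to finish.
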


\begin{proof}

First, all the involved polynomials are well-defined since the $\eta_j$'s
(resp., $\theta_i$'s) are supposed to be pairwise distinct.

\medskip

\underline{Proof of~(\ref{n-2m-1n-m-1}):}
we deal with the case $p+q\leq n-2$, $0\leq q\leq m-1$ and $p\leq n-m-1$.
We begin with part~(\ref{prexplicitlagrangepol1}) in the statement of the lemma:
we want to prove that the involved polynomial belongs to
$\mathcal{P}_{n,m}$, i.e. it has total degree at most $n$, and the 
terms whose total degree equals $n$ must have partial degree at most
$m$ with respect to the second variable $w$.
Here, all the involved products have total degree
at most $n$. Next, one has
$\deg_w\left(\prod_{j=0,j\neq p}^{n-q}\dfrac{z-\eta_j}{\eta_p-\eta_j}
\prod_{i=0}^{q-1}\dfrac{w-\theta_i}{\theta_q-\theta_i}\right)=q\leq m-1$.
Similarly, 
$\deg_w\left(\prod_{j=0,j\neq p}^{n-q-1}\dfrac{z-\eta_j}{\eta_p-\eta_j}
\prod_{i=0}^{q+1}\dfrac{w-\theta_i}{\theta_q-\theta_i}\right)=q+1\leq m$,
and
for all $r=1,\ldots,m-q-1$ (in case $m\geq q+2$),\\
$\deg_w\left(\prod_{j=0,j\neq p}^{n-q-r-1}\dfrac{z-\eta_j}{\eta_p-\eta_j}
\prod_{i=0}^{q+r+1}\dfrac{w-\theta_i}{\theta_q-\theta_i}\right)=q+r+1\leq m$.
All the remaining products have total degree at most $n-1$, then this proves~(\ref{prexplicitlagrangepol1}).

Now we prove part~(\ref{prexplicitlagrangepol2}) in the statement of the lemma.
First, one has $n-q>n-q-1\geq p+1>p$. 
On the other hand,
for all $r=1,\ldots,m-q-1$ (in case $m-q\geq2$, otherwise the associated sum does not
even appear), one has 
$n-q-r-1\geq n-m\geq p+1>p$. Similarly, 
for all $r=m-q,\ldots,n-p-q-2$ (in case $p\leq n-m-2$), one has 
$n-q-r-2\geq p>p-1$. 
It follows that the expression in~(\ref{n-2m-1n-m-1}) is divisible by
$\prod_{j=0}^{p-1}\dfrac{z-\eta_j}{\eta_p-\eta_j}$.
We similarly check that it is also divisible by 
$\prod_{i=0}^{q-1}\dfrac{w-\theta_i}{\theta_q-\theta_i}$.
Then it cancels all the points
$\left(\eta_k,\theta_l\right)$ with $0\leq k\leq p-1$ or $0\leq l\leq q-1$.
Thus in the following, it is sufficient to check~(\ref{prexplicitlagrangepol2})
for $(z,w)=\left(\eta_k,\theta_l\right)\in\Omega_N$ with
$k\geq p$ and $l\geq q$.

Next, if we fix $z=\eta_p$, the expression in~(\ref{n-2m-1n-m-1}) gives
\begin{eqnarray*}
1\times\prod_{i=0}^{q-1}\dfrac{w-\theta_i}{\theta_q-\theta_i}
-
1\times\prod_{i=0}^{q-1}\dfrac{w-\theta_i}{\theta_q-\theta_i}
+
1\times\prod_{i=0,i\neq q}^{q+1}\dfrac{w-\theta_i}{\theta_q-\theta_i}
& &
\\
+\;
\sum_{r=1}^{m-q-1}
\left[
1\times\prod_{i=0,i\neq q}^{q+r+1}\dfrac{w-\theta_i}{\theta_q-\theta_i}
-
1\times\prod_{i=0,i\neq q}^{q+r}\dfrac{w-\theta_i}{\theta_q-\theta_i}
\right]
& &
\\
+\;
\sum_{r=m-q}^{n-p-q-2}
\left[
1\times\prod_{i=0,i\neq q}^{q+r+1}\dfrac{w-\theta_i}{\theta_q-\theta_i}
-
1\times\prod_{i=0,i\neq q}^{q+r}\dfrac{w-\theta_i}{\theta_q-\theta_i}
\right]
& = &
\end{eqnarray*}
\begin{eqnarray*}
& = &
\prod_{i=0,i\neq q}^{q+1}\dfrac{w-\theta_i}{\theta_q-\theta_i}
+
\prod_{i=0,i\neq q}^{m}\dfrac{w-\theta_i}{\theta_q-\theta_i}
-
\prod_{i=0,i\neq q}^{q+1}\dfrac{w-\theta_i}{\theta_q-\theta_i}
+
\prod_{i=0,i\neq q}^{n-p-1}\dfrac{w-\theta_i}{\theta_q-\theta_i}
-
\prod_{i=0,i\neq q}^{m}\dfrac{w-\theta_i}{\theta_q-\theta_i}
\\
& = &
\prod_{i=0,i\neq q}^{n-p-1}\dfrac{w-\theta_i}{\theta_q-\theta_i}
\end{eqnarray*}
(notice that the above equalities hold if $q=m-1$ or $p=n-m-1$).
We first get $1$ for $w=\theta_q$. If $w=\theta_l$ with $l\geq q+1$ and $p+l\leq n-1$, then
$q+1\leq l\leq n-p-1$ and the above product vanishes. Lastly, if
$w=\theta_l$ with $p+l=n$, necessarily $l\leq m$, i.e.
$n-p=l\leq m$. This is impossible since $p\leq n-m-1$ by assumption.

Now if we fix $w=\theta_q$ in~(\ref{n-2m-1n-m-1}),
we get
\begin{eqnarray*}
\prod_{j=0,j\neq p}^{n-q}\dfrac{z-\eta_j}{\eta_p-\eta_j}\times1
-
\prod_{j=0,j\neq p}^{n-q-1}\dfrac{z-\eta_j}{\eta_p-\eta_j}\times1
+
\prod_{j=0,j\neq p}^{n-q-1}\dfrac{z-\eta_j}{\eta_p-\eta_j}\times1
& &
\\
+\;
\sum_{r=1}^{m-q-1}
\left[
\prod_{j=0,j\neq p}^{n-q-r-1}\dfrac{z-\eta_j}{\eta_p-\eta_j}\times1
-
\prod_{j=0,j\neq p}^{n-q-r-1}\dfrac{z-\eta_j}{\eta_p-\eta_j}\times1
\right]
& &
\\
+\;
\sum_{r=m-q}^{n-p-q-2}
\left[
\prod_{j=0,j\neq p}^{n-q-r-2}\dfrac{z-\eta_j}{\eta_p-\eta_j}\times1
-
\prod_{j=0,j\neq p}^{n-q-r-2}\dfrac{z-\eta_j}{\eta_p-\eta_j}\times1
\right]
& = &
\prod_{j=0,j\neq p}^{n-q}\dfrac{z-\eta_j}{\eta_p-\eta_j}
\,.
\end{eqnarray*}
Again, we get $1$ if $z=\eta_p$. If $z=\eta_k$ with
$k\geq p+1$ and $k+q\leq n$, then
$p+1\leq k\leq n-q$ and the above product vanishes.

The remaining case is the one for which
$(z,w)=\left(\eta_k,\theta_l\right)\in\Omega_{N}$ with
$k\geq p+1$ and $l\geq q+1$.
Since $k+l\leq n$, one has $k\leq n-l\leq n-q-1$ (with $k\geq p+1$),
then 
$\prod_{j=0,j\neq p}^{n-q-1}\dfrac{\eta_k-\eta_j}{\eta_p-\eta_j}=0$
and the first three terms in~(\ref{n-2m-1n-m-1}) vanish.
Next, let be $r$ with $1\leq r\leq m-q-1$ (we can assume 
$m-q\geq2$, otherwise the first sum disappears). If
$k\leq n-q-r-1$, then
$\prod_{j=0,j\neq p}^{n-q-r-1}\dfrac{\eta_k-\eta_j}{\eta_p-\eta_j}=0$
(since $k\geq p+1$).
Otherwise, $k\geq n-q-r$ then $l\leq n-k\leq q+r$ (with $l\geq q+1$),
and 
$\prod_{i=0,i\neq q}^{q+r}\dfrac{\theta_l-\theta_i}{\theta_q-\theta_i}=0$.
It follows that the first sum in~(\ref{n-2m-1n-m-1}) vanishes.

Lastly, let be $r$ with $m-q\leq r\leq n-p-q-2$ (similarly, we can assume that
$n-p-2\geq m$).
If $k\leq n-q-r-2$ (with $k\geq p+1$), then
$\prod_{j=0,j\neq p}^{n-q-r-2}\dfrac{\eta_k-\eta_j}{\eta_p-\eta_j}=0$.
Otherwise, $k\geq n-q-r-1$. If $k+l\leq n-1$, then
$l\leq n-k-1\leq q+r$ (with $l\geq q+1$) and
$\prod_{i=0,i\neq q}^{q+r}\dfrac{\theta_l-\theta_i}{\theta_q-\theta_i}=0$;
otherwise, $k+l=n$ then necessarily $l\leq m\leq q+r$
(since $r\geq m-q$) and one still has
$\prod_{i=0,i\neq q}^{q+r}\dfrac{\theta_l-\theta_i}{\theta_q-\theta_i}=0$.
It follows that the second sum in~(\ref{n-2m-1n-m-1}) vanishes and this completes
the proof of~(\ref{n-2m-1n-m-1}).

\medskip

\underline{Proof of~(\ref{n-2m-1n-m}):}
now we assume that $p+q\leq n-2$, $q\leq m-1$ and $p\geq n-m$.
We begin with the proof of~(\ref{prexplicitlagrangepol1}). All the involved products have total degree
at most $n$, and
$\deg_w\left(\prod_{j=0,j\neq p}^{n-q}\dfrac{z-\eta_j}{\eta_p-\eta_j}
\prod_{i=0}^{q-1}\dfrac{w-\theta_i}{\theta_q-\theta_i}\right)=q\leq m-1$,
$\deg_w\left(\prod_{j=0,j\neq p}^{n-q-1}\dfrac{z-\eta_j}{\eta_p-\eta_j}
\prod_{i=0}^{q+1}\dfrac{w-\theta_i}{\theta_q-\theta_i}\right)=q+1\leq m$,
and
for all $r=1,\ldots,n-p-q-1$,
$\deg_w\left(\prod_{j=0,j\neq p}^{n-q-r-1}\dfrac{z-\eta_j}{\eta_p-\eta_j}
\prod_{i=0}^{q+r+1}\dfrac{w-\theta_i}{\theta_q-\theta_i}\right)=q+r+1\leq n-p\leq m$.
All the remaining products have total degree at most $n-1$, then this proves~(\ref{prexplicitlagrangepol1}).

For the proof of~(\ref{prexplicitlagrangepol2}), we first have
$n-q>n-q-1\geq p+1>p$. 
On the other hand,
for all $r=1,\ldots,n-p-q-1$, one has 
$n-q-r-1\geq p$, then the expression in~(\ref{n-2m-1n-m}) is divisible by
$\prod_{j=0}^{p-1}\dfrac{z-\eta_j}{\eta_p-\eta_j}$.
We also see that it is divisible by 
$\prod_{i=0}^{q-1}\dfrac{w-\theta_i}{\theta_q-\theta_i}$.
Thus it cancels all the points
$\left(\eta_k,\theta_l\right)$ with $0\leq k\leq p-1$ or $0\leq l\leq q-1$.

Next, if we fix $z=\eta_p$, the expression in~(\ref{n-2m-1n-m}) gives
\begin{eqnarray*}
1\times\prod_{i=0}^{q-1}\dfrac{w-\theta_i}{\theta_q-\theta_i}
-
1\times\prod_{i=0}^{q-1}\dfrac{w-\theta_i}{\theta_q-\theta_i}
+
1\times\prod_{i=0,i\neq q}^{q+1}\dfrac{w-\theta_i}{\theta_q-\theta_i}
& &
\\
+\;
\sum_{r=1}^{n-p-q-1}
\left[
1\times\prod_{i=0,i\neq q}^{q+r+1}\dfrac{w-\theta_i}{\theta_q-\theta_i}
-
1\times\prod_{i=0,i\neq q}^{q+r}\dfrac{w-\theta_i}{\theta_q-\theta_i}
\right]
& = &
\;\;\;\;\;\;\;\;\;\;\;\;\;\;\;\;\;\;\;\;\;\;\;\;\;\;\;\;\;\;\;\;\;\;\;\;\;\;\;\;\;\;\;\;\;\;\;\;
\end{eqnarray*}
\begin{eqnarray*}
& = &
\prod_{i=0,i\neq q}^{q+1}\dfrac{w-\theta_i}{\theta_q-\theta_i}
+
\prod_{i=0,i\neq q}^{n-p}\dfrac{w-\theta_i}{\theta_q-\theta_i}
-
\prod_{i=0,i\neq q}^{q+1}\dfrac{w-\theta_i}{\theta_q-\theta_i}
\;=\;
\prod_{i=0,i\neq q}^{n-p}\dfrac{w-\theta_i}{\theta_q-\theta_i}
\,.
\end{eqnarray*}
We first get $1$ for $w=\theta_q$. If $w=\theta_l$ then
$p+l\leq n$, i.e. $l\leq n-p$ (with $l\geq q+1$) and the above product vanishes. 

Now if we fix $w=\theta_q$ in~(\ref{n-2m-1n-m}),
we get
\begin{eqnarray*}
\prod_{j=0,j\neq p}^{n-q}\dfrac{z-\eta_j}{\eta_p-\eta_j}\times1
-
\prod_{j=0,j\neq p}^{n-q-1}\dfrac{z-\eta_j}{\eta_p-\eta_j}\times1
+
\prod_{j=0,j\neq p}^{n-q-1}\dfrac{z-\eta_j}{\eta_p-\eta_j}\times1
& &
\\
+\;
\sum_{r=1}^{n-p-q-1}
\left[
\prod_{j=0,j\neq p}^{n-q-r-1}\dfrac{z-\eta_j}{\eta_p-\eta_j}\times1
-
\prod_{j=0,j\neq p}^{n-q-r-1}\dfrac{z-\eta_j}{\eta_p-\eta_j}\times1
\right]
& = &
\prod_{j=0,j\neq p}^{n-q}\dfrac{z-\eta_j}{\eta_p-\eta_j}
\,.
\end{eqnarray*}
Again, we get $1$ if $z=\eta_p$. If $z=\eta_k$ with
$k\geq p+1$ and $k+q\leq n$, then
$p+1\leq k\leq n-q$ and the above product vanishes.

The last case that we have to check is the one for which
$(z,w)=\left(\eta_k,\theta_l\right)\in\Omega_{N}$ with
$k\geq p+1$ and $l\geq q+1$.
Since $k+l\leq n$, one has $k\leq n-l\leq n-q-1$ (with $k\geq p+1$),
then 
$\prod_{j=0,j\neq p}^{n-q-1}\dfrac{\eta_k-\eta_j}{\eta_p-\eta_j}=0$
and the first three terms in~(\ref{n-2m-1n-m}) vanish.
Next, let be $r$ with $1\leq r\leq n-p-q-1$.
If $k\leq n-q-r-1$, then
$\prod_{j=0,j\neq p}^{n-q-r-1}\dfrac{\eta_k-\eta_j}{\eta_p-\eta_j}=0$
(since $k\geq p+1$).
Otherwise, $k\geq n-q-r$ then
$l\leq n-k\leq q+r$ (with $l\geq q+1$) and
$\prod_{i=0,i\neq q}^{q+r}\dfrac{\theta_l-\theta_i}{\theta_q-\theta_i}=0$.
It follows that the sum in~(\ref{n-2m-1n-m}) vanishes and this completes
the proof of~(\ref{n-2m-1n-m}).

\medskip

\underline{Proof of~(\ref{n-2m}):}
in this part we assume that $p+q\leq n-2$ and $q=m$. 
First, all the involved products have total degree
at most $n$, and
$\deg_w\left(\prod_{j=0,j\neq p}^{n-m}\dfrac{z-\eta_j}{\eta_p-\eta_j}
\prod_{i=0}^{m-1}\dfrac{w-\theta_i}{\theta_m-\theta_i}\right)=m$.
All the remaining products have total degree at most $n-1$, and this proves~(\ref{prexplicitlagrangepol1}).

Now we prove~(\ref{prexplicitlagrangepol2}).
First (since $p+m=p+q\leq n-2$), one has $n-m>n-m-2\geq p$. 
On the other hand,
for all $r=1,\ldots,n-m-p-2$ (in case $n-m-p\geq3$), one has 
$n-m-r-2\geq p$, then the expression in~(\ref{n-2m}) is divisible by
$\prod_{j=0}^{p-1}\dfrac{z-\eta_j}{\eta_p-\eta_j}$.
We also see that it is divisible by 
$\prod_{i=0}^{m-1}\dfrac{w-\theta_i}{\theta_m-\theta_i}$.
Thus it cancels all the points
$\left(\eta_k,\theta_l\right)$ with $0\leq k\leq p-1$ or $0\leq l\leq m-1$.

Next, if we fix $z=\eta_p$, the expression in~(\ref{n-2m}) gives
\begin{eqnarray*}
1\times\prod_{i=0}^{m-1}\dfrac{w-\theta_i}{\theta_m-\theta_i}
-
1\times\prod_{i=0}^{m-1}\dfrac{w-\theta_i}{\theta_m-\theta_i}
+
1\times\prod_{i=0,i\neq m}^{m+1}\dfrac{w-\theta_i}{\theta_m-\theta_i}
& &
\\
+\;
\sum_{r=1}^{n-m-p-2}
\left[
1\times\prod_{i=0,i\neq m}^{m+r+1}\dfrac{w-\theta_i}{\theta_m-\theta_i}
-
1\times\prod_{i=0,i\neq m}^{m+r}\dfrac{w-\theta_i}{\theta_m-\theta_i}
\right]
& = &
\;\;\;\;\;\;\;\;\;\;\;\;\;\;\;\;\;\;\;\;\;\;\;\;\;\;\;\;\;\;\;\;\;\;\;\;\;\;\;\;\;\;\;\;\;\;\;\;
\end{eqnarray*}
\begin{eqnarray*}
& = &
\prod_{i=0,i\neq m}^{m+1}\dfrac{w-\theta_i}{\theta_m-\theta_i}
+
\prod_{i=0,i\neq m}^{n-p-1}\dfrac{w-\theta_i}{\theta_m-\theta_i}
-
\prod_{i=0,i\neq m}^{m+1}\dfrac{w-\theta_i}{\theta_m-\theta_i}
\;=\;
\prod_{i=0,i\neq m}^{n-p-1}\dfrac{w-\theta_i}{\theta_m-\theta_i}
\end{eqnarray*}
(notice that this equality holds if $n-m-p-2=0$).
We first get $1$ for $w=\theta_m$. If $w=\theta_l$ then
$p+l\leq n$, i.e. $l\leq n-p$ (with $l\geq m+1$) and the above product vanishes,
unless $l=n-p$. But this last case cannot occur
since we should have $p+l=n$ then necessarily $l\leq m$,
and this would yield $n=p+l\leq p+m=p+q\leq n-2<n$.

Now if we fix $w=\theta_m$ in~(\ref{n-2m}),
we get
\begin{eqnarray*}
\prod_{j=0,j\neq p}^{n-m}\dfrac{z-\eta_j}{\eta_p-\eta_j}\times1
-
\prod_{j=0,j\neq p}^{n-m-2}\dfrac{z-\eta_j}{\eta_p-\eta_j}\times1
+
\prod_{j=0,j\neq p}^{n-m-2}\dfrac{z-\eta_j}{\eta_p-\eta_j}\times1
& &
\\
+\;
\sum_{r=1}^{n-m-p-2}
\left[
\prod_{j=0,j\neq p}^{n-m-r-2}\dfrac{z-\eta_j}{\eta_p-\eta_j}\times1
-
\prod_{j=0,j\neq p}^{n-m-r-2}\dfrac{z-\eta_j}{\eta_p-\eta_j}\times1
\right]
& = &
\prod_{j=0,j\neq p}^{n-m}\dfrac{z-\eta_j}{\eta_p-\eta_j}
\,.
\end{eqnarray*}
Again, we get $1$ if $z=\eta_p$. If $z=\eta_k$ with
$k\geq p+1$ and $k+m\leq n$, then
$p+1\leq k\leq n-m$ and the above product vanishes.

The remaining case is the one for which
$(z,w)=\left(\eta_k,\theta_l\right)\in\Omega_{N}$ with
$k\geq p+1$ and $l\geq m+1$. First, one necessarily has that
$k+l\leq n-1$: indeed, if $k+l=n$, then $l\leq m$ and this is
impossible since $l\geq m+1$.
Thus $k\leq n-l-1\leq n-m-2$ (with $k\geq p+1$),
then 
$\prod_{j=0,j\neq p}^{n-m-2}\dfrac{\eta_k-\eta_j}{\eta_p-\eta_j}=0$
and the first three terms in~(\ref{n-2m}) vanish.
Next, let be $r$ with $1\leq r\leq n-m-p-2$ (we can assume that
$n-m-p\geq3$, otherwise the associated sum does not appear).
If $k\leq n-m-r-2$, then
$\prod_{j=0,j\neq p}^{n-m-r-2}\dfrac{\eta_k-\eta_j}{\eta_p-\eta_j}=0$
(since $k\geq p+1$).
Otherwise, $k\geq n-m-r-1$ then
$l\leq n-k-1\leq m+r$ (with $l\geq m+1$) and
$\prod_{i=0,i\neq m}^{m+r}\dfrac{\theta_l-\theta_i}{\theta_m-\theta_i}=0$.
It follows that the sum in~(\ref{n-2m}) vanishes and this completes
the proof of~(\ref{n-2m}).

\medskip

\underline{Proof of~(\ref{n-2m+1}):}
now we assume that $p+q\leq n-2$ and $q\geq m+1$.
We first notice that the polynomial that appears in~(\ref{n-2m+1}) belongs to
$\mathcal{P}_{n,m}$ since all the involved products have total degree
at most $n-1$. This proves~(\ref{prexplicitlagrangepol1}).

Now we prove~(\ref{prexplicitlagrangepol2}).
First, one has $n-q-1>n-q-2\geq p$. 
On the other hand,
for all $r=1,\ldots,n-p-q-2$ (in case $n-p-q\geq3$, otherwise the sum does
not appear), one has 
$n-q-r-2\geq p$, then the expression in~(\ref{n-2m+1}) is divisible by
$\prod_{j=0}^{p-1}\dfrac{z-\eta_j}{\eta_p-\eta_j}$.
We also see that it is divisible by 
$\prod_{i=0}^{q-1}\dfrac{w-\theta_i}{\theta_q-\theta_i}$.
Thus it cancels all the points
$\left(\eta_k,\theta_l\right)$ with $0\leq k\leq p-1$ or $0\leq l\leq q-1$.

Next, if we fix $z=\eta_p$, the expression in~(\ref{n-2m+1}) gives
\begin{eqnarray*}
1\times\prod_{i=0}^{q-1}\dfrac{w-\theta_i}{\theta_q-\theta_i}
-
1\times\prod_{i=0}^{q-1}\dfrac{w-\theta_i}{\theta_q-\theta_i}
+
1\times\prod_{i=0,i\neq q}^{q+1}\dfrac{w-\theta_i}{\theta_q-\theta_i}
& &
\\
+\;
\sum_{r=1}^{n-p-q-2}
\left[
1\times\prod_{i=0,i\neq q}^{q+r+1}\dfrac{w-\theta_i}{\theta_q-\theta_i}
-
1\times\prod_{i=0,i\neq q}^{q+r}\dfrac{w-\theta_i}{\theta_q-\theta_i}
\right]
& = &
\;\;\;\;\;\;\;\;\;\;\;\;\;\;\;\;\;\;\;\;\;\;\;\;\;\;\;\;\;\;\;\;\;\;\;\;\;\;\;\;\;\;\;\;\;\;\;\;
\end{eqnarray*}
\begin{eqnarray*}
& = &
\prod_{i=0,i\neq q}^{q+1}\dfrac{w-\theta_i}{\theta_q-\theta_i}
+
\prod_{i=0,i\neq q}^{n-p-1}\dfrac{w-\theta_i}{\theta_q-\theta_i}
-
\prod_{i=0,i\neq q}^{q+1}\dfrac{w-\theta_i}{\theta_q-\theta_i}
\;=\;
\prod_{i=0,i\neq q}^{n-p-1}\dfrac{w-\theta_i}{\theta_q-\theta_i}
\,.
\end{eqnarray*}
We first get $1$ for $w=\theta_q$. If $w=\theta_l$ with $l\geq q+1$,
necessarily $p+l\leq n-1$ (otherwise, we should have $p+l=n$
then $l\leq m$, and this is impossible since 
$l\geq q+1>m+1>m$). Thus $l\leq n-p-1$ (with $l\geq q+1$)
and the above product vanishes. 

Now if we fix $w=\theta_q$ in~(\ref{n-2m+1}),
we get
\begin{eqnarray*}
\prod_{j=0,j\neq p}^{n-q-1}\dfrac{z-\eta_j}{\eta_p-\eta_j}\times1
-
\prod_{j=0,j\neq p}^{n-q-2}\dfrac{z-\eta_j}{\eta_p-\eta_j}\times1
+
\prod_{j=0,j\neq p}^{n-q-2}\dfrac{z-\eta_j}{\eta_p-\eta_j}\times1
& &
\\
+\;
\sum_{r=1}^{n-p-q-2}
\left[
\prod_{j=0,j\neq p}^{n-q-r-2}\dfrac{z-\eta_j}{\eta_p-\eta_j}\times1
-
\prod_{j=0,j\neq p}^{n-q-r-2}\dfrac{z-\eta_j}{\eta_p-\eta_j}\times1
\right]
& = &
\prod_{j=0,j\neq p}^{n-q-1}\dfrac{z-\eta_j}{\eta_p-\eta_j}
\,.
\end{eqnarray*}
Again, we get $1$ if $z=\eta_p$. If $z=\eta_k$ with
$k\geq p+1$ and $k+q\leq n$, necessarily
$k+q\leq n-1$ (otherwise, we should have $k+q=n$
then $q\leq m$, and this is impossible since
$q\geq m+1$). Thus 
$p+1\leq k\leq n-q-1$ and the above product vanishes.

The last case that we have to check is the one for which
$(z,w)=\left(\eta_k,\theta_l\right)\in\Omega_N$ with
$k\geq p+1$ and $l\geq q+1$.
Once again, we necessarily have that $k+l\leq n-1$ since
$k+l=n$ would yield $l\leq m$, and this would contradict that
$l\geq q+1>q\geq m+1$. Thus $k\leq n-l-1\leq n-q-2$ (with $k\geq p+1$),
then
$\prod_{j=0,j\neq p}^{n-q-2}\dfrac{\eta_k-\eta_j}{\eta_p-\eta_j}=0$
and the first three terms in~(\ref{n-2m+1}) vanish.
Next, let be $r$ with $1\leq r\leq n-p-q-2$ (once again,
we can assume that $n-p-q\geq3$).
If $k\leq n-q-r-2$, then
$\prod_{j=0,j\neq p}^{n-q-r-2}\dfrac{\eta_k-\eta_j}{\eta_p-\eta_j}=0$
(since $k\geq p+1$).
Otherwise, $k\geq n-q-r-1$ then
$l\leq n-k-1\leq q+r$ (with $l\geq q+1$) and
$\prod_{i=0,i\neq q}^{q+r}\dfrac{\theta_l-\theta_i}{\theta_q-\theta_i}=0$.
It follows that the sum in~(\ref{n-2m+1}) vanishes and this completes
the proof of~(\ref{n-2m+1}).

\medskip

\underline{Proof of~(\ref{n-1m-1}):}
we deal with the case $p+q=n-1$ and $q\leq m-1$.
First, all the involved products have total degree at most $n$, and
$\deg_w\left(\prod_{j=0,j\neq p}^{p+1}\dfrac{z-\eta_j}{\eta_p-\eta_j}
\prod_{i=0}^{q-1}\dfrac{w-\theta_i}{\theta_q-\theta_i}\right)=q\leq m-1$,
$\deg_w\left(\prod_{j=0,j\neq p}^{p-1}\dfrac{z-\eta_j}{\eta_p-\eta_j}
\prod_{i=0}^{q+1}\dfrac{w-\theta_i}{\theta_q-\theta_i}\right)=q+1\leq m$
(the remaining one having total degree at most $n-1$), and this 
proves~(\ref{prexplicitlagrangepol1}).

Now we prove~(\ref{prexplicitlagrangepol2}).
First, we see that all the involved products are divisible by
$\prod_{j=0}^{p-1}\dfrac{z-\eta_j}{\eta_p-\eta_j}
\prod_{i=0}^{q-1}\dfrac{w-\theta_i}{\theta_q-\theta_i}$,
then they cancel all the points
$\left(\eta_k,\theta_l\right)$ with $0\leq k\leq p-1$
or $0\leq l\leq q-1$.

Next, if we fix $z=\eta_p$, the expression~(\ref{n-1m-1}) gives
\begin{eqnarray*}
1\times
\prod_{i=0}^{q-1}\dfrac{w-\theta_i}{\theta_q-\theta_i}
\,-\,
1\times
\prod_{i=0}^{q-1}\dfrac{w-\theta_i}{\theta_q-\theta_i}
\,+\,
1\times
\prod_{i=0,i\neq q}^{q+1}\dfrac{w-\theta_i}{\theta_q-\theta_i}
& = &
\prod_{i=0,i\neq q}^{q+1}\dfrac{w-\theta_i}{\theta_q-\theta_i}
\,.
\end{eqnarray*}
We get $1$ for $w=\theta_q$. If $w=\theta_l$ with $l\geq q+1$,
then $l\leq n-p=q+1$, i.e. $l=q+1$ and the above product vanishes.

Now if we fix $w=\theta_q$ in~(\ref{n-1m-1}), we get
\begin{eqnarray*}
\prod_{j=0,j\neq p}^{p+1}\dfrac{z-\eta_j}{\eta_p-\eta_j}
\times1
\,-\,
\prod_{j=0}^{p-1}\dfrac{z-\eta_j}{\eta_p-\eta_j}
\times1
\,+\,
\prod_{j=0}^{p-1}\dfrac{z-\eta_j}{\eta_p-\eta_j}
\times1
& = &
\prod_{j=0,j\neq p}^{p+1}\dfrac{z-\eta_j}{\eta_p-\eta_j}
\,.
\end{eqnarray*}
We get $1$ for $z=\eta_p$. If $z=\eta_k$ with $k\geq p+1$,
then $k\leq n-q=p+1$, i.e. $k=p+1$ and the above product vanishes.

The remaining case is the one for which 
$(z,w)=\left(\eta_k,\theta_l\right)$ with
$k\geq p+1$ and $l\geq q+1$. But this yields
$k+l\geq p+q+2=n+1>n$, and this is impossible.

\medskip

\underline{Proof of~(\ref{n-1m}):}
we deal with the case $p+q=n-1$ and $q=m$ (then $p=n-m-1$).
The polynomial that appears in the expression~(\ref{n-1m})
belongs to $\mathcal{P}_{n,m}$ since \\
$\left(\prod_{j=0,j\neq {n-m-1}}^{n-m}\dfrac{z-\eta_j}{\eta_{n-m-1}-\eta_j}
\prod_{i=0}^{m-1}\dfrac{w-\theta_i}{\theta_m-\theta_i}\right)$ has
total degree $n$ and\\
$\deg_w\left(\prod_{j=0,j\neq {n-m-1}}^{n-m}\dfrac{z-\eta_j}{\eta_{n-m-1}-\eta_j}
\prod_{i=0}^{m-1}\dfrac{w-\theta_i}{\theta_m-\theta_i}\right)= m$.
This proves~(\ref{prexplicitlagrangepol1}).

Now we prove~(\ref{prexplicitlagrangepol2}).
First, we have that
$\prod_{j=0,j\neq {n-m-1}}^{n-m}\dfrac{z-\eta_j}{\eta_{n-m-1}-\eta_j}
\prod_{i=0}^{m-1}\dfrac{w-\theta_i}{\theta_m-\theta_i}$
is divisible by
$\prod_{j=0,j\neq {n-m-1}}^{n-m-2}\dfrac{z-\eta_j}{\eta_{n-m-1}-\eta_j}
\prod_{i=0}^{m-1}\dfrac{w-\theta_i}{\theta_m-\theta_i}$, then
it cancels all the points $\left(\eta_k,\theta_l\right)$
with $0\leq k\leq n-m-2$ or $0\leq l\leq m-1$.

Next, if we fix $z=\eta_{n-m-1}$, we get
$1\times\prod_{i=0}^{m-1}\dfrac{w-\theta_i}{\theta_m-\theta_i}$,
that gives $1$ if $w=\theta_m$. Otherwise, $w=\theta_l$ with $l\geq m+1$,
then
$n-m-1+l\geq n-m-1+m+1=n$. Since
$\left(\eta_{n-m-1},\theta_l\right)$ must belong to
$\Omega_N$, this yields $l\leq m$ and it is impossible.

Now if we fix $w=\theta_m$, we get
$\prod_{j=0,j\neq n-m-1}^{n-m}\dfrac{z-\eta_j}{\eta_{n-m-1}-\eta_j}\times1$,
that gives $1$ if $z=\eta_{n-m-1}$. Otherwise, $z=\eta_k$ with $k\geq n-m$.
Since $k+m\leq n$, it follows that $k\leq n-m$, i.e. $k=n-m$
and the product vanishes.

The remaining case is the one for which
$(z,w)=\left(\eta_k,\theta_l\right)$ with
$k\geq n-m-1+1=n-m$ and $l\geq m+1$, but this gives
$k+l\geq n+1$ and it is impossible.

\medskip

\underline{Proof of~(\ref{nn-1m+1}):}
we deal with the case $p+q=n$ (resp., $p+q=n-1$ and $q\geq m+1$).
In the first case, the polynomial
$\prod_{j=0}^{p-1}\dfrac{z-\eta_j}{\eta_p-\eta_j}
\prod_{i=0}^{q-1}\dfrac{w-\theta_i}{\theta_q-\theta_i}$ has total degree $p+q=n$
and 
$\deg_w\left(\prod_{j=0}^{p-1}\dfrac{z-\eta_j}{\eta_p-\eta_j}
\prod_{i=0}^{q-1}\dfrac{w-\theta_i}{\theta_q-\theta_i}\right)=q\leq m$
(since $\left(\eta_p,\theta_q\right)$ must belong to $\Omega_N$).
In the second one, it has total degree $p+q=n-1$. It follows that
$\prod_{j=0}^{p-1}\dfrac{z-\eta_j}{\eta_p-\eta_j}
\prod_{i=0}^{q-1}\dfrac{w-\theta_i}{\theta_q-\theta_i}$
belongs to $\mathcal{P}_{n-1}\subset\mathcal{P}_{n,m}$, and this proves~(\ref{prexplicitlagrangepol1}).

On the other hand, we see that, in any case, the polynomial
$\prod_{j=0}^{p-1}\dfrac{z-\eta_j}{\eta_p-\eta_j}
\prod_{i=0}^{q-1}\dfrac{w-\theta_i}{\theta_q-\theta_i}$
cancels all the points $\left(\eta_k,\theta_l\right)$
with $0\leq k\leq p-1$ or $0\leq l\leq q-1$. We can then assume in the following
that $k\geq p$ and $l\geq q$ in order to prove~(\ref{prexplicitlagrangepol2}).

In the first case $p+q=n$, the conditions $k\geq p$ and $l\geq q$ yield
$k+l\geq p+q=n$, then $k+l=n$ (since
$\left(\eta_k,\theta_l\right)$ must belong to $\Omega_N$), i.e. $k=p$ and $l=q$, for which
the polynomial gives $1$. This completes the proof of the required equality~(\ref{nn-1m+1})
in that case.

In the second case $p+q=n-1$ and $q\geq m+1$, the conditions $k\geq p$ and $l\geq q$ yield
$k+l\geq p+q=n-1$, i.e.
$k+l=n-1$ or $k+l=n$ (since $k+l\leq n$).
If $k+l=n-1$, i.e. $k+l=p+q$, necessarily $k=p$ and $l=q$, for wich
the polynomial gives $1$. Otherwise, 
$k+l=n$ then necessarily $l\leq m\leq q-1$ (by assumption), and this is impossible
since $l\geq q$. The proof of~(\ref{nn-1m+1}) (and of the lemma) is complete.

\end{proof}

\begin{remark}

These formulas could also have been deduced by using the results from~\cite{sauerxu1995} and~\cite{calvi2005}
where the authors give algorithms to construct them in the general case of
{\em block unisolvent arrays} in $\C^d$. Here we independently computed
them for the special case of $\Omega_N$ and $\mathcal{P}_{n,m}$ in $\C^2$.

\end{remark}

A consequence of the previous lemma is the following result.

\begin{corollary}\label{unisolvent}

For all $N\geq1$, the set $\Omega_N$ is unisolvent for the space $\mathcal{P}_{n,m}$:
for every function $f$ that is defined on $\Omega_N$, there exists a unique
polynomial $P\in\mathcal{P}_{n,m}$ such that
$P(z,w)=f(z,w)$ for every $(z,w)\in\Omega_N$.
In addition, the polynomial $P$ is the multivariate Lagrange polynomial
\begin{eqnarray}\label{deflagpoldbis}
L^{(N)}[f]
& = &
\sum_{k=1}^N
f\left(H_k\right)\,l_{H_k}^{(N)}
\,,
\end{eqnarray}
where the $l_{H_k}^{(N)}$'s are the functions mentioned in the statement of
Proposition~\ref{explicitlagrangepol}.

\end{corollary}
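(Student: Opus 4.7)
The plan is to derive the corollary as a direct consequence of Proposition~\ref{explicitlagrangepol} together with a dimension count. The key input is that Proposition~\ref{explicitlagrangepol} already supplies, for every $H_k\in\Omega_N$, an explicit polynomial $l_{H_k}^{(N)}\in\mathcal{P}_{n,m}$ enjoying the cardinal property $l_{H_k}^{(N)}(H_l)=\delta_{k,l}$ for all $l=1,\dots,N$. Given this, almost nothing else is needed; the corollary becomes a standard piece of linear algebra.

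First I would establish existence. Set $L^{(N)}[f]:=\sum_{k=1}^{N}f(H_k)\,l_{H_k}^{(N)}$. Since each $l_{H_k}^{(N)}$ lies in the linear space $\mathcal{P}_{n,m}$, so does their linear combination $L^{(N)}[f]$. Evaluating at an arbitrary $H_l\in\Omega_N$ and using the cardinal property yields $L^{(N)}[f](H_l)=\sum_{k=1}^{N}f(H_k)\,\delta_{k,l}=f(H_l)$, so $L^{(N)}[f]$ is an interpolant of $f$ on $\Omega_N$ lying in $\mathcal{P}_{n,m}$. This also shows that the evaluation map
\begin{equation*}
\mathcal{E}:\mathcal{P}_{n,m}\longrightarrow\C^N,\qquad P\longmapsto\bigl(P(H_1),\dots,P(H_N)\bigr),
\end{equation*}
is surjective, since any tuple $(y_1,\dots,y_N)\in\C^N$ is hit by $\sum_{k=1}^{N}y_k\,l_{H_k}^{(N)}$.

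For uniqueness, I would invoke the equality $\dim\mathcal{P}_{n,m}=N_{n-1}+m+1=N$ recalled in the Introduction just after~(\ref{defPnm}). Since $\dim\mathcal{P}_{n,m}=N=\dim\C^N$, the surjective linear map $\mathcal{E}$ is automatically bijective. Consequently, if $P_1,P_2\in\mathcal{P}_{n,m}$ both satisfy $P_i(H_l)=f(H_l)$ for every $l$, then $P_1-P_2\in\ker\mathcal{E}=\{0\}$, whence $P_1=P_2$. In particular the interpolant produced in the existence step is the unique element of $\mathcal{P}_{n,m}$ taking the prescribed values on $\Omega_N$, and it coincides with $L^{(N)}[f]$ as defined in~(\ref{deflagpoldbis}).

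There is essentially no obstacle here: the entire content has been packed into Proposition~\ref{explicitlagrangepol}, whose case-by-case verification was the real work. The only point to be careful about is making sure the dimension equality $\dim\mathcal{P}_{n,m}=N$ is quoted correctly for both the degenerate case $m=n$ (where $\mathcal{P}_{n,m}=\mathcal{P}_n$, $N=N_n$) and the generic case $0\leq m<n$, but this is immediate from~(\ref{defnm}) and the description of $\mathcal{P}_{n,m}$ in terms of the basis $\{e_j\}_{1\leq j\leq N}$.
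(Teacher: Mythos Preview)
Your proof is correct and follows essentially the same approach as the paper: both use the cardinal property of the $l_{H_k}^{(N)}$'s together with the dimension equality $\dim\mathcal{P}_{n,m}=N$ to conclude unisolvency via elementary linear algebra. The only cosmetic difference is that the paper phrases the argument by first observing the $l_{H_k}^{(N)}$'s form a basis of $\mathcal{P}_{n,m}$ and then showing any polynomial vanishing on $\Omega_N$ is zero, whereas you phrase it via surjectivity of the evaluation map; one small caveat is that in the paper's logical order the input is really Lemma~\ref{prexplicitlagrangepol} (Proposition~\ref{explicitlagrangepol} is only completed \emph{after} this corollary), so your citation should point there instead.
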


In particular, it will follow that these functions $l_{H_k}^{(N)}$'s are indeed the 
required FLIPs (for $\mathcal{P}_{n,m}$ and $\Omega_N$) and this will complete
the proof of Proposition~\ref{explicitlagrangepol}.

\begin{proof}

As a recapitulation, 
Lemma~\ref{prexplicitlagrangepol} gives the existence of a family 
$\left\{l_{H_k}^{(N)}\right\}_{1\leq k \leq N}\subset\mathcal{P}_{n,m}$ that satisfies
$l_{H_k}^{(N)}\left(H_l\right)=\delta_{k,l}$ for all $k,\,l=1,\ldots,N$.
This family is necessarily linearly independent, and
since its cardinal is
$N=\dim\mathcal{P}_{n,m}$, it is also a basis.

Let be $P\in\mathcal{P}_{n,m}$. There are $\alpha_1,\ldots,\alpha_N\in\C$ such that
$P=\sum_{k=1}^N\alpha_kl_{H_k}^{(N)}$. If $P$ vanishes on $\Omega_N$, it follows that
for all $j=1,\ldots,N$,
\begin{eqnarray*}
0
\;=\;
P\left(H_j\right)
\;=\;
\sum_{k=1}^N\alpha_kl_{H_k}^{(N)}\left(H_j\right)
\;=\;
\sum_{k=1}^N\alpha_k\delta_{k,j}
\;=\;
\alpha_j
\,,
\end{eqnarray*}
thus $P\equiv0$. 

This proves that $\Omega_N$ is unisolvent: indeed, 
$f$ being given, the function $L^{(N)}[f]$ defined in~(\ref{deflagpoldbis}) satisfies the required properties.
If $Q$ is another polynomial that fulfills the same conditions,
$L^{(N)}[f]-Q$ belongs to $\mathcal{P}_{n,m}$ and vanishes on $\Omega_N$,
then $L^{(N)}[f]-Q\equiv0$.

\end{proof}

\subsection{An estimation of the Lebesgue constant for some intertwining sequences and applications}

Now we assume that the sequences $\left(\eta_j\right)$ and $\left(\theta_i\right)_{i\geq0}$
are both Leja sequences for the unit disk
(with $\left|\eta_0\right|=\left|\theta_0\right|=1$). We can then
give the proof of Proposition~\ref{unifestimintertwining}.

\begin{proof}

We begin with the proof of the first estimate. Since
$\left(\eta_j\right)_{j\geq0}$ and $\left(\theta_i\right)_{i\geq0}$
are Leja sequences for the unit disk (with
$\left|\eta_0\right|=\left|\theta_0\right|=1$), we get by an application of Theorem~\ref{unifestim} that
for all $k,l\geq0$ and $p,q$ with $0\leq p\leq k$ and $0\leq q\leq l$,
\begin{eqnarray*}
\sup_{z\in\overline{\D}}
\left|
\prod_{j=0,j\neq p}^k
\dfrac{z-\eta_j}{\eta_p-\eta_j}
\right|
\;\leq\;
\pi\exp(3\pi)
%
& \mbox{ and } &
\sup_{w\in\overline{\D}}
\left|
\prod_{i=0,i\neq p}^l
\dfrac{w-\theta_i}{\theta_q-\theta_i}
\right|
\;\leq\;
\pi\exp(3\pi)
\,.
\end{eqnarray*}

Let be $N\geq1$ and the associated numbers $n\geq0$ and $m=0,\ldots,n$.
For all $\left(\eta_p,\theta_q\right)\in\Omega_{N}$,
an application of Proposition~\ref{explicitlagrangepol} yields the following estimates:

\begin{itemize}

\item

if $p+q=n$, or $p+q=n-1$ and $q\geq m+1$,
one has by~(\ref{nn-1m+1}),
\begin{eqnarray*}
\sup_{(z,w)\in\overline{\D}^2}
\left|
l_{(\eta_p,\theta_q)}^{(N)}(z,w)
\right|
& \leq &
\pi\exp(3\pi)\times\pi\exp(3\pi)
\;=\;
\pi^2\exp(6\pi)
\,;
\end{eqnarray*}

\item

if $p+q=n-1$ and $q=m$, one has by~(\ref{n-1m}),
\begin{eqnarray*}
\sup_{(z,w)\in\overline{\D}^2}
\left|
l_{(\eta_p,\theta_q)}^{(N)}(z,w)
\right|
& \leq &
\pi^2\exp(6\pi)
\,;
\end{eqnarray*}

\item

if $p+q=n-1$ and $0\leq q\leq m-1$, one has by~(\ref{n-1m-1}),
\begin{eqnarray*}
\sup_{(z,w)\in\overline{\D}^2}
\left|
l_{(\eta_p,\theta_q)}^{(N)}(z,w)
\right|
& \leq &
3\pi^2\exp(6\pi)
\,;
\end{eqnarray*}

\item

if $0 \leq p+q\leq n-2$, $0\leq q\leq m-1$ and $0\leq p\leq n-m-1$, one has by~(\ref{n-2m-1n-m-1}),
\begin{eqnarray*}
\sup_{(z,w)\in\overline{\D}^2}
\left|
l_{(\eta_p,\theta_q)}^{(N)}(z,w)
\right|
& \leq &
[3+2(m-q-1+n-p-q-2-(m-q)+1)]\pi^2\exp(6\pi)
\\
& = &
\left[2(n-p-q)-1\right]\pi^2\exp(6\pi)
\,;
\end{eqnarray*}

\item

if $0 \leq p+q\leq n-2$, $0\leq q\leq m-1$ and $p\geq n-m$, one has by~(\ref{n-2m-1n-m}),
\begin{eqnarray*}
\sup_{(z,w)\in\overline{\D}^2}
\left|
l_{(\eta_p,\theta_q)}^{(N)}(z,w)
\right|
\,=\,
[3+2(n-p-q-1)]\pi^2\exp(6\pi)
\,=\,
[2(n-p-q)+1]\pi^2\exp(6\pi)
\,;
\end{eqnarray*}

\item

if $p+q\leq n-2$ and $q=m$, one has by~(\ref{n-2m}),
\begin{eqnarray*}
\sup_{(z,w)\in\overline{\D}^2}
\left|
l_{(\eta_p,\theta_q)}^{(N)}(z,w)
\right|
\,\leq\,
\left[3+2(n-m-p-2)\right]
\pi^2\exp(6\pi)
\,=\,
\left[2(n-p-q)-1\right]
\pi^2\exp(6\pi)
\,;
\end{eqnarray*}

\item

if $p+q\leq n-2$ and $q\geq m+1$, one has by~(\ref{n-2m+1}),
\begin{eqnarray*}
\sup_{(z,w)\in\overline{\D}^2}
\left|
l_{(\eta_p,\theta_q)}^{(N)}(z,w)
\right|
\,=\,
\left[3+2(n-p-q-2)\right]
\pi^2\exp(6\pi)
\,=\,
\left[2(n-p-q)-1\right]
\pi^2\exp(6\pi)
\,.
\end{eqnarray*}

\end{itemize}

In any case, the following estimate is valid for all $\left(\eta_p,\theta_q\right)\in\Omega_{N}$:
\begin{eqnarray*}
\sup_{(z,w)\in\overline{\D}^2}
\left|
l_{(\eta_p,\theta_q)}^{(N)}(z,w)
\right|
& \leq &
2\left(n-p-q+1\right)
\pi^2\exp(6\pi)
\,.
\end{eqnarray*}
This proves the first assertion of the corollary since by~(\ref{Nn}), $N\sim N_n\sim n^2/2$.

\medskip

The proof of the second part is similar since Proposition~\ref{explicitlagrangepol} is still valid
with the points $\eta_j$'s and  $\theta_i$'s replaced with the $\Phi_1\left(\eta_j\right)$'s
and $\Phi_2\left(\theta_i\right)$'s respectively (notice that the points
$\Phi_1\left(\eta_j\right)$'s and $\Phi_2\left(\theta_i\right)$'s are well-defined
since all the $\eta_j$'s and  $\theta_i$'s belong to the unit circle).
The only difference is an application of Theorem~\ref{unifestimcompact}
instead of Theorem~\ref{unifestim}, that yields
for all $k,l\geq0$ and $p,q$ with $0\leq p\leq k$ and $0\leq q\leq l$,
\begin{eqnarray}\label{pseudomejor1}
\sup_{z\in K_1}
\left|
\prod_{j=0,j\neq p}^k
\dfrac{z-\Phi_1\left(\eta_j\right)}{\Phi_1\left(\eta_p\right)-\Phi_1\left(\eta_j\right)}
\right|
& \leq &
M_1(k+2)^{2A_1/\ln(2)}
\end{eqnarray}
and
\begin{eqnarray}\label{pseudomejor2}
\sup_{w\in K_2}
\left|
\prod_{i=0,i\neq p}^l
\dfrac{w-\Phi_2\left(\theta_i\right)}{\Phi_2\left(\theta_q\right)-\Phi_2\left(\theta_i\right)}
\right|
& \leq &
M_2(l+2)^{2A_2/\ln(2)}
\,,
\end{eqnarray}
where $M_1$, $A_1$ and $M_2$, $A_2$ are positive constants depending only
on $K_1$ and $K_2$ respectively.
By repeating the same argument of the first part, we get for all
$\left(\eta_p,\theta_q\right)\in\Omega_{N}$:
\begin{eqnarray*}
\sup_{(z,w)\in K_1\times K_2}
\left|
l_{\left(\Phi_1\left(\eta_p\right),\Phi_2\left(\theta_q\right)\right)}^{(N)}(z,w)
\right|
\,\leq\,
2M_1M_2\left(n-p-q+1\right)
\max_{k+l=n}
(k+2)^{2A_1/\ln(2)}
(l+2)^{2A_2/\ln(2)}
\,.
\end{eqnarray*}
By setting $A=\max\left(A_1,A_2\right)$ and noticing that
\begin{eqnarray*}
\max_{k+l=n}(k+2)(l+2)
& = &
(n+2)^2
\max_{k+l=n}\dfrac{k+2}{n+2}\left(1-\dfrac{k}{n+2}\right)
\\
& \leq &
(n+2)^2
\sup_{x\in[0,1]}[x(1-x)]
\;=\;
\dfrac{1}{4}(n+2)^2
\,,
\end{eqnarray*}
we get
\begin{eqnarray*}
\sup_{(z,w)\in K_1\times K_2}
\left|
l_{\left(\Phi_1\left(\eta_p\right),\Phi_2\left(\theta_q\right)\right)}^{(N)}(z,w)
\right|
& \leq &
\dfrac{2M_1M_2}{2^{4A/\ln(2)}}
(n-p-q+1)(n+2)^{4A/\ln(2)}
\,.
\end{eqnarray*}
This proves the second part of the corollary since by~(\ref{Nn}), $N\sim N_n\sim n^2/2$.

\end{proof}

\begin{remark}

The constant $c_n$ that appears in Lemma~\ref{lemma3bialascalvi}
can be improved. We know by Theorem~6 from~\cite{bialascalvi} that
\begin{eqnarray*}
c_n
\;=\;
\exp\left(A\sum_{j=0}^s\epsilon_j\right)
\,,
& \mbox{ where } &
n
\;=\;
\sum_{j=0}^s\epsilon_j2^j
\,.
\end{eqnarray*}
At the end of the proof of Proposition~\ref{unifestimintertwining}, 
by using~(\ref{unifestimcompactaux}), we could improve~(\ref{pseudomejor1})
and~(\ref{pseudomejor2}) to get
\begin{eqnarray*}
\sup_{z\in K_1}
\left|
\prod_{j=0,j\neq p}^k
\dfrac{z-\Phi_1\left(\eta_j\right)}{\Phi_1\left(\eta_p\right)-\Phi_1\left(\eta_j\right)}
\right|
& \leq &
M_1c_k^2
\;=\;
M_1\exp
\left(2A_1\sum_{j=0}^{s_k}\epsilon_j^{(k)}\right)
\end{eqnarray*}
and
\begin{eqnarray*}
\sup_{w\in K_2}
\left|
\prod_{i=0,i\neq p}^l
\dfrac{w-\Phi_2\left(\theta_i\right)}{\Phi_2\left(\theta_q\right)-\Phi_2\left(\theta_i\right)}
\right|
& \leq &
M_2c_l^2
\;=\;
M_2\exp
\left(2A_2\sum_{j=0}^{s_l}\epsilon_j^{(l)}\right)
\,,
\end{eqnarray*}
respectively. We could get (with $A=\max\left(A_1,A_2\right)$)
\begin{eqnarray*}
\sup_{(z,w)\in K_1\times K_2}
\left|
l_{\left(\Phi_1\left(\eta_p\right),\Phi_2\left(\theta_q\right)\right)}^{(N)}(z,w)
\right|
\,\leq\,
2M_1M_2\left(n-p-q+1\right)
\max_{k+l=n}
\exp2A\left(\sum_{j=0}^{s_k}\epsilon_j^{(k)}+\sum_{j=0}^{s_l}\epsilon_j^{(l)}\right).
\end{eqnarray*}
 Nevertheless, we cannot hope any improvement better than
$\sum_{j=0}^{s_k}\epsilon_j^{(k)}+\sum_{j=0}^{s_l}\epsilon_j^{(l)}
\leq2\sum_{j=0}^s\epsilon_j$ (that yields
$(n+2)^{4A/\ln(2)}$ at the end of the proof), even though
$\sum_{j=0}^{s_k}\epsilon_j^{(k)}2^j+\sum_{j=0}^{s_l}\epsilon_j^{(l)}2^j
=k+l=n=\sum_{j=0}^s\epsilon_j2^j$.
For example, if $n=2^{r+1}-2=\sum_{j=1}^r2^j$ and $k=l=2^r-1=\sum_{j=0}^{r-1}2^j$, then
$\sum_{j=0}^{s_k}\epsilon_j^{(k)}+\sum_{j=0}^{s_l}\epsilon_j^{(l)}
=2\sum_{j=0}^{r-1}1=2r=2\sum_{j=1}^{r}1=2\sum_{j=0}^{s}\epsilon_j$.

\end{remark}

\medskip

Now we can give the proof of Theorem~\ref{lebcteintertwin}.

\begin{proof}

We begin with the proof of the part~(\ref{lebcteintertwinbidisc}).
By the first part of Proposition~\ref{unifestimintertwining}, we immediately get 
\begin{eqnarray*}
\Lambda_N\left(\overline{\D}^2,\left(H_j\right)_{k\geq1}\right)
& \leq &
\sum_{(\eta_p,\theta_q)\in\Omega_{N}}
\sup_{(z,w)\in\overline{\D}^2}
\left|
l_{(\eta_p,\theta_q)}^{(N)}(z,w)
\right|
\,\leq\,
\sum_{p+q\leq n}
2\left(n-p-q+1\right)
\pi^2\exp(6\pi)
\\
& \leq &
2n\pi^2\exp(6\pi)\dfrac{(n+1)(n+2)}{2}
\;=\;
\pi^2\exp(6\pi)\,n(n+1)(n+2)
\,.
\end{eqnarray*}
It follows that
\begin{eqnarray*}
\Lambda_N\left(\overline{\D}^2,\left(H_j\right)_{k\geq1}\right)
& = &
O\left(n^3\right)
\;=\;
O\left(N^{3/2}\right)
\,,
\end{eqnarray*}
since by~(\ref{Nn}), $N\sim N_n\sim n^2/2$. This proves~(\ref{lebcteintertwinbidisc}).

The proof of~(\ref{lebcteintertwincompact}) is similar. By applying the second part
of Proposition~\ref{unifestimintertwining}, we have that
\begin{eqnarray*}
\Lambda_N\left(K_1\times K_2,\left(\Phi\left(H_j\right)\right)_ {j\geq1}\right)
& \leq &
\sum_{(\eta_p,\theta_q)\in\Omega_{N}}
\sup_{(z,w)\in K_1\times K_2}
\left|
l_{\left(\Phi_1\left(\eta_p\right),\Phi_2\left(\theta_q\right)\right)}^{(N)}(z,w)
\right|
\\
& \leq &
M(n+2)^{4A/\ln(2)}
\sum_{p+q\leq n}
\left(n-p-q+1\right)
\\
& \leq &
\dfrac{M}{2}(n+2)^{4A/\ln(2)}n(n+1)(n+2)
\,.
\end{eqnarray*}
Once again, it follows by~(\ref{Nn}) that
\begin{eqnarray*}
\Lambda_N\left(K_1\times K_2,\left(\Phi\left(H_j\right)\right)_ {j\geq1}\right)
\;=\;
O\left(n^{4A/\ln(2)+3}\right)
\;=\;
O\left(N^{2A/\ln(2)+3/2}\right)
\,.
\end{eqnarray*}

\end {proof}

\begin{remark}

At the end of the above proof, we could get a sharper estimate by computing
the sum $\sum_{p+q\leq n}\left(n-p-q+1\right)$. But it seems useless since 
it will not change the exponent of $n$ (or $N$) that may not be optimal.

\end{remark}

Now we deal with another application of Theorem~\ref{lebcteintertwin}
that is Corollary~\ref{appljackson}. We first remind the notations from~\cite{renwang}:
the Lipschitz space $Lip_{\gamma}\left(\overline{\D}^d\right)$, $0<\gamma\leq1$,
consists of all holomorphic functions 
$f\in\mathcal{O}\left(\D^d\right)\bigcap C\left(\overline{\D}^d\right)$ satisfying
\begin{eqnarray*}
\left|f\left(e^{ih}\zeta\right)-f(\zeta)\right|
& \leq &
L\,|h|^{\gamma}
\,,
\end{eqnarray*}
for any $\zeta\in\mathbb{T}^d$ (the unit torus in $\C^d$) and $h\in\R$
($L>0$ being the Lipschitz constant);
then $m\in\N$ being given, the function 
$f\in\mathcal{O}\left(\D^d\right)\bigcap C\left(\overline{\D}^d\right)$ is said to belong to the Lipschitz space
$Lip_{\gamma}^m\left(\overline{\D}^d\right)$
if $\dfrac{\partial^{\alpha}f}{\partial z^{\alpha}}\in Lip_{\gamma}\left(\overline{\D}^d\right)$
for any $|\alpha|=\alpha_1+\cdots+\alpha_d\leq m$;
lastly, for all $n\geq0$, $\mathcal{P}_n^{(d)}$ is the space
of polynomials of total degree at most $n$ (in particular,
$\mathcal{P}_n^{(2)}=\mathcal{P}_n=\mathcal{P}_{n,n}$
from~(\ref{defPnm})).

Next, we can remind the following result
given as Theorem~7.2 from~\cite{renwang} and that is a generalization of
Jackson's theorem in the polydisc.

\begin{theorem}\label{thm72renwang}

If $f\in Lip_{\gamma}^m\left(\overline{\D}^d\right)$ then for all $n\in\N$,
\begin{eqnarray*}
\inf_{P\in\mathcal{P}^{(d)}_n}
\sup_{z\in\overline{\D}^d}
\left|
f(z)-P(z)
\right|
& \leq &
L\dfrac{C_{m,\gamma}}{n^{m+\gamma}}
\,.
\end{eqnarray*}

\end{theorem}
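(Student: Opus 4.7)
The plan is to prove this multivariate Jackson-type theorem by reducing to a convolution argument on the distinguished boundary $\mathbb{T}^d$, exploiting the fact that $f$ is holomorphic in the polydisc. First I would observe that any $f \in \mathcal{O}(\D^d)\cap C(\overline{\D}^d)$ admits a boundary expansion $f(\zeta) = \sum_{\alpha \in \N^d} c_\alpha \zeta^\alpha$ on $\mathbb{T}^d$ (non-negative multi-indices only, by the Hardy-space structure). A polynomial $P \in \mathcal{P}_n^{(d)}$ approximating $f$ on $\mathbb{T}^d$ automatically extends to an approximant on $\overline{\D}^d$ by the maximum modulus principle, so the task reduces to trigonometric polynomial approximation on $\mathbb{T}^d$ under the constraint that only non-negative Fourier modes appear.

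The central step is to build an approximation operator $T_n : C(\mathbb{T}^d) \to \mathcal{P}_n^{(d)}$ of convolution type. I would construct $T_n$ as a tensor product of one-dimensional Jackson (or de la Vallée Poussin) kernels $J_n$, each of which has support in frequencies $\{0,1,\ldots,n\}$ and satisfies the classical moment conditions needed to reproduce polynomials of degree $\leq m$ and to yield an $O(n^{-(m+\gamma)})$ error for functions in the 1D Lipschitz class $Lip_\gamma^m$. Applying the tensorized kernel to $f$ produces a polynomial in $\mathcal{P}_n^{(d)}$, and holomorphy ensures that no conjugate frequencies enter. The error at a point $\zeta \in \mathbb{T}^d$ can then be estimated by a standard telescoping argument: swap out one variable at a time, using the 1D Jackson estimate applied to the partial map $z_j \mapsto \partial^\beta f(\ldots,z_j,\ldots)$, with the Lipschitz-$\gamma$ control on mixed derivatives of order $\leq m$ providing the required modulus-of-smoothness bound.

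The main obstacle will be bookkeeping in the telescoping step: one needs that when the Jackson operator is applied in the $j$th variable, the intermediate function $T_{n,1}\cdots T_{n,j-1}f$ still lies in $Lip_\gamma^m$ with a Lipschitz constant comparable to $L$ (uniformly in $n$). This requires proving that the 1D Jackson operators are bounded on the relevant Lipschitz spaces, which is classical but needs to be stated carefully in the multivariate smoothness scale defined through boundary moduli of continuity of derivatives $\partial^\alpha f$ with $|\alpha| \leq m$. The constant $C_{m,\gamma}$ will aggregate the norms of each 1D operator and the combinatorial factor from summing over the $d$ telescoping terms.

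Finally, I would recover the claimed bound: writing $f - P = \sum_{j=1}^d (T_{n,1}\cdots T_{n,j-1} - T_{n,1}\cdots T_{n,j})f$ and bounding each summand by $L\,C'_{m,\gamma}/n^{m+\gamma}$, we get the global estimate $\|f - P\|_{\overline{\D}^d} \leq L\, C_{m,\gamma}/n^{m+\gamma}$. Since only non-negative frequencies appear at every stage, $P \in \mathcal{P}_n^{(d)}$, completing the proof. An alternative route would be a direct induction on $d$, but the tensor-product convolution approach seems cleaner for tracking the constant and matching the exponent $m+\gamma$.
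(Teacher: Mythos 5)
The paper does not prove this theorem at all: it is imported verbatim as Theorem~7.2 of the cited reference~\cite{renwang}, immediately after its statement ("We can then remind the following result given as Theorem~7.2 from~\cite{renwang}\ldots"). There is therefore no "paper's own proof" to compare your sketch against, and an honest answer is simply to record the citation.

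That said, there is a concrete gap in the sketch worth flagging. The Lipschitz class $Lip_\gamma^m(\overline{\D}^d)$ as defined in the paper (following~\cite{renwang}) uses the \emph{diagonal} rotation $e^{ih}\zeta=(e^{ih}\zeta_1,\ldots,e^{ih}\zeta_d)$: the hypothesis only controls $\left|(\partial^\alpha f)(e^{ih}\zeta)-(\partial^\alpha f)(\zeta)\right|$ along that one-parameter family. Your telescoping decomposition $f-T_{n,1}\cdots T_{n,d}f=\sum_j T_{n,1}\cdots T_{n,j-1}(I-T_{n,j})f$ requires, for each summand, a one-variable Jackson estimate for $(I-T_{n,j})f$, hence a modulus-of-continuity bound for $\zeta_j\mapsto\partial^\beta f(\zeta)$ with the other coordinates frozen. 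That coordinate-wise control is not what the hypothesis provides, and you would first need to prove (say, via a Hardy--Littlewood-type argument using holomorphy and the Cauchy kernel) that the diagonal Lipschitz condition implies a comparable coordinate-wise one before the tensorized Jackson machinery applies. Absent that bridge, the argument does not close. As a side remark, if the bridge were in place you would not actually need the Jackson operators to preserve the Lipschitz class (the concern you single out as the "main obstacle"): writing the telescope as above, you only need boundedness of $T_{n,1}\cdots T_{n,j-1}$ on $C(\mathbb{T}^d)$ together with the one-variable error estimate for $(I-T_{n,j})f$, which is weaker and cleaner.
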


We can then give the proof of Corollary~\ref{appljackson}.

\begin{proof}

Let fix $m\geq3$, $0<\gamma\leq1$ and 
$f\in Lip_{\gamma}^m\left(\overline{\D}^2\right)$.
By Theorem~\ref{thm72renwang}, there is a positive constant
$M>0$ such that for all $n\geq1$, there exists $P_n\in\mathcal{P}_n$ that satisfies
\begin{eqnarray}\label{thm72renwang2}
\sup_{(z,w)\in\overline{\D}^2}
\left|
f(z,w)-P_n(z,w)
\right|
& \leq &
\dfrac{M}{n^{m+\gamma}}
\,.
\end{eqnarray}

On the other hand, let $\left(\eta_j\right)_{j\geq0}$ and $\left(\theta_i\right)_{i\geq0}$
be Leja sequences for the unit disk (with $\left|\eta_0\right|=\left|\theta_0\right|=1$),
and let us consider the intertwining sequence $\left(H_j\right)_{k\geq1}$ defined
as in~(\ref{defintertwin}). For all
$N\geq1$ with 
$N_n<N\leq N_{n+1}$ (see~(\ref{Nn})),
let us consider $\Omega_N$ defined as 
in~(\ref{defOmegaN}).
One has in particular that
\begin{eqnarray}\label{equivNn}
N
& \sim &
\dfrac{n^2}{2}
\,.
\end{eqnarray}

Now let us consider $L^{(N)}[f]$ the (multivariate) Lagrange polynomial of $f$
defined by~(\ref{deflagpold}). Its existence is guaranteed
by Proposition~\ref{explicitlagrangepol} and the fact that
$H_p\in\overline{\D}^2$ for all $p=1,\ldots,N$.
A classical calculation yields for all $(z,w)\in\overline{\D}^2$,
\begin{eqnarray}\nonumber
\left|
f(z,w)-L^{(N)}[f](z,w)
\right|
& \leq &
\left|
f(z,w)-P_n(z,w)
\right|
+
\left|
P_n(z,w)-L^{(N)}[f](z,w)
\right|
\\\label{appljacksonaux1}
& \leq &
\dfrac{M}{n^{m+\gamma}}
+
\left|
P_n(z,w)-L^{(N)}[f](z,w)
\right|
\,,
\end{eqnarray}
the second inequality being an application of~(\ref{thm72renwang2}).

On the other hand, we claim that
$L^{(N)}\left[P_n\right]\equiv P_n$. Indeed, 
$L^{(N)}\left[P_n\right]\in\mathcal{P}_{n+1,m}$
(by~(\ref{defnm}) and~(\ref{defPnm})) ,
as well as $P_n\in\mathcal{P}_n=\mathcal{P}_{n,n}\subset\mathcal{P}_{n+1,m}$.
Moreover, $L^{(N)}\left[P_n\right]$ and $P_n$ coincide on
$\Omega_N=\Omega_{n+1,m}$ that is unisolvent for $\mathcal{P}_{n+1,m}$ by Corollary~\ref{unisolvent}.
This proves the claim.

It follows that
\begin{eqnarray}\nonumber
\left|
P_n(z,w)-L^{(N)}[f](z,w)
\right|
& = &
\left|
L^{(N)}\left[P_n\right](z,w)-L^{(N)}[f](z,w)
\right|
\;=\;
\left|
L^{(N)}\left[P_n-f\right](z,w)
\right|
\\\nonumber
& \leq &
\Lambda_N\left(\overline{\D}^2,\left(H_j\right)_{k\geq1}\right)
\sup_{(z',w')\in\overline{\D}^2}
\left|
P_n\left(z',w'\right)-f\left(z',w'\right)
\right|
\\\label{appljacksonaux2}
& \leq &
\Lambda_N\left(\overline{\D}^2,\left(H_j\right)_{k\geq1}\right)
\times
\dfrac{M}{n^{m+\gamma}}
\,,
\end{eqnarray}
the first inequality being valid since the Lebesgue constant is also the operator norm
of the linear operator $L^{(N)}$ that is
the projection from $C\left(\overline{\D}^2\right)$ onto $\mathcal{P}_{n+1,m}$, 
and the second one being an application of~(\ref{thm72renwang2}).

We can deduce by~(\ref{appljacksonaux1}) and~(\ref{appljacksonaux2}) that
\begin{eqnarray}\nonumber
\left|
f(z,w)-L^{(N)}[f](z,w)
\right|
& \leq &
\left(
1+
\Lambda_N\left(\overline{\D}^2,\left(H_j\right)_{k\geq1}\right)
\right)
\dfrac{M}{n^{m+\gamma}}
\,.
\end{eqnarray}
Finally, an application of~(\ref{lebcteintertwinbidisc}) from Theorem~\ref{lebcteintertwin}
with~(\ref{equivNn}) leads to
\begin{eqnarray*}
\left|
f(z,w)-L^{(N)}[f](z,w)
\right|
\;=\;
\left(1+
O\left(N^{3/2}\right)
\right)
\times
O\left(\dfrac{1}{\left(\sqrt{N}\right)^{m+\gamma}}\right)
\;=\;
O\left(\dfrac{1}{N^{(m+\gamma-3)/2}}\right)
\,,
\end{eqnarray*}
and the corollary is proved.

\end{proof}

\medskip

\subsection{On the construction of explicit bidimensional Leja sequences}

Now we deal with the proof of Theorem~\ref{explicitlejasequ}. We remind
the intertwining sequence $\left(H_k\right)_{k\geq1}$ constructed from
any sequences $\left(\eta_j\right)_{j\geq0}$ and $\left(\theta_i\right)_{i\geq0}$
of pairwise distinct elements, and we prove the following result that gives a formula
for the Vandermonde determinant that can also be linked to Proposition~2.5 from~\cite{calvi2005}.

\begin{lemma}\label{explicitVDM}

We have for all $n\geq0$ and $(z,w)\in\C^2$:
\begin{eqnarray*}
VDM
\left(
H_1,H_2,\ldots,H_{N_n},(z,w)
\right)
& = &
\prod_{j=0}^n
\left(z-\eta_j\right)
\times
VDM
\left(
H_1,H_2,\ldots,H_{N_n}
\right)
\,.
\end{eqnarray*}
Similarly, $n\geq0$ being given, we have for all $N$ with
$N_{n-1}<N<N_n$ (and the associated $m$ with $0\leq m\leq n-1$), and all $(z,w)\in\C^2$:
\begin{eqnarray*}
VDM
\left(
H_1,H_2,\ldots,H_{N},(z,w)
\right)
\,=\,
\prod_{j=0}^{n-m-2}\left(z-\eta_j\right)
\,
\prod_{i=0}^{m}\left(w-\theta_i\right)
\times
VDM
\left(
H_1,H_2,\ldots,H_{N}
\right)
\end{eqnarray*}
(with the convention that $\prod_{\emptyset}=1$ in case $m=n-1$).

\end{lemma}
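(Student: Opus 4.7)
The plan is to view $F(z,w) := VDM(H_1,\ldots,H_N,(z,w))$ as a polynomial in $(z,w)$ obtained by cofactor expansion along the last column of the $(N+1)\times(N+1)$ matrix $[e_i(H_j)]$. This expansion writes $F$ as a linear combination of $e_1,\ldots,e_{N+1}$, hence $F$ lies in $\mathcal{P}_{n+1,0}$ when $N=N_n$ and in $\mathcal{P}_{n,m+1}$ when $N_{n-1}<N<N_n$ (these being the spaces associated with $N+1$ via~(\ref{defnm})--(\ref{defPnm})). Moreover, substituting $(z,w)=H_k$ for any $1\le k\le N$ makes the last column equal to the $k$-th column, so $F(H_k)=0$. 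The coefficient of the leading monomial $e_{N+1}$ in $F$ is read off directly from the cofactor expansion: it equals $(-1)^{(N+1)+(N+1)}$ times the minor obtained by deleting the last row and last column, namely $VDM(H_1,\ldots,H_N)$.

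The next step is to identify an explicit polynomial $P(z,w)$ in the same space with the same vanishing pattern and leading coefficient $1$. In Case~1 ($N=N_n$), take $P(z,w)=\prod_{j=0}^{n}(z-\eta_j)$, which is a univariate polynomial in $z$ of degree $n+1$, hence lies in $\mathcal{P}_{n+1,0}$, has leading term $z^{n+1}=e_{N_n+1}$, and vanishes on $\Omega_{N_n}=\{(\eta_k,\theta_l):k+l\le n\}$ since $z-\eta_k$ kills the point whenever $0\le k\le n$. In Case~2 ($N_{n-1}<N<N_n$, with $m=N-N_{n-1}-1$), take $P(z,w)=\prod_{j=0}^{n-m-2}(z-\eta_j)\prod_{i=0}^{m}(w-\theta_i)$, of total degree $n$, whose unique degree-$n$ contribution with maximal $w$-weight is $z^{n-m-1}w^{m+1}=e_{N+1}$; all other terms of total degree $n$ still have $w$-degree at most $m+1$, so $P\in\mathcal{P}_{n,m+1}$, and its $e_{N+1}$-coefficient is $1$.

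The verification that $P$ vanishes on $\Omega_N$ in Case~2 is where the case analysis concentrates and is the most delicate part of the plan. For a point $(\eta_k,\theta_l)$ with $k+l\le n-1$, split on whether $k\le n-m-2$ (the $z$-factor vanishes) or $k\ge n-m-1$ (in which case $l\le n-1-k\le m$ and the $w$-factor vanishes). For a point of the form $(\eta_{n-r},\theta_r)$ with $0\le r\le m$, we have $n-r\ge n-m>n-m-2$ in general, but $r\le m$ forces the $w$-factor to vanish. Thus $P$ cancels every element of $\Omega_N$.

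Finally, the difference $F(z,w)-VDM(H_1,\ldots,H_N)\cdot P(z,w)$ lies in the span of $e_1,\ldots,e_N$ (the $e_{N+1}$-coefficients match by construction), i.e.\ in $\mathcal{P}_{n,m}$ in Case~2 and in $\mathcal{P}_n=\mathcal{P}_{n,n}$ in Case~1, and vanishes identically on $\Omega_N$. Invoking the unisolvency of $\Omega_N$ for this space, established in Corollary~\ref{unisolvent}, this difference is the zero polynomial, yielding both formulas simultaneously. The main obstacle is purely bookkeeping: namely the case-split in the vanishing check for $P$ in Case~2 and the precise identification of the leading monomial $e_{N+1}$ dictated by the lexicographic ordering~(\ref{deforderpol}); once these are pinned down the rest is cofactor expansion plus Corollary~\ref{unisolvent}.
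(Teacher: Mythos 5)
Your proof is correct and takes essentially the same approach as the paper: both isolate the contribution of the leading monomial $e_{N+1}$ in the bordered Vandermonde determinant (you by cofactor expansion along the last column, the paper by row-reducing with the coefficients of $L^{(N)}[e_{N+1}]$, which amounts to the same thing), then identify the residual as the explicit product by showing the difference lies in $\mathcal{P}_{n,m}$, vanishes on $\Omega_N$, and invoking the unisolvency from Corollary~\ref{unisolvent}. The degree bookkeeping and the case-split in your vanishing check for $P$ match the paper's verifications.
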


\begin{proof}

We have for all $N\geq1$ and all $(z,w)\in\C^2$ (the $e_p$'s being defined by~(\ref{deforderpol})),
\begin{eqnarray*}
VDM
\left(
H_1,H_2,\ldots,H_{N},(z,w)
\right)
\,=\,
\left|
\begin{array}{ccccc}
e_1\left(H_1\right) & e_1\left(H_2\right) & \cdots & e_1\left(H_N\right) & e_{1}(z,w)
\\
e_2\left(H_1\right) & e_2\left(H_2\right) & \cdots & e_2\left(H_N\right) & e_{2}(z,w)
\\
\vdots & \vdots & \ddots & \vdots & \vdots
\\
e_N\left(H_1\right) & e_N\left(H_2\right) & \cdots & e_N\left(H_N\right) & e_{N}(z,w)
\\
e_{N+1}\left(H_1\right) & e_{N+1}\left(H_2\right) & \cdots & e_{N+1}\left(H_N\right) & e_{{N+1}}(z,w)
\end{array}
\right|
.
\end{eqnarray*}

Let us consider the Lagrange polynomial of $e_{N+1}$,
\begin{eqnarray*}
L^{(N)}\left[e_{N+1}\right](z,w)
& = &
\sum_{p=1}^Ne_{N+1}\left(H_p\right)l_{H_p}^{(N)}(z,w)
\,,
\;\forall\,(z,w)\in\C^2
\,.
\end{eqnarray*}
Its existence is guaranteed by Proposition~\ref{explicitlagrangepol}.
Since $L^{(N)}\left[e_{N+1}\right]$ is spanned by the
$e_p$'s, $p=1,\ldots,N$, one also has
\begin{eqnarray}\label{explicitVDMaux1}
L^{(N)}\left[e_{N+1}\right](z,w)
& = &
\sum_{p=1}^N\alpha_pe_p(z,w)
\,,
\;\alpha_p\in\C,\,\;\forall\,p=1,\ldots,N.
\end{eqnarray}

The above determinant is not changed if we replace the last row $L_{N+1}$
with $L_{N+1}-\sum_{p=1}^N\alpha_pL_p$. It follows by~(\ref{explicitVDMaux1})
that for
all $j=1,\ldots,N$, $e_{N+1}\left(H_j\right)$ becomes
\begin{eqnarray*}
e_{N+1}\left(H_j\right)-\sum_{p=1}^N\alpha_pe_p\left(H_j\right)
\;=\;
e_{N+1}\left(H_j\right)-L^{(N)}\left[e_{N+1}\right]\left(H_j\right)
\;=\;
0
\,,
\end{eqnarray*}
by the fundamental property of $L^{(N)}\left[e_{N+1}\right]$.
On the other hand, the element $e_{N+1}(z,w)$ becomes
$e_{N+1}(z,w)-\sum_{p=1}^N\alpha_pe_p(z,w)=e_{N+1}(z,w)-L^{(N)}\left[e_{N+1}\right](z,w)$.
Thus
\begin{eqnarray*}
VDM
\left(
H_1,H_2,\ldots,H_{N},(z,w)
\right)
& = &
\;\;\;\;\;\;\;\;\;\;\;\;\;\;\;\;\;\;\;\;\;\;\;\;\;\;\;\;\;\;\;\;\;\;\;\;\;\;\;\;\;\;\;\;
\;\;\;\;\;\;\;\;\;\;\;\;\;\;\;\;\;\;\;\;\;\;\;\;\;\;\;\;
\end{eqnarray*}
\begin{eqnarray*}
\;\;\;\;\;\;\;\;
& = &
\left|
\begin{array}{ccccc}
e_1\left(H_1\right) & e_1\left(H_2\right) & \cdots & e_1\left(H_N\right) & e_{1}(z,w)
\\
e_2\left(H_1\right) & e_2\left(H_2\right) & \cdots & e_2\left(H_N\right) & e_{2}(z,w)
\\
\vdots & \vdots & \ddots & \vdots & \vdots
\\
e_N\left(H_1\right) & e_N\left(H_2\right) & \cdots & e_N\left(H_N\right) & e_{N}(z,w)
\\
0 & 0 & \cdots & 0 & e_{{N+1}}(z,w)-L^{(N)}\left[e_{N+1}\right](z,w)
\end{array}
\right|
\\
& = &
\left(
e_{{N+1}}(z,w)-L^{(N)}\left[e_{N+1}\right](z,w)
\right)
\times
\left|
\begin{array}{cccc}
e_1\left(H_1\right) & e_1\left(H_2\right) & \cdots & e_1\left(H_N\right)
\\
e_2\left(H_1\right) & e_2\left(H_2\right) & \cdots & e_2\left(H_N\right)
\\
\vdots & \vdots & \ddots & \vdots 
\\
e_N\left(H_1\right) & e_N\left(H_2\right) & \cdots & e_N\left(H_N\right) 
\end{array}
\right|
\\
& = &
\left(
e_{{N+1}}(z,w)-L^{(N)}\left[e_{N+1}\right](z,w)
\right)
\times
VDM
\left(
H_1,H_2,\ldots,H_{N}
\right)
\,.
\end{eqnarray*}

The proof of the lemma will be complete once we have proved that for
all $(z,w)\in\C^2$,
\begin{eqnarray}\nonumber
e_{{N_n+1}}(z,w)-L^{(N_n)}\left[e_{N_n+1}\right](z,w)
& = &
z^{n+1}-L^{(N_n)}\left[{X}^{n+1}\right](z,w)
\\\label{explicitVDMaux3}
& = &
\prod_{j=0}^{n}
\left(z-\eta_j\right)
\,,
\end{eqnarray}
and for all $m=0,\ldots,n-1$,
\begin{eqnarray}\nonumber
e_{N+1}(z,w)-L^{(N)}\left[e_{N+1}\right](z,w)
& = &
z^{n-m-1}w^{m+1}-L^{(N)}\left[{X}^{n-m-1}{Y}^{m+1}\right](z,w)
\\\label{explicitVDMaux4}
& = &
\prod_{j=0}^{n-m-2}
\left(z-\eta_j\right)
\;
\prod_{i=0}^{m}
\left(w-\theta_i\right)
\,.
\end{eqnarray}

We first deal with~(\ref{explicitVDMaux3}) and set
\begin{eqnarray*}
R_{N_n}(z,w)
& := &
z^{n+1}-\prod_{j=0}^{n}
\left(z-\eta_j\right)
\,.
\end{eqnarray*}
Since it is a difference of unitary polynomials of degree $n+1$, $R_{N_n}$
is spanned by the $z^j$'s for $j=0,\ldots,n$. Then
$R_{N_n}\in\mathcal{P}_{n,n}$, as well as $L^{(N_n)}\left[{X}^{n+1}\right]$.
On the other hand, for all
$\left(\eta_k,\theta_l\right)\in\Omega_{N_n}$, one has $0\leq k\leq k+l\leq n$, then
\begin{eqnarray*}
R_{N_n}\left(\eta_k,\theta_l\right)
& = &
\eta_k^{n+1}-\prod_{j=0}^n\left(\eta_k-\eta_j\right)
\;=\;
\eta_k^{n+1}
\;=\;
L^{(N_n)}\left[{X}^{n+1}\right]\left(\eta_k,\theta_l\right)
\,,
\end{eqnarray*}
i.e. $R_{N_n}$ and $L^{(N_n)}\left[{X}^{n+1}\right]$ coincide on the set $\Omega_{N_n}$.
Since $\Omega_{N_n}$ is unisolvent for the space $\mathcal{P}_{n,n}$ by
Corollary~\ref{unisolvent}, it follows that
$R_{N_n}\equiv L^{(N_n)}\left[{X}^{n+1}\right]$ and this 
proves~(\ref{explicitVDMaux3}).

Now we deal with~(\ref{explicitVDMaux4}). We similarly introduce
\begin{eqnarray*}
R_N(z,w)
\,=\,
z^{n-m-1}w^{m+1}-L^{(N)}\left[{X}^{n-m-1}{Y}^{m+1}\right](z,w)
-
\prod_{j=0}^{n-m-2}
\left(z-\eta_j\right)
\prod_{i=0}^{m}
\left(w-\theta_i\right)
\,.
\end{eqnarray*}
First, we claim that $R_N$ belongs to $\mathcal{P}_{n,m}$. Indeed,
$\prod_{j=0}^{n-m-2}
\left(z-\eta_j\right)
\prod_{i=0}^{m}
\left(w-\theta_i\right)
=\left(z^{n-m-1}+S_{n-m-2}\right)
\left(w^{m+1}+T_m\right)$,
where $S_{n-m-2}\in\C[z]$ with $\deg S_{n-m-2}\leq n-m-2$,
and $T_m\in\C[w]$ with $\deg T_m\leq m$. Then
\begin{eqnarray*}
z^{n-m-1}w^{m+1}
-
\prod_{j=0}^{n-m-2}
\left(z-\eta_j\right)
\prod_{i=0}^{m}
\left(w-\theta_i\right)
& = &
\;\;\;\;\;\;\;\;\;\;\;\;\;\;\;\;\;\;\;\;\;\;\;\;\;\;\;\;\;\;\;\;\;\;\;\;\;\;\;\;\;\;\;\;
\;\;\;\;\;\;\;\;\;\;\;\;
\end{eqnarray*}
\begin{eqnarray*}
& = &
z^{n-m-1}w^{m+1}
-
\left(z^{n-m-1}+S_{n-m-2}\right)
\left(w^{m+1}+T_m\right)
\\
& = &
-z^{n-m-1}T_m
-w^{m+1}S_{n-m-2}
-S_{n-m-2}T_m
\;\in\;\mathcal{P}_{n-1}\;\subset\;\mathcal{P}_{n,m}
\,,
\end{eqnarray*}
since each term has total degree at most $n-1$. On the other hand,
$L^{(N)}\left[{X}^{n-m-1}{Y}^{m+1}\right]\in\mathcal{P}_{n,m}$
then the claim is proved.

Next, let be $\left(\eta_k,\theta_l\right)\in\Omega_{N}$. If
$k\leq n-m-2$ (in case $m\leq n-2$) or $l\leq m$, we have
\begin{eqnarray*}
R_N\left(\eta_k,\theta_l\right)
& = &
\eta_k^{n-m-1}\theta_l^{m+1}
-
L^{(N)}\left[{X}^{n-m-1}{Y}^{m+1}\right]\left(\eta_k,\theta_l\right)
-
\prod_{j=0}^{n-m-2}
\left(\eta_k-\eta_j\right)
\prod_{i=0}^{m}
\left(\theta_l-\theta_i\right)
\\
& = &
\eta_k^{n-m-1}\theta_l^{m+1}
-
\eta_k^{n-m-1}\theta_l^{m+1}
-0
\;=\;
0
\,.
\end{eqnarray*}
Otherwise, $k\geq n-m-1$ and $l\geq m+1$, then
$k+l\geq n$. Necessarily, $k+l=n$ thus $l\leq m$
(since $\left(\eta_k,\theta_l\right)\in\Omega_{N}$).
It follows that $l\leq m<m+1\leq l$, and this is impossible.

Hence $R_N$ vanishes on the set $\Omega_{N}$ that is unisolvent
for the space $\mathcal{P}_{n,m}\ni R_N$. An application of Corollary~\ref{unisolvent}
yields $R_N\equiv0$ and this proves~(\ref{explicitVDMaux4}).

\end{proof}

As a first application, we prove Corollary~\ref{redemschifsic} given in the Introduction, that is the formula
for the two-variable Vandermonde determinant of Schiffer and Siciak.

\begin{proof}

The proof is by induction on $n\geq0$. If $n=0$, the assertion is obvious since
$VDM\left(\eta_0,\theta_0\right)=1=\prod_{\emptyset}$ .

Now if $n\geq0$, successive applications of the second assertion of Lemma~\ref{explicitVDM} (with
$m=n,n-1,\ldots,0$ and $(z,w)=\left(\eta_{n-m},\theta_{m+1}\right)$), gives that
\begin{eqnarray*}
VDM
\left(
\left(\eta_0,\theta_0\right),\ldots,\left(\eta_0,\theta_n\right),
\left(\eta_{n+1},\theta_0\right),\ldots,\left(\eta_0,\theta_{n+1}\right)
\right)
& = &
\;\;\;\;\;\;\;\;\;\;\;\;\;\;\;\;\;\;\;\;\;\;\;\;\;\;\;\;\;\;\;\;\;\;\;\;
\end{eqnarray*}
\begin{eqnarray*}
& = &
\prod_{i=0}^{n}\left(\theta_{n+1}-\theta_i\right)
\times
VDM
\left(
\left(\eta_0,\theta_0\right),\ldots,\left(\eta_0,\theta_n\right),
\left(\eta_{n+1},\theta_0\right),\ldots,\left(\eta_{1},\theta_n\right)
\right)
\\
& = &
\prod_{i=0}^{n}\left(\theta_{n+1}-\theta_i\right)
\left(\eta_1-\eta_0\right)
\prod_{i=0}^{n-1}\left(\theta_n-\theta_i\right)
VDM
\left(
\left(\eta_0,\theta_0\right),\ldots,\left(\eta_0,\theta_n\right),
\left(\eta_{n+1},\theta_0\right),\ldots,\left(\eta_{2},\theta_{n-1}\right)
\right)
\\
&  \vdots & 
\\
& = &
\prod_{m=0}^n
\left[
\prod_{j=0}^{n-m-1}\left(\eta_{n-m}-\eta_j\right)
\,
\prod_{i=0}^{m}\left(\theta_{m+1}-\theta_i\right)
\right]
\times
VDM
\left(
\left(\eta_0,\theta_0\right),\ldots,\left(\eta_0,\theta_n\right),\left(\eta_{n+1},\theta_0\right)
\right)
\\
& = &
\prod_{m=0}^n
\left[
\prod_{j=0}^{n-m-1}\left(\eta_{n-m}-\eta_j\right)
\prod_{i=0}^{m}\left(\theta_{m+1}-\theta_i\right)
\right]
\times
\prod_{j=0}^n
\left(\eta_{n+1}-\eta_j\right)
\times
VDM
\left(
\left(\eta_0,\theta_0\right),\ldots,\left(\eta_0,\theta_n\right)
\right),
\end{eqnarray*}
the last equality being an application of the first assertion of Lemma~\ref{explicitVDM}
(with $(z,w)=\left(\eta_{n+1},\theta_0\right)$).
It follows that
\begin{eqnarray*}
VDM
\left(
\left(\eta_0,\theta_0\right),\ldots,\left(\eta_0,\theta_n\right),
\left(\eta_{n+1},\theta_0\right),\ldots,\left(\eta_0,\theta_{n+1}\right)
\right)
& = &
\;\;\;\;\;\;\;\;\;\;\;\;\;\;\;\;\;\;\;\;\;\;\;\;\;\;\;\;\;\;\;\;
\end{eqnarray*}
\begin{eqnarray*}
& = &
\prod_{m=0}^n
\left[
\prod_{j=0}^{m}\left(\eta_{m+1}-\eta_j\right)
\,\times\,
\prod_{i=0}^{m}\left(\theta_{m+1}-\theta_i\right)
\right]
\times
VDM
\left(
\left(\eta_0,\theta_0\right),\ldots,\left(\eta_0,\theta_n\right)
\right)
\,,
\end{eqnarray*}
because
$\prod_{m=0}^n\prod_{j=0}^{n-m-1}\left(\eta_{n-m}-\eta_j\right)=
\prod_{m=0}^{n-1}\prod_{j=0}^{n-m-1}\left(\eta_{n-m}-\eta_j\right)=\\
\prod_{m=0}^{n-1}\prod_{j=0}^m\left(\eta_{m+1}-\eta_j\right)$.
The induction is achieved since
\begin{eqnarray*}
\prod_{m=0}^n
\left[
\prod_{j=0}^{m}\left(\eta_{m+1}-\eta_j\right)
\right]
\;=\;
\prod_{0\leq i<j\leq n+1}
\left(\eta_j-\eta_i\right)
\;=\;
VDM\left(\eta_0,\ldots,\eta_{n+1}\right)
\end{eqnarray*}
(similarly, 
$\prod_{m=0}^n
\left[\,\prod_{i=0}^{m}\left(\theta_{m+1}-\theta_i\right)\right]
=VDM\left(\theta_0,\ldots,\theta_{n+1}\right)$).

\end{proof}

Another application of the above lemma is the proof of Theorem~\ref{explicitlejasequ}.

\begin{proof}

The proof is by induction on $N\geq1$. If $N=1$, we have that
$\left|VDM(z,w)\right|=|1|=1$, then any point
$(z,w)\in\C^2$ reaches the maximum (in particular $\left(\eta_0,\theta_0\right))$.

Now let be $N\geq1$ and assume that the first $N$ points $H_j$'s of the intertwining of the Leja sequences
$\left(\eta_j\right)_{j\geq0}$ and $\left(\theta_i\right)_{i\geq0}$, are an
$N$-Leja section for the compact set $K_1\times K_2$. The goal is to prove that the $(N+1)$st
point $H_{N+1}$ of the intertwining sequence reaches
$\sup_{(z,w)\in K_1\times K_2}\left|VDM\left(H_1,\ldots,H_N,(z,w)\right)\right|$.

If $N=N_n$ for some $n\geq0$, then an application of the first assertion of Lemma~\ref{explicitVDM}
gives that
\begin{eqnarray*}
\sup_{(z,w)\in K_1\times K_2}
\left|
VDM
\left(
H_1,\ldots,H_{N_n},(z,w)
\right)
\right|
& = &
\;\;\;\;\;\;\;\;\;\;\;\;\;\;\;\;\;\;\;\;\;\;\;\;\;\;\;\;\;\;\;\;\;\;\;\;\;\;\;\;
\;\;\;\;\;\;\;\;\;\;\;\;\;\;\;\;\;\;\;\;\;\;\;\;\;\;\;\;\;\;\;\;\;\;\;\;\;\;\;\;
\end{eqnarray*}
\begin{eqnarray*}
& = &
\sup_{(z,w)\in K_1\times K_2}
\left|
VDM
\left(
\left(\eta_0,\theta_0\right),\ldots,\left(\eta_0,\theta_{n-1}\right),
\left(\eta_{n},\theta_0\right),\ldots,\left(\eta_0,\theta_n\right),(z,w)
\right)
\right|
\\
& = &
\sup_{(z,w)\in K_1\times K_2}
\left|
\prod_{j=0}^n
\left(z-\eta_j\right)
\times
VDM
\left(
\left(\eta_0,\theta_0\right),\ldots,\left(\eta_0,\theta_{n-1}\right),
\left(\eta_{n},\theta_0\right),\ldots,\left(\eta_0,\theta_n\right)
\right)
\right|
\\
& = &
\left(
\sup_{z\in K_1}
\prod_{j=0}^n
\left|z-\eta_j\right|
\right)
\times
\left|
VDM
\left(
\left(\eta_0,\theta_0\right),\ldots,\left(\eta_0,\theta_{n-1}\right),
\left(\eta_{n},\theta_0\right),\ldots,\left(\eta_0,\theta_n\right)
\right)
\right|
\,.
\end{eqnarray*}
The maximum is reached on any point $\left(\eta_{n+1},w\right)$ with $w\in K_2$
(since $\left(\eta_j\right)_{j\geq0}$ is a Leja sequence for $K_1$), then in particular
on the $(N+1)$st point $H_{N_n+1}=\left(\eta_{n+1},\theta_0\right)$ of the intertwining sequence.

Otherwise, $N_{n-1}<N< N_n$ with $n\geq1$. An application of the second assertion of
Lemma~\ref{explicitVDM} gives that
\begin{eqnarray*}
\sup_{(z,w)\in K_1\times K_2}
\left|
VDM
\left(
H_1,\ldots,H_{N},(z,w)
\right)
\right|
& = &
\;\;\;\;\;\;\;\;\;\;\;\;\;\;\;\;\;\;\;\;\;\;\;\;\;\;\;\;\;\;\;\;\;\;\;\;\;\;\;\;
\;\;\;\;\;\;\;\;\;\;\;\;\;\;\;\;\;\;\;\;\;\;\;\;\;\;\;\;\;\;\;\;\;\;\;\;\;\;\;\;
\end{eqnarray*}
\begin{eqnarray*}
& = &
\sup_{(z,w)\in K_1\times K_2}
\left|
VDM
\left(
\left(\eta_0,\theta_0\right),\ldots,\left(\eta_0,\theta_{n-1}\right),
\left(\eta_{n},\theta_0\right),\ldots,\left(\eta_{n-m},\theta_m\right),(z,w)
\right)
\right|
\\
& = &
\sup_{(z,w)\in K_1\times K_2}
\left[
\prod_{j=0}^{n-m-2}\left|z-\eta_j\right|
\prod_{i=0}^{m}\left|w-\theta_i\right|
\times
\left|
VDM
\left(
\left(\eta_0,\theta_0\right),\ldots,\left(\eta_{n-m},\theta_m\right)
\right)
\right|
\right]
\\
& = &
\left(
\sup_{z\in K_1}
\prod_{j=0}^{n-m-2}\left|z-\eta_j\right|
\right)
\times
\left(
\sup_{w\in K_2}
\prod_{i=0}^{m}\left|w-\theta_i\right|
\right)
\times
\left|
VDM
\left(
\left(\eta_0,\theta_0\right),\ldots,\left(\eta_{n-m},\theta_m\right)
\right)
\right|,
\end{eqnarray*}
whose maximum is reached on the point $\left(\eta_{n-m-1},\theta_{m+1}\right)$
(since $\left(\eta_j\right)_{j\geq0}$ and $\left(\theta_i\right)_{i\geq0}$
are both Leja sequences for $K_1$ and $K_2$ respectively), that is exactly the
$(N+1)$st point $H_{N+1}$ of the intertwining sequence. This completes the induction
and the proof of the theorem.

\end{proof}

\begin{remark}

We can notice in the above proof that the choice for the $(N+1)$st Leja point is not necessarily unique.
For $N=N_n$, we could choose any point
$\left(z_{N+1},w_{N+1}\right)$ with $w_{N+1}\in K_2$
and such that 
$\left(\eta_0,\ldots,\eta_n,z_{N+1}\right)$ is an $(n+2)$-Leja section.
For $N_{n-1}<N< N_n$, we could similarly choose any point $\left(z_{N+1},w_{N+1}\right)$
such that $\left(\eta_0,\ldots,\eta_{n-m-2},z_{N+1}\right)$
(resp., $\left(\theta_0,\ldots,\theta_m,w_{N+1}\right)$)
is an $(n-m)$-Leja section 
(resp., an $(m+2)$-Leja section).

But the choice of $\left(\eta_{n+1},\theta_0\right)$ (resp., $\left(\eta_{n-m-1},\theta_{m+1}\right)$)
for the $(N+1)$st Leja point for $N=N_n$ (resp., $N_{n-1}<N< N_n$), is essential
so that $\left(H_1,\ldots,H_N,H_{N+1}\right)$ are the first $(N+1)$ points of an
intertwining sequence (in order to apply Lemma~\ref{explicitVDM}
for the determination of the following Leja points). Otherwise, searching the $(N+2)$nd Leja point would become
a hard task (and {\em a fortiori} the next ones).

\end{remark}

\bigskip


\begin{thebibliography}{00}


\bibitem{alper1955}  S. Y. Al'per, 
On uniform approximations of functions of a complex variable in a closed region, 
{\em Izv. Akad. Nauk SSSR Ser. Mat.} {\bf 19} (1955), 423--444. 



\bibitem{alper1956} S. Y. Al'per, 
On the convergence of Lagrange's interpolational polynomials 
in the complex domain,
{\em Uspekhi Mat. Nauk (NS)}, {\bf 11} (5) (1956), 44--50.



\bibitem{bermanboucksomwitt2011} R. Berman, S. Boucksom, D. Witt Nystr\"{o}m,
Fekete points and convergence towards equilibrium measures
on complex manifolds,
{\em Acta Math.} {\bf 207} (1) (2011), 1--27.



\bibitem{bialascalvi} L. Bialas-Ciez, J.-P. Calvi,
Pseudo Leja sequences,
{\em Annali di Matematica Pura ed Applicata} {\bf 191} (2012), 53--75.



\bibitem{bierman1903}  O. Bierman, 
\"{U}ber n\"aherungsweise kubaturen, 
{\em Monaths. Math. Phys.} {\bf 14} (1903), 211--225. 



\bibitem{bloomboschrislev1992} T. Bloom, L. Bos, C. Christensen, N. Levenberg,
Polynomial interpolation of holomorphic functions in $\C$ and $\C^n$,
{\em Rocky Mountain J. Math.} {\bf 22} (2) (1992), 441--470.



\bibitem{bloomboscalvilev2012} T. Bloom, L. Bos, J.-P. Calvi, N. Levenberg, 
Polynomial interpolation and approximation in $\C^d$,
{\em  Annales Polonici Mathematici} {\bf 106} (2012), 53--81.



\bibitem{calvi2005} J.-P. Calvi,
Intertwining unisolvent arrays for multivariate Lagrange interpolation, 
{\em Advances in Computational Mathematics} {\bf 23} (2005), 393--414.



\bibitem{calviphung1} J.-P. Calvi, V.M. Phung,
On the Lebesgue constant of Leja sequences for the unit disk and its applications
to multivariate interpolation,
{\em Journal of Approximation Theory} {\bf 163} (5) (2011), 608--622.



\bibitem{chkifa1} M.A. Chkifa,
On the Lebesgue constant of Leja sequences for the complex unit disk and
of their real projection,
{\em Journal of Approximation Theory} {\bf 166} (2013), 176--200.



\bibitem{edrei1} A. Edrei, Sur les d\' eterminants r\' ecurrents et les
singularit\' es d'une fonction donn\' ee par son d\' eveloppement de Taylor,
{\em Compos. Math.} {\bf 7} (1940) 20--88.



\bibitem{fekete1} M. Fekete, \"{U}ber die Verteilung der Wurzeln
bei gewissen algebraischen Gleichungen mit ganzzahligen
Koeffizienten, {\em Math. Zeitschrift} {\bf 17} (1923)
p. 228--249.



\bibitem{jackson} D. Jackson, 
On approximations by trigonometrical sums and polynomials,
{\em Trans. Amer. Math. Soc.} {\bf 13} (1912), 491--515. 



\bibitem{klimek} M. Klimek, 
{\em Pluripotential Theory}, London Mathematical Society Monographs, 
New Series~\#6, Oxford University Press, 1991.



\bibitem{kovaripom} T. K\"{o}vari, Ch. Pommerenke, 
On the distribution of Fekete points, 
{\em Mathematika} {\bf 15} (1968), 70--75.



\bibitem{leja1} F. Leja, Sur certaines suites li\' ees aux ensembles plans
et leur application \`a la repr\' esentation conforme,
{\em  Annales Polonici Mathematici} {\bf 4} (1957) 8--13.



\bibitem{renwang} G. Ren, M. Wang, 
Holomorphic Jackson's theorems in polydiscs, 
{\em Journal of Approximation Theory} {\bf 134} (2) (2005), 175--198.



\bibitem{safftotik} E. Saff, V. Totik, 
{\em Logarithmic Potentials with External Fields}, Grundlehren Math. Wiss. 
[Fundamental Principles of Mathematical Sciences] {\bf 316} (1997). 



\bibitem{sauerxu1995} T. Sauer, Y. Xu, 
On multivariate Lagrange interpolation, 
{\em Math. Comp.} {\bf 64} (1995), 1147--1170.



\bibitem{schiffersiciak1962} M. Schiffer, J. Siciak, 
Transfinite diameter and analytic continuation of functions of two complex variables,
{\em Studies in Math. Analysis and Related Topics}, Stanford (1962), 341--358.



\bibitem{siciak1962} J. Siciak,
On some extremal functions and their applications in the theory of analytic functions of several complex variables,
{\em Trans. Amer. Math. Soc.} {\bf 105} (1962), 322--57. 



\bibitem{zaharjuta1975} V.P. Zaharjuta,
Transfinite diameter, Chebyshev constants, and capacity for
compacta in $\C^n$,
{\em Math. USSR Sbornik}, {\bf 25} (3) (1975), 350--364. 




\end{thebibliography}
\end{document}